\title[On Measure Solutions of the Boltzmann Equation part II\dots]{On Measure
  Solutions of the Boltzmann Equation part II: Rate of Convergence to Equilibrium }
\author{Xuguang Lu and Cl\'{e}ment Mouhot}
\def\signxl{\bigskip \begin{center} {\sc Xuguang Lu\par\vspace{3mm}
Tsinghua University\par
Department of Mathematical Sciences\par
Beijing 100084, P.R.,
CHINA\par\vspace{5mm}
e-mail:} \tt{xglu@math.tsinghua.edu.cn} \end{center}}
\def\signcm{\bigskip \begin{center} {\sc
Cl\'ement Mouhot\par\vspace{5mm}
University of Cambridge\par
DPMMS, Centre for Mathematical Sciences\par
Wilberforce Road,
Cambridge CB3 0WA,
UK\par\vspace{5mm}
e-mail:} \tt{C.Mouhot@dpmms.cam.ac.uk} \end{center}}
\begin{document}
\maketitle                            

\newcommand{\p}{\partial}
\newcommand{\og}{\omega}
\newcommand{\Og}{\Omega}
\newcommand{\dt}{\delta}
\newcommand{\Dt}{\Delta}
\newcommand{\ld}{\lambda}
\newcommand{\Ld}{\Lambda}
\newcommand{\Gm}{\Gamma}
\newcommand{\gm}{\gamma}
\newcommand{\vp}{\varphi}
\newcommand{\vep}{\varepsilon}
\newcommand{\ep}{\epsilon}
\newcommand{\vh}{\varrho}
\newcommand{\vap}{\varphi}
\newcommand{\kp}{\eta}
\newcommand{\Sg}{\Sigma}
\newcommand{\fa}{\forall}
\newcommand{\fr}{\frac}
\newcommand{\sg}{\sigma}
\newcommand{\la}{\langle}
\newcommand{\ra}{\rangle}
\newcommand{\wt}{\widetilde}
\newcommand{\be}{\begin{equation}}
\newcommand{\ee}{\end{equation}}
\newcommand{\ba}{\begin{array}}
\newcommand{\ea}{\end{array}}
\newcommand{\bea}{\begin{eqnarray}}
\newcommand{\eea}{\end{eqnarray}}
\newcommand{\beas}{\begin{eqnarray*}}
\newcommand{\eeas}{\end{eqnarray*}}
\newcommand{\dpm}{\displaystyle }
\newcommand{\intt}{\int\!\!\!\!\int}
\newcommand{\inttt}{\int\!\!\!\!\int\!\!\!\!\int}
\newcommand{\intttt}{\int\!\!\!\!\int\!\!\!\!\int\!\!\!\!\int}
\newcommand{\bRN}{{\mathbb R}^N}
\newcommand{\bSN}{{\mathbb S}^{N-1}}
\newcommand{\bRRN}{{\mathbb R}^N\times{\mathbb R}^N}
\newcommand{\bRSN}{{\mathbb R}^N\times{\mathbb S}^{N-1}}
\newcommand{\bRRRN}{{\mathbb R}^N\times{\mathbb R}^N\times{\mathbb R}^N}
\newcommand{\bRRSN}{{\mathbb R}^N\times{\mathbb R}^N\times{\mathbb S}^{N-1}}

\theoremstyle{theorem}
\newtheorem{theorem}{Theorem}
\newtheorem{lemma}[theorem]{Lemma}
\newtheorem{conjecture}[theorem]{Conjecture}
\newtheorem{corollary}[theorem]{Corollary}
\newtheorem{proposition}[theorem]{Proposition}
\theoremstyle{definition}
\newtheorem{definition}[theorem]{Definition}

\def\theThm{{\arabic{section}.\arabic{theorem}}}
\numberwithin{equation}{section}
\numberwithin{theorem}{section}

\theoremstyle{remark}
\newtheorem{remark}[theorem]{Remark}
\newtheorem{remarks}[theorem]{Remarks}
\newtheorem{examples}[theorem]{Examples}
\newtheorem{example}[theorem]{Example}

\vskip 0.1in \baselineskip 17pt

\begin{abstract}
  The paper considers the convergence to equilibrium for measure
  solutions of the spatially homogeneous Boltzmann equation for hard
  potentials with angular cutoff. We prove the exponential sharp rate
  of strong convergence to equilibrium for conservative measure
  solutions having finite mass and energy. The proof is based on the
  regularizing property of the iterated collision operators,
  exponential moment production estimates, and some previous results
  on the exponential rate of strong convergence to equilibrium for
  square integrable initial data. We also obtain a lower bound of the
  convergence rate and deduce that no eternal solutions exist apart
  from the trivial stationary solutions given by the Maxwellian
  equilibrium. The constants in these convergence rates depend only on
  the collision kernel and conserved quantities (mass, momentum, and
  energy).  We finally use these convergence rates in order to deduce
  global-in-time strong stability of measure solutions.
\end{abstract}

\bigskip

{\bf Mathematics Subject Classification (2000)}: 35Q Equations of
mathematical physics and other areas of application [See also 35J05,
35J10, 35K05, 35L05], 76P05 Rarefied gas flows, Boltzmann equation
[See also 82B40, 82C40, 82D05].

\bigskip

{\bf Keywords}: Boltzmann equation; spatially homogeneous; hard
potentials; measure solutions; equilibrium; exponential rate of
convergence; eternal solution; global-in-time stability.

\tableofcontents

\section {Introduction}
\label{sec1}

The Boltzmann equation describes evolution of a dilute
gas. Investigations of the spatially homogeneous Boltzmann equation
have made a lot of progresses in the last decades and it is hoped to
provide useful clues for the understanding of the complete (spatially
inhomogeneous) Boltzmann equation. The complete equation is more
realistic and interesting to physics and mathematics but remains still
largely out of reach mathematically and will most likely need long
term preparations and efforts.  For review and references of these
areas, the reader may consult for instance
\cite{Villani-handbook,MR2765747,partI}.

The present paper is a follow-up to our previous work \cite{partI} on
measure-valued solutions\footnote{As in our previous
  work~\cite{partI}, the ``measure-valued solutions" will be also
  called ``measure solutions''.}  to the spatially homogeneous
Boltzmann equation for hard potentials.  In this second part, we prove
that, under some angular cutoff assumptions (which include the hard
sphere model), solutions with measure-valued initial data having
finite mass and energy converge strongly to equilibrium in the
exponential rate $e^{-\ld t}$, where $\ld>0$ is the spectral gap of
the corresponding linearized collision operator. This sharp
exponential rate was first proved in \cite{Mcmp} for initial data with
bounded energy, and belonging to $L^1$ (for the hard sphere model) or
to $L^1\cap L^2$ (for all hard potentials with cutoff). The core idea
underlying our improvement of this result to measure solutions is that
instead of considering a one-step iteration of the collision integral
which produces the $L^1\cap L^2$ integrability for the hard sphere
model (as first observed by Abrahamsson \cite{MR1697495}, elaborating
upon an idea in \cite{MR1697562}), we consider a \emph{multi-steps}
iteration which produces the $L^1\cap L^{\infty}$ integrability for
all hard potentials with angular cutoff. This, together with
approximation by $L^1$ solutions through the Mehler transform, and the
property of the exponential moment production, enables us to apply the
results of \cite{Mcmp} and obtain the same convergence rate
$e^{-\ld t}$ for measure solutions.  We also obtain a lower bound of
the convergence rate and establish the global in time strong stability
estimate. As a consequence we prove that, for any hard potentials with
cutoff, there are no eternal measure solutions with finite and
non-zero temperature, apart from the Maxwellians.

\subsection{The spatially homogeneous Boltzmann equation}
The spatially homogeneous Boltzmann equation takes the form
\begin{equation}\label{(B)}
\frac{\p }{\p t}f_t(v)=Q(f_t,f_t)(v), \quad (v,t)\in
\mathbb{R}^N\times(0,\infty),\quad N\ge 2
\end{equation}
with some given initial data $f_t(v)|_{t=0}=f_0(v) \ge 0$, where $Q$ is the
\emph{collision integral} defined by
\begin{equation}\label{(1.1)}
Q(f,f)(v)=\intt_{\mathbb{R}^N\times
\mathbb{S}^{N-1}}B(v-v_*,\sg)\Big(f(v')f(v_*')-f(v)f(v_*)\Big)
\, {\rm d}\sigma \, {\rm d}v_*.
\end{equation}
In the latter expression, $v,v_*$ and $v', v_*'$ stand for velocities
of two particles after and before their collision, and the microscopic
conservation laws of an elastic collision
\begin{equation}\label{(1.3)}
  v'+v_*'=v+v_*,\quad |v'|^2+|v_*'|^2 = |v|^2 + |v_*|^2.
\end{equation}
induce the following relations:
\begin{equation}\label{(1.2)}
  v'=\frac{v+v_*}{2}+\frac{|v-v_*|}{2}\sg,\quad
  v_*'=\frac{v+v_*}{2}-\frac{|v-v_*|}{2}\sg
\end{equation}
for some unit vector $\sg \in \mathbb{S}^{N-1}$.

The collision kernel $B(z,\sg)$ under consideration is assumed to have
the following \emph{product form}
\begin{equation}\label{(1.4)}
B(z,\sg)=|z|^{\gm}b\left(\frac{z}{|z|}\cdot\sg\right)\, ,\quad \gamma>0
\end{equation}
where $b$ is a nonnegative Borel function on $[-1,1]$. This
corresponds to the so-called \emph{inverse power-law interaction
  potentials} between particles, and the condition $\gamma>0$
corresponds to the so-called \emph{hard potentials}. Throughout this paper we
assume that the function $b$ satisfies \emph{Grad's angular cutoff}:
\begin{equation}\label{(Grad)}
A_0:=\int_{{\mathbb S}^{N-1}}b\left(\frac{z}{|z|}\cdot\sg\right){\rm d}\sg=\left|{\mathbb S}^{N-2}\right|
  \int_{0}^{\pi}b(\cos\theta)\sin^{N-2}\theta\,{\rm d}\theta<\infty
\end{equation}
and it is always assumed that $A_0>0$,  where $|{\mathbb S}^{N-2}|$ denotes the Lebesgue measure of the
$(N-2)$-dimensional sphere ${\mathbb S}^{N-2}$ (recall that in the case $N=2$ we have ${\mathbb S}^{0}=\{-1,1\}$ and $|{\mathbb S}^{0}|=2$ ). This enables us to
split the collision integral as
$$
Q(f,g)=Q^{+}(f,g)-Q^{-}(f,g)
$$
with the two bilinear operators
\begin{eqnarray}\label{(Q+fg)}
  && Q^{+}(f,g)(v)=\intt_{{\mathbb
      R}^N\times{\mathbb S}^{N-1}} B(v-v_*,\sg)f(v')g(v_*') \, {\rm
    d}\sg \, {\rm d}v_*,
  \\
  &&\label{(Q-fg)} Q^{-}(f,g)(v)=A_0f(v)\int_{{\mathbb
      R}^N}|v-v_*|^{\gm} g(v_*) \, {\rm d}v_*.
\end{eqnarray}
which are nonnegative when applied to nonnegative functions.

The bilinear operators $Q^{\pm}$ are bounded from
$L^1_{s+\gm}({\mathbb R}^N) \times L^1_{s+\gm}({{\mathbb R}^{N}})$ to
$L^1_{s}({\mathbb R}^N)$ for $s\ge 0$, where $L^1_s({\mathbb R}^N)$ is
a subspace of $L^1_0({\bRN}):= L^1({\bRN})$ defined by
\begin{equation}\label{(1-Lspace)}
f\in L^1_s({\mathbb R}^N) \quad \Longleftrightarrow \quad \|f\|_{L^1_s}:=
\int_{{\mathbb R}^N}\la v\ra^{s} |f(v)|\, {\rm d}v<\infty.
\end{equation}
where we have used the standard notation
$$
\forall \, v \in \mathbb R, \quad \la v\ra:=\sqrt{1+|v|^2}.
$$
Since in the equation~\eqref{(B)}, $f=g=f_t$, by replacing
\[
B(v-v_*,\sg) \quad \mbox{ with } \quad
\frac{1}{2}\big[B(v-v_*,\sg)+B(v-v_*,-\sg)\big]
\]
one can assume without loss of generality that the function $b$ is
even: $b(-t)=b(t)$ for all $t\in [-1,1]$. This in turn implies that
the polar form of $Q^+$ satisfies
\begin{equation}\label{(1-Qcommut)}
Q^{+}(f,g) \equiv Q^{+}(g,f).
\end{equation}

\subsection{The definition of the solutions}
\label{sec:notion-solutions}

The equation~\eqref{(B)} is usually solved as an integral equation as
follows. Given any $0\le f_0\in L^1_2({\mathbb R}^N)$, we say that a
nonnegative Lebesgue measurable function $(v,t)\mapsto f_t(v)$ on
$[0,\infty)\times{\mathbb R}^N$ is a mild solution to
\eqref{(B)} if for every $t\ge 0$, $v\mapsto f_t(v)$ belongs to
$L^1_2({\mathbb R}^N)$, $\sup_{t\ge 0}\|f_t\|_{L^1_2}<\infty$, and
there is a Lebesgue null set $Z_0$ (which is independent of $t$) such
that
\begin{equation}
\begin{cases}\displaystyle
  \forall\,t\in[0,\infty),\quad \forall\, v\in {\mathbb
    R}^N\setminus Z_0, \quad
  \int_{0}^{t}Q^{\pm}(f_{\tau},f_{\tau})(v)\, {\rm
    d}\tau<\infty,\vspace{0.2cm} \\ \displaystyle
   \forall\,t\in[0,\infty), \quad \forall\,v\in
  {\mathbb R}^N\setminus Z_0, \quad
  f_t(v)=f_0(v)+\int_{0}^{t}Q(f_{\tau},f_{\tau})(v)\, {\rm
    d}\tau.
\end{cases}
\end{equation}

The bilinear operators $(f,g) \mapsto Q^{\pm}(f,g)$ can now be
extended to measures. For every $s\ge 0$, let ${\mathcal B}_s({\mathbb
  R}^N)$ with the norm $\|\cdot\|_s$ be the Banach space of real Borel measures on ${\mathbb R}^N$ defined by
\begin{equation}\label{(1-norm)}
F\in {\mathcal B}_s({\mathbb
  R}^N) \quad \Longleftrightarrow \quad \|F\|_s:=\int_{{\mathbb R}^N }\la v\ra^s \, {\rm d}|F|(v)<\infty,
\end{equation}
where the positive Borel measure
$|F|$ is the total variation of $F$. This norm  $\|\cdot\|_s$ can also
be defined by duality:
\begin{equation} \label{(dual)}
\|F\|_s=\sup_{\vp\in C_c({\bRN}),\,\|\vp\|_{L^{\infty}}\le 1}
\left|\int_{{\bRN}}\vp(v)\la v\ra^s\, {\rm d}F(v)\right|.
\end{equation}
The latter form is convenient when dealing with the difference of two
positive measures.  The norms $\|\cdot\|_s$ and $\|\cdot\|_{L^1_s}$
are related by
\begin{equation}\label{(norm)}
\|F\|_s=\|f\|_{L^1_s}\quad
  \mbox{if}\quad  {\rm d}F(v)=f(v)\, {\rm d}v.
\end{equation}

For any $F,G\in {\mathcal B}_{s+\gm}({\mathbb R}^N)$ ($s\ge 0$), we define the Borel measures
$Q^{\pm}(F,G)$ and
$$Q(F,G)=Q^{+}(F,G)-Q^{-}(F,G)$$ through Riesz's representation
theorem by
\begin{equation}\label{(1-Q+meas)}
  \int_{{\mathbb R}^N}\psi(v)\, {\rm d}Q^{+}(F,G)(v)=
  \intt_{{\mathbb R}^N\times{\mathbb R}^N }L_B[\psi](v,v_*)
  \, {\rm d}F(v)\, {\rm d}G(v_*),
\end{equation}
\begin{equation}\label{(1-Q-meas)}
  \int_{{\mathbb R}^N }\psi(v)\, {\rm d}Q^{-}(F,G)(v)=
  A_0\intt_{{\mathbb R}^N\times{\mathbb R}^N}
  |v-v_*|^{\gm}\psi(v)d F(v)\, {\rm d}G(v_*)
\end{equation}
for all bounded Borel functions $\psi$, where
\begin{equation}\label{(1-LB)}
L_B[\psi](v,v_*)=|v-v_*|^{\gm}\int_{{\mathbb S}^{N-1} }b({\bf n}\cdot \sg)
\psi(v')\,\, {\rm d}\sg,\quad {\bf n}=\frac{v-v_*}{|v-v_*|}
\end{equation}
and in case $v=v_*$ we define ${\bf n}$ to be a fixed unit vector
${\bf e}_1$. It is easily shown (see Proposition 2.3 of \cite{partI})
that the extended bilinear operators $Q^{\pm}$ are also bounded from
${\mathcal B}_{s+\gm}({\mathbb R}^N)\times {\mathcal
  B}_{s+\gm}({\mathbb R}^N)$ to ${\mathcal B}_{s}({\mathbb R}^N)$ for
$s\ge 0$: if $F,G\in{\mathcal B}_{s+\gm}({\mathbb R}^N)$ then
$Q^{\pm}(F,G)\in{\mathcal B}_{s}({\mathbb R}^N)$ and
\begin{equation}\label{(1-Qbound)}
\left\|Q^{\pm}(F,G)\right\|_{s}\le 2^{(s+\gm)/2}A_0\left(\|F\|_{s+\gm}\|G\|_0+\|F\|_{0}
\|G\|_{s+\gm}\right),
\end{equation}
\begin{equation}\label{(1-Q-differ-bound)}
\left\|Q^{\pm}(F,F)-Q^{\pm}(G,G)\right\|_{s}\le
2^{(s+\gm)/2}A_0\left(\|F+G\|_{s+\gm}\|F-G\|_{0}+\|F+
G\|_{0}\|F-G\|_{s+\gm}\right).
\end{equation}

Let us finally define the cone of positive distributions with $s$
moments bounded:
$$
{\mathcal B}_s^+({\mathbb R}^N) :=\left\{ F\in {\mathcal B}_s({\mathbb
    R}^N)\,|\, F\ge 0\right\}.
$$
We can now define the notion of solutions that we shall use in this
paper. We note that the condition $\gamma\in(0,2]$ as assumed in the
following definition is mainly used for ensuring the existence of
solutions.

\begin{definition}[Measure strong solutions]
  Let $B(z,\sg)$ be given by \eqref{(1.4)} with $\gm\in (0,2]$ and
  with $b$ satisfysing the condition \eqref{(Grad)}. Let
  $\{F_t\}_{t\ge 0}\subset {\mathcal B}_2^{+}(\mathbb{R}^N)$. We say
  that $\{F_t\}_{t\ge 0}$, or simply $F_t$, is a {\em measure strong
    solution} of equation~\eqref{(B)} if it satisfies the following:
\begin{itemize}
\item[(i)] $\sup\limits_{t\ge 0}\|F_t\|_2<\infty,$

\item[(ii)] $t\mapsto F_t\in C([0,\infty);{\mathcal
    B}_2(\mathbb{R}^N))\cap C^1([0,\infty);{\mathcal
    B}_0(\mathbb{R}^N))$ and
\begin{equation}\label{def-strong}
\forall \, t\in[0,\infty), \quad \frac{\, {\rm d}}{\, {\rm d}t}F_t=Q(F_t,F_t).
\end{equation}
\end{itemize}

Furthermore $F_t$ is called a {\em conservative solution} if $F_t$
conserves the mass, momentum and energy, i.e.
$$
\forall\, t\ge 0, \quad \int_{\mathbb{R}^N}\left( \begin{array}{c} 1 \\ v \\
      |v|^2 \end{array} \right)\, {\rm d}F_t(v)= \int_{\mathbb{R}^N}\left( \begin{array}{c} 1 \\ v \\
      |v|^2 \end{array} \right)\, {\rm d}F_0(v).
$$
\end{definition}

Observe that \eqref{(1-Qbound)} and \eqref{(1-Q-differ-bound)} imply
the strong continuity of $t\mapsto F_t\in C([0,\infty);{\mathcal
  B}_2(\mathbb{R}^N))$ and therefore the strong continuity of
$t\mapsto Q(F_t,F_t) \in C([0,\infty);{\mathcal
  B}_0(\mathbb{R}^N))$. Hence the differential equation
\eqref{def-strong} is equivalent to the integral equation
\begin{equation}\label{def-strong-bis}
\forall \, t\ge 0, \quad F_t=F_0+\int_{0}^{t}Q(F_\tau, F_\tau)\, {\rm d}\tau,
\end{equation}
where the integral is taken in the sense of the Riemann integration or
more generally in the sense of the Bochner integration.  Recall also
that here the derivative ${\rm d} \mu_t/{\rm d}t$ and integral
$\int_{a}^{b}\nu_t\, {\rm d}t$ are defined by
$$
\left(\frac{\, {\rm d} }{\, {\rm d}t}\mu_t\right)(E)=\frac{\, {\rm d}}{\, {\rm d}t} \mu_t(E),\quad
\left(\int_{a}^{b}\nu_t \, {\rm d}t\right)(E)= \int_{a}^{b}\nu_t (E)
\, {\rm d}t
$$
for all Borel sets $E\subset \mathbb{R}^N$.

\subsection{Recall of the main results of the first part}
\label{sec:recall-main-results}

The following results concerning moment production and uniqueness of
conservative solutions which will be used in the present paper are
extracted from our previous paper \cite{partI}. The following
properties (a) and (b) are a kind of ``gain of decay'' property of the
flow stating and quantifying how moments of the solutions become
bounded for any positive time even they are not bounded at initial
time; the following properties (c)-(d)-(e) concern the stability
of the flow.

\begin{theorem}[\cite{partI}]\label{(theo1.0)}
  Let $B(z,\sg)$ be defined in \eqref{(1.4)} with $\gm\in (0,2]$ and with the condition \eqref{(Grad)}. Then for any $F_0\in {\mathcal
    B}^{+}_2(\mathbb{R}^N)$ with $\|F_0\|_0> 0$, there exists a unique
  conservative measure strong solution $F_t$ of equation~\eqref{(B)}
  satisfying $F_t|_{t=0}=F_0$.  Moreover this solution satisfies:
\smallskip

\begin{itemize}
\item[(a)] $F_t$ satisfies the moment production estimate:
 \begin{eqnarray}\label{(1.12)}&&
\forall\,t>0,\quad \forall\,
s\ge 0, \quad \|F_t\|_{s}\le {\mathcal K}_s \left(1+\frac{1}{t}
\right)^{\frac{(s-2)^{+}}{\gm}}
\end{eqnarray}
where $(x-y)^+=\max\{x-y,0\}$,
\begin{eqnarray}\label{(1.14)}&&{\mathcal K}_s:=
{\mathcal K}_s(\|F_0\|_0,\|F_0\|_2)=\|F_0\|_2\left[2^{s+7}\frac{\|F_0\|_2}{\|F_0\|_0}
\left(1+\frac{1}{16\|F_0\|_2A_2\gm}\right)\right]^{\frac{(s-2)^{+}}{\gm}}
\\
&&
\label{(1-A2)}
A_2:=\left|\mathbb{S}^{N-2}\right|\int_{0}^{\pi}
b(\cos\theta)\sin^N\theta\,\, {\rm d}\theta.
\end{eqnarray}
\smallskip

\item[(b)] If $\gm\in (0,2)$ or if
\begin{equation} \label{(1-gamma 2)}
  \gm=2\quad {\rm and}\quad \exists\, 1<p<\infty\quad {\rm s.t.}\quad
  \int_{0}^{\pi}[b(\cos\theta)]^{p}\sin^{N-2}\theta\,\, {\rm
    d}\theta<\infty
\end{equation}
then $F_t$ satisfies the exponential moment production estimate:
\begin{eqnarray}
\label{(1.13)}
 \forall\,t>0, \quad \int_{\mathbb{R}^N}e^{\alpha(t)\langle v\rangle ^{\gm}} \, {\rm d}F_t(v)\le
2\|F_0\|_0
\end{eqnarray}
where
\begin{eqnarray}
  &&\label{(1-alpha)}
  \alpha(t)=2^{-s_0} \frac{\|F_0\|_0}{\|F_0\|_2}\left(1-e^{-\beta
      t}\right),  \quad \beta =16\|F_0\|_2A_2\gm>0,
\end{eqnarray}
and $1<s_0<\infty$ depends only on the function $b$ and $\gm$.
\smallskip

\item[(c)] Let $G_t$ be a conservative measure strong solutions of
  equation~\eqref{(B)} on the time-interval $[\tau, \infty)$ with an
  initial datum $G_t|_{t=\tau}=G_\tau\in{\mathcal
    B}^{+}_2(\mathbb{R}^N)$ for some $\tau\ge 0$. Then:
\begin{itemize}
\item If $\tau=0$, then
\begin{equation}\label{(1.21)}
\forall \, t\ge 0, \quad \left\|F_t-G_t\right\|_{2}\le  \Psi_{F_0}\left(\left\|F_0-G_0\right\|_2\right) e^{C(1+t)},\quad
\end{equation}
where
\begin{equation}\label{(1.20)}
  \Psi_{F_0}(r)=r+r^{1/3}+ \int_{|v|>r^{-1/3}}|v|^2\, {\rm d}F_0(v),\quad
  r>0, \qquad
  \Psi_{F_0}(0)=0,
\end{equation}
and $C={\mathcal R}(\gm,A_0,A_{2},\|F_0\|_0, \|F_0\|_2)$ is an
explicit positive continuous function on $({\mathbb R}_+ ^*)^5$.

\item If $\tau>0$, then
\begin{equation}\label{(1.21*)}
\forall \, t\in [\tau,
\infty), \quad \left\|F_t-G_t\right\|_{2} \le \left\|F_\tau-G_\tau\right\|_2 e^{C_{\tau}(t-\tau)},
\end{equation}
where
\[
C_{\tau}:= 4\left({\mathcal
    K}_{2+\gm}+\|F_0\|_2\right)\left(1+\frac{1}{\tau}\right),
\]
and ${\mathcal K}_{2+\gm}$ is defined by (\ref{(1.14)}) with
$s=2+\gm$.
\end{itemize}
\smallskip

\item[(d)] If $F_0$ is absolutely continuous with respect to the
  Lebesgue measure, i.e.
\[
{\rm d}F_0(v)=f_0(v)\, {\rm d}v \quad \mbox{ with } \quad 0\le
  f_0\in L^1_2(\mathbb{R}^N),
\]
then $F_t$ is also absolutely continuous with respect to the Lebesgue
measure: ${\rm d}F_t(v)=f_t(v)\, {\rm d}v$ for all $t\ge 0$, and
$f_t$ is the unique conservative mild solution of equation~\eqref{(B)}
with the initial datum $f_0$.  \smallskip

\item[(e)] If $F_0$ is not a single Dirac distribution, then there is
  a sequence $f_{k,t}$, $k \ge 1$, of conservative mild solutions of
  equation~\eqref{(B)} with initial data $0\le f_{k,0}\in
  L^1_2(\mathbb{R}^N)$ satisfying
  \begin{equation}\label{(1.22)}
  \int_{\mathbb{R}^N} \left( \begin{array}{c} 1 \\ v \\
      |v|^2 \end{array} \right) f_{k,0}(v) \, {\rm d}v=\int_{\mathbb{R}^N}
  \left( \begin{array}{c} 1 \\ v \\
      |v|^2 \end{array} \right)  {\rm
    d}F_0(v),\quad  k=1,2,\dots
\end{equation}
  such that
\begin{equation}\label{(1.23)}
  \forall\,\vp\in C_b(\mathbb{R}^N),\quad
  \forall\,t\ge 0, \quad \lim_{k\to\infty}\int_{\mathbb{R}^N}\vp(v) f_{k,t}(v)\, {\rm d}v
  =\int_{\mathbb{R}^N}\vp(v)\, {\rm d}F_t(v).
\end{equation}
Besides, the initial data can be chosen of the form
$f_{k,0}=I_{n_k}[F_0],\, k=1,2,3,\dots$ where
$\{I_{n_k}[F_0]\}_{k=1}^{\infty}$ is a subsequence of the Mehler
transforms $\{I_n[F_0]\}_{n=1}^{\infty}$ of $F_0$.
\end{itemize}
\end{theorem}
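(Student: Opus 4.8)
\medskip\noindent\textbf{Proof strategy (recalled from Part~I~\cite{partI}).}
The statement gathers five facts, and I indicate how each is established. The engine behind~(a) and~(b) is a family of Povzner-type estimates used in the weak form~\eqref{(1-Q+meas)}: for $s\ge 2$ one bounds $L_B[\langle\cdot\rangle^s](v,v_*)$ from above by $\langle v\rangle^s\langle v_*\rangle^\gm+\langle v\rangle^\gm\langle v_*\rangle^s$ minus a genuinely negative ``good'' term of order $\langle v\rangle^{s-2+\gm}\langle v_*\rangle^2$, the gain coming precisely from $\gm>0$ and the energy cancellation in~\eqref{(1.3)}. Testing the integral equation~\eqref{def-strong-bis} against $\langle v\rangle^s$, using conservation of mass and energy to control the positive terms and absorbing the good term after H\"older interpolation between moments, one obtains a closed Bernoulli-type differential inequality for $y(t)=\|F_t\|_s$; its super-solution is exactly the algebraic bound $\|F_t\|_s\le{\mathcal K}_s(1+1/t)^{(s-2)^+/\gm}$ with the explicit constant~\eqref{(1.14)}, i.e.\ (a). The passage from densities to measures costs nothing, since every inequality is used in its dual form and $Q^\pm$ is bounded on ${\mathcal B}_{s}$ by~\eqref{(1-Qbound)}.

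For~(b) one sums the scalar bounds into an exponential moment: with $Z(t)=\sum_{k\ge 0}\alpha(t)^k\|F_t\|_{k\gm}/k!$, differentiating term by term, inserting the Povzner bound and using the good term to absorb the convolution-type losses yields a differential inequality for $Z$ that, after the choice of $\alpha(t)$ and $\beta$ in~\eqref{(1-alpha)}, forces $Z(t)\le 2\|F_0\|_0$ for $t>0$; the assumption~\eqref{(1-gamma 2)} in the borderline case $\gm=2$ is what makes these losses summable. Assertion~(d) comes from the fact that $Q^\pm$ map $L^1$ densities to $L^1$ densities: if ${\rm d}F_0=f_0\,{\rm d}v$ with $f_0\in L^1_2$, the Duhamel iteration stays absolutely continuous, its density solves the mild equation, and uniqueness of the mild solution identifies it with $f_t$. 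Assertion~(e) is the approximation step through the Mehler transform $I_n[F_0]$, an explicit, absolutely continuous regularisation of $F_0$ that shares its mass, momentum and energy (whence~\eqref{(1.22)}), satisfies $\sup_n\|I_n[F_0]\|_2<\infty$, and converges weakly-$\ast$ to $F_0$; combined with the $\tau=0$ stability estimate~\eqref{(1.21)} and the fact that $\Psi_{F_0}(\|F_0-I_n[F_0]\|_2)\to 0$, this yields~\eqref{(1.23)} along a subsequence.

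There remains~(c). By bilinearity and the symmetrisation~\eqref{(1-Qcommut)}, the difference $D_t=F_t-G_t$ satisfies $D_t=D_\tau+\int_\tau^t\big(Q(F_s+G_s,D_s)+Q(D_s,F_s+G_s)\big)\,{\rm d}s$; estimating with~\eqref{(1-Q-differ-bound)} at $s=2$ and applying Gronwall gives $\|D_t\|_2\le\|D_\tau\|_2\exp\big(c\int_\tau^t(\|F_s\|_{2+\gm}+\|G_s\|_{2+\gm})\,{\rm d}s\big)$. When $\tau>0$, the moment bound~\eqref{(1.12)} with $s=2+\gm$ makes the exponent linear in $t-\tau$ with the constant $C_\tau$, which is~\eqref{(1.21*)}. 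When $\tau=0$ there is no moment above~$2$ available at the initial time, so one truncates $F_0$ at level $R$, propagates the truncated datum, controls its positive-time $(2+\gm)$-moment by~\eqref{(1.12)}, estimates the discarded tail by $\int_{|v|>R}|v|^2\,{\rm d}F_0(v)$, and optimises $R\sim r^{-1/3}$ with $r=\|F_0-G_0\|_2$; this produces the modulus $\Psi_{F_0}$ of~\eqref{(1.20)} and the growth factor $e^{C(1+t)}$.

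The \emph{delicate point} is precisely this case $\tau=0$: the naive Gronwall bound is unavailable because $\|F_s\|_{2+\gm}$ need not be integrable near $s=0$, so one must interleave the truncation of $F_0$, the quantitative production estimate~\eqref{(1.12)} (effective only for positive times), and the weak-$\ast$ continuity of the flow at $t=0$ so that the resulting modulus of continuity is uniform for $t$ in compact intervals and degrades no worse than $e^{C(1+t)}$. Everything else is bookkeeping with the bilinear bounds~\eqref{(1-Qbound)}--\eqref{(1-Q-differ-bound)} and the Povzner inequalities.
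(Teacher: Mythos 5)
This theorem is presented in the paper as a \emph{recall} of the main results of Part~I (\cite{partI}); the present paper contains no proof of it whatsoever, so there is no internal argument to compare yours against. Judged on its own terms, your sketch of items (a)--(e) is faithful to the strategy that Part~I actually follows: Povzner-type angular averaging leading to a Bernoulli differential inequality for $\|F_t\|_s$ and hence the algebraic production rate $(1+1/t)^{(s-2)^+/\gm}$; summation of polynomial moments into the exponential moment, with the $L^p$ hypothesis \eqref{(1-gamma 2)} needed to keep the Povzner constants summable at $\gm=2$; Gronwall with the produced $(2+\gm)$-moment for $\tau>0$; truncation of $F_0$ and optimisation in the truncation radius to produce the modulus $\Psi_{F_0}$ for $\tau=0$; propagation of absolute continuity; and Mehler-transform approximation. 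Your identification of the $\tau=0$ stability estimate as the delicate point is also correct.

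There is, however, a genuine omission: the theorem's first and most substantial assertion is the \emph{existence} of a conservative measure strong solution, and none of your paragraphs constructs one -- every estimate you describe is an a priori bound on a solution assumed to exist. Uniqueness does follow from \eqref{(1.21)} because $\Psi_{F_0}(0)=0$, and it would have been worth saying so explicitly; but existence requires its own argument. In Part~I it is obtained by first solving the equation for approximate initial data (e.g.\ the $L^1$ regularisations $I_n[F_0]$, or a truncated kernel), for which the integral equation \eqref{def-strong-bis} can be solved by iteration in ${\mathcal B}_2^+(\mathbb{R}^N)$ using the bilinear bounds \eqref{(1-Qbound)}--\eqref{(1-Q-differ-bound)}, then establishing the moment and stability estimates \emph{uniformly} in the approximation, and passing to the weak-$*$ limit; one must in addition verify that energy is conserved in the limit (no leakage of $|v|^2$-mass to infinity), which is exactly where the Povzner control of the $(2+\gm)$-moment re-enters. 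As written, your proposal presupposes the object whose properties it then derives, so it cannot stand alone as a proof of the theorem as stated.
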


\begin{remarks}\label{remark1.1}
  \begin{enumerate} \item In the physical case, $N=3$ and $0<\gm \le
    1$, the moment estimates \eqref{(1.12)} and \eqref{(1.13)} also
    hold for conservative weak measure solutions of
    equation~\eqref{(B)} without angular cutoff (see~\cite{partI}).

  \item The \emph{Mehler transform}
    \[
    I_n[F](v):=e^{Nn}\int_{{\bRN}}M_{1,0,T}\left(e^n\left(v-u-\sqrt{1-e^{-2n}}\,(v_*-u)\right)\right)
    \, {\rm d}F(v_*) \in L^1_2({\mathbb R}^N)
    \]
    of a measure $F\in {\mathcal B}_2^{+}({\bRN})$ (which is not a
    single Dirac distribution) will be studied in Section 4 (after
    introducing other notations) where we shall show that $I_n[F]$ has
    a further convenient property:
\[
\lim_{n \to \infty} \|I_n[F]-M\|_2 = \|F-M\|_2
\]
and thus it is a useful tool in order to reduce the study of
properties of measure solutions to that of $L^1$ solutions. Here $M$
is the Maxwellian (equilibrium) having the same mass, momentum, and
energy as $F$, see \eqref{eq:max}-\eqref{eq:max-meas} below.
  \end{enumerate}
\end{remarks}

\subsection{Normalization}

In most of the estimates in this paper, we shall try as much as
possible to make explicit the dependence on the basic constants in the
assumptions. But first let us study the reduction that can be obtained
by scaling arguments.

Under the assumption \eqref{(Grad)}, it is easily seen that $F_t$ is a
measure solution of equation~\eqref{(B)} with the angular function $b$
if and only if $t\mapsto F_{A_0^{-1}t}$ is a measure solution of
equation~\eqref{(B)} with the scaled angular function
$A_0^{-1}b$. Therefore without loss of generality we can assume the
normalization
\begin{equation}\label{(Grad-1)}
  A_0=\left|{\mathbb S}^{N-2}\right|
  \int_{0}^{\pi}b(\cos\theta)\sin^{N-2}\theta\,{\rm d}\theta=1.
\end{equation}

Next given any $\rho>0$, $u\in{\bRN}$ and $T>0$, we define the bounded
positive linear operator ${\mathcal N}_{\rho,u,T}$ on ${\mathcal
  B}_2({\bRN})$ as follows: for any $F\in {\mathcal B}_2({\bRN})$,
there is a unique ${\mathcal N}_{\rho,u,T}(F)\in{\mathcal
  B}_2({\bRN})$ such that (thanks to Riesz representation theorem),
\begin{multline*}
\forall \, \psi \mbox{ Borel function s.t. } \sup_{v\in{\bRN}}|\psi(v)|\la v\ra^{-2}
<+ \infty, \\ \int_{{\bRN}}\psi(v)\, {\rm d}{\mathcal N}_{\rho,u,T}(F)(v)=
\frac{1}{\rho}\int_{{\bRN}}\psi\left(\frac{v-u}{\sqrt{T}}\right)\, {\rm d}F(v).\qquad \qquad \qquad
\end{multline*}
We call ${\mathcal N}_{\rho,u,T}$ the \emph{normalization operator}
associated with $\rho,u,T$.  The inverse ${\mathcal
  N}_{\rho,u,T}^{-1}$ of ${\mathcal N}_{\rho,u,T}$ is given by
${\mathcal N}_{\rho,u,T}^{-1}={\mathcal N}_{1/\rho,-u/\sqrt{T},1/T}$,
i.e.
$$\int_{{\bRN}}\psi(v)\, {\rm d}{\mathcal N}_{\rho,u,T}^{-1}(F)(v)=
\rho\int_{{\bRN}}\psi\left(\sqrt{T}\,v+ u\right)\, {\rm d}F(v).
$$
It is easily seen that for every $F\in {\mathcal B}_2({\bRN})$
\begin{eqnarray}
&& \label{(1.34)}\left\|{\mathcal N}_{\rho,u,T}(F)\right\|_{0}=\frac{1}{\rho}\|F\|_0,
\\ &&
\label{(1.35)} \left\|{\mathcal N}_{\rho,u,T}(F)\right\|_{2}\le C_{\rho,|u|,T}\|F\|_2,\\
&&\label{(1.36)}
\left\|{\mathcal N}_{\rho,u,T}^{-1}(F)\right\|_{2}\le C_{1/\rho, |u|/\sqrt{T}, 1/T}\|F\|_2
\end{eqnarray}
where
\begin{eqnarray}&&\label{(1.35c)}
C_{\rho,|u|,T}=
\frac{1}{\rho}\max\left\{ 1+\frac{|u|^2+|u|}{T}\, ; \ \frac{1+|u|}{T}\right\},\\
&&  \label{(1.36c)}
C_{1/\rho,|u|/\sqrt{T},1/T}=
\rho\max\left\{ 1+|u|^2+\sqrt{T}|u|\, ; \ T+\sqrt{T}|u|\right\}.
\end{eqnarray}

We then introduce the subclass ${\mathcal B}_{\rho,u,T}^{+}({\bRN})$
of ${\mathcal B}_2^{+}({\bRN})$ by
\begin{equation} \label{(1.33)}
\begin{cases}\displaystyle
 F\in{\mathcal
  B}_{\rho,u,T}^{+}({\bRN})\Longleftrightarrow
F\in {\mathcal B}_2^{+}({\bRN}) \quad{\rm and}\vspace{0.2cm} \\
\displaystyle
 \int_{{\bRN}}\, {\rm d}F(v)=\rho,\quad
{\color {red}{\fr{1}{\rho}}}\int_{{\bRN}}v\, {\rm d}F(v)=u,\quad
\frac{1}{N\rho}\int_{{\bRN}}|v-u|^2\, {\rm d}F(v) =T.
\end{cases}
\end{equation}
In other words,  $F\in{\mathcal B}_{\rho,u,T}^{+}({\bRN})$ means that
$F$ has the mass $\rho$,  mean-velocity $u$, and the kinetic temperature
$T$. It is obvious that $F_t$ conserves mass, momentum, and energy
is equivalent to that $F_t$ conserves mass, mean-velocity , and kinetic temperature.

When restricting ${\mathcal N}_{\rho,u,T}$ on ${\mathcal
  B}_{\rho,u,T}^{+}({\bRN})$, it is easily seen that
$$
{\mathcal N}_{\rho,u,T}: {\mathcal B}_{\rho,u,T}^{+}({\bRN}) \to
{\mathcal B}_{1,0,1}^{+}({\bRN}),\quad {\mathcal N}^{-1}_{\rho,u,T}:
{\mathcal B}_{1,0,1}^{+}({\bRN})\to {\mathcal
  B}_{\rho,u,T}^{+}({\bRN}).
$$

Similarly we define $L^{1}_{\rho,u,T}({\bRN})$ by
\begin{equation} \label{(1-general)}
f\in L^{1}_{\rho,u,T}({\bRN})\Longleftrightarrow
\begin{cases}\displaystyle
  0\le f\in L^1_2({\bRN}), \quad \int_{{\bRN}}f(v)\, {\rm d}v=\rho,
  \vspace{0.2cm} \\ \displaystyle {\fr{1}{\rho}}\int_{{\bRN}}v f(v)\, {\rm
    d}v=u,\quad \frac{1}{N\rho}\int_{{\bRN}}|v-u|^2 f(v)\, {\rm d}v =T.
\end{cases}
\end{equation}
In this case, the normalization operator ${\mathcal N}= {\mathcal
  N}_{\rho,u,T}: L^{1}_{\rho,u,T}({\bRN})\to L^{1}_{1,0,1}({\bRN})$
is written directly as
\begin{equation}\label{(1-norm-f)}
{\mathcal N}(f)(v)=\frac{T^{N/2}}{\rho}f\left(\sqrt{T}\, v+u\right).
\end{equation}

Recall that the Maxwellian $M\in L^{1}_{\rho,u,T}({\bRN})$ is given by
\begin{equation}\label{eq:max}
M(v) :=\frac{\rho}{(2\pi T)^{N/2}}\exp\left(-\frac{|v-u|^2}{2T}\right).
\end{equation}
For notational convenience we shall do not distinguish between a Maxwellian
distribution $M\in {\mathcal B}_{\rho,u,T}^+({\bRN})$ and its density function
$M\in L^{1}_{\rho,u,T}({\bRN})$: we write without risk of
confusion that
\begin{equation}\label{eq:max-meas}
{\rm d}M(v)=M(v)\, {\rm d}v.
\end{equation}

Due to the homogeneity of $z\mapsto B(z,\sg)=|z|^{\gm}b(\frac{z}{|z|}\cdot\sg)$, we have
\[
L_B\left[\psi\left(\frac{\cdot-u}{\sqrt{T}}\right)\right]
(v,v_*)=T^{\gm/2}L_B[\psi]\left(\frac{v-u}{\sqrt{T}},
  \frac{v_*-u}{\sqrt{T}}\right)
\]
and then by Fubini theorem we get (denoting simply ${\mathcal
  N}={\mathcal N}_{\rho,u,T}$ when no ambiguity is possible)
\[
\forall\, F\in{\mathcal B}_2^+({\bRN}), \quad {\mathcal
  N}\left(Q^{\pm}(F,F)\right)=\rho T^{\gm/2} Q^{\pm}\left({\mathcal N}(F),
{\mathcal N}(F)\right).
\]
Since ${\mathcal N}$ is linear and bounded, this implies that if $F_t$
is a measure strong solution of equation~\eqref{(B)} and $c=\rho T^{\gm/2}$,
then
\[
\frac{\, {\rm d}}{\, {\rm d}t}{\mathcal N}(F_{t/c})=
Q\left({\mathcal N}(F_{t/c}), {\mathcal N}(F_{t/c})\right).
\]
This together with \eqref{(1.34)}-\eqref{(1.36)} leads to the
following statement:

\begin{proposition}[Normalization]\label{(prop1.1)}
 Let $B(z,\sg)$ be defined by \eqref{(1.4)} with $\gm \in (0,2]$ and
  with the condition \eqref{(Grad-1)}.  Let $F_0\in{\mathcal
    B}_{\rho,u,T}^{+}({\bRN})$ with $\rho>0$, $u\in{\bRN}$ and $T>0$,
  and let $F_t$ be the unique conservative measure strong solution of
  equation~\eqref{(B)} with the initial datum $F_0$. Let $M\in
  {\mathcal B}_{\rho,u,T}^{+}({\bRN})$ be the Maxwellian defined by
  \eqref{eq:max}, let ${\mathcal N}:={\mathcal N}_{\rho,u,T}$ be the
  normalization operator, and let $c=\rho T^{\gm/2}$. Then:
\begin{itemize}
\item[(I)] The normalization $t\mapsto {\mathcal N}(F_{t/c})$ is the unique
  conservative measure strong solution of equation~\eqref{(B)} with the
  initial datum ${\mathcal N}(F_0)\in{\mathcal
    B}_{1,0,1}^{+}({\bRN})$.
\smallskip

\item[(II)]  For all $t\ge 0$
\begin{equation*}
\begin{cases}\displaystyle
  \left\|F_{t}-M\right\|_0=\rho\|{\mathcal N}(F_{t})-{\mathcal
    N}(M)\|_0 , \vspace{0.2cm} \\ \displaystyle
  \left\|F_{t}-M\right\|_2
  \le C_{1/\rho,|u|/\sqrt{T},1/T}\|{\mathcal N}(F_{t})-{\mathcal
    N}(M)\|_2, \vspace{0.2cm}\\ \displaystyle
  \left\|{\mathcal N}(F_{t})-{\mathcal N}(M)\right\|_2\le C_{\rho,|u|,T}
  \left\|F_{t}-M\right\|_2
\end{cases}
\end{equation*}
where $C_{\rho,|u|,T}$ and $C_{1/\rho,|u|/\sqrt{T},1/T}$ are given in
\eqref{(1.35c)}-\eqref{(1.36c)}.
\end{itemize}
\end{proposition}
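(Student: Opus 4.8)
The plan is to handle the two assertions separately. Part~(I) reduces to verifying that the rescaled normalization $t\mapsto {\mathcal N}(F_{t/c})$ satisfies every clause in the definition of a conservative measure strong solution, after which the uniqueness part of Theorem~\ref{(theo1.0)} identifies it with the solution issued from ${\mathcal N}(F_0)$. Part~(II) is a direct consequence of the linearity of ${\mathcal N}$ together with the elementary norm estimates \eqref{(1.34)}--\eqref{(1.36)}, and uses nothing about $M$ beyond $M\in{\mathcal B}_2({\bRN})$.

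For~(I): since $F_0\in{\mathcal B}_{\rho,u,T}^{+}({\bRN})$, the mapping property of the normalization operator recalled just above the statement gives ${\mathcal N}(F_0)\in{\mathcal B}_{1,0,1}^{+}({\bRN})$, so Theorem~\ref{(theo1.0)} furnishes a \emph{unique} conservative measure strong solution with this initial datum. It therefore suffices to check that $G_t:={\mathcal N}(F_{t/c})$ is itself such a solution. Positivity and the uniform bound $\sup_{t\ge 0}\|G_t\|_2<\infty$ follow from positivity of ${\mathcal N}$ together with \eqref{(1.35)} and $\sup_{t\ge 0}\|F_t\|_2<\infty$. The regularity $t\mapsto G_t\in C([0,\infty);{\mathcal B}_2({\bRN}))\cap C^1([0,\infty);{\mathcal B}_0({\bRN}))$ is inherited from that of $F_t$ because ${\mathcal N}$ is a bounded linear operator---hence commutes with limits and with the setwise time-derivative ${\rm d}\mu_t/{\rm d}t$---and $t\mapsto t/c$ is a smooth reparametrization. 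The evolution equation $\frac{\, {\rm d}}{\, {\rm d}t}G_t=Q(G_t,G_t)$ is precisely the identity displayed just before the statement, obtained from the homogeneity of $B$ and Fubini's theorem via ${\mathcal N}(Q^{\pm}(F,F))=\rho T^{\gm/2}Q^{\pm}({\mathcal N}(F),{\mathcal N}(F))$ and the choice $c=\rho T^{\gm/2}$; equivalently one may verify the integral form \eqref{def-strong-bis} by pulling the bounded operator ${\mathcal N}$ through the Bochner integral. Conservation of mass, momentum and energy for $G_t$ holds because the original solution is conservative, so $F_t\in{\mathcal B}_{\rho,u,T}^{+}({\bRN})$ for all $t$, whence $G_t\in{\mathcal B}_{1,0,1}^{+}({\bRN})$ for all $t$. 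Uniqueness then gives~(I).

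For~(II): by linearity, ${\mathcal N}(F_t)-{\mathcal N}(M)={\mathcal N}(F_t-M)$ and $F_t-M={\mathcal N}^{-1}\big({\mathcal N}(F_t)-{\mathcal N}(M)\big)$. The first identity is then \eqref{(1.34)} applied with $F_t-M$ in place of $F$, namely $\|{\mathcal N}(F_t-M)\|_0=\frac{1}{\rho}\|F_t-M\|_0$; the second inequality is \eqref{(1.36)} applied to the measure ${\mathcal N}(F_t)-{\mathcal N}(M)$, whose image under ${\mathcal N}^{-1}$ is $F_t-M$; and the third is \eqref{(1.35)} applied to $F_t-M$.

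The proof presents no genuine obstacle; the one point deserving care is the regularity transfer in~(I)---checking that the setwise time-derivative and the rescaling $t\mapsto t/c$ commute with the bounded linear map ${\mathcal N}$, so that $G_t$ lands in $C([0,\infty);{\mathcal B}_2({\bRN}))\cap C^1([0,\infty);{\mathcal B}_0({\bRN}))$ and solves \eqref{def-strong}. This is routine given boundedness of ${\mathcal N}$ and the chain rule, but it is the one place where the definitions must be unwound explicitly.
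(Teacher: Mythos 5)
Your proposal is correct and follows essentially the same route as the paper, which proves the proposition implicitly in the paragraph preceding its statement: the commutation identity ${\mathcal N}(Q^{\pm}(F,F))=\rho T^{\gm/2}Q^{\pm}({\mathcal N}(F),{\mathcal N}(F))$ from the homogeneity of $B$ and Fubini, the time rescaling $t\mapsto t/c$ with $c=\rho T^{\gm/2}$, uniqueness from Theorem~\ref{(theo1.0)}, and the norm estimates \eqref{(1.34)}--\eqref{(1.36)} applied to $F_t-M$ by linearity of ${\mathcal N}$.
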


\subsection{Linearized collision operator and spectral gap}

For any nonnegative Borel function $W$ on ${\bRN}$ we define the
weighted Lebesgue space $L^p({\bRN}, W)$ with $1\le p<\infty $ by
$$
f\in L^p({\bRN},W) \quad \Longleftrightarrow \quad \|f\|_{L^p(W)}
:=\left(\int_{{\bRN}}|f(v)|^p W(v)\, {\rm
    d}v\right)^{1/p}<\infty.
$$

Let $B(z,\sg)$ as defined in \eqref{(1.4)} with $\gm \in (0,2]$ and with
$b$ satisfying \eqref{(Grad-1)}. Let $M$
be the Maxwellian with mass $\rho>0$, mean velocity $u$ and temperature
$T>0$ defined in \eqref{eq:max}, and let
\[
L_M: L^2\left({\bRN}, M^{-1}\right)\to L^2\left({\bRN}, M^{-1}\right)
\]
be the linearized collision operator associated with $B(z,\sg)$ and
$M(v)$, i.e.
\begin{equation}\label{(1-linear operator)}
 L_M(h)(v)=\intt_{{\bRSN}}B(v-v_*,\sg)M(v_*)
\left(h'+h_*'-h-h_*\right)\, {\rm d}\sg \, {\rm d}v_*.
\end{equation}
It is well-known that the spectrum $\Sg(L_M)$ of $L_M$ is contained in
$(-\infty, 0]$ and has a positive spectral gap
$S_{b,\gm}(\rho,\mu,T) >0$, i.e.
\[
S_{b,\gm}(\rho,u,T) :=\inf\left\{ \ld>0\,\,|\,\, -\ld\in
  \Sg(L_M)\right\}>0.
\]

Moreover by simple calculations, one has the following scaling property on this
spectral gap
\[
S_{b,\gm}(\rho,u,T)= \rho T^{\gm/2} S_{b,\gm}(1,0,1).
\]

In the spatially homogeneous case, the study of the linearized
collision operator goes back to Hilbert \cite{MR1511713,MR0056184} who
computed the collisional invariant, the linearized operator and its
kernel in the hard spheres case, and showed the boundedness and
``complete continuity'' of its non-local
part. Carleman~\cite{Carleman} then proved the existence of a spectral
gap by using Weyl's theorem and the compactness of the non-local part
proved by Hilbert. Grad~\cite{Grad1,Grad2} then extended these results
to the case of hard potentials with cutoff.  All these results are
based on non-constructive arguments. The first constructive estimates
in the hard spheres case were obtained only recently in
\cite{Baranger-Mouhot} (see also \cite{MR2254617} for more general
interactions, and \cite{MR2301289} for a review). Let us also mention the works
\cite{WCUh:LBE:70,Bobylev1975,Boby:maxw:88} for the different setting
of \emph{Maxwell molecules} where the eigenbasis and eigenvalues can
be explicitly computed by Fourier transform methods. Although these
techniques do not apply here, the explicit formula computed are an
important source of inspiration for dealing with more general physical
models.

\subsection{Main results}

In order to use the results obtained in \cite{Mcmp} (see also
\cite{Vi03a,MR2081030}) for $L^1$ solutions, we shall need the
following additional assumptions for some of our main results:
\begin{eqnarray}
  \label{(b-upperbd)}&&\|b\|_{L^{\infty}}:=\sup_{t\in[-1,1]}b(t)<\infty,\\
  &&\label{(b-lowerbd)} \inf_{t\in[-1,1]}b(t)>0.
\end{eqnarray}
Recall that for the hard sphere model, i.e. $N=3, \gm=1,$ and
 $b \equiv {\rm const}.>0$, the conditions
\eqref{(b-upperbd)}-\eqref{(b-lowerbd)} are satisfied.

The first main result of this paper is concerned with the upper bound
of the rate of convergence to equilibrium when the dimension $N$ is
greater or equal to $3$.

\begin{theorem}[Sharp exponential relaxation rate]\label{(theo1.1)}
Suppose $N\ge 3$ and let $B(z,\sg)$ be given by \eqref{(1.4)}
with  $\gm\in(0,  \min\{2, N-2\}]$ and with $b$ satisfying
\eqref{(Grad-1)}, \eqref{(b-upperbd)}, and
\eqref{(b-lowerbd)}. Let $\rho>0$, $u\in{\bRN}$ and $T>0$, and let
\[
\ld=S_{b,\gm}(\rho,\mu,T)=S_{b,\gm}(1, 0, 1)\, \rho \, T^{\gm/2}>0
\]
be the spectral gap for the linearized collision operator
\eqref{(1-linear operator)} associated with $B(z,\sg)$ and the
Maxwellian $M\in {\mathcal B}^{+}_{\rho,u,T}({\bRN})$. Then for any
conservative measure strong solution $F_t$ of the equation~\eqref{(B)}
with $F_0\in {\mathcal B}^{+}_{\rho,u,T}({\bRN})$ we have:
\[
 \forall\, t\ge 0, \quad \|F_t-M\|_2\le C\|F_0-M\|_2^{1/2} e^{-\ld t}
\]
where
\[
C:=C_0 \, C_{1/\rho,|u|/\sqrt{T},1/T}
\, \left(C_{\rho,|u|,T} \right)^{1/2}
\]
with $C_{\rho,|u|,T}$ and $C_{1/\rho,|u|/\sqrt{T},1/T}$ given in
\eqref{(1.35)} and \eqref{(1.36)}, and with some constant $C_0<\infty$
which depends only on $N$, $\gm$, and the function $b$ (through the
bounds \eqref{(b-upperbd)}, \eqref{(b-lowerbd)}).
\end{theorem}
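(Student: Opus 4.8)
\emph{Overview of the argument.} The plan is to reduce the statement, through the $L^1$-approximation furnished by the Mehler transform, to the sharp exponential relaxation for square-integrable solutions of \cite{Mcmp}, the genuinely new input being a quantitative \emph{gain of integrability} for the iterated collision operator that is uniform along the approximating sequence. First, by Proposition~\ref{(prop1.1)} we may assume the normalisation $\rho=1$, $u=0$, $T=1$, so that $M$ is the standard Maxwellian and $\ld=S_{b,\gm}(1,0,1)$; the factors $C_{\rho,|u|,T}$ and $C_{1/\rho,|u|/\sqrt{T},1/T}$ appearing in the statement are exactly the price of transporting the final inequality back to the original variables by Proposition~\ref{(prop1.1)}(II). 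From now on all constants are allowed to depend only on $N$, $\gm$, and $b$ (through \eqref{(b-upperbd)}--\eqref{(b-lowerbd)}), the mass and energy being fixed to $1$ and $N$.

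\emph{Step 1: approximation by $L^1$-solutions.} Since $T=1>0$, $F_0$ is not a single Dirac mass, so Theorem~\ref{(theo1.0)}(e) provides conservative mild solutions $f_{n,t}$ of \eqref{(B)} with initial data $f_{n,0}=I_{n}[F_0]\in L^1_2(\bRN)$, all carrying the mass, momentum and energy of $F_0$, such that $f_{n,t}\to F_t$ narrowly (against $C_b(\bRN)$) for every $t\ge0$ by \eqref{(1.23)}, while $\|I_n[F_0]-M\|_2\to\|F_0-M\|_2$ by Remarks~\ref{remark1.1}. Testing the dual formula \eqref{(dual)} against compactly supported functions, narrow convergence gives the lower semicontinuity $\|F_t-M\|_2\le\liminf_{n\to\infty}\|f_{n,t}-M\|_2$ for each $t\ge0$. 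It therefore suffices to establish the desired bound for the $f_{n,t}$, with constants independent of $n$; moreover, since the $f_{n,t}$ are (by Theorem~\ref{(theo1.0)}(d)) the a.c.\ conservative strong solutions, the estimates \eqref{(1.12)}--\eqref{(1.13)} hold for them with constants that, by \eqref{(1.14)} and \eqref{(1-alpha)}, depend only on the (fixed) mass and energy.

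\emph{Step 2: uniform gain of integrability (the crux).} The plan here is to write the equation for $f_{n,t}$ in mild (Duhamel) form, with the loss term $L_n(\tau,v):=A_0\int_{\bRN}|v-v_*|^{\gm}f_{n,\tau}(v_*)\,{\rm d}v_*$ as integrating factor,
\begin{equation*}
f_{n,t}(v)=e^{-\int_0^tL_n(\tau,v)\,{\rm d}\tau}\,f_{n,0}(v)
+\int_0^te^{-\int_s^tL_n(\tau,v)\,{\rm d}\tau}\,Q^{+}(f_{n,s},f_{n,s})(v)\,{\rm d}s ,
\end{equation*}
and to iterate this identity a fixed number $k$ of times. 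Conservation of mass and energy gives the collision-frequency lower bound $L_n(\tau,v)\ge c_0\la v\ra^{\gm}-c_1$ with $c_0>0$, so every loss factor contributes a true decay in $v$; combined with the polynomial moment bounds \eqref{(1.12)} and the exponential moment bound \eqref{(1.13)}, this will tame the velocity growth $|v-v_*|^{\gm}$ produced by each application of $Q^{+}$. The decisive analytic input is the regularising (smoothing) property of the gain operator: for $N\ge3$ and $0<\gm\le N-2$, a bounded number $k=k(N)$ of successive applications of $Q^{+}$ maps weighted-$L^1$ data into $L^1\cap L^{\infty}$ with quantitative norm control — this is the multi-step analogue of Abrahamsson's one-step $L^1\cap L^2$-gain for the hard-sphere model, and it is precisely here that the hypotheses $N\ge 3$ and $\gm\le N-2$ enter. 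Carrying out the iteration and estimating separately the finitely many terms that have passed through fewer than $k$ gain operators — in particular the never-regularised term $e^{-\int_0^tL_n}f_{n,0}$, handled directly by means of the same collision-frequency lower bound — the goal is to arrive at: there exist $t_0>0$ and $\Gamma<\infty$, depending only on $N,\gm,b$, with $\sup_{n\ge1}\|f_{n,t_0}\|_{L^1\cap L^{\infty}(\bRN)}\le\Gamma$. I expect this step — making the regularity gain of the iterated gain operator quantitative and \emph{uniform in $n$}, under the sole assumptions $N\ge3$, $0<\gm\le\min\{2,N-2\}$ — to be the main obstacle of the proof.

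\emph{Step 3: conclusion.} Since $f_{n,t_0}\in L^1\cap L^{\infty}\subset L^1\cap L^2$ with $L^1\cap L^2$-norm $\le\Gamma$ independent of $n$, and the energy is fixed, the sharp $L^1\cap L^2$-relaxation of \cite{Mcmp}, applied to the solution $t\mapsto f_{n,t_0+t}$, will yield $\|f_{n,t}-M\|_2\le C_{*}\,e^{-\ld(t-t_0)}$ for $t\ge t_0$, with $C_{*}$ depending only on $\Gamma,N,\gm,b$; on $[0,t_0]$ the same quantity is bounded by $\|f_{n,t}\|_2+\|M\|_2\le 1+N+\|M\|_2$ using \eqref{(1.12)} with $s=2$. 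Combining and letting $n\to\infty$ via Step~1 gives $\|F_t-M\|_2\le C_1\,e^{-\ld t}$ for all $t\ge0$, with $C_1$ depending only on $N,\gm,b$. Finally, to produce the factor $\|F_0-M\|_2^{1/2}$ I would combine this absolute decay with the stability estimates of Theorem~\ref{(theo1.0)}(c): the bound \eqref{(1.21)} (and, from a positive time on, \eqref{(1.21*)}) controls $\|F_t-M\|_2$ on a bounded time interval by a modulus of continuity of $\|F_0-M\|_2$, while the linearised analysis of \eqref{(B)} near $M$ — applicable once $\|F_t-M\|_2$ has been driven below a fixed threshold by the absolute decay, and initialised in the relevant strong norm by interpolation between this weak-norm smallness and the strong-norm bound of Step~2 — propagates the control multiplicatively at the sharp rate $e^{-\ld t}$; optimising over the (fixed) crossover time gives $\|F_t-M\|_2\le C_0\,\|F_0-M\|_2^{1/2}\,e^{-\ld t}$ for all $t\ge0$. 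Undoing the normalisation via Proposition~\ref{(prop1.1)}(II) then turns $C_0$ into $C=C_0\,C_{1/\rho,|u|/\sqrt{T},1/T}\,(C_{\rho,|u|,T})^{1/2}$, which is the claimed statement.
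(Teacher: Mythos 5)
Your overall architecture (normalize, approximate by the Mehler transform, gain integrability by iterating $Q^{+}$, invoke the $L^1\cap L^2$ result of \cite{Mcmp}, then upgrade to the sharp rate and the $\|F_0-M\|_2^{1/2}$ factor via the linearized weighted-$L^1$ stability) is the same as the paper's. But your Step~2 contains a genuine gap: the claimed conclusion $\sup_{n}\|f_{n,t_0}\|_{L^1\cap L^{\infty}}\le\Gamma$ at a \emph{fixed} time $t_0$ is false in general. The Duhamel iteration never regularizes the whole solution: after $k$ iterations one gets a decomposition $f_{n,t}=f^{k}_{n,t}+h^{k}_{n,t}$ in which only $f^{k}_{n,t}$ is uniformly bounded in $L^{\infty}\cap H^1$, while $h^{k}_{n,t}$ always contains the term $f_{n,t_0}E_{t_0}^{t}$ carrying the un-regularized initial data, damped only by the loss factor $E_{t_0}^{t}\le e^{-a(t-t_0)}$; this remainder is small in $L^1$ only as $t\to\infty$, not at any fixed time. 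Concretely, if $F_0$ charges a point $v_0$ (and is not a single Dirac), then $F_t$ retains an atom of mass $e^{-\int_0^tL(\tau,v_0)\,{\rm d}\tau}F_0(\{v_0\})>0$ at every finite $t$, so the approximations $f_{n,t_0}$ concentrate there and their $L^{2}$ and $L^{\infty}$ norms blow up with $n$. Hence you cannot apply the $L^1\cap L^2$ theorem of \cite{Mcmp} to $t\mapsto f_{n,t_0+t}$ with constants uniform in $n$, and Step~3 collapses as written.

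The missing idea is the ``moving solutions'' device: for each $\tau\ge\tau_0$ one launches an auxiliary conservative solution $f^{(\tau)}_t$ from the \emph{renormalized smooth part} ${\mathcal N}(g_\tau)\in L^1_{1,0,1}\cap L^2$ at time $\tau$ (uniform $L^2$ bounds from below and above for ${\mathcal N}(g_\tau)$ being supplied by Theorem~\ref{(theo3.1)} and Lemma~\ref{(lem4.2)}), applies Theorem~\ref{(theo4.1)} to it to get $\|f^{(\tau)}_t-M\|_{L^1}\le C e^{-\ld(t-\tau)}$, and controls $\|f_t-f^{(\tau)}_t\|_{L^1_2}$ by the Lipschitz stability \eqref{(1.21*)}, which grows like $e^{c_2(t-\tau)}$ but starts from $\|f_\tau-{\mathcal N}(g_\tau)\|_{L^1_2}\le Ce^{-c_1\tau}$ thanks to the exponential $L^1$-decay of $h_\tau$. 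Choosing $\tau=\theta t$ balances these and yields an absolute decay $\|f_t-M\|_{L^1}\le Ce^{-c_3t}$ at a \emph{suboptimal} rate $c_3<\ld$, after which Lemma~\ref{(lem4.1)} (the weighted linearized estimate), Cauchy--Schwarz against the exponential moment \eqref{(1.13)}, and a Gronwall bound on $[0,t_2]$ produce both the sharp rate $\ld$ and the factor $\|f_0-M\|_{L^1_2}^{1/2}$. Your Step~3 gestures at this last upgrade correctly, but without the moving-solutions bridge the argument from the decomposition to \cite{Mcmp} does not go through.
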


\begin{remark}\label{remark1.UpBd}
\begin{enumerate}

\item It should be noted that, in addition to the exponential rate,
  Theorem \ref{(theo1.1)} also shows that for the hard potentials
  considered here, the convergence to equilibrium is \emph{grossly
    determined}, i.e. the speed of the convergence only depends only
  on the collision kernel and the conserved macroscopic quantities
  (mass, momentum, energy). This is essentially different from those
  for non-hard potentials (i.e. $\gm\le 0$), see for
  instance~\cite{MR2546739}.

\item Applying Theorem \ref{(theo1.1)} to the normal initial data and
the Maxwellian $F_0, M\in{\mathcal B}_{1,0,1}^{+}({\bRN})$, and using $\left\|F_0-M\right\|_2^{1/2}\le \left(\left\|F_0\right\|_2+\left\|M\right\|_2\right)^{1/2}
  =(2(1+N))^{1/2}$
  we have
$$
\forall\, t\ge 0, \quad \left\|F_t-M\right\|_2\le C_0e^{-\ld t},\quad
\ld=S_{b,\gm}(1,0,1)
$$
where $C_0<\infty$ depends  only on $N$, $\gm$, and the function $b$.  Then by
normalization (using Proposition \ref{(prop1.1)}) and the relation
$S_{b,\gm}(\rho,\mu,T)=S_{b,\gm}(1, 0, 1)\rho T^{\gm/2}$, we conclude
that if $F_0, M\in{\mathcal B}_{\rho,u,T}^{+}({\bRN})$, then for the same constant $C_0$ we have
\begin{equation}\label{(4.36)}
  \left\|F_t-M\right\|_2\le
  C_0 \, C_{1/\rho,|u|/\sqrt{T},1/T} \, e^{-\ld t},\quad t\ge 0;\quad
  \ld=S_{b,\gm}(\rho,u,T).
\end{equation}
This estimate will be used in proving
our next results Corollary~\ref{cor:11+12} and Theorem
\ref{(theo1.3)}.

\item In general, in this paper we say that a constant $C$ depends
  only on some parameters $x_1, x_2, \dots, x_m$, if
  $C=C(x_1, x_2, \dots, x_m)$ is an explicit continuous function of
  $(x_1,x_2,\dots,x_m)\in I$ where $I\subset {\mathbb R}^m$ is a
  possible value range of the parameters $(x_1, x_2, \dots, x_m)$. In
  particular this implies that if $K$ is a compact subset of $I$, then
  $C$ is bounded on $K$. 
\end{enumerate}

  \end{remark}

  The second main result is concerned with the lower bound of the rate
  of convergence to equilibrium.
\begin{theorem}[Lower bound on the relaxation rate]\label{(theo1.2)}
Let $B(z,\sg)$ be given by \eqref{(1.4)} with
  $\gm \in (0,2]$ and with the condition \eqref{(Grad-1)}. Let
  $\rho>0$, $u\in{\bRN}$, $T>0$ and let $M\in{\mathcal
    B}^{+}_{\rho,u,T}({\bRN})$ be the Maxwellian.
 Then for any conservative measure strong solution $F_t$ of equation
  \eqref{(B)} with initial data $F_0\in{\mathcal B}^{+}_{\rho,u,T}({\bRN})$ we
  have:
\begin{itemize}
\item[(i)] If $0<\gm<2$, then
\begin{equation*}
\forall\, t\ge 0, \quad \|F_t-M\|_0\ge
  (4\rho)^{1-\alpha}\|F_0-M\|_0 ^{\alpha}
  \exp\left(-\beta\,t^{\frac{2}{2-\gm}}\right)
\end{equation*}
where
 \[
\alpha=\left(\frac{2}{\gm}\right)^{\frac{\gm}{2-\gm}} \quad \mbox{
  and } \quad  \beta= \left(1-\frac{\gm}{2}\right) \left(2^{6}(N+1)^2\rho T^{\gm/2}\right)^{\frac{2}{2-\gm}}.
\]

\item[(ii)] If $\gm=2$, then
$$
\forall\, t\ge 0, \quad \left\|F_t-M\right\|_0\ge
4\rho\left(\frac{\left\|F_{0}-M\right\|_0}{4\rho}\right)^{e^{\kappa\,t}}
$$
 with $\kappa=2^{6}(N+1)^2\rho T$.
\end{itemize}
\end{theorem}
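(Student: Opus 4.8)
The plan is to exploit the mismatch between the very weak $\|\cdot\|_0$-norm appearing in the lower bound and the moment-producing nature of the flow: $Q^{\pm}(F,F)$ involves the weight $|v-v_*|^\gm$, so controlling $\frac{\,{\rm d}}{\,{\rm d}t}\|F_t-M\|_0$ requires only $\|F_t-M\|_{\gm}$ and the zeroth moments, not anything stronger. First I would write $D_t:=F_t-M$. Since $M$ is a stationary solution, $\frac{\,{\rm d}}{\,{\rm d}t}D_t=Q(F_t,F_t)-Q(M,M)$, and using the bilinearity together with the polarized difference estimate \eqref{(1-Q-differ-bound)} with $s=0$ one gets
\[
\left\|\frac{\,{\rm d}}{\,{\rm d}t}D_t\right\|_0 \le 2^{\gm/2}A_0\big(\|F_t+M\|_\gm\|D_t\|_0+\|F_t+M\|_0\|D_t\|_\gm\big).
\]
With the normalization $A_0=1$, $\|F_t\|_0=\|M\|_0=\rho$, $\|D_t\|_0\le 2\rho$, and $\|D_t\|_\gm\le\|F_t\|_\gm+\|M\|_\gm$, this bounds $\frac{\,{\rm d}}{\,{\rm d}t}\|D_t\|_0$ from below (via $\frac{\,{\rm d}}{\,{\rm d}t}\|D_t\|_0\ge -\|\frac{\,{\rm d}}{\,{\rm d}t}D_t\|_0$) by $-C\big(\|F_t\|_\gm+\rho\big)\|D_t\|_0^{?}$ — but to close the argument I need the $\|D_t\|_\gm$ factor to itself be controlled by a power of $\|D_t\|_0$. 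This is the crux of the proof.

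The key interpolation step: $\|D_t\|_\gm\le \|D_t\|_0^{1-\gm/2}\,\|D_t\|_2^{\gm/2}$ by Hölder (interpolating the weight $\la v\ra^\gm$ between $\la v\ra^0$ and $\la v\ra^2$, valid since $0<\gm\le 2$), and $\|D_t\|_2\le\|F_t\|_2+\|M\|_2 = 2\rho(1+NT)/\!\cdot$, bounded by conservation of mass and energy (for normalized data $\|F_0\|_2=\|M\|_2$, and these are conserved). Hence $\|D_t\|_\gm\le C_1\|D_t\|_0^{1-\gm/2}$ with $C_1$ explicit in $\rho,T,N$; the constant $2^6(N+1)^2$ in the statement is exactly what one obtains after tracking $2^{\gm/2}$, the factor $\|F_t+M\|_0=2\rho$, and $\|D_t\|_2\le 2\rho(1+NT)\le$ (crude bound) through. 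When $\gm<2$ this gives a differential inequality of the form
\[
\frac{\,{\rm d}}{\,{\rm d}t}\|D_t\|_0 \ge -a\,\|D_t\|_0^{1-\gm/2}, \qquad a:=2^6(N+1)^2\rho T^{\gm/2},
\]
whose solution with initial value $\|D_0\|_0$ is, after integrating $\|D_t\|_0^{\gm/2-1}\,{\rm d}\|D_t\|_0\ge -a\,{\rm d}t$, i.e. $\frac{2}{\gm}\big(\|D_t\|_0^{\gm/2}-\|D_0\|_0^{\gm/2}\big)\ge -a t$, the bound $\|D_t\|_0^{\gm/2}\ge \|D_0\|_0^{\gm/2}-\tfrac{\gm}{2}at$. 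Raising to the power $2/\gm$ and using the elementary inequality $(x-y)_+^{2/\gm}\ge$ (a clean lower bound of the stated exponential form) yields the claimed $\|F_t-M\|_0\ge (4\rho)^{1-\alpha}\|F_0-M\|_0^{\alpha}\exp(-\beta t^{2/(2-\gm)})$; the exponents $\alpha=(2/\gm)^{\gm/(2-\gm)}$ and $\beta=(1-\gm/2)(2^6(N+1)^2\rho T^{\gm/2})^{2/(2-\gm)}$ come out of optimizing/bounding $(c-\kappa t)_+^{2/\gm}$ against an exponential $c^{\alpha}\exp(-\beta t^{2/(2-\gm)})$ uniformly in $t\ge 0$ — a purely scalar calculus lemma. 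In the borderline case $\gm=2$ the interpolation degenerates to $\|D_t\|_2\le\|D_t\|_0$-times-constant giving a \emph{linear} differential inequality $\frac{\,{\rm d}}{\,{\rm d}t}\|D_t\|_0\ge -\kappa\|D_t\|_0$ with $\kappa=2^6(N+1)^2\rho T$, hence $\|D_t\|_0\ge\|D_0\|_0 e^{-\kappa t}$; but the sharper doubly-exponential form stated, $4\rho(\|D_0\|_0/4\rho)^{e^{\kappa t}}$, is obtained instead by estimating $\frac{\,{\rm d}}{\,{\rm d}t}\log\|D_t\|_0$ more carefully, keeping the factor $\|D_t\|_0$ out of the logarithm — writing $y_t=\log(\|D_t\|_0/4\rho)\le 0$ one gets $\dot y_t\ge \kappa y_t$-type control after dividing by $\|D_t\|_0$, so $y_t\ge y_0 e^{\kappa t}$, which is the stated formula.

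The main obstacle I anticipate is \textbf{not} the differential inequality itself but making the constants come out exactly as stated: one must be careful that the energy bound $\|D_t\|_2\le 2\rho(1+NT)$ (or whatever crude form is used) combines with $2^{\gm/2}\le 2$ and the mass factor to land precisely on $2^6(N+1)^2$, which probably forces a deliberately lossy but clean bound like $1+NT\le (1+N)^2(1+T)$ or similar, plus absorbing $T$-powers into $T^{\gm/2}$ correctly. A secondary subtlety is the normalization: the theorem is stated for general $\rho,u,T$, so one either works directly with the $u$-shifted Maxwellian (note $\|\cdot\|_0$ and $\|\cdot\|_\gm$ estimates are translation-robust since $|v-v_*|$ is translation invariant, so $u$ drops out cleanly — this is why no $u$-dependence appears in the final bound) or invokes Proposition \ref{(prop1.1)} and tracks the scaling $S_{b,\gm}(\rho,u,T)=\rho T^{\gm/2}S_{b,\gm}(1,0,1)$, noting $c=\rho T^{\gm/2}$ is exactly the time-scaling factor that produces the $\rho T^{\gm/2}$ inside $\kappa$ and $\beta$. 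I would also double-check the elementary lemma "$(c-\kappa t)_+^{p}\ge c^{\alpha}\exp(-\beta t^{q})$ for all $t\ge 0$" with $p=2/\gm$, $q=2/(2-\gm)$, since this is where $\alpha$ and the precise form of $\beta$ are forced, and it requires $c=\|D_0\|_0/(4\rho)\le 1$ (true since $\|D_0\|_0\le 2\rho<4\rho$) so that $c^\alpha\le c$ compensates the slack.
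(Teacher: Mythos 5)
Your plan starts the same way as the paper (differentiate $\|F_t-M\|_0$, use \eqref{(1-Q-differ-bound)}, then interpolate $\|F_t-M\|_{\gm}\le \|F_t-M\|_0^{1-\gm/2}\|F_t-M\|_2^{\gm/2}$), but it breaks down at the crucial step, exactly where you yourself flagged doubt. Bounding $\|F_t-M\|_2$ by a \emph{constant} (via conservation of mass and energy) yields the sublinear differential inequality $\frac{\rm d}{{\rm d}t}\|D_t\|_0\ge -a\|D_t\|_0^{1-\gm/2}$, whose integrated lower bound $\|D_t\|_0^{\gm/2}\ge \|D_0\|_0^{\gm/2}-\tfrac{\gm}{2}at$ vanishes at the finite time $t_*=\tfrac{2}{\gm a}\|D_0\|_0^{\gm/2}$ and is vacuous afterwards. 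Consequently the ``purely scalar calculus lemma'' you propose, $(c-\kappa t)_+^{2/\gm}\ge c^{\alpha}\exp(-\beta t^{2/(2-\gm)})$ for all $t\ge 0$, is false: the left-hand side is identically zero for $t\ge c/\kappa$ while the right-hand side is strictly positive. The same problem recurs in your $\gm=2$ case: conservation of energy gives $\|D_t\|_2\le{\rm const}$, not $\|D_t\|_2\le C\|D_t\|_0$, so you do not actually obtain a linear differential inequality, and the ``$\dot y_t\ge \kappa y_t$-type control'' you invoke for the doubly exponential bound is precisely the estimate you have not established.

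The missing idea is the paper's inequality \eqref{(5.8)}: for any $F\in{\mathcal B}^{+}_{1,0,1}$ one has
$\|F-M\|_2\le 2(N+1)\,\|F-M\|_0\,\log\bigl(4/\|F-M\|_0\bigr)$.
This is proved by writing $|M-F|=(F-M)+2(M-F)^{+}$, observing that $(M-F)^{+}\le M$ has a Gaussian tail, and applying Jensen's inequality to $x\mapsto e^{\dt x/2}$ with respect to the measure $(M-F)^{+}$ (the cancellation $\|F-M\|_0=2\|(M-F)^{+}\|_0$ and $\|F-M\|_2=2\|(M-F)^{+}\|_2$ uses that $F$ and $M$ share mass and energy). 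Feeding this into the interpolation turns the differential inequality into the \emph{almost linear} form $|\frac{\rm d}{{\rm d}t}U(t)|\le A\,U(t)[\log(1/U(t))]^{\gm/2}$ with $A=2^6(N+1)^2$, whose solutions cannot reach zero in finite time; integrating $[\log(1/U)]^{1-\gm/2}$ (for $\gm<2$) or $\log\log(1/U)$ (for $\gm=2$) then produces exactly the stated exponents $\alpha$, $\beta$, $\kappa$. Your normalization remarks (reducing to $\rho=1$, $u=0$, $T=1$ and rescaling time by $c=\rho T^{\gm/2}$) are correct and match the paper, but without the logarithmic control of $\|\cdot\|_2$ by $\|\cdot\|_0$ the argument cannot yield a lower bound valid for all $t\ge 0$.
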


\begin{remarks}\label{remark1.LowBd}
\begin{enumerate}

\item The lower bounds established with the norm $\|\cdot\|_0$ imply
  certain lower bounds in terms of the norm $\|\cdot\|_2$. In fact, on
  one hand, it is obvious that $\|F_t-M\|_2\ge \|F_t-M\|_0$. On the
  other hand, for the standard case
  $F_0, M\in {\mathcal B}^{+}_{1,0,1}({\bRN})$, applying the
  inequalities \eqref{(5.8)} and $\log y\le \sqrt{y}$\,\,($y\ge 1$) we
  have
$$\|F_0-M\|_0\ge \Big(\fr{1}{4(N+1)}\|F_0-M\|_2\Big)^2.$$
Then, for the general case $F_0,M\in{\mathcal
    B}^{+}_{\rho,u,T}({\bRN})$, we use part (II) of Proposition
\ref{(prop1.1)} (normalization) to deduce
$$\left\|F_0-M\right\|_0\ge \rho\Big(\fr{1}{4(N+1)}\cdot\fr{1}{
C_{1/\rho,|u|/\sqrt{T},1/T}}\|F_0-M\|_2\Big)^2.$$

\item To our knowledge, Theorem \ref{(theo1.2)} is perhaps the first
  result concerning the lower bounds on the relaxation rate for the
  hard potentials.  Of course --and in spite of the fact that the
  assumptions of Theorem \ref{(theo1.2)} are weaker than those of
  Theorem \ref{(theo1.1)}--, these lower bounds are very rough as
  compared with the corresponding upper bounds in Theorem
  \ref{(theo1.1)}. The particular formula in these lower bounds come
  from limitations of the method we adopted. We conjecture that
  under the same assumptions on the initial data (i.e. assuming only
  that $F_0$ have finite mass, momentum and energy), the lower bounds
  have the same form ${\rm cst}.e^{-{\rm cst}. t}$ as the upper
  bounds. This may be investigated in the future.
\end{enumerate}

\end{remarks}

Now let us state an important corollary of Theorem \ref{(theo1.1)} and
Theorem \ref{(theo1.2)}, which gives a positive answer (see the part
(iii) below), for hard potentials, to the question of \emph{eternal
  solutions} raised in \cite[Chapter~1,
subsection~2.9]{Villani-handbook} (see also \cite{MR2946633}).

\begin{corollary}\label{cor:11+12}
Under the same assumptions on $N,\gm $ and $B(z,\sg)$ as in
  Theorem \ref{(theo1.1)}, let $F_0\in{\mathcal
    B}^{+}_{\rho,u,T}({\bRN})$ with $\rho>0$, $u\in{\bRN}$ and $T>0$,
  and let $M\in {\mathcal B}^{+}_{\rho,u,T}({\bRN})$ be the
  Maxwellian. Then we have:
\begin{itemize}
 \item[(i)] Let $F_t\in{\mathcal B}^{+}_{\rho,u,T}({\bRN})$ be the unique
  conservative measure solution of equation~\eqref{(B)} on
  $[0,\infty)$ with the initial datum $F_0$. If $F_0\neq M$, then
  $F_t\neq M$ for all $t\ge 0$. In other words, $F_t$ can not arrive
  at equilibrium state in finite time unless $F_0$ is an equilibrium.

\item[(ii)] Let $F_t\in{\mathcal B}^{+}_{\rho,u,T}({\bRN})$ be a
  conservative backward measure strong solution of
  equation~\eqref{(B)} on an interval $(-t_{\infty}, 0]$ for some
  $0<t_{\infty}\le \infty$, i.e.
$$\frac{\, {\rm d}}{\, {\rm d}t}F_t=Q(F_t,F_t),\quad t\in (-t_{\infty}, 0]. $$

Then if $F_0\neq M$, then $(-t_{\infty}, 0]$ must be bounded, and if
$F_0=M$, then $F_t\equiv M$ on $(-t_{\infty}, 0]$. \, In particular we
have

\item[(iii)][Eternal measure solutions are stationary.] If a
  conservative measure strong solution $F_t$ of equation \eqref{(B)}
  in ${\mathcal B}^{+}_{\rho,u,T}({\bRN})$ is \emph{eternal},
  i.e. defined for all $t \in \mathbb R$, then it has to be stationary and
  $F_t = M$ for all $t \in \mathbb R$.
\end{itemize}
\end{corollary}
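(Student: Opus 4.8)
The plan is to derive all three statements from the sharp two-sided control of $\|F_t-M\|$ provided by Theorems \ref{(theo1.1)} and \ref{(theo1.2)}, together with the uniqueness part of Theorem \ref{(theo1.0)}. For part (i), suppose toward a contradiction that $F_{t_0}=M$ for some $t_0\ge 0$. Since $M$ is itself a conservative measure strong solution of \eqref{(B)} (the collision operator annihilates Maxwellians) with $M\in{\mathcal B}^{+}_{\rho,u,T}({\bRN})$, the uniqueness statement of Theorem \ref{(theo1.0)} applied on $[t_0,\infty)$ — or directly the stability estimate \eqref{(1.21*)} with $\tau=t_0>0$, or \eqref{(1.21)} if $t_0=0$ — forces $F_t=M$ for all $t\ge t_0$. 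But one may also run uniqueness \emph{backward} in time on $[0,t_0]$: the map $t\mapsto F_{t_0-t}$ would then be a solution on $[0,t_0]$ starting from $M$, hence equal to $M$, so $F_0=M$, contradicting the hypothesis. The cleanest way to phrase this uses Theorem \ref{(theo1.2)}: the lower bound gives, for $0<\gm<2$,
\[
\|F_{t_0}-M\|_0\ge (4\rho)^{1-\alpha}\|F_0-M\|_0^{\alpha}\exp\left(-\beta\, t_0^{2/(2-\gm)}\right)>0
\]
whenever $\|F_0-M\|_0>0$, and similarly in the case $\gm=2$; hence $F_{t_0}\ne M$ for every finite $t_0$. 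This is the shortest route and avoids invoking backward uniqueness.

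For part (ii), let $F_t$ be a conservative backward solution on $(-t_\infty,0]$ with $F_0\ne M$. Fix any $\tau\in(-t_\infty,0]$ and regard $t\mapsto F_{\tau+t}$ as a forward conservative measure strong solution on $[0,-\tau]$ (and beyond, by gluing with the forward solution from $F_0$, which is unique by Theorem \ref{(theo1.0)}) with initial datum $F_\tau\in{\mathcal B}^{+}_{\rho,u,T}({\bRN})$. Theorem \ref{(theo1.1)} applied to this solution, evaluated at time $-\tau$, gives
\[
\|F_0-M\|_2=\|F_{\tau+(-\tau)}-M\|_2\le C\,\|F_\tau-M\|_2^{1/2}\,e^{-\ld(-\tau)}=C\,\|F_\tau-M\|_2^{1/2}\,e^{\ld\tau}.
\]
Since $\tau\le 0$ and $\|F_\tau-M\|_2\le \|F_\tau\|_2+\|M\|_2$ is bounded by $\sup_t\|F_t\|_2+\|M\|_2<\infty$ (using condition (i) of the definition of measure strong solution, which holds on $(-t_\infty,0]$), the right-hand side is at most $C'e^{\ld\tau}$. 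If $t_\infty=\infty$ we could let $\tau\to-\infty$ and conclude $\|F_0-M\|_2=0$, contradicting $F_0\ne M$; hence $t_\infty<\infty$. The case $F_0=M$ follows from part (i) applied to each forward solution issued from $F_\tau$: if $F_0=M$ then backward uniqueness (or again Theorem \ref{(theo1.2)} read backward, i.e. the same lower-bound inequality with roles of the two times swapped) gives $F_\tau=M$ for all $\tau\in(-t_\infty,0]$. Part (iii) is immediate: an eternal solution is in particular a backward solution on $(-\infty,0]$, so $t_\infty=\infty$ is not bounded, whence by part (ii) necessarily $F_0=M$ and then $F_t\equiv M$ for all $t$.

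The main technical point to get right is not any single estimate but the bookkeeping of the \emph{time direction} and the \emph{uniqueness class}: one must check that a time-reversed or time-shifted piece of the given solution again satisfies all the hypotheses of Theorem \ref{(theo1.0)} and Theorem \ref{(theo1.1)} — in particular that it lies in ${\mathcal B}^{+}_{\rho,u,T}({\bRN})$ for the \emph{same} $(\rho,u,T)$ (this is exactly the conservation of mass, momentum and energy, which is part of the definition of conservative solution and is preserved under time shifts), and that the uniform bound $\sup\|F_t\|_2<\infty$ on $(-t_\infty,0]$ lets us treat each $F_\tau$ as a legitimate initial datum with controlled $\|F_\tau-M\|_2$. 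Once this is set up, the exponential factor $e^{\ld\tau}\to 0$ as $\tau\to-\infty$ does all the work, and the finite-time statements reduce to the strict positivity of the lower bounds in Theorem \ref{(theo1.2)}. No new computation beyond combining the two theorems is needed; the subtlety is purely in verifying that the hypotheses transfer correctly under time translation and reversal.
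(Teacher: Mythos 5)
Your proposal is correct and follows essentially the same route as the paper: part (i) from the strict positivity of the lower bound in Theorem \ref{(theo1.2)}, part (ii) by viewing $t\mapsto F_{\tau+t}$ as a forward solution, applying the upper bound of Theorem \ref{(theo1.1)} at $t=-\tau$ and letting $\tau$ tend to $-t_\infty$ (the paper extracts the explicit bound $t_\infty\le \ld^{-1}\log(C/\|F_0-M\|_0)$ rather than arguing by contradiction, but this is only a cosmetic difference), and handling the case $F_0=M$ via the lower bound, with part (iii) deduced from part (ii). Your attention to the transfer of hypotheses under time translation (conservation of $(\rho,u,T)$, uniform bound on $\|F_\tau-M\|_2$, gluing with the unique forward extension) matches what the paper does implicitly.
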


The proof of this Corollary is easy and we would like to present it here.

\begin{proof}[Proof of Corollary~\ref{cor:11+12}]
  Part (i) is follows simply from the lower bound in Theorem
  \ref{(theo1.2)}.  Part (iii) follows from part (ii). In fact let
  $F_t$ be an eternal solution of equation \eqref{(B)} as defined in
  the part (iii) of the statement. Then $F_t$ is also a backward
  measure strong solution of equation~\eqref{(B)} on the unbounded
  time-interval $(-\infty, 0]$. By part (ii) we conclude that $F_0=M$ and
  thus $F_t\equiv M$ on $(-\infty,0]$. Then by the uniqueness of
  forward solutions we conclude that $F_t=M$ for all $t\in \mathbb R$.

  To prove part (ii), we use the existence and the uniqueness theorem
  of conservative measure strong solutions (see Theorem
  \ref{(theo1.0)}) to extend the backward solution $F_t$ to the whole
  interval $(-t_{\infty},\infty)$. Fix any $\tau\in (-t_{\infty},
  0)$.  Then $t\mapsto F_{\tau+t}$ is a conservative measure strong
  solution of equation~\eqref{(B)} on $[0,\infty)$ with the initial
  datum $F_{\tau}$.  By using the upper bound of the convergence rate
  in Theorem \ref{(theo1.1)} (see also \eqref{(4.36)}), together with
  the conservation of mass, momentum, and energy we have (with
  $\ld=S_{b,\gm}(\rho,u,T)$)
\begin{equation}\label{(5.11)}
  \forall \, t\ge 0, \quad \left\|F_{\tau+t}-M\right\|_0\le Ce^{-\ld t},
\end{equation}
where $C>0$ only depends on $N,\gm, b,\rho, u,T$.  Taking $t=-\tau$
gives
$$
\left\|F_0-M\right\|_0\le Ce^{\ld\tau}.
$$
Thus if $\left\|F_0-M\right\|_0>0$, then
$$
-\tau\le \frac{1}{\ld}
\log\left(\frac{C}{\left\|F_0-M\right\|_0}\right)<\infty.
$$
Letting $\tau\to -t_{\infty}$ leads to
$$
t_{\infty}\le \frac{1}{\ld}
\log\left(\frac{C}{\left\|F_0-M\right\|_0}\right)<\infty.
$$
Next, applying Theorem \ref{(theo1.2)}, we have for all $t\ge 0$
\begin{eqnarray}\label{(5.12)}
  \left\|F_{\tau+t}-M\right\|_0 \ge & (4\rho)^{1-\alpha}
\left\|F_{\tau}-M\right\|_0^{\alpha}e^{-\beta\,t^{\frac{2}{2-\gm}}}
&  ({\rm if} \,\,\, 0<\gm<2)\\ \label{(5.13)}
\left\|F_{\tau+t}-M\right\|_0 \ge & 4\rho\left (\frac{\left\|F_{\tau}-M\right\|_0}{4\rho}\right)^{e^{\kappa\,t}}
& ({\rm if} \,\,\, \gm=2).
\end{eqnarray}
Now suppose $\left\|F_0-M\right\|_0=0$. Then taking $t=-\tau$ so that
$\left\|F_{\tau+t}-M\right\|_0=0$ we obtain from \eqref{(5.12)},
\eqref{(5.13)} that $\left\|F_{\tau}-M\right\|_0=0$. Since
$\tau\in(-t_{\infty},0)$ is arbitrary, this shows that $F_{t}\equiv
M$ on $(-t_{\infty},0]$
 and concludes the proof.
\end{proof}

The third main result is concerned with the global-in-time stability
of measure strong solutions.

\begin{theorem}\label{(theo1.3)} Let $N,\gm $ and
  $B(z,\sg)$ satisfy the same assumptions in Theorem \ref{(theo1.1)}.
Let $\rho_0>0$, $u_0 \in{\bRN}$, $T_0>0$ and let $M\in{\mathcal
  B}^{+}_{\rho,u,T}({\bRN})$ be the Maxwellian.

  Then for any conservative measure strong solutions $F_t$, $G_t$ of
  equation \eqref{(B)} with $F_0\in{\mathcal
    B}^{+}_{\rho_0,u_0,T_0}({\bRN})$, there are explicitable constants
  $\eta\in (0,1)$, $C \in (0,\infty)$ only depending on $N$, $\gm$, $b$,
  $\rho_0$, $u_0$, $T_0$, such that
$$
\sup_{t\ge 0}\left\|F_t-G_t\right\|_2\le \wt{\Psi}_{F_0}\left(\left\|F_0-G_0\right\|_2\right)
$$
where
$$
\forall \,  r\ge 0, \quad \wt{\Psi}_{F_0}(r) := C\Big(r+\left[\Psi_{F_0}(r)\right]^{\eta}\Big)
$$
with $\Psi_{F_0}(r)$ defined in \eqref{(1.20)}.
\end{theorem}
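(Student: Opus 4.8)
The plan is to interpolate between two facts already available: the short-time stability estimate of Theorem~\ref{(theo1.0)}(c) with $\tau=0$, which is sharp for $t$ small but deteriorates like $e^{C(1+t)}$, and the exponential relaxation estimate \eqref{(4.36)} (a consequence of Theorem~\ref{(theo1.1)}, whose hypotheses are in force here), which forces $F_t$ and $G_t$ to approach their respective Maxwellians as $t\to\infty$. Write $r:=\|F_0-G_0\|_2$. I would first dispose of the regime of large $r$: since $F_t$ and $G_t$ are conservative, $\|F_t\|_2=\|F_0\|_2=\rho_0(1+NT_0+|u_0|^2)$ and $\|G_t\|_2=\|G_0\|_2\le\|F_0\|_2+r$ for every $t$, so the crude bound $\sup_{t\ge0}\|F_t-G_t\|_2\le2\|F_0\|_2+r$ always holds, and it is already of the desired form $\wt{\Psi}_{F_0}(r)$ once $r$ is bounded below by a threshold $r_1=r_1(N,\gm,b,\rho_0,u_0,T_0)$ to be fixed (with $C$ enlarged at the end). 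Hence it suffices to treat $0\le r\le r_1$.

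In that regime I would record that the conserved quantities $(\rho_G,u_G,T_G)$ of $G_0$ are close to $(\rho_0,u_0,T_0)$: from $|\rho_0-\rho_G|\le\|F_0-G_0\|_0\le r$, $|\int v\,{\rm d}F_0-\int v\,{\rm d}G_0|\le r$ and $|\int|v|^2\,{\rm d}F_0-\int|v|^2\,{\rm d}G_0|\le r$, elementary estimates — legitimate once $r_1$ is small enough that $\rho_G\ge\rho_0/2$ and $T_G\ge T_0/2$ — give $|\rho_0-\rho_G|+|u_0-u_G|+|T_0-T_G|\le Cr$. Letting $M=M_{\rho_0,u_0,T_0}$ and $M_G=M_{\rho_G,u_G,T_G}$ be the corresponding Maxwellians (so that $G_t\to M_G$), this yields $\|M-M_G\|_2\le Cr$ by the local Lipschitz continuity of $(\rho,u,T)\mapsto M_{\rho,u,T}\in{\mathcal B}_2({\bRN})$ (its $\rho$-, $u$- and $T$-derivatives are a polynomial in $v$ times a Gaussian, hence lie in ${\mathcal B}_2$ with norms continuous in the parameters). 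Shrinking $r_1$ further, I would also arrange that the spectral gaps $\lambda_F:=S_{b,\gm}(\rho_0,u_0,T_0)$ and $\lambda_G:=S_{b,\gm}(\rho_G,u_G,T_G)$ both exceed $\lambda_*:=\tfrac12 S_{b,\gm}(1,0,1)\rho_0 T_0^{\gm/2}>0$, and that the normalization constants $C_{1/\rho_G,|u_G|/\sqrt{T_G},1/T_G}$ entering \eqref{(4.36)} remain bounded in terms of $(N,\gm,b,\rho_0,u_0,T_0)$.

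Then, for a parameter $t_*\ge0$ to be chosen, I would split $[0,\infty)=[0,t_*]\cup[t_*,\infty)$. On $[0,t_*]$, Theorem~\ref{(theo1.0)}(c) with $\tau=0$ gives $\|F_t-G_t\|_2\le\Psi_{F_0}(r)\,e^{C(1+t_*)}$, with $C=\mathcal R(\gm,1,A_2,\rho_0,\|F_0\|_2)$ depending only on $(N,\gm,b,\rho_0,u_0,T_0)$. On $[t_*,\infty)$, the triangle inequality combined with \eqref{(4.36)} for $F_t$ (Maxwellian $M$) and for $G_t$ (Maxwellian $M_G$), together with $\|M-M_G\|_2\le Cr$, gives $\|F_t-G_t\|_2\le C_1e^{-\lambda_* t_*}+Cr$ for all $t\ge t_*$, where $C_1$ collects the factors $C_0 C_{1/\rho,|u|/\sqrt T,1/T}$ for the two solutions. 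Choosing $t_*$ so the two right-hand sides coincide, i.e. $e^{(C+\lambda_*)t_*}=C_1/(e^C\Psi_{F_0}(r))$ — which gives $t_*\ge0$ since $\Psi_{F_0}(r)\le C_1e^{-C}$ after a last shrinking of $r_1$ (recall $\Psi_{F_0}(r)\to0$ as $r\to0$) — and substituting back, both pieces are bounded by $C_2(\Psi_{F_0}(r))^{\eta}+Cr$ with $\eta:=\lambda_*/(C+\lambda_*)\in(0,1)$, $C$ here being the growth rate from Theorem~\ref{(theo1.0)}(c), so that $\eta$ depends only on $(N,\gm,b,\rho_0,u_0,T_0)$. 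This is precisely $\wt{\Psi}_{F_0}(r)$ up to a constant; combining with the crude bound for $r>r_1$ (and enlarging the constant once more) closes the argument.

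The interpolation itself is the routine ``match the increasing and the decreasing bound at their crossover'' computation, so I expect the real obstacle to be the bookkeeping of the second step: establishing the quantitative Maxwellian continuity $\|M-M_G\|_2\le Cr$ with a constant that is an explicit function of $(\rho_0,u_0,T_0)$ alone, and verifying that \emph{every} auxiliary constant involved (those in Theorem~\ref{(theo1.0)}(c) and in \eqref{(4.36)}, the spectral gaps $S_{b,\gm}(\cdot,\cdot,\cdot)$, and the normalization constants $C_{\rho,|u|,T}$, $C_{1/\rho,|u|/\sqrt T,1/T}$) stays uniformly controlled for $r$ in the fixed neighbourhood $[0,r_1]$, so that the final $C$ and $\eta$ genuinely depend only on $N,\gm,b,\rho_0,u_0,T_0$.
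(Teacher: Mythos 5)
Your proposal follows essentially the same route as the paper: dispose of large $\left\|F_0-G_0\right\|_2$ by conservation, show the conserved quantities and hence the Maxwellians of $F_0$ and $G_0$ are $O(r)$-close, and then balance the short-time stability bound $\Psi_{F_0}(r)e^{C(1+t)}$ of Theorem~\ref{(theo1.0)}(c) against the long-time relaxation bound from \eqref{(4.36)} at a crossover time, yielding the exponent $\eta$ as a ratio of the two rates. One bookkeeping point to fix: your ``last shrinking of $r_1$'' so that $\Psi_{F_0}(r)\le C_1e^{-C}$ makes $r_1$ depend on the tail term $\int_{|v|>r^{-1/3}}|v|^2\,{\rm d}F_0$ in $\Psi_{F_0}$, i.e.\ on $F_0$ itself rather than only on $(\rho_0,u_0,T_0)$, which would contaminate the constant in your large-$r$ crude bound; the paper avoids this by splitting instead on whether $\Psi_{F_0}(r)\ge 1$, in which case the crude bound $\tfrac52\left\|F_0\right\|_2\le\tfrac52\left\|F_0\right\|_2\left[\Psi_{F_0}(r)\right]^{\eta}$ is already of the required form.
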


\subsection{Previous results and references}

Apart from the paper \cite{Mcmp} already mentioned concerning the
sharp rate of relaxation for $L^1$ solutions in the case of hard
spheres or hard potentials with cutoff, let us mention the many
previous works that developed quantitative estimates on the rate of
convergence
\cite{CC92,MR1263387,CC94,MR1725612,MR1991033,MR2546739,MR2546739,MR2546739}. Let
us also mention the recent work \cite{GMM} obtaining sharp rates of
relaxation for $L^1_v L^\infty_x$ solutions in the spatially
inhomogeneous case in the torus.

\subsection{Strategy and plan of the paper}
\label{sec:strategy-plan-paper}

The rest of the paper is organized as follows: In Section 2 we give an
integral representation for the one-step iterated collision operator
$(f,g,h)\mapsto Q^{+}(f, Q^{+}(g,h))$ and prove an $L^p$ gain of
integrability for this operator. This is a generalization of
Abrahamsson's result \cite{MR1697495} which is concerned with $N=3$
and $\gm=1$.  In order to obtain the required regularities of such
iterated collision operators, the technical difficulty is to deal with
small values of $\gm$. In that case one needs multi-step iteration of
$Q^+$. In Section 3 we use iteratively the previous multi-step
estimates on $Q^+$ to give a series of positive decompositions
$f_t=f^n_t+h^{n}_t$ for $t\in [t_0,\infty)$ with $t_0>0$, for an $L^1$
mild solution $f_t$. In this decomposition the $f^n_t$ are bounded (in
$L^{\infty}({\bRN})$) and regular (they belong at least to
$H^1({\bRN})$ for instance) when $n$ is large enough; whereas
$h^{n}_t$ decays in $L^1$ norm exponentially fast as $t\to +\infty$.  By
approximation we then extend such positive decompositions to the
measure strong solutions $F_t$. In Section 4 we first use the results
of \cite{Mcmp} and those obtained in Section 3 to prove Theorem
\ref{(theo1.1)} for $L^1$ mild solutions, and then we use
approximation by $L^1$ mild solutions to complete the proof of Theorem
\ref{(theo1.1)} for measure solutions. The proof of Theorem
\ref{(theo1.2)} is given in Section 5. In Section 6 we prove Theorem
\ref{(theo1.3)} which is an application of Theorem \ref{(theo1.0)} and
Theorem \ref{(theo1.1)}.

Throughout this paper, unless otherwise stated, we always assume that
$N\ge 2$ as already indicated in equation \eqref{(B)}.

\section{$L^p$-estimates of the iterated gain term}
\label{sec2}

We introduce the weighted Lebesgue spaces $L^p_s({\bRN})$ for $1\le
p\le \infty, 0\le s<\infty$ as:
\begin{equation*}
  \begin{cases} \displaystyle
    f\in L^p_s({\bRN})
    \Longleftrightarrow \|f\|_{L^p_s}
    =\left(\int_{{\bRN}}\la v\ra^{ps}|f(v)|^p\, {\rm
        d}v\right)^{1/p}<\infty,\quad 1\le p<\infty \vspace{0.2cm} \\
    \displaystyle
    f\in L^{\infty}_s({\bRN}) \Longleftrightarrow \|f\|_{L^{\infty}_s}
    =\sup_{v\in{\bRN}}\la v\ra^{s}|f(v)|<\infty,\quad
    p=\infty.
\end{cases}
\end{equation*}
In the case $s=0$, we denote $L^p_0({\mathbb R}^N)=L^p({\mathbb R}^N)$
as usual.

We shall use the following formula of change of variables. For any
${\bf n}\in {\bSN}$ and $\psi$ nonnegative measurable on ${\bSN}$:
\begin{equation}\label{(2.0)}
\int_{{\bSN}}\psi(\sg)\, {\rm d}\sg=\int_{-1}^{1}(1-t^2)^{(N-3)/2}
\left( \int_{{\mathbb S}^{N-2}({\bf n})}\psi\left(t{\bf
    n}+\sqrt{1-t^2}\,\og\right)\, {\rm d}^{\bot}\og \right) \, {\rm d}t
\end{equation}
where ${\mathbb S}^{N-2}({\bf n})=\{\og\in{\bSN}\,|\,\, \og\,
\bot\,{\bf n}\}$ and ${\rm d}^{\bot}\og$ denotes the sphere measure
element of ${\mathbb S}^{N-2}({\bf n})$.

For convenience we rewrite \eqref{(2.0)} as follows:
\begin{equation}\label{(2.1)}\int_{{\bSN}}\psi(\sg)\, {\rm d}\sg=\int_{{\mathbb R}}\zeta(t)
\left( \int_{{\mathbb S}^{N-2}({\bf n})}\psi(\sg_{{\bf n}}(t,\og))\,
  {\rm d}^{\bot}\og \right) \, {\rm d}t
\end{equation}
where
\begin{equation}\label{(2.2)}
  \forall \, t\in \mathbb R, \quad \zeta(t):=(1-t^2)^{\frac{N-3}{2}}{\bf 1}_{(-1,1)}(t)
\end{equation}
\begin{equation}\label{(2.3)}
\sg_{{\bf n}}(t,\og):=
\left\{\begin{array}{ll}-{\bf n} \quad & {\rm if} \quad t\le
    -1\vspace{0.2cm} \\
\,\,t{\bf n}+\sqrt{1-t^2}\,\og\quad
& {\rm if} \quad t \in (-1,1) \vspace{0.2cm} \\
\,\,\,\, {\bf n} \quad & {\rm if} \quad t\ge  1.
\end{array}\right.
\end{equation}

\begin{lemma}\label{(lem2.1)}  Suppose $N\ge 3$ and let $B(z,\sg)$ be given by \eqref{(1.4)} with  $b$ satisfying \eqref{(b-upperbd)}. Let $ f\in
  L^1_{\gm}({\bRN})$ and $g, h \in L^1_{2\gm}({\bRN})$.
Then $Q^{+}(f, Q^{+}(g,h))\in L^1({\bRN})$ with the estimate
  \begin{equation}\label{(2.11)}
    \left\|Q^{+}\left(f, Q^{+}(g,h)\right)\right\|_{L^1}\le A_0^2\|f\|_{L^1_{\gm}}
    \|g\|_{L^1_{2\gm}}\|h\|_{L^1_{2\gm}}.
  \end{equation}

  Moreover we have the following representation: for almost every
  $v\in{\bRN}$
  \begin{equation}\label{(2.4)}
    Q^{+}\left(f, Q^{+}(g,h)\right)(v) =\inttt_{{\bRRRN}}K_{B}(v,v_*,w,w_*)f(v_*)g(w)h(w_*)
    \, {\rm d}v_*\, {\rm d}w \, {\rm d}w_*
  \end{equation}
  where $K_B: {\mathbb R}^{4N}\to [0,\infty)$ is defined by
  \begin{equation}\label{(2.5)}
    K_{B}(v,v_*,w,w_*) :=
    \begin{cases} \displaystyle
      \frac{2^N}{|v-v_*||w-w_*|}\zeta\left({\bf n}\cdot
        \frac{2v-(w+w_*)}{|w-w_*|} \right ) \times \vspace{0.2cm} \\ \displaystyle
      \qquad \qquad  \int_{{\mathbb
          S}^{N-2}({\bf n})} \frac{B(w-w_*,\sg)\,B(
        w'-v_*,\sg')}{|w'-v_*|^{N-2}} \, {\rm d}^{\bot}\og  \vspace{0.2cm}
      \\ \displaystyle \qquad \qquad \qquad \qquad
      \mbox{if} \quad |v-v_*||w-w_*|\neq 0,
      \vspace{0.2cm}   \\ \displaystyle
\,\, \, 0 \quad\qquad \qquad \qquad\mbox{if} \quad
|w-w_*||v-v_*|= 0,
\end{cases}
\end{equation}
where the function $\zeta$ is given by \eqref{(2.2)}, and
 \begin{equation}\label{(2.6)}
   {\bf n}:=\frac{v-v_*}{|v-v_*|},\quad
   w'=\frac{w+w_*}{2}+\frac{|w-w_*|}{2}\sg,\quad \sg'=
   \frac{2v-v_*-w'}{|2v-v_*-w'|}
 \end{equation}
 with
\begin{equation}\label{(2.7)}
  \sg = \sg(\og)=\sg_{{\bf
      n}}(t,\og)
  \quad \mbox{at} \quad t={\bf
    n}\cdot\left( \frac{2v-(w+w_*)}{|w-w_*|} \right).
\end{equation}
\end{lemma}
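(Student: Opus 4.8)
The plan is to start from the defining identity for $Q^+(F,G)$ in weak form, namely $\int\psi\,\mathrm{d}Q^+(F,G)=\iint L_B[\psi](v,v_*)\,\mathrm{d}F(v)\,\mathrm{d}G(v_*)$, and unfold it twice. Applying it first with $F\leftarrow f$ and $G\leftarrow Q^+(g,h)$, and then a second time to expand $Q^+(g,h)$ against the test function $v_*\mapsto L_B[\psi](v,v_*)$ (after checking this map is a bounded Borel function so the representation applies), gives for any bounded Borel $\psi$:
\[
\int_{\bRN}\psi(v)\,\mathrm{d}Q^+\!\big(f,Q^+(g,h)\big)(v)
=\inttt_{\bRRRN}\widetilde L[\psi](v_*,w,w_*)\,f(v_*)g(w)h(w_*)\,\mathrm{d}v_*\,\mathrm{d}w\,\mathrm{d}w_*,
\]
where $\widetilde L[\psi](v_*,w,w_*)$ is an explicit double spherical integral over $\sg,\sg'\in\bSN$ coming from the two copies of $L_B$. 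The goal is then a change of variables inside that double spherical integral to convert it into a single integral $\int_{\bRN}\psi(v)K_B(v,v_*,w,w_*)\,\mathrm{d}v$, which simultaneously identifies the kernel $K_B$ and shows $Q^+(f,Q^+(g,h))$ is absolutely continuous with the stated density.

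The main work is the change of variables. First I would use \eqref{(2.1)}--\eqref{(2.3)} to write the inner $\sg'$-integral (the one from the outer $Q^+$, with axis ${\bf n}=(v-v_*)/|v-v_*|$) in the form $\int_{\mathbb R}\zeta(t)\int_{{\mathbb S}^{N-2}({\bf n})}(\cdots)\,\mathrm{d}^\bot\og\,\mathrm{d}t$; here the post-collisional velocity produced by the outer operator is $v'=\tfrac{v_*+w'}{2}+\tfrac{|v_*-w'|}{2}\sg'$ where $w'$ is the post-collisional velocity of the inner collision between $w$ and $w_*$. The key geometric identity is that $\psi$ is evaluated at this $v'$, so one should change variables from $(t,\og)\mapsto v'$ for fixed $\sg$ (i.e. fixed $w'$), with $v_*,w,w_*$ as parameters. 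Writing $v'$ in terms of $\sg'=\sg'(t,\og)$ and computing the Jacobian of this map --- which is where the factors $|v-v_*|$ (the scale of the ${\bf n}$-sphere), $|w'-v_*|^{-(N-2)}$ (from the radius of the ${\mathbb S}^{N-2}$ relative to the unit sphere, raised to the appropriate power), and the constant $2^N$ (from the factor $\tfrac12$ in $v'=\tfrac{v_*+w'}{2}+\cdots$ in $N$ dimensions) all arise --- renaming the resulting integration variable $v'\to v$, and noting that the constraint $t\in(-1,1)$ becomes exactly $\zeta\big({\bf n}\cdot\tfrac{2v-(w+w_*)}{|w-w_*|}\big)\neq 0$ after substituting $w'=2v-v_*-\cdots$ back, one reads off \eqref{(2.5)}--\eqref{(2.7)}. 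One then applies Fubini (justified a posteriori by the $L^1$ bound) to pull the $\mathrm{d}v$ integral outside, and since $\psi$ is arbitrary bounded Borel, Riesz representation gives the pointwise a.e. formula \eqref{(2.4)}.

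For the norm bound \eqref{(2.11)}, I would take $\psi\equiv 1$ (or more precisely $\psi=\operatorname{sgn}$ of the density, but positivity makes $\psi\equiv 1$ enough) and simply estimate $\int_{\bRN}\mathrm{d}Q^+(f,Q^+(g,h))$ directly from the \emph{unexpanded} double weak representation, bounding each $L_B[1](v,v_*)\le A_0\,|v-v_*|^\gm\le A_0\,2^{\gm/2}\la v\ra^\gm\la v_*\ra^\gm$ using $|v-v_*|\le\sqrt2\,\la v\ra\la v_*\ra$; doing this for both copies of $L_B$ and integrating out yields $\|Q^+(f,Q^+(g,h))\|_{L^1}\le A_0^2\,(\text{const})\,\|f\|_{L^1_\gm}\|g\|_{L^1_{2\gm}}\|h\|_{L^1_{2\gm}}$, and a careful tracking of the weights (the inner $Q^+(g,h)$ loses $\gm$ moments, so $g,h$ need $2\gm$ while $f$ needs only $\gm$) gives exactly the constant $A_0^2$ as stated — actually one should double-check whether the $\sqrt2$-type factors are absorbed by the normalization $A_0=1$ or are genuinely bounded by the claimed constant; the cleanest route is to use $|v-v_*|^\gm \le \la v\ra^{2\gm}\la v_*\ra^{2\gm}$-type crude bounds only where needed.

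\textbf{Expected main obstacle.} The delicate point is the Jacobian computation for the map $(t,\og)\mapsto v'$ (equivalently, justifying \eqref{(2.5)}), since it combines a spherical change of variables on ${\mathbb S}^{N-2}({\bf n})\times(-1,1)$ with the affine map defining $v'$; one has to be careful that the relevant "sphere" in $v'$-space has radius $\tfrac12|v_*-w'|$ and sits in the hyperplane through $\tfrac{v_*+w'}{2}$ orthogonal to... no, rather $v'$ ranges over the \emph{full} sphere of radius $\tfrac12|v_*-w'|$ centered at $\tfrac{v_*+w'}{2}$, and the $(t,\og)$ parametrization with axis ${\bf n}$ covers it with Jacobian weight $(\tfrac12|v_*-w'|)^{N-1}$ times the $\zeta(t)(1-t^2)^{\ldots}$ factors — then inverting to solve for $w'$ in terms of $v$ and substituting back is what produces the $|w'-v_*|^{-(N-2)}$ (one power is eaten by the surface-measure normalization, the rest by the $B$-homogeneity) and the argument of $\zeta$. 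Getting the exponent $N-2$ and the constant $2^N$ exactly right is the only real computation; everything else (measurability of $K_B$, Fubini, Riesz) is routine once the bound \eqref{(2.11)} is in hand.
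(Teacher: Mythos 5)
Your overall architecture matches the paper's: unfold the weak formulation twice to get
$\int \vp \, {\rm d}Q^{+}(f,Q^{+}(g,h))=\inttt L_B\left[L_B[\vp](\cdot,v_*)\right](w,w_*)f(v_*)g(w)h(w_*)\,{\rm d}v_*{\rm d}w\,{\rm d}w_*$,
prove \eqref{(2.11)} by taking $\vp=1$, and then convert the double spherical integral into $\int K_B\,\vp(v)\,{\rm d}v$. The $L^1$ bound is fine (and your worry about $\sqrt{2}$ factors is unnecessary: $|w-w_*|\le\la w\ra\la w_*\ra$ and $|w'-v_*|\le\la w\ra\la w_*\ra\la v_*\ra$ give exactly the constant $A_0^2$).

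The genuine gap is in the change of variables, which you yourself flag as the only real computation. As described, you fix the inner direction $\sg$ (hence $w'$), parametrize the outer direction $\sg'$ by $(t,\og)$ with axis ${\bf n}$, and map $(t,\og)\mapsto v'$, then "rename $v'\to v$" to get a ${\rm d}v$ integral. This cannot work: for fixed $w'$ the final velocity $v'$ ranges over the $(N-1)$-dimensional sphere of center $\tfrac{w'+v_*}{2}$ and radius $\tfrac{|w'-v_*|}{2}$, a Lebesgue-null set, so no Jacobian for that map alone produces a volume element ${\rm d}v$; the missing $N$-th degree of freedom must be borrowed from the \emph{inner} sphere. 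Moreover, in \eqref{(2.5)} the residual integral $\int_{{\mathbb S}^{N-2}({\bf n})}{\rm d}^{\bot}\og$ parametrizes the surviving freedom of the inner direction $\sg$ (equivalently of $w'$, which varies with $\og$), not of $\sg'$ --- so you have put the $(t,\og)$-decomposition on the wrong sphere. The paper's mechanism is: (i) rewrite the outer spherical integral as a volume integral in $z=v-\tfrac{w'}{2}$ against the Dirac constraint $\dt\left(\left|\tfrac{w'}{2}\right|^2-\left|v-\tfrac{w'}{2}\right|^2\right)$, which is where the factor $|w'|^{-(N-2)}$ arises; (ii) Fubini to pull ${\rm d}v$ outside; (iii) for fixed $v$, decompose the \emph{inner} $\sg$-integral via \eqref{(2.1)} with ${\bf n}=(v-v_*)/|v-v_*|$ and observe that the delta is \emph{linear} in $t$, so integrating out $t$ yields the factor $\tfrac{2^N}{|v-v_*||w-w_*|}$, fixes $t={\bf n}\cdot\tfrac{2v-(w+w_*)}{|w-w_*|}$, produces the $\zeta$ factor, and leaves the $(N-2)$-dimensional $\og$-integral of \eqref{(2.5)}. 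Without this coupling of the outer radial coordinate to the inner $t$-coordinate, your computation does not close; you would also need the final approximation step to remove the continuity assumption on $b$ used to justify the Dirac manipulations.
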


\begin{remark}
  Inserting the formula \eqref{(Grad)} of $B(z,\sg)$ into
  \eqref{(2.5)} gives the more detailed expression of $K_B$:
  \begin{multline}
    \label{(2.9)}
    K_{B}(v,v_*,w,w_*) = \\
    \frac{2^N}{|w-w_*|^{1-\gm}|v-v_*|}\zeta\left( {\bf
        n}\cdot\frac{2v-(w+w_*)}{|w-w_*|} \right) \int_{{\mathbb
        S}^{N-2}({\bf n})}
    \frac{b\left(\frac{w-w_*}{|w-w_*|}\cdot\sg\right)\,b\left(
        \frac{w'-v_*}{|w'-v_*|}\cdot\sg'\right)}{|w'-v_*|^{N-2-\gm}} \,
    {\rm d}^{\bot}\og
  \end{multline}
  for $|w-w_*||v-v_*|\neq 0$.  Also we note that
  \begin{equation}
    K_{B}(v,v_*,w,w_*)>0 \Longrightarrow
    |v-v_*||w-w_*|\neq 0 \quad \mbox{and} \quad
    \left|{\bf n}\cdot\frac{2v-(w+w_*)}{|w-w_*|}\right|<1
  \end{equation}
  which implies, by using the formula \eqref{(2.7)} for $\sigma_{{\bf n}}(t,\omega)$
  in this case and the value of $t$, that $(v-w')\cdot(v-v_*)=0$ and
  therefore by Pythagoras' theorem
  \begin{eqnarray} \displaystyle
    &&\label{(2.10)}
    |w'-v_*|=|w'-v + v -v_*|=\sqrt{|v-v_*|^2+|v-w'|^2}, \vspace{0.2cm} \\
    \displaystyle
    &&\label{(2.10*)} |w'-v_*|\ge |v-v_*|.
  \end{eqnarray}
\end{remark}

\vskip2mm

\begin{proof}[Proof of Lemma~\ref{(lem2.1)}] We shall use the
  following formula of change of variables (see \cite[Chapter~1,
  Sections~4.5-4.6]{Villani-handbook}): For every nonnegative measurable
  function $\psi$ on ${\mathbb R}^{4N}$, one has
  \begin{multline}\label{(2-change)}
    \inttt_{{\bRRSN}}B(v-v_*,\sg) \psi(v',v_*',
  v,v_*)
  \, {\rm d}\sg \, {\rm d}v_* \, {\rm d}v\\
  =\inttt_{{\bRRSN}}B(v-v_*,\sg) \psi(v,v_*,v',v_*') \, {\rm d}\sg
  \, {\rm d}v_* \, {\rm d}v.
\end{multline}

We can assume that $f,g,h$ are all nonnegative.  Applying
\eqref{(1-Qcommut)}, \eqref{(2-change)}, and recalling definition of
$L_B[\vp]$ (see \eqref{(1-LB)}) we have, for any nonnegative
measurable function $\vp$ on ${\bRN}$,
\begin{equation*}
\begin{cases}\displaystyle
  \int_{{\bRN}}Q^{+}(f,
Q^{+}(g,h))(v)\vp(v) \, {\rm d}v =\int_{{\bRN}} f(v_*) \left( \int_{{\bRN}}
Q^{+}(g,h)(w)L_B[\vp](w,v_*)\, {\rm d}w \right) \, {\rm
  d}v_*, \vspace{0.2cm} \\
\displaystyle
\int_{{\bRN}} Q^{+}(g,h)(w)L_B[\vp](w,v_*) \, {\rm
  d}w=\intt_{{\bRRN}}L_B\left[L_B[\vp](\cdot, v_*)\right](w,w_*)g(w)h(w_*) \,
{\rm d}w \, {\rm d}w_*,
\end{cases}
\end{equation*}
and so
\begin{multline} \label{(2.12)}
\int_{{\bRN}}Q^{+}(f, Q^{+}(g,h))(v)\vp(v) \, {\rm d}v \\
=\inttt_{{\bRRRN}}L_B\left[L_B[\vp](\cdot, v_*)\right](w,w_*)f(v_*)g(w)h(w_*) \, {\rm d}v_* \, {\rm d}w
\, {\rm d}w_*.
\end{multline}
Taking $\vp =1$ and using the inequalities
\begin{equation}
\label{(2.12WV)}
\begin{cases}
|w-w_*|\le \la w\ra\la
w_*\ra,\vspace{0.2cm}\\ \displaystyle
 |w'-v_*|\le \frac{|w+w_*|}{2}+\frac{|w-w_*|}{2}+|v_*|
\le \la w\ra\la w_*\ra\la v_*\ra,
\end{cases}
\end{equation}
we obtain
\[
L_B\left[L_B[1](\cdot, v_*)\right](w,w_*)
 \le A_0^2\la v_*\ra^{\gm}\la w\ra^{2\gm}\la w_*\ra^{2\gm}
\]
and it follows from \eqref{(2.12)} that
\[
0\le Q^{+}\left(f, Q^{+}(g,h)\right)\in L^1({\bRN})
\]
and so \eqref{(2.11)} holds true.

Comparing \eqref{(2.12)} with \eqref{(2.4)}, it appears that in order
to prove the integral representation \eqref{(2.4)} we only need to
prove that the following identity
\begin{equation}\label{(2.14)}
\forall\, 0\le
\vp\in C_c({\bRN}), \quad L_B\left[L_B[\vp](\cdot,
v_*)\right](w,w_*)=\int_{{\bRN}}K_B(v,v_*,w,w_*)\vp(v)\, {\rm d}v
\end{equation}
holds for all $w,w_*,v_*\in{\bRN}$ satisfying
\begin{equation}\label{(2.13)}
0\neq \left|\frac{w+w_*}{2}-v_*\right|\neq \frac{|w-w_*|}{2}\neq 0.
\end{equation}

Observe that
\begin{equation*}
  \begin{cases} \displaystyle
    L_B\left[L_B[\vp](\cdot,v_*)\right](w,w_*)=
    L_B\left[L_B[\vp(v_*+\cdot)](\cdot,0)\right](w-v_*,w_*-v_*),\vspace{0.2cm} \\
    \displaystyle
    K_B(v,v_*,w,w_*)=K_B(v-v_*,0,w-v_*,w_*-v_*).
  \end{cases}
\end{equation*}
By replacing respectively $\vp(v_*+\cdot)$, $w-v_*$ and $w_*-v_*$ with
$\vp(\cdot)$, $w$ and $w_*$, we can assume without loss of generality
that $v_*=0$. That is, in order to prove \eqref{(2.14)}, we only need to
prove
\begin{equation}\label{(2.15)}
  \forall\, 0\le \vp\in C_c({\bRN}), \quad
  L_B\left[L_B[\vp](\cdot,0)\right](w,w_*)=\int_{{\bRN}}K_B(v,0,w,w_*)\vp(v)\,
  {\rm d}v.
\end{equation}

To do this we first assume that $b \in C([-1,1])$ so that the use of
the Dirac distribution is fully justified.  We compute
\begin{equation}\label{(2.16)}
  L_B\left[L_B[\vp](\cdot, 0)\right](w,w_*) =\int_{{\bSN}}
  B(w-w_*,\sg)\int_{{\bSN}}B(w',\og)\vp\left(\frac{w'}{2}
    +\frac{|w'|}{2}\og\right)\, {\rm d}\og \, {\rm d}\sg,
\end{equation}
and, by using \eqref{(2.6)} and \eqref{(2.13)} with $v_*=0$, we have
$w'\neq 0$ for all $\sg\in{\bSN}$. Let $\dt = \dt(x)$ be the
one-dimensional Dirac distribution. Applying the integral
representation
\[
\forall \, \psi\in C((0,\infty)),\ \rho>0, \quad
\psi(\rho)=\frac{2}{\rho^{N-2}}\int_{0}^{\infty}
r^{N-1}\psi(r)\dt(\rho^2-r^2)\, {\rm d}r
\]
to the function
\[
\psi(\rho):= \vp\left(\frac{w'}{2} +\rho\og\right)
\]
and then taking $\rho=|w'|/2$ and changing variable $r\og=z$ we
have
\begin{multline*}
\int_{{\bSN}}B(w',\og)\vp\left(\frac{w'}{2} +\frac{|w'|}{2}\og\right) \, {\rm
  d}\og \\ =2\left|\frac{w'}{2}\right|^{-(N-2)} \int_{{\bRN}}
B\left(w',\frac{z}{|z|}\right)\vp\left(\frac{w'}{2}+ z\right) \dt\left(\frac{|w'|^2}{4}-|z|^2\right)
\, {\rm d}z.
\end{multline*}
We then use the change of variable $z=v-w'/2$ and Fubini  theorem:
\begin{multline*}\label{(2,17)}
L_B\left[L_B[\vp](\cdot, 0)\right](w,w_*) = \\
\int_{{\bRN}}\vp(v)\left(\int_{{\bSN}}
  2 \left|\frac{w'}{2}\right|^{-(N-2)} B(w-w_*,\sg)
  B\left(w',\frac{v-w'/2}{|v-w'/2|}\right)
  \dt\left(\left|\frac{w'}{2}\right|^2-\left|v-\frac{w'}{2}\right|^2\right)
  \, {\rm d}\sg\right) \, {\rm d}v.
\end{multline*}

We now assume that $v\neq 0$ and ${\bf n}=v/|v|$ satisfy
\[
\left|{\bf n}\cdot\frac{2v-(w+w_*)}{|w-w_*|}\right|\neq 1,
\quad \left|v-\frac{w+w_*}{4}\right|\neq \left|\frac{w-w_*}{4}\right|.
\]
We deduce that $|v-w'/2|>0$ for all $\sg\in{\bSN}$ and we
compute using \eqref{(2.1)}-\eqref{(2.2)}-\eqref{(2.3)} with ${\bf
  n}=v/|v|$ that
\begin{multline*}
\int_{{\bSN}}2 \left|\frac{w'}{2}\right|^{-(N-2)}
B(w-w_*,\sg) B\left(w',\frac{v-w'/2}{|v - w'/2|}\right)
\dt \left(\frac{|w'|^2}{4}-\left|v-\frac{w'}{2}\right|^2\right)\, {\rm d}\sg\\
=\int_{{\mathbb R}}\zeta(t)\,\left(\int_{{\mathbb S}^{N-2}({\bf
      n})} 2 \left|\frac{w'}{2}\right|^{-(N-2)} B(w-w_*,\sg)
  B\left(w',\frac{v-w'/2}{|v- w'/2|}\right)\Bigg|_{\sg=\sg_{{\bf n}}(t,\og)}\, {\rm d}^{\bot}\og\right)\\
\times \dt\left(|v|\frac{|w-w_*|}{2}t -v\cdot\left(v- \frac{w+w_*}{2}\right)\right)\, {\rm d} t\\
=\frac{2^N}{|v||w-w_*|}\zeta\left(\frac{v\cdot(
  2v-(w+w_*))}{|v||w-w_*|}\right)\int_{{\mathbb
    S}^{N-2}({\bf n})} |w'|^{-(N-2)} B(w-w_*,\sg)
B\left(w',\frac{2v-w'}{|2v-w'|}\right)\, {\rm d}^{\bot}\og
\end{multline*}
where $\sg$ in the last line is given by \eqref{(2.7)}. Thus we obtain
\begin{multline} \label{(2.18)} \qquad L_B[L_B[\vp](\cdot,
0)](w,w_*)
=\frac{2^N}{|w-w_*|}\int_{{\bRN}} \frac{\vp(v)}{|v|}\zeta\left({\bf
  n}\cdot \frac{2v-(w+w_*)}{|w-w_*|}\right) \\ \times\int_{{\mathbb
  S}^{N-2}({\bf n})}|w'|^{-(N-2)} B(w-w_*,\sg)
B\left(w',\frac{2v-w'}{|2v-w'|}\right)\, {\rm d}^{\bot}\og \, {\rm
  d}v.
\end{multline}
This proves \eqref{(2.15)}.

Finally, thanks to $N\ge 3$, we use standard approximation arguments
in order to prove that \eqref{(2.15)} still holds without the
continuity assumption on the function $b$. We skip these classical
calculations.
\end{proof}

\begin{lemma}\label{(lem2.2)}  Suppose $N\ge 3$ and let
  $B(z,\sg)$ be defined in \eqref{(1.4)} with  $b$
  satisfying \eqref{(b-upperbd)}. Let $1\le p,q\le \infty$ satisfy
  $1/p+1/q=1$.

Then, in the case where we have
\begin{equation} \label{(2.19)}
0<\gm < N-2,\quad \frac{N-1}{N-1-\gm}\le p< \frac{N}{N-1-\gm},
\end{equation}
the following estimate holds
\begin{equation} \label{(2.20)}
\left(\int_{{\bRN}}[K_{B}(v,v_*,w,w_*)]^p\, {\rm d}v\right)^{1/p} \le
C_{p}\|b\|_{L^{\infty}}^2|w-w_*|^{2\gm-N/q}.
\end{equation}

Second, in the case where we have
\begin{equation} \label{(2.21)}
\gm \ge N-2,\quad 1\le p<N,
\end{equation}
then the following estimate holds:
\begin{equation} \label{(2.22)}\left(\int_{{\bRN}}[K_B(v,v_*,w,w_*)]^{p} \, {\rm d}v\right)^{1/p}\le
C_{p}\|b\|_{L^{\infty}}^2\langle{v_*}\rangle^{2\gm-N/q} \langle{w}\rangle^{2\gm-N/q}\langle{w_*}\rangle^{2\gm
-N/q}. \end{equation}
The constants $C_p$ only depend on $N,\gm, p$.
\end{lemma}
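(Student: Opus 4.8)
The plan is to estimate the $v$-integral of $[K_B(v,v_*,w,w_*)]^p$ directly from the explicit kernel formula~\eqref{(2.9)}, exploiting the two features of $K_B$ that are recorded in the Remark following Lemma~\ref{(lem2.1)}: first, $K_B$ is supported where $|{\bf n}\cdot(2v-(w+w_*))/|w-w_*||<1$, i.e.\ in a slab around the line through $(w+w_*)/2$ in the direction ${\bf n}$; second, on that support Pythagoras gives $|w'-v_*|\ge|v-v_*|$, so the factor $|w'-v_*|^{-(N-2-\gm)}$ is controlled by $|v-v_*|^{-(N-2-\gm)}$ when $N-2-\gm\ge 0$ (the regime~\eqref{(2.19)}), and is simply bounded by a power of $\la v_*\ra\la w\ra\la w_*\ra$ when $N-2-\gm\le 0$ (the regime~\eqref{(2.21)}), using $|w'-v_*|\le \la w\ra\la w_*\ra\la v_*\ra$ from~\eqref{(2.12WV)}. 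The function $b$ contributes only the harmless factor $\|b\|_{L^\infty}^2$, and the inner $\og$-integral over ${\mathbb S}^{N-2}({\bf n})$ contributes a bounded factor depending on $N,\gm,p$ once the $|w'-v_*|$ weight has been pulled out.

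After reducing (as in the proof of Lemma~\ref{(lem2.1)}) to $v_*=0$ by translation invariance, the first step is to set up coordinates adapted to the slab: write $v=v_*+s\,{\bf n}+x$ with $x\perp{\bf n}$, so that $\zeta({\bf n}\cdot(2v-(w+w_*))/|w-w_*|)$ forces $x$ to lie in a ball of radius $O(|w-w_*|)$ in the $(N-1)$-dimensional hyperplane ${\bf n}^\perp$, while $s$ ranges over ${\mathbb R}$. The integration in $v$ thus splits as $\int_{{\mathbb R}}{\rm d}s\int_{\{|x|\lesssim|w-w_*|\}}{\rm d}x$. In the regime~\eqref{(2.19)}, on the support one has (by~\eqref{(2.10)} with $v_*=0$) $|w'-v_*|^2=|v-v_*|^2+|v-w'|^2\ge |v-v_*|^2=s^2+|x|^2$, giving the pointwise bound
\[
K_B(v,0,w,w_*)\ \le\ C\,\|b\|_{L^\infty}^2\,\frac{|w-w_*|^{\gm-1}}{|v-v_*|}\,\Big(|v-v_*|^{2}\Big)^{-\frac{N-2-\gm}{2}}\mathbf 1_{\{|x|\lesssim |w-w_*|\}}\,,
\]
i.e.\ $K_B\lesssim \|b\|_{L^\infty}^2\,|w-w_*|^{\gm-1}\,(s^2+|x|^2)^{-(N-1-\gm)/2}\mathbf 1_{\{|x|\lesssim|w-w_*|\}}$. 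Raising to the power $p$ and integrating, the $s$-integral $\int_{{\mathbb R}}(s^2+|x|^2)^{-p(N-1-\gm)/2}{\rm d}s$ converges precisely when $p(N-1-\gm)>1$, i.e.\ $p\ge (N-1)/(N-1-\gm)$ (the lower bound in~\eqref{(2.19)}), and evaluates to $C|x|^{1-p(N-1-\gm)}$; the remaining $x$-integral $\int_{\{|x|\lesssim|w-w_*|\}}|x|^{1-p(N-1-\gm)}{\rm d}x$ over ${\mathbb R}^{N-1}$ converges at the origin exactly when $1-p(N-1-\gm)>-(N-1)$, i.e.\ $p<N/(N-1-\gm)$ (the upper bound in~\eqref{(2.19)}), yielding $C|w-w_*|^{N-1+1-p(N-1-\gm)}=C|w-w_*|^{N-p(N-1-\gm)}$. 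Collecting the powers of $|w-w_*|$: from the prefactor $|w-w_*|^{p(\gm-1)}$ and from the two integrations $|w-w_*|^{N-p(N-1-\gm)}$, the total exponent is $p(\gm-1)+N-p(N-1-\gm)=2p\gm-p(N-1)+N-p= 2p\gm - pN + N$; taking the $p$-th root gives exponent $2\gm-N+N/p=2\gm-N/q$, which is exactly~\eqref{(2.20)}. In the regime~\eqref{(2.21)} one instead bounds $|w'-v_*|^{-(N-2-\gm)}=|w'-v_*|^{\gm-(N-2)}\le(\la w\ra\la w_*\ra\la v_*\ra)^{\gm-(N-2)}$ using~\eqref{(2.12WV)}, pulls this bounded-in-$v$ factor out, and is left with $\int_{{\mathbb R}}{\rm d}s\int_{\{|x|\lesssim|w-w_*|\}}|w-w_*|^{p(\gm-1)}(\text{bounded }\og\text{-integral})^p(s^2+|x|^2)^{\cdots}$ — more carefully, when $N-2-\gm\le 0$ the singular factor $|v-v_*|^{-1}$ from $|v-v_*|^{-1}$ is the only $v$-singularity (since $|w'-v_*|^{-(N-2)}$ has turned into a positive power or a bounded factor), so the $v$-integral is $\int(s^2+|x|^2)^{-p/2}\mathbf 1_{\{|x|\lesssim|w-w_*|\}}$, which converges for $p<N$ and produces, after restoring everything and undoing the $v_*=0$ reduction, the bracketed product $\la v_*\ra^{2\gm-N/q}\la w\ra^{2\gm-N/q}\la w_*\ra^{2\gm-N/q}$ as in~\eqref{(2.22)} (here one also uses $|w-w_*|\le\la w\ra\la w_*\ra$ and $|w-w_*|^{\gm-1}$ together with the $|w-w_*|$-dependence from the $x$-ball, all absorbed into the $\la\cdot\ra$ weights).

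The main obstacle I expect is a careful bookkeeping of the two-sided constraints on $p$: both the lower bound $p\ge (N-1)/(N-1-\gm)$ (convergence of the $s$-integral, i.e.\ integrability along the ``axial'' direction of the slab where $K_B$ is most singular) and the upper bound $p<N/(N-1-\gm)$ (convergence of the transverse $x$-integral near $x=0$, where $v=v_*$) must be tracked through the change of variables, and one must verify that the inner $\og$-integral over ${\mathbb S}^{N-2}({\bf n})$ — once the weight $|w'-v_*|^{-(N-2-\gm)}$ is extracted — is genuinely bounded uniformly (this uses $\sg,\sg'\in{\mathbb S}^{N-1}$ and $b\in L^\infty$, so it is $\le\|b\|_{L^\infty}^2|{\mathbb S}^{N-2}|$). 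A secondary technical point is justifying the slab change of variables $v\mapsto(s,x)$ and the interchange with the $\og$-integral; this is routine Fubini, since everything is nonnegative, but one should note that the Jacobian of $v=v_*+s{\bf n}+x$ is $1$ and that $\zeta$ is supported exactly on $|s_0|<1$ where $s_0={\bf n}\cdot(2v-(w+w_*))/|w-w_*|$ is an affine function of $v$ with the constraint reading $|x|^2\lesssim|w-w_*|^2$ after completing the square. Once these points are in place, the computation is the elementary power-counting carried out above, and the constants $C_p$ depending only on $N,\gm,p$ arise as explicit Beta-function values from the $s$- and $x$-integrals.
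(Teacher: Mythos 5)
Your pointwise reductions of the kernel are the same as the paper's: bounding the $\og$-integral by $\|b\|_{L^{\infty}}^2|{\mathbb S}^{N-2}|$ and controlling $|w'-v_*|^{-(N-2-\gm)}$ either by $|v-v_*|^{-(N-2-\gm)}$ (via \eqref{(2.10*)}) or by $(\la v_*\ra\la w\ra\la w_*\ra)^{\gm+2-N}$ (via \eqref{(2.12WV)}) are exactly \eqref{(2.24)}--\eqref{(2.25)}, and your final exponent arithmetic is right. The genuine gap is in the evaluation of $\int_{\rm supp}|v-v_*|^{-\beta}\,{\rm d}v$. The direction ${\bf n}=(v-v_*)/|v-v_*|$ inside the cutoff $\zeta\big({\bf n}\cdot\tfrac{2v-(w+w_*)}{|w-w_*|}\big)$ depends on $v$, so the constraint is \emph{not} an affine function of $v$ (your ``$s_0$ is an affine function of $v$'' is false), and the support is not a slab. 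With $v_*=0$ the constraint reads $\big|2|v|^2-v\cdot(w+w_*)\big|<|v|\,|w-w_*|$: for each direction $\sg$ emanating from $v_*$, the radius $|v-v_*|$ is confined to an interval of length $|w-w_*|$ centered at $\sg\cdot(\tfrac{w+w_*}{2}-v_*)$, i.e.\ the support is a bounded shell of radial thickness $|w-w_*|/2$ around the sphere having $[v_*,\tfrac{w+w_*}{2}]$ as a diameter. It is neither an infinite cylinder $\{|x|\lesssim|w-w_*|,\ s\in{\mathbb R}\}$ nor contained in one, and your lower-bound condition on $p$ even comes out wrong in your own picture: $\int_{\mathbb R}(s^2+|x|^2)^{-p(N-1-\gm)/2}\,{\rm d}s$ converges iff $p(N-1-\gm)>1$, which is $p>1/(N-1-\gm)$, \emph{not} $p\ge(N-1)/(N-1-\gm)$; for $N=4$, $\gm=1$ your argument would impose no lower bound at all and would ``prove'' \eqref{(2.20)} for $p$ where only the weighted bound can hold. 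That your total power of $|w-w_*|$ comes out right is a scaling coincidence (both regions scale the same way in $|w-w_*|$), but the second length scale $|2v_*-(w+w_*)|/|w-w_*|$ --- the distance from the singularity $v=v_*$ to the shell --- is invisible in the slab picture.

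The paper's computation passes instead to polar coordinates $v=v_*+\tfrac{|w-w_*|}{2}r\sg$ as in \eqref{(2.29)}, under which the cutoff becomes $|r+u\cdot\sg|<1$ with $u=\tfrac{2v_*-(w+w_*)}{|w-w_*|}$, so $r$ ranges over an interval of length $2$ whose location depends on $u$. The hypothesis $p\ge(N-1)/(N-1-\gm)$, i.e.\ $\beta=(N-1-\gm)p\ge N-1$, is precisely what makes $r\mapsto r^{N-1-\beta}$ non-increasing, so that $\int_I r^{N-1-\beta}\,{\rm d}r\le 2^{N-\beta}/(N-\beta)$ \emph{uniformly in the location of $I$}; this is \eqref{(2.28)} and gives \eqref{(2.20)}. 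When $\beta<N-1$ this uniformity fails and one picks up a factor $(1+|u|)^{N-1-\beta}$, which after multiplying by $|w-w_*|^{N-\beta}$ is the origin of the weights $(\la v_*\ra\la w\ra\la w_*\ra)^{N-1-\beta}$ in \eqref{(2.27)} and hence of \eqref{(2.22)} in the sub-case $1\le p<N-1$ --- exactly the step your ``all absorbed into the $\la\cdot\ra$ weights'' leaves unjustified. Replacing your slab parametrization by this polar-coordinate estimate (or by a direct proof of \eqref{(2.27)}--\eqref{(2.28)}) repairs the argument; the rest of your bookkeeping then goes through.
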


\begin{proof} By replacing the function $b$ with
  $b/\|b\|_{L^{\infty}}$ we can assume for notation convenience that
  $\|b\|_{L^{\infty}}=1$. Fix $w,w_*, v_*\in{\bRN}$. To prove the
  lemma we may assume that $w\neq w_*$.  Recall that $N\ge 3$ implies
  $\zeta(t)\le {\bf 1}_{(-1,1)}(t)$. Then from
  \eqref{(2.9)}-\eqref{(2.10*)} we have
  \begin{multline} \label{(2.24)} K_B(v,v_*,w,w_*)\\
    \le \frac{2^N|{\mathbb S}^{N-2}|
    }{|w-w_*|^{1-\gm}}\cdot\frac{1}{|v-v_*|^{N-1-\gm}}{\bf
      1}_{(-1,1)}\left({\bf n}\cdot
      \frac{2v-(w+w_*)}{|w-w_*|}\right)\quad {\rm for}\quad
    0<\gm<N-2,
\end{multline}
whereas for $\gm\ge N-2$ we have
\begin{multline} \label{(2.25)}
K_B(v,v_*,w,w_*)\\
\le  2^{N}|{\mathbb S}^{N-2}|\la v_*\ra
^{\gm+2-N}\la w\ra^{2\gm+1-N} \la
w_*\ra^{2\gm+1-N}\frac{1}{|v-v_*|}{\bf 1}_{(-1,1)}\left({\bf n}\cdot
  \frac{2v-(w+w_*)}{|w-w_*|}\right)
\end{multline}
where we used $N-2\ge 1$ and the inequalities in \eqref{(2.12WV)}.

Let us define
$$
J_{\beta}(w,w_*)=\int_{{\bRN}}\frac{1}{|v-v_*|^{\beta}}{\bf
  1}_{(-1,1)}\left({\bf n}\cdot
  \frac{2v-(w+w_*)}{|w-w_*|}\right) \, {\rm d}v.
$$
We need to prove that
\begin{eqnarray}\label{(2.27)}&& J_{\beta}(w,w_*)\le
|{\bSN}|(\la v_*\ra\la w\ra\la w_*\ra)^{N-1-\beta}|w-w_*|\quad
\mbox{when} \quad 0<\beta<N-1,\\
&&
\label{(2.28)} J_{\beta}(w,w_*)
 \le \frac{|{\bSN}|}{(N-\beta)}
|w-w_*|^{N-\beta} \quad \mbox{when} \quad N-1\le \beta<N.
\end{eqnarray}
To do this we use the change of variable
\[
v=v_*+\frac{|w-w_*|}{2} r\sg, \quad
{\rm d}v=\left|\frac{w-w_*}{2}\right|^{N} r^{N-1}\, {\rm d}r\, {\rm d}\sg
\]
to compute
\begin{equation} \label{(2.29)}J_{\beta}(w,w_*)
=\left|\frac{w-w_*}{2}\right|^{N-\beta}\int_{{\bSN}}I(u\cdot\sg)
\, {\rm d}\sg \end{equation}
where
$$
I(u\cdot \sg)=\int_{0}^{\infty}r^{N-1-\beta}{\bf 1}_{\{|r+u\cdot \sg|<1\}}
\, {\rm d}r,\quad u=\frac{2v_*-(w+w_*)}{|w-w_*|}.
$$

Let us now estimate $I(u\cdot \sg)$ uniformly in $\sg$.  If $u\cdot
\sg\ge 1$, then $I(u\cdot \sg)=0$.  Suppose $u\cdot \sg< 1$. If
$0<\beta<N-1$, then
\begin{equation*}
I(u\cdot \sg) \le
  2(1+|u|)^{N-1-\beta}\le 2
  \left(\frac{|\frac{w-w_*}{2}|+|\frac{w+w_*}{2}|+|v_*|}{|\frac{w-w_*}{2}|}\right)^{N-1-\beta}
\end{equation*}
which together with \eqref{(2.29)} and the third inequality in
\eqref{(2.12WV)} gives \eqref{(2.27)}. If $N-1\le \beta<N$, then
$0<N-\beta\le 1$ so that
 \begin{equation*}
 I(u\cdot \sg)
\le\frac{2^{N-\beta}}{N-\beta}
\end{equation*}
which implies \eqref{(2.28)}.

Now suppose \eqref{(2.19)} is satisfied. Then using \eqref{(2.24)} and
\eqref{(2.27)} with
\[
N-1\le \beta=(N-1-\gm)p<N
\]
we obtain \eqref{(2.20)}:
\begin{multline*}
  \left(\int_{{\bRN}}[K_{B}(v,v_*,w,w_*)]^p\, {\rm d}v\right)^{1/p}
  \le C_p{|w-w_*|^{\gm-1}}\left(J_{(N-1-\gm)p}(w,w_*)
  \right)^{1/p}\\
\le C_{p}|w-w_*|^{\gm-1}|w-w_*|^{\frac{N-p(N-1-\gm)}{p}}
  =C_{p}|w-w_*|^{2\gm-N/q}.
\end{multline*}

Next suppose \eqref{(2.21)} is satisfied.  If $N-1\le p<N$, then by
\eqref{(2.25)}-\eqref{(2.28)} with $\beta=p$ and using $|w-w_*|\le
\la w\ra\la w_*\ra$ we have
\begin{multline*}\left(\int_{{\bRN}}[K_{B}(v,v_*,w,w_*)]^p\, {\rm
      d}v\right)^{1/p}
\le C_{p}\langle v_*\rangle ^{\gm+2-N}\langle w\rangle^{2\gm+1-N}
\langle w_*\rangle ^{2\gm+1-N}
\left(J_{p}(w,w_*)\right)^{1/p}\\
\le C_{p}\langle v_*\rangle ^{\gm+2-N}\langle w\rangle^{2\gm-N/q} \langle
w_*\rangle ^{2\gm-N/q}.
\end{multline*}

Similarly if $1\le p<N-1$, then using \eqref{(2.25)}-\eqref{(2.27)}
with $\beta=p$ we have
$$\left(\int_{{\bRN}}[K_{B}(v,v_*,w,w_*)]^p\, {\rm d}v\right)^{1/p}\\
\le C_{p}\langle v_*\rangle ^{\gm-(N-1)/q}\langle w\rangle^{2\gm-N/q} \langle
w_*\rangle ^{2\gm-N/q}. $$
Since $\gm\ge N-2\ge 1$ and $1\le p<N$ imply
\[
\max\left\{\gm +2-N, \gm-\frac{N-1}{q}\right\}\le 2\gm-N/q,
\]
it follows that
\[
\max\left\{\la v_*\ra^{\gm +2-N},\,\la v_*\ra^{\gm-\frac{N-1}{q}}\right\}\le \la
v_*\ra^{2\gm-N/q}.
\]
This concludes the proof of \eqref{(2.22)}.
\end{proof}

\begin{lemma}\label{(lem2.3)}
  Let $K(v,v_*)$ be a measurable function on ${\bRRN}$ and let
$$
\forall \, v\in {\bRN}, \quad T(f)(v):=\int_{{\bRN}}K(v,v_*)f(v_*)\,
{\rm d}v_*.
$$
Assume that $1\le r<\infty,\, 0\le s<\infty$ and that there is
$0<A<\infty$ such that
\begin{equation} \label{(2.30)} \left(\int_{{\bRN}}|K(v,v_*)|^r \,
    {\rm d} v\right)^{\frac{1}{r}}\le A\la v_*\ra^{s}\qquad {\rm
    a.e.}\quad v_*\in {\bRN}.
\end{equation}
Then
\begin{equation}\label{(2.31)}
\forall \, f\in L^1_s({\bRN}), \quad \|T(f)\|_{L^r}\le A\|f\|_{L^1_s}.
\end{equation}
Furthermore let $1\le p,q\le \infty$
satisfy
\[
\frac{1}{p}=\frac{1}{q}+\frac{1}{r}-1
\]
and assume that there is $0< B<\infty$ such that
\begin{equation}\label{(2.32)}
\left(\int_{{\bRN}}(|K(v,v_*)|\la v_*\ra^{-s})^{r} \, {\rm d}v_*\right)^{\frac{1}{r}}
\le B\qquad {\rm a.e.} \quad v\in {\bRN}.
\end{equation}
Then
\begin{equation}\label{(2.33)}
\forall\,f\in L^q_s({\bRN}), \quad \|T(f)\|_{L^p}\le A^{\frac{r}{p}}
B^{1-\frac{r}{p}}\|f\|_{L^q_s}.
\end{equation}
\end{lemma}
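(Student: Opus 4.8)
The plan is to deduce \eqref{(2.31)} directly from Minkowski's integral inequality, and \eqref{(2.33)} from a three-exponent Hölder inequality once the weight $\la v_*\ra^{s}$ has been absorbed into the kernel. For \eqref{(2.31)}: since $r\ge 1$, Minkowski's integral inequality gives
\[
\|T(f)\|_{L^r}=\Big\|\int_{\bRN}K(\cdot,v_*)f(v_*)\,{\rm d}v_*\Big\|_{L^r}\le\int_{\bRN}\|K(\cdot,v_*)\|_{L^r}\,|f(v_*)|\,{\rm d}v_*,
\]
and then \eqref{(2.30)} bounds the inner $L^r$ norm by $A\la v_*\ra^{s}$, which is precisely \eqref{(2.31)}.

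For \eqref{(2.33)} I would first set $\wt K(v,v_*):=K(v,v_*)\la v_*\ra^{-s}$ and $g:=\la\cdot\ra^{s}f$, so that $T(f)(v)=\int_{\bRN}\wt K(v,v_*)g(v_*)\,{\rm d}v_*$ and $\|g\|_{L^q}=\|f\|_{L^q_s}$; hypotheses \eqref{(2.30)} and \eqref{(2.32)} then say exactly that $\|\wt K(\cdot,v_*)\|_{L^r_v}\le A$ uniformly in $v_*$, and $\|\wt K(v,\cdot)\|_{L^r_{v_*}}\le B$ for a.e.\ $v$. Next I would split, for a.e.\ $v$,
\[
|\wt K(v,v_*)g(v_*)|=|\wt K(v,v_*)|^{r\ld_1}\cdot\big(|\wt K(v,v_*)|^{r}|g(v_*)|^{q}\big)^{\ld_2}\cdot|g(v_*)|^{q\ld_3},\qquad \ld_1=\tfrac1r-\tfrac1p,\ \ld_2=\tfrac1p,\ \ld_3=\tfrac1q-\tfrac1p,
\]
and check that $\ld_1,\ld_2,\ld_3\ge 0$ (immediate from $p\ge r$ and $p\ge q$, which follow from $\tfrac1p=\tfrac1q+\tfrac1r-1$ and $q,r\ge1$) and $\ld_1+\ld_2+\ld_3=1$.

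Applying Hölder in $v_*$ with exponents $1/\ld_1,1/\ld_2,1/\ld_3$ then yields, for a.e.\ $v$, after bounding $\|\wt K(v,\cdot)\|_{L^r}^{r\ld_1}\le B^{r\ld_1}=B^{1-r/p}$,
\[
|T(f)(v)|\le B^{\,1-r/p}\,\|g\|_{L^q}^{\,q\ld_3}\Big(\int_{\bRN}|\wt K(v,v_*)|^{r}|g(v_*)|^{q}\,{\rm d}v_*\Big)^{1/p}.
\]
Raising this to the power $p$, integrating in $v$, exchanging the order of integration by Tonelli's theorem (all integrands are nonnegative), and using $\int_{\bRN}|\wt K(v,v_*)|^{r}\,{\rm d}v\le A^{r}$ gives $\|T(f)\|_{L^p}^{p}\le A^{r}B^{\,p-r}\|g\|_{L^q}^{\,q(p\ld_3+1)}$; since $q(p\ld_3+1)=p$, taking the $p$-th root is exactly \eqref{(2.33)}. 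Equivalently, \eqref{(2.33)} is Riesz--Thorin interpolation between the two endpoints $\wt K\colon L^1\to L^r$, of norm $\le A$ (which is \eqref{(2.31)} applied to $\wt K$), and $\wt K\colon L^{r'}\to L^\infty$, of norm $\le B$ (which is plain Hölder), corresponding to $(p,q)=(r,1)$ and $(p,q)=(\infty,r')$.

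I expect the only real care to be in the exponent bookkeeping: verifying that $\ld_1,\ld_2,\ld_3$ are nonnegative and sum to $1$, and that the accumulated powers of $\|g\|_{L^q}$ collapse to exactly $1$. One should also note that the degenerate configurations $\ld_1=0$ (then $p=r$, $q=1$, i.e.\ \eqref{(2.31)}), $\ld_3=0$ (the Schur-test case $p=q$), and $p=\infty$ are all subsumed in, or reduce to, the same computation, so no separate treatment is needed.
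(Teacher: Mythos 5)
Your proof is correct, but your primary argument takes a genuinely different route from the paper's. The paper handles \eqref{(2.33)} by first establishing the two endpoint bounds for the weight-absorbed operator $T_s$, namely $T_s\colon L^1\to L^r$ with norm $\le A$ (Minkowski plus \eqref{(2.30)}) and $T_s\colon L^{r'}\to L^\infty$ with norm $\le B$ (H\"older plus \eqref{(2.32)}), and then invokes the Riesz--Thorin interpolation theorem with $\theta=1-r/p$ — exactly the alternative you sketch in your final sentence of that step. Your main argument instead proves the interpolated bound directly by the three-exponent H\"older splitting
$|\wt K g|=|\wt K|^{r\ld_1}\,(|\wt K|^{r}|g|^{q})^{\ld_2}\,|g|^{q\ld_3}$
with $\ld_1=\tfrac1r-\tfrac1p$, $\ld_2=\tfrac1p$, $\ld_3=\tfrac1q-\tfrac1p$, followed by Tonelli; the exponent bookkeeping you flag does check out ($\ld_1+\ld_2+\ld_3=1$, the powers of $|\wt K|$ and $|g|$ each sum to $1$, and $q(p\ld_3+1)=p$), and the degenerate cases are handled as you say (for $q=\infty$ one necessarily has $r=1$, $p=\infty$, and the splitting degenerates to the trivial $L^1\times L^\infty$ bound). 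What each approach buys: the paper's route is shorter on the page but uses complex interpolation as a black box; yours is elementary, self-contained, and makes transparent why the constant is exactly $A^{r/p}B^{1-r/p}$ — it is the classical direct proof of the Schur-test/generalized Young inequality. Both yield identical conclusions, so either is acceptable here.
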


\begin{proof}  Let us define
$$
\forall \, v\in {\bRN}, \quad T_s(f)(v):=\int_{{\bRN}}K(v,v_*)\la
v_*\ra^{-s}f(v_*)\, {\rm d}v_*.
$$
By Minkowski inequality and \eqref{(2.30)} we have
\begin{equation}\label{(2.34)}
\forall\, f\in L^1({\bRN}), \quad \left\|T_s(f)\right\|_{L^r}\le A\|f\|_{L^1}.
\end{equation}
Also by \eqref{(2.32)} we have
\begin{equation} \label{(2.35)}
  \forall\, f\in L^{r'}({\bRN}), \quad \left\|T_s(f)\right\|_{L^{\infty}}\le  B\|f\|_{L^{r'}}
\end{equation}
where $1\le r'\le \infty$ satisfies $1/r+1/{r'}=1$.

Let $\theta=1-r/p$. By assumption on $p,q,r$ we have $0\le \theta\le
1$ and
$$
\frac{1}{p}=\frac{1-\theta}{r}+\frac{\theta}{\infty},\quad
\frac{1}{q}=\frac{1-\theta}{1}+\frac{\theta}{r'}.
$$
So by \eqref{(2.34)}, \eqref{(2.35)} and Riesz-Thorin interpolation
theorem (see e.g. \cite[Chapter~5]{MR0304972}) we have
\begin{equation}\label{(2.36)}
\forall\,f\in L^q({\bRN}), \quad \left\|T_s(f)\right\|_{L^p}\le A^{1-\theta}B^{\theta}\|f\|_{L^q}.
\end{equation}
Now if we set $(f)_s(v_*)=\la v_*\ra^sf(v_*)$, then
\[
T(f)=T_s((f)_s), \quad \|(f)_s\|_{L^1}=\|f\|_{L^1_s}, \quad
\|(f)_s\|_{L^q}=\|f\|_{L^q_s}
\]
and thus \eqref{(2.31)}-\eqref{(2.33)} follow from \eqref{(2.34)}-\eqref{(2.36)}.
\end{proof}

In order to highlight structures of inequalities, we adopt the
following notional convention: {\it Functions $f,g,h$
  appeared below are arbitrary members in the classes
  indicated. Whenever the notation (for instance) $\|f\|_{L^p_s}$
  appears, it always means that $f\in L^p_s({\mathbb R}^N)$ with the
  norm $\|f\|_{L^p_s}$; and if $\|f\|_{L^p_s}$, $\|f\|_{L^q_k}$ appear
  simultaneously, it means that $f\in L^p_s({\mathbb R}^N)\cap
  L^q_k({\mathbb R}^N)$.}

\begin{lemma}\label{(lem2.4)} Let
  $0<\alpha<N, 1\le \alpha q<N,1<p\le \infty$ with $1/p+1/q=1$. Then
\[
\sup_{v\in{\bRN}}\int_{{\bRN}}\frac{|f(v_*)|}{|v-v_*|^{\alpha}} \,
{\rm d}v_* \le
2\left(\frac{\left|{\bSN}\right|}{N-\alpha q}\right)^{\frac{\alpha }{N}} \|f\|_{L^1}^{1-\frac{\alpha q
}{N}} \|f\|_{L^p}^{\frac{\alpha q }{N}}.
\]
\end{lemma}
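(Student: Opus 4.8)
The plan is to prove the bound by the classical device of splitting the kernel $|z|^{-\alpha}$ at a radius $R>0$ which is optimized at the end. Fix $v\in{\bRN}$ and write
\[
\int_{{\bRN}}\frac{|f(v_*)|}{|v-v_*|^{\alpha}}\,{\rm d}v_*
=\underbrace{\int_{|v-v_*|\le R}\frac{|f(v_*)|}{|v-v_*|^{\alpha}}\,{\rm d}v_*}_{=:{\rm I}_R}
+\underbrace{\int_{|v-v_*|> R}\frac{|f(v_*)|}{|v-v_*|^{\alpha}}\,{\rm d}v_*}_{=:{\rm II}_R}.
\]
For ${\rm II}_R$ I would simply use $|v-v_*|^{-\alpha}\le R^{-\alpha}$ on the domain of integration (here the hypothesis $\alpha>0$ is used), so that ${\rm II}_R\le R^{-\alpha}\|f\|_{L^1}$. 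For ${\rm I}_R$ I would apply H\"older's inequality with the conjugate exponents $p,q$ and evaluate the resulting radial integral in polar coordinates,
\[
{\rm I}_R\le \|f\|_{L^p}\left(\int_{|z|\le R}|z|^{-\alpha q}\,{\rm d}z\right)^{1/q}
=\|f\|_{L^p}\left(\frac{|{\bSN}|}{N-\alpha q}\right)^{1/q}R^{(N-\alpha q)/q},
\]
where the condition $\alpha q<N$ is exactly what makes $|z|^{-\alpha q}$ integrable near the origin (the case $p=\infty$, $q=1$ being included with the obvious reading of H\"older's inequality).

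It then remains to choose $R$. Rather than genuinely minimizing the sum ${\rm I}_R+{\rm II}_R$, I would pick $R$ so that the two contributions are \emph{equal}, which is what keeps the constant clean. Setting $a:=\|f\|_{L^p}\bigl(|{\bSN}|/(N-\alpha q)\bigr)^{1/q}$ and $b:=\|f\|_{L^1}$, and assuming $a,b>0$ (the degenerate case $f\equiv 0$ being trivial), the equation $aR^{(N-\alpha q)/q}=bR^{-\alpha}$ amounts to $R^{N/q}=b/a$ and is solved by $R=(b/a)^{q/N}$. With this choice each of ${\rm I}_R$ and ${\rm II}_R$ equals $b^{1-\alpha q/N}a^{\alpha q/N}$, hence their sum is $2\,b^{1-\alpha q/N}a^{\alpha q/N}$; since $\bigl((|{\bSN}|/(N-\alpha q))^{1/q}\bigr)^{\alpha q/N}=(|{\bSN}|/(N-\alpha q))^{\alpha/N}$ this is exactly
\[
2\left(\frac{|{\bSN}|}{N-\alpha q}\right)^{\alpha/N}\|f\|_{L^1}^{1-\alpha q/N}\|f\|_{L^p}^{\alpha q/N}.
\]
Taking the supremum over $v\in{\bRN}$ (the above bound being uniform in $v$) completes the argument.

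There is no real obstacle here; the only points needing a little care are (i) the integrability condition $\alpha q<N$, which is part of the hypotheses and is precisely what renders ${\rm I}_R$ finite, and (ii) the bookkeeping of exponents in the optimization, where balancing the two terms rather than minimizing the sum is what produces the stated constant $2$ (a true minimization would give a slightly smaller but messier constant). Note that the hypothesis $\alpha q\ge 1$ plays no role in this argument.
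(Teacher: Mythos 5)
Your proof is correct and is exactly the argument the paper intends: its own proof of this lemma is the single line ``This follows from H\"older inequality and a minimizing argument,'' and your splitting at radius $R$, H\"older on the near part, trivial bound on the far part, and choice of $R$ balancing the two contributions is that argument written out in full, with the exponent bookkeeping checking out and the factor $2$ arising precisely from balancing rather than truly minimizing. You are also right that the hypothesis $\alpha q\ge 1$ is not needed here; only $\alpha q<N$ is used.
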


\begin{proof} This follows from H\"{o}lder inequality and a
  minimizing argument.
\end{proof}

\begin{lemma}\label{(lem2.5)}   Suppose $N\ge 3$ and let
  $B(z,\sg)$ be defined in \eqref{(1.4)} with the condition
  \eqref{(b-upperbd)}. For any $w,w_*\in{\bRN}$ with $w\neq w_*$, let
\begin{equation} \label{(2.37)}
\forall \, v\in{\bRN}, \quad T_{w,w_*}(f)(v):=\int_{{\bRN}}K_{B}(v,v_*,w,w_*)f(v_*)\, {\rm d}v_*
\end{equation}
for nonnegative measurable or certain integrable functions $f$
as indicated below.

\begin{itemize}
\item[(i)] Suppose $0<\gm< N-2$.
Let $p_1=(N-1)/(N-1-\gm)$. Then
\begin{equation} \label{(2.38)}\|T_{w,w_*}(f)\|_{L^{p_1}}\le C_{p_1}\|b\|_{L^{\infty}}^2
\|f\|_{L^1}\la w\ra^{\gm}\la w_*\ra^{\gm}.\end{equation}
Also if $1< p, q<\infty$ satisfy $$\frac{1}{p}=
\frac{1}{q}+\frac{1}{p_1}-1$$ then
\begin{equation} \label{(2.39)}
  \left\|T_{w,w_*}(f)\right\|_{L^{p}}\le
  C_{p}\|b\|_{L^{\infty}}^2 \|f\|_{L^q_{1}} \frac{\la
    w\ra^{1-\frac{p_1}{p}}\la w_*\ra^{1-\frac{p_1}{p}}}{
    |w-w_*|^{1-\gm-\frac{1}{p}}}.
\end{equation} And if $1< p\le
\infty, 1\le q< N/(N-1-\gm)$ satisfy $1/p+1/q=1$, then
\begin{equation} \label{(2.40)}
\left\|T_{w,w_*}(f)\right\|_{L^{\infty}}\le \frac{C_{p}\|b\|_{L^{\infty}}^2}{|w-w_*|^{1-\gm}}
\|f\|_{L^1}^{1-\frac{N-1-\gm}
{N}q} \|f\|_{L^{p}}^{\frac{N-1-\gm}{N}q}.
\end{equation}

\item[(ii)] Suppose $\gm\ge  N-2$.  Let $1< p<N$, $1/p+1/q=1$.  Then
\begin{equation} \label{(2.41)}
\left\|T_{w,w_*}(f)\right\|_{L^p}\le
C_{p}\|b\|_{L^{\infty}}^2\|f\|_{L^1_{2\gm-N/q}}\la w\ra^{2\gm-N/q}\la
w_*\ra^{2\gm-N/q}.
\end{equation}
Furthermore if $N/(N-1)< p<N$, then
\begin{equation} \label{(2.42)}
\left\|T_{w,w_*}(f)\right\|_{L^{\infty}}\le C_{p}\|b\|_{L^{\infty}}^2
\|f\|_{L^1_{\gm+2-N}}^{1-\frac{q}{N}} \|f\|_{L^p_{\gm+2-N}}^{\frac{q}{N}} \la
w\ra^{2\gm+1-N}\la w_*\ra^{2\gm+1-N}.
\end{equation}
\end{itemize}

The constants $C_p<\infty$ only depend on $N,\gm, p$.
\end{lemma}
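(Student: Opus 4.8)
The plan is to obtain each of the five inequalities mechanically, by feeding the kernel $K(v,v_*):=K_B(v,v_*,w,w_*)$ (with $w\ne w_*$ frozen), or its explicit pointwise majorants \eqref{(2.24)}, \eqref{(2.25)}, into the abstract Lemmas~\ref{(lem2.2)}, \ref{(lem2.3)} and \ref{(lem2.4)}. As in the proof of Lemma~\ref{(lem2.2)} I would first normalize $\|b\|_{L^{\infty}}=1$.

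For the two $L^1_s\to L^p$ bounds \eqref{(2.38)} and \eqref{(2.41)} I would invoke the first part of Lemma~\ref{(lem2.3)}, i.e.\ \eqref{(2.31)}: its hypothesis \eqref{(2.30)} is exactly an $L^p_v$-estimate of $K_B$, so \eqref{(2.22)} (with $r=p$, $s=2\gm-N/q$, and $A=C_p\la w\ra^{2\gm-N/q}\la w_*\ra^{2\gm-N/q}$, legitimate since $\gm\ge N-2$ forces $2\gm-N/q\ge 0$) yields \eqref{(2.41)} at once, while \eqref{(2.20)} taken at the left endpoint $r=p_1=(N-1)/(N-1-\gm)$, $s=0$ yields, in the regime $0<\gm<N-2$, the bound $\|T_{w,w_*}(f)\|_{L^{p_1}}\le C\|f\|_{L^1}|w-w_*|^{2\gm-N/q_1}$ with $q_1=(N-1)/\gm$; since $0<2\gm-N/q_1=\gm(N-2)/(N-1)\le\gm$, one replaces $|w-w_*|^{2\gm-N/q_1}$ by $(\la w\ra\la w_*\ra)^{\gm}$ through $|w-w_*|\le\la w\ra\la w_*\ra$, giving \eqref{(2.38)}. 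For the two $L^{\infty}$ bounds \eqref{(2.40)} and \eqref{(2.42)} I would instead majorize $K_B$ by \eqref{(2.24)}, resp.\ \eqref{(2.25)}, discard the indicator factor, pull the $(w,w_*)$-dependent prefactor out of the $v_*$-integral, and apply Lemma~\ref{(lem2.4)}: with $\alpha=N-1-\gm$ for \eqref{(2.40)}, where the hypothesis $q<N/(N-1-\gm)$ is precisely $\alpha q<N$; and with $\alpha=1$ applied to $\la v_*\ra^{\gm+2-N}f(v_*)$ for \eqref{(2.42)}, where the hypothesis $p>N/(N-1)$ is precisely $\alpha q=q<N$, the exponent $\gm+2-N\ge0$ because $\gm\ge N-2$, and $\|\la\cdot\ra^{\gm+2-N}f\|_{L^1}=\|f\|_{L^1_{\gm+2-N}}$, $\|\la\cdot\ra^{\gm+2-N}f\|_{L^p}=\|f\|_{L^p_{\gm+2-N}}$.

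The estimate \eqref{(2.39)} is the one requiring genuine work, and I would derive it from the interpolation part of Lemma~\ref{(lem2.3)}, i.e.\ \eqref{(2.33)}, with $r=p_1$ and weight $s=1$: the relation $1/p=1/q+1/p_1-1$ is exactly the hypothesis there, and the parameter $\theta=1-p_1/p\in[0,1]$ confines us to $p\ge p_1$. Hypothesis \eqref{(2.30)} is once more \eqref{(2.20)} at $p=p_1$ (which a fortiori holds with the weight $\la v_*\ra$). Hypothesis \eqref{(2.32)} — a bound for $\int_{\bRN}(K_B\la v_*\ra^{-1})^{p_1}\,{\rm d}v_*$ that is \emph{uniform in $v$} — is the only new computation: starting from \eqref{(2.24)}, the algebraic identity $(N-1-\gm)p_1=N-1$ is what makes it work, for the substitution $v_*=v-\rho\sg$ ($\rho>0$, $\sg\in\bSN$) cancels the Jacobian $\rho^{N-1}$ against $|v-v_*|^{-(N-1)}$ and reduces the integral to $\int_{\bSN}{\bf 1}_{\{|\sg\cdot u|<1\}}\bigl(\int_{0}^{\infty}\la v-\rho\sg\ra^{-p_1}\,{\rm d}\rho\bigr)\,{\rm d}\sg$ with $u=(2v-(w+w_*))/|w-w_*|$; the inner integral is bounded, \emph{uniformly in $v$}, by $\int_{\mathbb R}\la s\ra^{-p_1}\,{\rm d}s<\infty$ via $\la v-\rho\sg\ra\ge\la (v\cdot\sg)-\rho\ra$, and this finiteness is exactly where $p_1>1$, i.e.\ $\gm>0$, enters. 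This gives $B=C\,|w-w_*|^{-(1-\gm)}$, and then \eqref{(2.33)} produces $\|T_{w,w_*}(f)\|_{L^p}\le A^{p_1/p}B^{1-p_1/p}\|f\|_{L^q_1}$; collecting the net power of $|w-w_*|$ and bounding $|w-w_*|\le\la w\ra\la w_*\ra$ wherever convenient yields the stated form $\la w\ra^{1-p_1/p}\la w_*\ra^{1-p_1/p}/|w-w_*|^{1-\gm-1/p}$.

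Thus the main obstacle is concentrated in \eqref{(2.39)}: producing the transposed, $v$-uniform estimate \eqref{(2.32)} (the change of variables $v_*=v-\rho\sg$ together with the elementary fact that $p_1>1$ makes $\int_0^\infty\la v-\rho\sg\ra^{-p_1}\,{\rm d}\rho$ finite and essentially $v$-independent), and then the exponent bookkeeping when forming $A^{p_1/p}B^{1-p_1/p}$, including checking that the hypotheses $1<p,q<\infty$ with $1/p=1/q+1/p_1-1$ (which force $p\ge p_1$ and $q\ge1$) keep $r=p_1$ inside the admissible range of Lemma~\ref{(lem2.2)} and inside the hypotheses of Lemma~\ref{(lem2.3)}. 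Everything else is routine bookkeeping with the three abstract lemmas.
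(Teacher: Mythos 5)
Your proposal is correct and follows essentially the same route as the paper: each estimate is obtained by feeding the kernel bounds of Lemma~\ref{(lem2.2)} (or the pointwise majorants \eqref{(2.24)}, \eqref{(2.25)}) into Lemmas~\ref{(lem2.3)} and \ref{(lem2.4)}, with \eqref{(2.39)} handled by the interpolation part of Lemma~\ref{(lem2.3)} at $r=p_1$, $s=1$. The only (harmless) variation is in verifying the transposed hypothesis \eqref{(2.32)}: the paper first uses $\la v_*\ra^{-1}\le \sqrt{2}\,\la w\ra\la w_*\ra (1+|v-v_*|^2)^{-1/2}$ on the support of $K_B$ (via \eqref{(2.10*)} and \eqref{(2.12WV)}) to reduce to a convolution integral manifestly independent of $v$, whereas you bound $\sup_v\int \la v_*\ra^{-p_1}|v-v_*|^{-(N-1)}\,{\rm d}v_*$ directly in polar coordinates; both work (yours even spares the factor $(\la w\ra\la w_*\ra)^{1-p_1/p}$, which only strengthens \eqref{(2.39)}).
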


\begin{proof}
As in the proof of Lemma \ref{(lem2.2)} we can assume that $\|b\|_{L^{\infty}}=1$.
\smallskip

\noindent {\it Case~{\rm (i)}.} Suppose $0<\gm<N-2$.  By Lemma \ref{(lem2.2)}
we have
\begin{equation} \label{(2.43)}
\left(\int_{{\bRN}}[K_B(v,v_*,w,w_*)]^{p_1}\, {\rm d}v\right)^{1/{p_1}}\le
C_{p_1}|w-w_*|^{2\gm-N/{q_1}}
\end{equation}
where $q_1=(p_1)/(p_1-1)=(N-1)/\gm$.  Since
\[
0< 2\gm-N/{q_1}< \gm \quad \mbox{ and } \quad |w-w_*|\le \la w\ra\la
w_*\ra,
\]
\eqref{(2.38)} follows from \eqref{(2.31)} with $r=p_1, s=0$. Next
recalling \eqref{(2.10*)} and the second inequality in
\eqref{(2.12WV)} we see that that
\[
K_B(v,v_*,w,w_*)> 0 \quad \Longrightarrow \quad \sqrt{1+|v-v_*|^2} \le
\sqrt{2}\la w\ra\la w_*\ra \la v_*\ra
\]
so that
\begin{equation} \label{(2.44)}K_B(v,v_*,w,w_*)\la v_*\ra^{-1}\le
  \sqrt{2}\la w\ra\la w_*\ra
  \frac{K_B(v,v_*,w,w_*)}{\sqrt{1+|v-v_*|^2}}.
\end{equation}
This together with \eqref{(2.24)} and $(N-1-\gm)p_1=N-1$ gives
\begin{multline}\label{(2.45)}
\left(\int_{{\bRN}}(K_B(v,v_*,w,w_*)\la
  v_*\ra^{-1})^{p_1} \, {\rm d}v_*\right)^{1/{p_1}}\\
 \le C_{p_1}\frac{\la w\ra\la w_*\ra}{
  |w-w_*|^{1-\gm}}\left(\int_{{\bRN}}\frac{{\rm
      d}v_*}{(1+|v-v_*|^2)^{p_1/2} |v-v_*|^{N-1}}\right)^{1/{p_1}}
=C_{p_1}\frac{\la w\ra\la w_*\ra}{
  |w-w_*|^{1-\gm}}.
\end{multline}
Note that the above integral is finite since $p_1>1$.  If we set
$$A_{w,w_*}:=C_{p_1}
|w-w_*|^{2\gm-N/{q_1}},\quad B_{w,w_*}:=C_{p_1}\frac{\la w\ra\la
  w_*\ra}{ |w-w_*|^{1-\gm}}$$ then we see from \eqref{(2.43)} and
\eqref{(2.45)} that Lemma \ref{(lem2.3)} can be used for $T_{w,w_*}(f)$
with $r=p_1$ and $s=1$, and thus for all $f\in L^q_1({\bRN})$
$$
\left\|T_{w,w_*}(f)\right\|_{L^p} \le
\left(A_{w,w_*}\right)^{\frac{p_1}{p}}\left(B_{w,w_*}\right)^{1-\frac{p_1}{p}}\|f\|_{L^q_1}
=C_p\frac{\la w\ra^{1-\frac{p_1}{p}}\la w_*\ra^{1-\frac{p_1}{p}}}
{|w-w_*|^{1-\gm-\frac{1}{p}}}\|f\|_{L^q_1}
$$
where we have computed (using the definitions of $p_1, q_1$)
$$
\frac{p_1}{p}\left(2\gm-\frac{N}{q_1}\right)-\left(1-\frac{p_1}{p}\right)(1-\gm)
=\frac{1}{p}+\gm-1.
$$
This proves \eqref{(2.39)}. To prove \eqref{(2.40)} we use \eqref{(2.24)}
to get
$$\left|T_{w,w_*}(f)(v)\right|\le \frac{2^N|{\mathbb S}^{N-2}|}{|w-w_*|^{1-\gm}}
\int_{{\bRN}}\frac{|f(v_*)|\, {\rm d}v_*}{|v-v_*|^{N-1-\gm}}. $$
Since $1\le (N-1-\gm)q<N$, it follows from Lemma \ref{(lem2.4)} that
$$
\left\|T_{w,w_*}(f)\right\|_{L^{\infty}}\le \frac{C_{p}}{|w-w_*|^{1-\gm}}
\|f\|_{L^1} ^{1-\frac{N-1-\gm}
{N}q} \|f\|_{L^p} ^{\frac{N-1-\gm}{N}q}.
$$
\smallskip

\noindent {\it Case~{\rm (ii)}.} Suppose $\gm\ge N-2$. In this case we recall
the inequality \eqref{(2.22)}. Let $1< p<N$ and $1/p+1/q=1$. Then applying Lemma \ref{(lem2.3)} to
$T_{w,w_*}(f)$ with $r=p, s=2\gm-N/q$ gives \eqref{(2.41)}.  Finally
suppose $N/(N-1)< p<N$. Recalling \eqref{(2.37)} and using
\eqref{(2.25)} we have
\begin{equation*}
\left|T_{w,w_*}(f)(v)\right|\le C_{N,\gm}\la w\ra^{2\gm+1-N} \la w_*\ra^{2\gm+1-N}
\int_{{\bRN}} \frac{\la v_*\ra^{\gm+2-N}}{|v-v_*|}|f(v_*)|\, {\rm
  d}v_*.
\end{equation*}
Since $q=p/(p-1)<N$, it follows from Lemma \ref{(lem2.4)}
that \begin{equation*}
  \int_{{\bRN}} \frac{\la
    v_*\ra^{\gm+2-N}}{|v-v_*|}|f(v_*)|\, {\rm d}v_*\le C_{p}
  \|f\|_{L^1_{\gm+2-N}}^{1-\frac{q}{N}}
  \|f\|_{L^p_{\gm+2-N}}^{\frac{q}{N}}.
\end{equation*}
This proves~\eqref{(2.42)}.
\end{proof}

Let $f,g,h$ be nonnegative measurable functions on ${\bRN}$. Define for any
$s\ge 0$
$$
(f)_s(v):=\la v\ra ^sf(v).
$$
Then applying the inequality $\la v\ra\le \la v'\ra\la v_*'\ra$ we
have
\begin{equation*}
\begin{cases} \displaystyle
 \left(Q^{+}(f,g)\right)_s\le Q^{+}((f)_s, (g)_s),\vspace{0.2cm} \\ \displaystyle
\left(Q^{+}(f, Q^{+}(g, h))\right)_s \le Q^{+}\left((f)_s,
  Q^{+}((g)_s, (h)_s)\right),
\end{cases}
\end{equation*}
and so on and so forth. Consequently we have for all $s\ge 0$ and $1\le p\le
\infty$:
\begin{equation} \label{(2.46)}
\left\|Q^{+}(f, Q^{+}(g, h))\right\|_{L^p_s}\le
\left\|Q^{+}\left((f)_s,Q^{+}\left((g)_s, (h)_s\right)\right)\right\|_{L^p}
\end{equation}
 provided that the right hand side makes sense.

 Now we are going to prove the $L^{p}$ and $L^{\infty}$ boundedness of
 the iterated operator $Q^{+}(f, Q^{+}(g,h))$.  Let
\begin{equation} \label{(2.47)}
N_{\gm}=\left\{\begin{array}{ll} \left[\frac{N-1}{\gm}\right] & {\rm
if}\,\,\,0<\gm<N-2
\\ \\
\,\,\,\,\,\,1 & {\rm if}\,\,\,\gm\ge N-2
\end{array}\right.
\end{equation}
where $[x]$ denotes the largest integer not exceeding $x$.

\begin{theorem}\label{(theo2.1)} Suppose $N\ge 3$ and let
  $B(z,\sg)$ be defined by \eqref{(1.4)} with the condition
  \eqref{(b-upperbd)}. Given any $s\ge 0$ we have:
\begin{itemize}
\item[(i)]  Suppose $0<\gm<N-2$. Let
\,$N_{\gm}$ be defined in \eqref{(2.47)} and let
$$
p_n=\frac{N-1}{N-1-n\gm} \in (1,\infty],\quad n=1,2,\dots,N_{\gm}.
$$
Then
\begin{itemize}
\item[(a)] For all $n=1,2,\dots,N_{\gm}$, we have
  \begin{equation}\label{(2.48)}
\left\|Q^{+}(f, Q^{+}(g,h))\right\|_{L^{p_{1}}_s}\le
  C_{p_1}\|b\|_{L^{\infty}}^2\|f\|_{L^1_{s}}
\|g\|_{L^1_{s+\gm}} \|h\|_{L^1_{s+\gm}}.
\end{equation}
\item[(b)] If $1\le n\le N_{\gm}-1$, then
\begin{equation}\label{(2.49)}
  \left\|Q^{+}(f, Q^{+}(g,h))\right\|_{L^{p_{_{n+1}}}_s}\le
  C_{p_{_{n+1}}}\|b\|_{L^{\infty}}^2\|f\|_{L^{p_n}_{s+1}}
  \|g\|_{L^1_{s+\gm_1}} \|h\|_{L^1_{s+\gm_1}}^{1-\theta_n}
\|h\|_{L^{p_n}_{s+\gm_1}}^{\theta_n}
\end{equation}
where
\begin{equation}\label{(2.50)}
\gm_1=\max\{\gm \, ,\ 1\},\quad
\theta_n=\frac{1}{N}\left(1-\frac{N-2}{n}\right)^{+},\quad  n\ge 1.
\end{equation}

\item[(c)] Finally if $n=N_{\gm}$, then
\begin{equation}\label{(2.51)}
  \left\|Q^{+}(f,Q^{+}(g,h))\right\|_{L^{\infty}_s}\le
  C_{\infty}\|b\|_{L^{\infty}}^2 \|f\|_{L^1_s} ^{1-\alpha_1} \|f\|_{L^{p_{_{N_{\gm}}}}_s}^{\alpha_1}
\|g\|_{L^1_{s+\gm_*}} \|h\|_{L^1_{s+\gm_*}}^{1-\alpha_2}
\|h\|_{L^{p_{_{N_{\gm}}}}_s}^{\alpha_2} \end{equation}
where $\gm_*=(\gm-1)^{+}$ and
\begin{equation}\label{(2.52)}
0<\alpha_1 := \left( \frac{N-1}{\gm N_{\gm}}\right) \left(
  \frac{N-1-\gm}{N} \right) <1,\quad 0\le \alpha_2 :=\left(
  \frac{N-1}{\gm N_{\gm}}\right) \left( \frac{(1-\gm)^{+}}{N} \right)<1.
\end{equation}
\end{itemize}

\item[(ii)] Suppose $\gm\ge N-2$.  Let $1< p<N$, $1/p+1/q=1$. Then
  \begin{equation}\label{(2.53)}
    \left\|Q^{+}(f, Q^{+}(g,h))\right\|_{L^p_s}\le
    C_{p}\|b\|_{L^{\infty}}^2\|f\|_{L^1_{s+2\gm-N/q}}
\|g\|_{L^1_{s+2\gm-N/q}}\|h\|_{L^1_{s+2\gm
-N/q}}.
\end{equation}
Furthermore if $N/(N-1)<p<N$, then
\begin{equation}\label{(2.54)}
  \left\|Q^{+}\left(f, Q^{+}(g,h)\right)\right\|_{L^{\infty}_s}
  \le C_{p}\|b\|_{L^{\infty}}^2 \|f\|_{L^1_{s+\gm+2-N}}^{1-\frac{q}{N}}
  \|f\|_{L^p_{s+\gm+2-N}}^{\frac{q}{N}}
  \|g\|_{L^1_{s+2\gm+1-N}}\|h\|_{L^1_{s+2\gm+1-N}}.
\end{equation}
\end{itemize}

All the constants $C_p<\infty$ only depend on $N,\gm,p$.
\end{theorem}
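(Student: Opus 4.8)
\emph{Proof strategy.} The plan is to derive each of the stated bounds by combining the pointwise kernel representation of Lemma~\ref{(lem2.1)}, the operator estimates on $T_{w,w_*}$ of Lemma~\ref{(lem2.5)}, and the convolution bound of Lemma~\ref{(lem2.4)}, with the two outer integrations in $w$ and $w_*$ carried out by hand. Since all quantities involve only $|f|,|g|,|h|$, we may assume $f,g,h\ge 0$, and (dividing $b$ by $\|b\|_{L^\infty}$) that $\|b\|_{L^\infty}=1$. Rewriting \eqref{(2.4)} with the notation $T_{w,w_*}$ of \eqref{(2.37)} gives
\[
Q^{+}\left(f,Q^{+}(g,h)\right)(v)=\intt_{{\bRRN}}T_{w,w_*}(f)(v)\,g(w)\,h(w_*)\,{\rm d}w\,{\rm d}w_*,
\]
so that by Minkowski's integral inequality in $L^{p}_v({\bRN})$ (or the triangle inequality when $p=\infty$),
\[
\left\|Q^{+}\left(f,Q^{+}(g,h)\right)\right\|_{L^{p}}\le\intt_{{\bRRN}}\left\|T_{w,w_*}(f)\right\|_{L^{p}_v}g(w)\,h(w_*)\,{\rm d}w\,{\rm d}w_*.
\]
By \eqref{(2.46)} every weighted ($s>0$) inequality follows from its $s=0$ version applied to $(f)_s,(g)_s,(h)_s$, using $\|(f)_s\|_{L^q_k}=\|f\|_{L^q_{s+k}}$; so we only treat $s=0$, noting that the $s=0$ bounds in particular show the left-hand side is finite, which legitimizes the use of \eqref{(2.46)}.

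It then remains, case by case, to insert the relevant estimate of Lemma~\ref{(lem2.5)} for $\|T_{w,w_*}(f)\|_{L^p_v}$ and integrate in $(w,w_*)$. The easiest case is (ii), $\gm\ge N-2$: \eqref{(2.41)} and \eqref{(2.42)} have the form $\|T_{w,w_*}(f)\|_{L^p_v}\le C_p\,(\text{norm of }f)\,\la w\ra^{a}\la w_*\ra^{a}$ with \emph{no} singular factor $|w-w_*|^{-\beta}$, so integrating against $g(w)h(w_*)$ produces directly the weighted $L^1$ norms of $g,h$ in \eqref{(2.53)}--\eqref{(2.54)}. In case (i), $0<\gm<N-2$: \eqref{(2.38)} again has no singular factor, giving \eqref{(2.48)} at once. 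The bounds \eqref{(2.49)} and \eqref{(2.51)} use \eqref{(2.39)} with the admissible pair $p=p_{n+1}$, $q=p_n$ (admissible exactly because $1/p_{n+1}=1/p_n+1/p_1-1$), resp.\ \eqref{(2.40)} with $p=p_{N_\gm}$, $q=q_{N_\gm}=(N-1)/(N_\gm\gm)$ (so that $\tfrac{N-1-\gm}{N}q=\alpha_1<1$, which is the admissibility requirement of Lemma~\ref{(lem2.5)}, while $q_{N_\gm}\ge1$ by the definition \eqref{(2.47)} of $N_\gm$). In both cases the $(w,w_*)$-integrand then carries a factor $|w-w_*|^{-\beta}$ with $\beta=\gm(n+2-N)/(N-1)$, resp.\ $\beta=1-\gm$. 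When $\beta\le 0$ one uses $|w-w_*|\le\la w\ra\la w_*\ra$ to bound $|w-w_*|^{-\beta}\le(\la w\ra\la w_*\ra)^{|\beta|}$, so only weighted $L^1$ norms of $g,h$ remain --- exactly the regime $\theta_n=0$, resp.\ $\alpha_2=0$. When $\beta>0$ one freezes $w$, evaluates the convolution $\int_{{\bRN}}|w-w_*|^{-\beta}h(w_*)\la w_*\ra^{a}\,{\rm d}w_*$ via Lemma~\ref{(lem2.4)}, and integrates the leftover $g(w)\la w\ra^{a}$ to get $\|g\|_{L^1_{a}}$.

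The point requiring care --- and the origin of the peculiar exponents $\theta_n$, $\alpha_1$, $\alpha_2$ --- is the interpolation bookkeeping in the singular regime $\beta>0$. A direct application of Lemma~\ref{(lem2.4)} with the space $L^{p_n}$ (resp.\ $L^{p_{N_\gm}}$) for $h$ need not be admissible, since the constraint $1\le\beta q<N$ may force a strictly smaller exponent; one then applies Lemma~\ref{(lem2.4)} with an intermediate Lebesgue space and interpolates back from $L^1\cap L^{p_n}$ (resp.\ $L^1\cap L^{p_{N_\gm}}$), and a short computation (using $q_n=(N-1)/(n\gm)$) shows the composite power of $\|h\|_{L^{p_n}}$ is precisely $\theta_n=\tfrac1N\left(1-\tfrac{N-2}{n}\right)^{+}$, resp.\ that of $\|h\|_{L^{p_{N_\gm}}}$ is $\alpha_2$; this uses only $\alpha_2<1$. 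All remaining verifications are routine if tedious: that each moment weight generated --- the exponents $|\beta|$ above and the weights $1-p_1/p$ appearing in \eqref{(2.39)} --- stays $\le\gm_1=\max\{\gm,1\}$ (resp.\ $\le\gm_*=(\gm-1)^{+}$ in \eqref{(2.51)}), which is precisely where the constraint $n\le N_\gm-1$ and the definition of $N_\gm$ intervene; that the power of $\|f\|_{L^{p_{N_\gm}}}$ produced by \eqref{(2.40)} matches $\alpha_1$; and the elementary interpolation inequalities between $L^1$, $L^{p_n}$ and the intermediate spaces.
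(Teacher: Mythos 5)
Your proposal is correct and follows essentially the same route as the paper's proof: the same kernel representation plus Minkowski reduction to $\|T_{w,w_*}(f)\|_{L^p}$ from Lemma~\ref{(lem2.5)}, the same dichotomy on the sign of the exponent of $|w-w_*|$ in the $(w,w_*)$-integral, and the same use of Lemma~\ref{(lem2.4)} to generate the exponents $\theta_n,\alpha_1,\alpha_2$. The only (harmless) divergence is your detour through an intermediate Lebesgue space to respect the stated hypothesis $1\le\alpha q$ of Lemma~\ref{(lem2.4)}: the paper simply applies that lemma with $0<\alpha q_n<1$ (its proof in fact only needs $0<\alpha q<N$), and your interpolation back from $L^1\cap L^{p_n}$ returns exactly the same exponent $\theta_n=\beta q_n/N$.
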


\begin{remark} Observe that in the case $\gamma \ge N-2$, the iterated
  operator maps (forgetting about the weights)
  $L^1 \times L^1 \times L^1$ to $L^p$ for $1<p<N$. This can be
  recovered heuristically from Lions' theorem in~\cite{MR1284432}
  and~\cite{MR2081030} that states that $Q^+$ maps $L^1 \times H^s$ to
  $H^{s+(N-1)/2}$ for $s \in \mathbb R$. Then $L^1$ is contained in
  $H^{-N/2-0}$ and applying twice Lions' theorem one gets that the
  iterated operator maps $L^1\times L^1 \times L^1$ to
  $H^{2(N-1)/2-N/2-0} = H^{N/2-1-0}$. And the Sobolev embedding for
  the space $H^{N/2-1}$ is precisely $L^N$.
\end{remark}

\begin{proof}  The proof is a direct application of the inequalities
obtained in Lemma \ref{(lem2.4)} and Lemma \ref{(lem2.5)}. We can assume
that $f,g,h $ are all nonnegative.  And because of \eqref{(2.46)}, we need
only to prove the theorem for $s=0$.  By the integral representation of
$Q^{+}(f, Q^{+}(g,h))$ and the definition of $T_{w,w_*}$ we have
$$
Q^{+}(f,Q^{+}(g,h))(v)=\intt_{{\bRRN}}T_{w,w_*}(f)(v)g(w) h(w_*)\,
{\rm d}w \, {\rm d}w_*,
$$
and therefore
\begin{equation}\label{(2.55)}
\left\|Q^{+}\left(f, Q^{+}(g,h)\right)\right\|_{L^p}\le
\intt_{{\bRRN}}\left\|T_{w,w_*}(f)\right\|_{L^p}g(w) h(w_*) \, {\rm
  d}w \, {\rm d}w_*
\end{equation}
for all $1\le p\le\infty$.
\smallskip

\noindent
{\it Case~{\rm (i)}.} Suppose $0<\gm<N-2$. By \eqref{(2.38)} we have
\[
\left\|T_{w,w_*}(f)\right\|_{L^{p_1}}\le
C_{p_1}\|b\|_{L^{\infty}}^2\|f\|_{L^1}\la w\ra^{\gm}\la w_*\ra^{\gm}
\]
and so by \eqref{(2.55)}
$$
\left\| Q^{+}(f, Q^{+}(g,h)) \right\|_{L^{p_1}}\le
C_{p_1}\|b\|_{L^{\infty}}^2\|f\|_{L^1}\|g\|_{L^1_{\gm}}\|h\|_{L^1_{\gm}}.
$$
This proves \eqref{(2.48)} (with $s=0$).

- Suppose $1\le n\le N_{\gm}-1$. By definition of $p_n$ we have
$$
\frac{1}{p_{n+1}}=\frac{1}{p_n}+\frac{1}{p_1}-1,\quad
n=1,2,\dots,N_{\gm}-1.
$$
By \eqref{(2.39)} and \eqref{(2.55)} we have
\begin{equation} \label{(2.56)}
\begin{cases}\displaystyle
\left\|T_{w,w_*}(f)\right\|_{L^{p_{n+1}}}\le C_{p_{n+1}}\|b\|_{L^{\infty}}^2
\frac{\la w\ra^{1-\frac{p_1}{p_{n+1}}}\la
w_*\ra^{1-\frac{p_1}{p_{n+1}}}}{|w-w_*|^{1-\gm-\frac{1}{p_{n+1}}}}\|f\|_{L^{p_n}_1},
\vspace{0.2cm}  \\ \displaystyle
\|Q^{+}(f,Q^{+}(g,h))\|_{L^{p_{_{n+1}}}} \vspace{0.2cm} \\
\displaystyle \mbox{ } \qquad
\le C_{p_{_{n+1}}}\|b\|_{L^{\infty}}^2\|f\|_{L^{p_n}_1}
\intt_{{\bRRN}}\frac{\la w\ra^{1-\frac{p_1}{p_{n+1}}}\la w_*\ra^{1-\frac{p_1}{p_{n+1}}}}{|w-w_*|^{1-\gm-\frac{1}{p_{n+1}}}}g(w)h(w_*) {\rm
d}w \, {\rm d}w_*.
\end{cases}
\end{equation}
 Let $q_n\ge 1$ be defined by
\[
\frac{1}{q_n}+\frac{1}{p_n}=1, \quad \mbox{i.e.} \quad
q_n=\frac{N-1}{n\gm}.
\]
We have
$$
1-\gm-\frac{1}{p_{n+1}}=\frac{n+2-N}{N-1}\gm, \quad
q_n\left(1-\gm-\frac{1}{p_{n+1}}\right)=1-\frac{N-2}{n}.
$$
If $n\le N-2$, then $1-\gm-1/p_{n+1}\le 0$ and using
$|w-w_*|\le \la w\ra\la w_*\ra$ we have $$\frac{\la
  w\ra^{1-\frac{p_1}{p_{n+1}}}\la
  w_*\ra^{1-\frac{p_1}{p_{n+1}}}}{|w-w_*|^{1-\gm-\frac{1}{p_{n+1}}}}
\le \la w\ra^{\gm}\la w_*\ra^{\gm}$$
and so by \eqref{(2.56)} we have
\[
\left\|Q^{+}\left(f, Q^{+}(g,h)\right)\right\|_{L^{p_{_{n+1}}}} \le C_{p_{_{n+1}}}
\|b\|_{L^{\infty}}^2\|f\|_{L^{p_n}_1}
\|g\|_{L^1_{\gm}}\|h\|_{L^1_{\gm}}.
\]

\noindent If $n>N-2$, then $0<q_n(1-\gm- 1/p_{n+1})<1$ so that
applying Lemma \ref{(lem2.4)} (with $q=q_{n},\,
\alpha=1-\gm-1/p_{n+1}$) and recalling the definition of
$\theta_n$ we have
\begin{equation}\label{(2.57)}
  \intt_{{\bRRN}}\frac{\la w\ra^{1-\frac{p_1}{p_{n+1}}}\la w_*\ra^{1-\frac{p_1}{p_{n+1}}}}
  {|w-w_*|^{1-\gm-\frac{1}{p_{n+1}}}}g(w)h(w_*) \, {\rm d}w \, {\rm d}w_*  \le
  C_{p_{_{n+1}}}\|g\|_{L^1_1}(\|h\|_{L^1_1})^{1-\theta_n}(\|h\|_{L^{p_n}_1})^{\theta_n}
\end{equation}
and thus \eqref{(2.49)} (with $s=0$) follows from \eqref{(2.56)} and
\eqref{(2.57)}.

- Now let $n=N_{\gm}$. Let us recall that
\[
q_{N_\gm}=\frac{N-1}{N_{\gm}\gm}, \quad
N_{\gm}> \frac{N-1}{\gm}-1>0,
\]
hence
\begin{equation} \label{(2.58)}
q_{N_{\gm}}(N-1-\gm)< N-1.
\end{equation}
Using \eqref{(2.40)} and \eqref{(2.55)} (for the $L^{\infty}$ norm) we
have
\begin{equation} \label{(2.59)}
\begin{cases} \displaystyle
\left\|T_{w,w_*}(f)\right\|_{L^{\infty}}\le \frac{C_{\infty}\|b\|_{L^{\infty}}^2}{|w-w_*|^{1-\gm}}
\|f\|_{L^1}^{1-\frac{N-1-\gm}
{N}q_{N_{\gm}}} \|f\|_{L^{p_{N_{\gm}}}}^{\frac{N-1-\gm}{N}
q_{N_{\gm}} }, \vspace{0.2cm} \\ \displaystyle
\left\|Q^{+}\left(f,Q^{+}(g,h)\right)\right\|_{L^{\infty}} \vspace{0.2cm} \\ \displaystyle
\mbox{ } \qquad \le C_{\infty}\|b\|_{L^{\infty}}^2 \|f\|_{L^1} ^{1-\frac{N-1-\gm}
{N}q_{N_{\gm}}} \left( \|f\|_{L^{p_{N_{\gm}}}}\right)^{\frac{N-1-\gm}{N} q_{N_{\gm}}}
\intt_{{\bRRN}}\frac{g(w)h(w_*)}{|w-w_*|^{1-\gm}} \, {\rm d}w \, {\rm
    d}w_*.
\end{cases}
\end{equation}
If $0<\gm<1$, then from \eqref{(2.58)} and $N\ge 3$ we have
$0<q_{N_{\gm}}(1-\gm)<N-1$ so that using Lemma \ref{(lem2.3)} gives
\begin{equation} \label{(2.60)}
  \intt_{{\bRRN}}\frac{g(w)h(w_*)}{|w-w_*|^{1-\gm}} \, {\rm d}w \,
  {\rm d}w_*
 \le C_{\gm}\|g\|_{L^1}
\|h\|_{L^1}^{1-\frac{1-\gm}{N}q_{N_{\gm}}}
\|h\|_{L^{p_{N_{\gm}}}}^{\frac{1-\gm}{N}q_{N_{\gm}}}.
\end{equation}
If $1\le \gm<N-2$, then $|w-w_*|^{\gm-1}\le \la
w\ra^{\gm-1} \la w_*\ra^{\gm-1} $ and so
\begin{equation}\label{(2.61)}
  \intt_{{\bRRN}} \frac{g(w)h(w_*)}{|w-w_*|^{1-\gm}} \, {\rm
    d}w \, {\rm d}w_* \le\|g\|_{L^1_{\gm-1}}\|h\|_{L^1_{\gm-1}}.
\end{equation}
Thus \eqref{(2.51)} (with $s=0$) follows from \eqref{(2.59)},
\eqref{(2.60)} and \eqref{(2.61)}.
\smallskip

\noindent {\it Case~{\rm(ii)}.}  Suppose $\gm\ge N-2$ and let $1<p<N$, $1/p+1/q=1$. Then
\eqref{(2.53)} (with $s=0$) follows from \eqref{(2.55)} and
\eqref{(2.41)}. Furthermore if $N/(N-1)<p<N$, then \eqref{(2.54)} (with $s=0$)
follows from \eqref{(2.55)} (for the $L^{\infty}$ norm) and
\eqref{(2.42)}.
\end{proof}

\section {Iteration and Decomposition of Solutions}
\label{sec3}

We begin by the study of the process of iteration of the collision
operator and decomposition of solutions though the following
lemma. Roughly speaking the strategy of the decomposition is the
following. We use the Duhamel representation formula to decompose the
flow associated with the equation into two parts, one of which is more
regular than the initial datum, while the amplitude of the other
decreases exponentially fast with time, and we repeat this process in
order to increase the smoothness, starting each time a new flow having
the smoother part of the previous solution as initial datum. Each time
we start a new flow, we depart from the true solution, since the
initial datum is not the real solution, and we keep track of this
error through a Lipschitz stability estimate. Finally the times of the
decomposition are chosen in such a way that the time-decay of the
non-smooth parts dominates the time-growth in these Lipschitz
stability errors.

\begin{lemma}\label{(lem3.1} Let
  $B(z,\sg)$ be defined in \eqref{(1.4)} with $\gm\in(0,2]$ and with the condition
  \eqref{(Grad-1)}.  Let $f_t\in
  L^1_2({\bRN})$ be a mild solution of equation~\eqref{(B)}. Let us
  define
  \begin{equation} \label{(3.1)} \forall \, s, t\ge 0, \quad
    E_{s}^{t}(v):=\exp\left(-\int_{s}^{t}
      \int_{{\bRN}}|v-v_*|^{\gm}f_{\tau}(v_*) \, {\rm d}v_* \, {\rm
        d}\tau \right).
\end{equation}

Given  any $t_0\ge 0$ we also define for all $t\ge t_0$
\begin{eqnarray}\label{(3.2)}&&  f^{0}_t(v)=f_t(v),\quad  h^0_t(v) =0,\\
&&\label{(3.3)}f_{t}^{n}(v)
=\int_{t_0}^{t}E_{t_1}^{t}(v)\int_{t_0}^{t_1}
Q^{+}\left( f^{n-1}_{t_1},\,Q^{+}(f^{n-1}_{t_2}, f^{n-1}_{t_2})E_{t_2}^{t_1}
\right)(v) \, {\rm d}t_2 \, {\rm d}t_1, \\
&&\label{(3.4)}
 h_t^{1}(v)=f_{t_0}(v)E_{t_0}^{t}(v)+\int_{t_0}^{t}
Q^{+}\left(f_{t_1},\,f_{t_0}E_{t_0}^{t_1}\right)(v)E_{t_1}^{t}(v)\, {\rm d}t_1,\\
&&\label{(3.5)}h_t^{n}(v)=h_t^{1}(v) +\int_{t_0}^{t}E_{t_1}^{t}(v)
\int_{t_0}^{t_1}Q^{+}\left(f_{t_1},\,Q^{+}(f_{t_2},
\,h^{n-1}_{t_2})E_{t_2}^{t_1}\right)(v) \, {\rm d}t_2 \, {\rm d}t_1 \\
\nonumber &&
\qquad\,\,\, +\int_{t_0}^{t}E_{t_1}^{t}(v) \int_{t_0}^{t_1}
Q^{+}\left(f_{t_1},\,Q^{+}(f^{n-1}_{t_2},
h^{n-1}_{t_2})E_{t_2}^{t_1} \right)(v)\, {\rm d}t_2 \, {\rm d}t_1
\nonumber\\
&& \qquad\,\,\, +\int_{t_0}^{t}E_{t_1}^{t}(v)
\int_{t_0}^{t_1}Q^{+}\left(h^{n-1}_{t_1},\,
Q^{+}\left(f^{n-1}_{t_2}, f^{n-1}_{t_2}\right)E_{t_2}^{t_1}\right)(v)
\, {\rm d}t_2 \, {\rm d}t_1
\nonumber\end{eqnarray}
for $n=1,2,3, \dots$

Then $f_t^n\ge 0$, $h_t^n\ge 0$ and there is a null set $Z\subset
{\bRN}$ which is independent of $t$ and $n$ such that for all $v\in
{\bRN}\setminus Z$
\begin{equation} \label{(3.6)}
 \forall\, t\in[t_0,\infty),\quad n=1,2,3,\dots, \quad
 f_t(v)=f_{t}^{n}(v)+h_t^{n}(v).
\end{equation}
\end{lemma}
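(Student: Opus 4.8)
The plan is to rewrite the equation in Duhamel (exponential) form, iterate this representation once inside the gain operator $Q^{+}$ to obtain the case $n=1$, and then propagate the decomposition from level $n-1$ to level $n$ by a purely algebraic computation based on the bilinearity and symmetry of $Q^{+}$. Throughout one works off the Lebesgue-null set $Z_0$ that comes with the definition of a mild solution, and one checks that this same $Z_0$ serves for every $t\ge t_0$ and every $n$, so that one may take $Z=Z_0$.

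First I would establish the Duhamel representation. Writing $R_\tau(v):=\int_{\bRN}|v-v_*|^{\gm}f_\tau(v_*)\,{\rm d}v_*$, the loss term is $Q^{-}(f_\tau,f_\tau)(v)=R_\tau(v)f_\tau(v)$ (recall the normalization \eqref{(Grad-1)}, so $A_0=1$), and $E_{s}^{t}(v)=\exp(-\int_s^t R_\tau(v)\,{\rm d}\tau)$ is exactly the integrating factor attached to the loss term. Since $\sup_\tau\|f_\tau\|_{L^1_2}<\infty$ and $\gm\le 2$ we have $0\le R_\tau(v)\le C\la v\ra^{\gm}$ uniformly in $\tau$, hence $\tau\mapsto\int_{t_0}^{\tau}R_s(v)\,{\rm d}s$ is finite and locally Lipschitz, $0<E_{t_0}^{t}(v)\le 1$, and $E_{s}^{t}=E_{t_1}^{t}E_{s}^{t_1}$. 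For $v\notin Z_0$ the map $t\mapsto f_t(v)$ is locally absolutely continuous with $\tfrac{{\rm d}}{{\rm d}t}f_t(v)=Q^{+}(f_t,f_t)(v)-R_t(v)f_t(v)$ for a.e.\ $t$; multiplying by $1/E_{t_0}^{t}(v)$, recognizing a time derivative, integrating over $[t_0,t]$ and multiplying back by $E_{t_0}^{t}(v)$ yields
\[
f_t(v)=f_{t_0}(v)E_{t_0}^{t}(v)+\int_{t_0}^{t}Q^{+}(f_{t_1},f_{t_1})(v)\,E_{t_1}^{t}(v)\,{\rm d}t_1 .
\]
Then I would insert this same identity (with $t$ replaced by $t_1$) for the \emph{second} argument $f_{t_1}$ inside $Q^{+}(f_{t_1},f_{t_1})$; using that $Q^{+}(F,\cdot)$ is linear and commutes with the time integral (Tonelli, all integrands nonnegative) together with \eqref{(3.3)}--\eqref{(3.4)}, this produces exactly $f_t=h_t^{1}+f_t^{1}$, i.e.\ \eqref{(3.6)} for $n=1$. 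The only measure-theoretic point is that the substitution occurs at a pre-collisional velocity, but since the collision transform pushes Lebesgue-null sets to Lebesgue-null sets this only modifies the integrand on a null set.

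Next I would run the induction on $n$. Assuming $f_t=f_t^{n-1}+h_t^{n-1}$ for all $t\ge t_0$, I substitute $f_{t_i}=f_{t_i}^{n-1}+h_{t_i}^{n-1}$ ($i=1,2$) into the double-integral term $\int_{t_0}^{t}E_{t_1}^{t}\int_{t_0}^{t_1}Q^{+}\big(f_{t_1},Q^{+}(f_{t_2},f_{t_2})E_{t_2}^{t_1}\big)\,{\rm d}t_2\,{\rm d}t_1$ of the $n=1$ identity and expand by bilinearity and by the symmetry $Q^{+}(F,G)=Q^{+}(G,F)$ (which holds after the reduction to $b$ even). With $a=f_{t_2}^{n-1}$, $b=h_{t_2}^{n-1}$, the identity $Q^{+}(a+b,a+b)=Q^{+}(a,a)+Q^{+}(a,b)+Q^{+}(a+b,b)$ together with $f_{t_1}=f_{t_1}^{n-1}+h_{t_1}^{n-1}$ splits the integrand into exactly the four terms $Q^{+}\big(f_{t_1}^{n-1},Q^{+}(f_{t_2}^{n-1},f_{t_2}^{n-1})E_{t_2}^{t_1}\big)$, $Q^{+}\big(h_{t_1}^{n-1},Q^{+}(f_{t_2}^{n-1},f_{t_2}^{n-1})E_{t_2}^{t_1}\big)$, $Q^{+}\big(f_{t_1},Q^{+}(f_{t_2}^{n-1},h_{t_2}^{n-1})E_{t_2}^{t_1}\big)$ and $Q^{+}\big(f_{t_1},Q^{+}(f_{t_2},h_{t_2}^{n-1})E_{t_2}^{t_1}\big)$; once integrated, the first reproduces $f_t^{n}$ by \eqref{(3.3)} and the last three, added to $h_t^{1}$, reproduce $h_t^{n}$ by the very form of \eqref{(3.5)}. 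Hence the double integral equals $f_t^{n}+(h_t^{n}-h_t^{1})$, and since $f_t=h_t^{1}+(\text{double integral})$ this gives $f_t=f_t^{n}+h_t^{n}$; the case $n=1$ is the same computation with $f^0=f$, $h^0=0$. Nonnegativity of $f_t^{n}$, $h_t^{n}$ I would get by a trivial separate induction, since $f_t\ge 0$, $E_{s}^{t}\ge 0$ and $Q^{+}$ preserves nonnegativity, and \eqref{(3.3)}--\eqref{(3.5)} are built from $f$, $E$, $Q^{+}$ and time-integration; finiteness a.e.\ then follows from \eqref{(3.6)}, since $0\le f_t^{n},h_t^{n}\le f_t<\infty$ off $Z_0$.

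I expect the genuine obstacle to be not the algebra of the induction — routine once the groupings dictated by \eqref{(3.3)}--\eqref{(3.5)} are identified — but the rigorous justification of the first step: working with a mild solution that is only defined almost everywhere and is merely $L^1$-valued, one must verify the local absolute continuity in $t$, the finiteness and strict positivity of the integrating factor, the legitimacy of differentiating under the product, and, most delicately, that re-inserting the Duhamel formula inside $Q^{+}$ is admissible while keeping one fixed null set $Z$ independent of $t$ and of $n$.
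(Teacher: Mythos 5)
Your proposal is correct and follows essentially the same route as the paper: the Duhamel representation, its re-insertion into the second argument of $Q^{+}$ to get the case $n=1$, and then an induction whose bilinear/symmetric expansion produces exactly the four terms grouped as in \eqref{(3.3)}--\eqref{(3.5)}. The only cosmetic difference is that the paper takes $Z=\bigcup_n Z_n$ as a countable union of the null sets produced at each stage rather than arguing that the single set $Z_0$ suffices, but this does not affect the validity of the argument.
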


\begin{proof}  In the following we denote by $Z_0, Z_1, Z_2, \dots \subset
  {\bRN}$ some null sets (i.e. ${\rm meas}(Z_n)=0$) which are
  independent of the time variable $t$. The decomposition
  \eqref{(3.6)} is based on the Duhamel representation formula for the
  solution $f_t$: for all $v\in {\bRN}\setminus Z_0$
  \begin{equation} \label{(3.7)}
    \forall \, t\ge t_0, \quad f_t(v)=
    f_{t_0}(v)E_{t_0}^t(v)+\int_{t_0}^{t} Q^{+}(f_{t_1},
    f_{t_1})(v)E_{t_1}^t(v)\, {\rm d}t_1.
\end{equation}
 Here we note that in the definition of $E_{s}^{t}(v)$ we have used the assumption  \eqref{(Grad-1)}, i.e.,
$A_0=1$.  Applying \eqref{(3.7)} to $f_{t}$ at time $t=t_1$ and inserting it
into the second argument of $Q^{+}(f_{t_1}, f_{t_1})$ we obtain for
all $t\ge t_0$ and all $v\in {\bRN}\setminus Z_1$
\begin{equation} \label{(3.8)}
f_t(v)= h_t^1(v) +\int_{t_0}^{t}E_{t_1}^{t}(v)\int_{t_0}^{t_1}
Q^{+}\left( f_{t_1},\,Q^{+}(
f_{t_2}, f_{t_2})E_{t_2}^{t_1}\right)(v)
\, {\rm d}t_2 \, {\rm d}t_1.
\end{equation}
That is, we have the decomposition
$$
\forall\, t\ge t_0,\quad \forall\, v\in {\bRN}\setminus Z_1, \quad
f_t(v)=h_t^{1}(v)+f_{t}^{1}(v).
$$

Suppose for some $n\ge 1$, the decomposition $f_t(v)=
h^{n}_t(v)+f_t^{n}(v)$ holds for all $t\ge t_0$ and all $v\in
{\bRN}\setminus Z_n$.  Let us insert $f_{t_2}=h_{t_2}^n+f_{t_2}^n$ and
$f_{t_1}=h_{t_1}^n+f_{t_1}^n$ into $ Q^{+}\left( f_{t_1},\,Q^{+}(
  f_{t_2}, f_{t_2})E_{t_2}^{t_1}\right)$ in the following way:
\begin{equation*}
\begin{cases} \displaystyle
  Q^{+}\left( f_{t_1},\,Q^{+}( f_{t_2}, f_{t_2})E_{t_2}^{t_1} \right)=
  Q^{+}\left(f_{t_1},\,Q^{+}(f_{t_2}, h_{t_2}^n)E_{t_2}^{t_1}\right)
  +Q^{+}\left(f_{t_1},\,Q^{+}(f_{t_2}, f_{t_2}^n)E_{t_2}^{t_1}\right),
  \vspace{0.2cm} \\ \displaystyle Q^{+}\left(f_{t_1},\,Q^{+}(f_{t_2},
    f_{t_2}^n)E_{t_2}^{t_1}\right)
  =Q^{+}\left(f_{t_1},\,Q^{+}(f_{t_2}^n,
    h_{t_2}^n)E_{t_2}^{t_1}\right)+
  Q^{+}\left(f_{t_1},\,Q^{+}(f_{t_2}^n,
    f_{t_2}^n)E_{t_2}^{t_1}\right), \vspace{0.2cm} \\ \displaystyle
  Q^{+}\left(f_{t_1},\,Q^{+}(f_{t_2}^n, f_{t_2}^n)E_{t_2}^{t_1}\right)
  = Q^{+}\left(h_{t_1}^n,\,Q^{+}(f_{t_2}^n, f_{t_2}^n)E_{t_2}^{t_1}
  \right)+Q^{+}\left(f_{t_1}^n,\,Q^{+}(f_{t_2}^n,
    f_{t_2}^n)E_{t_2}^{t_1}\right).
\end{cases}
\end{equation*}
Then
\begin{multline*}
 Q^{+}\left( f_{t_1},\,Q^{+}(
f_{t_2}, f_{t_2})E_{t_2}^{t_1}\right)=
Q^{+}\left(f_{t_1},\,Q^{+}(f_{t_2}, h_{t_2}^n)E_{t_2}^{t_1}\right)
+Q^{+}\left(f_{t_1},\,Q^{+}(f_{t_2}^n, h_{t_2}^n)E_{t_2}^{t_1}\right) \\
+Q^{+}\left(h_{t_1}^n,\,Q^{+}(f_{t_2}^n, f_{t_2}^n)E_{t_2}^{t_1}
\right)+Q^{+}\left(f_{t_1}^n,\,Q^{+}(f_{t_2}^n,
  f_{t_2}^n)E_{t_2}^{t_1}\right).
\end{multline*}
Inserting this into \eqref{(3.8)} yields
$$
\forall\, t\ge t_0,\quad\forall\,\in {\bRN}\setminus Z_{n+1}, \quad
f_t(v)= h^{n+1}_t(v)+f_t^{n+1}(v).
$$
This proves the lemma by induction, and the null set $Z$ can be chosen
$Z=\bigcup_{n=1}^{\infty}Z_n$. \end{proof}

Note that the above iterations make sense since $f^0_t
= f_t\ge 0$ and, by induction, all $f^n_t$ are nonnegative.  Note also that if
$t_0>0$, then, by moment production estimate (Theorem \ref{(theo1.0)}),
we have
\[
\forall \, t \ge t_0,\quad f_t\in \bigcap_{s\ge 0} L^1_s({\bRN}).
\]
This enables us to use moment estimates for $Q^{+}(f, Q^{+}(g,h))$:
\begin{equation} \label{(3.9)} \forall \, s\ge 0, \quad
  \left\|Q^{+}\left(f, Q^{+}(g,h)\right)\right\|_{L^1_s} \le
  \|f\|_{L^1_{s+\gm}}\|g\|_{L^1_{s+2\gm}}\|h\|_{L^1_{s+2\gm}}.
\end{equation}

Before we can show the regularity property of $f^n_t$ and the
exponential decay (in norm) of $h^{n}_t$ we need further preparation.

Recall that the Sobolev space $H^{s}({\bRN})$ $(s>0)$ is a subspace of $f\in
L^2({\bRN})$ defined by
$$f\in H^{s}({\bRN}) \Longleftrightarrow  \|f\|_{H^{s}}=\|\widehat{f}\|_{L^2_s}
=\left(\int_{{\bRN}}\la \xi\ra ^{2s}|\widehat{f}(\xi)|^2 \, {\rm d}\xi\right)^{1/2}
<\infty$$
where $\widehat{f}(\xi)$ is the Fourier transform of $f$:
$$
\widehat{f}(\xi)={\mathcal F}(f)(\xi)=\int_{{\bRN}}f(v)e^{-{\rm i}\xi\cdot v}
\, {\rm d}v.
$$
As usual we denote the \emph{homogeneous} seminorm as
$$\|f\|_{\dot{H}^{s}}=\|\widehat{f}\|_{\dot{L}^2_{s}}
=\left(\int_{{\bRN}}|\xi|^{2s}|\widehat{f}(\xi)|^2 \, {\rm
    d}\xi\right)^{1/2}.
$$
The norm and seminorm are related by
\begin{equation}\label{(3.13-1)}
\| f \|_{\dot{H^s}} \le \|f\|_{H^{s}}\le (2\pi)^{N/2}2^{s/2}\|f\|_{L^2}+
2^{s/2}\|f\|_{\dot{H}^{s}}.\end{equation}

It is easily proved (see \cite[pp. 416-417]{MR1663589}) that if the
angular function $b$ satisfies
\begin{equation}\label{(3.12)}
\|b\|_{L^2}^2:=\left|{\mathbb
    S}^{N-2}\right|\int_{0}^{\pi}b(\cos\theta)^2\sin^{N-2}\theta
\, {\rm d}\theta <\infty
\end{equation}
then
$Q^{+}: L^2_{N+\gm}({\bRN})\times L^2_{N+\gm}({\bRN})\to L^2({\bRN})$
is bounded with
\begin{equation}\label{(3.13-2)}\|Q^{+}(f,g)\|_{L^2}\le C\|b\|_{L^2}\|f\|_{L^2_{N+\gm}}\|g\|_{L^2_{N+\gm}}
\end{equation}
where $C<\infty$ only depends on $N$ and $\gm$.  This together with
\eqref{(3.13-1)}, \eqref{(3.13-2)} and the estimate of
$\|Q^{+}(f,g)\|_{\dot{H}^{s}}$ obtained in \cite{MR1639275,MR1663589}
for $s=(N-1)/2$ leads to the following lemma.

\begin{lemma}[\cite{MR1639275,MR1663589}]\label{(lemBD)}
 Let $B(z,\sg)$ be defined in
  \eqref{(1.4)} with the condition \eqref{(3.12)}. Then $Q^{+}:
  L^2_{N+\gm}({\bRN})\times L^2_{N+\gm}({\bRN})\to
  H^{\frac{N-1}{2}}({\bRN})$ is bounded with the estimate
 $$\|Q^{+}(f,g)\|_{H^{\frac{N-1}{2}}}\le
 C\|b\|_{L^2}\|f\|_{L^2_{N+\gm}} \|g\|_{L^2_{N+\gm}} $$
where $C<\infty$ only depends on $N,\gm$.
\end{lemma}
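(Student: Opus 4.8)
The plan is to read this lemma off from three ingredients that are already in hand: the $L^2$ boundedness \eqref{(3.13-2)}, the homogeneous smoothing estimate for $Q^+$ established in \cite{MR1639275,MR1663589}, and the elementary comparison \eqref{(3.13-1)} between the inhomogeneous norm $\|\cdot\|_{H^{s}}$ and the pair $(\|\cdot\|_{L^2},\|\cdot\|_{\dot H^{s}})$. So I would first fix $s=(N-1)/2$ and observe that it suffices to bound separately $\|Q^+(f,g)\|_{L^2}$ and the homogeneous seminorm $\|Q^+(f,g)\|_{\dot H^{(N-1)/2}}$, each by a quantity of the form $C(N,\gm)\,\|b\|_{L^2}\,\|f\|_{L^2_{N+\gm}}\,\|g\|_{L^2_{N+\gm}}$; the weight $\la v\ra^{N+\gm}$ is dictated by the need to absorb the kinetic factor $|v-v_*|^{\gm}\le\la v\ra^{\gm}\la v_*\ra^{\gm}$ together with a Jacobian of order $N$ coming from the parametrisation \eqref{(1.2)} of the collision. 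The $L^2$ piece is exactly \eqref{(3.13-2)}, whose proof (recalled in \cite[pp.~416--417]{MR1663589}) uses the change of variables \eqref{(2-change)}, Minkowski's integral inequality in the $\sg$ variable, and Young-type convolution estimates.

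The analytically substantial part is the homogeneous seminorm, and here I would simply invoke the gain-of-regularity estimate of \cite{MR1639275,MR1663589}: under the $L^2$ angular condition \eqref{(3.12)} one has $\|Q^+(f,g)\|_{\dot H^{(N-1)/2}}\le C\|b\|_{L^2}\|f\|_{L^2_{N+\gm}}\|g\|_{L^2_{N+\gm}}$. I would only sketch the mechanism behind it: writing $Q^+$ in Carleman/Radon form as an integral of the product $f\,g$ over codimension-one affine subspaces — equivalently, analysing it on the Fourier side via a Bobylev-type identity — one sees that the averaging over the $(N-1)$-dimensional collision sphere produces a gain of $(N-1)/2$ derivatives, the cutoff entering precisely through $b\in L^2$, and the moment weight $L^2_{N+\gm}$ again accommodating the growth $|v-v_*|^{\gm}$ and the loss coming from the singular geometry. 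This is also consistent with Lions' regularity theorem \cite{MR1284432,MR2081030}, $Q^+:L^1\times H^{\sigma}\to H^{\sigma+(N-1)/2}$.

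With these two estimates available the conclusion is immediate: applying \eqref{(3.13-1)} with $s=(N-1)/2$ gives
\[
\|Q^+(f,g)\|_{H^{(N-1)/2}}\le (2\pi)^{N/2}2^{(N-1)/4}\,\|Q^+(f,g)\|_{L^2}+2^{(N-1)/4}\,\|Q^+(f,g)\|_{\dot H^{(N-1)/2}},
\]
and substituting \eqref{(3.13-2)} and the homogeneous estimate above yields the claimed bound with a constant $C=C(N,\gm)$. The only genuine obstacle is the homogeneous smoothing estimate itself, which I do not reprove (it is the content of \cite{MR1639275,MR1663589}); everything else is bookkeeping with the moment weights and the elementary interpolation-free comparison \eqref{(3.13-1)}.
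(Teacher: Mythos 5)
Your proposal is correct and matches the paper's own (very brief) justification exactly: the paper likewise combines the $L^2$ bound \eqref{(3.13-2)}, the homogeneous $\dot H^{(N-1)/2}$ smoothing estimate cited from \cite{MR1639275,MR1663589}, and the norm comparison \eqref{(3.13-1)} to assemble the lemma. Nothing further is needed.
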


The following lemma will be useful to prove the $H^1$-regularity of
$f^n_t$ in the decomposition $f_t=f_t^n+h_t^n$.

\begin{lemma}\label{(lem3.2)}
Let $B(z,\sg)$ be defined in \eqref{(1.4)} with $\gamma\in(0,2]$ and with the condition \eqref{(Grad-1)}. Let
  $F_t\in {\mathcal B}_{1,0,1}^{+}({\bRN})$ be a conservative measure
  strong solution of equation \eqref{(B)}. Then for any $t_0>0$ we
  have
\begin{equation}\label{(3.10)}
\forall\,t\ge t_0, \ \forall\, v\in{\bRN},\quad \int_{{\bRN}}|v-v_*|^{\gm}\, {\rm d}F_t(v_*)
\ge a\la v\ra^{\gm}\ge a\qquad \end{equation}
where
\begin{equation} \label{(3.11)} a:=\left[{\mathcal
      K}_4\Big(1+\max\{1,\,1/t_0\}\Big)^{2/\gm}\right]^{-(2-\gm)/2}
\end{equation}
and ${\mathcal K}_4={\mathcal K}_4(1,1+N)\,(>1)$ is the constant in
\eqref{(1.14)}. In particular if $t_0\ge 1$, then $a$ is independent
of $t_0$.

Moreover for any $t_0\le t_1\le t<\infty$, let $E_{t_1}^{t}(v)$ be defined as in
\eqref{(3.1)} for the measure $F_\tau$, i.e.
$$
E_{t_1}^{t}(v):=\exp\left(-\int_{t_1}^{t}
  \int_{{\bRN}}|v-v_*|^{\gm}\, {\rm d}F_\tau(v_*) \, {\rm
  d}\tau \right).
$$
Then for any $f\in L^{\infty}(\bRN) \cap L^1_2(\bRN)\cap H^1({\bRN})$ we have
$fE_{t_1}^{t}\in H^1({\bRN})$ and
\begin{equation} \label{(3-H-regularity)}
\left\|f E_{t_1}^{t}\right\|_{H^1(\bRN)}\le  C\left[\|f\|_{L^{\infty}(\bRN)}+\|f\|_{L^1_2(\bRN)}
+\|f\|_{H^1(\bRN)} \right] e^{-a(t-t_1)}(1+t-t_1)\end{equation}
where $C$ only depends on $N,\gm$.
\end{lemma}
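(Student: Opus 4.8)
The statement splits into the pointwise lower bound \eqref{(3.10)} and the Sobolev estimate \eqref{(3-H-regularity)}, which I would treat separately.

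\textit{The lower bound \eqref{(3.10)}.} Since $F_t\in{\mathcal B}_{1,0,1}^{+}({\bRN})$ has mass $1$, zero mean and temperature $1$, expanding the square gives
\[
\int_{{\bRN}}|v-v_*|^2\,{\rm d}F_t(v_*)=|v|^2+N\ge \la v\ra^2 .
\]
To bring the exponent $\gm\in(0,2]$ into play I would use Lyapunov's inequality (log-convexity of $p\mapsto\int_{{\bRN}}|v-v_*|^p\,{\rm d}F_t(v_*)$, recalling that $F_t$ is a probability measure), interpolating $2$ between $\gm$ and $4$: with $\theta=2/(4-\gm)\in(0,1]$, so that $1/\theta=(4-\gm)/2$ and $(1-\theta)/\theta=(2-\gm)/2$,
\[
\int_{{\bRN}}|v-v_*|^2\,{\rm d}F_t(v_*)\le\Big(\int_{{\bRN}}|v-v_*|^{\gm}\,{\rm d}F_t(v_*)\Big)^{\theta}\Big(\int_{{\bRN}}|v-v_*|^{4}\,{\rm d}F_t(v_*)\Big)^{1-\theta}.
\]
Combining this with $|v-v_*|\le\la v\ra\la v_*\ra$, hence $\int_{{\bRN}}|v-v_*|^4\,{\rm d}F_t(v_*)\le\la v\ra^4\|F_t\|_4$, and rearranging yields
\[
\int_{{\bRN}}|v-v_*|^{\gm}\,{\rm d}F_t(v_*)\ge\frac{\big(\la v\ra^2\big)^{1/\theta}}{\big(\la v\ra^4\|F_t\|_4\big)^{(1-\theta)/\theta}}=\frac{\la v\ra^{\gm}}{\|F_t\|_4^{(2-\gm)/2}} .
\]
It then only remains to feed in the moment production bound \eqref{(1.12)} with $s=4$, namely $\|F_t\|_4\le{\mathcal K}_4(1+1/t)^{2/\gm}$, to note that the right-hand side is decreasing in $t$ and that $1/t_0\le\max\{1,1/t_0\}$, and to recall ${\mathcal K}_4={\mathcal K}_4(1,1+N)$; this gives \eqref{(3.10)}--\eqref{(3.11)}. (When $\gm=2$ the exponent $(2-\gm)/2$ vanishes and $a=1$, consistently.)

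\textit{The Sobolev estimate: set-up.} Write $E_{t_1}^{t}=e^{-\Phi}$ with $\Phi(v)=\int_{t_1}^{t}\int_{{\bRN}}|v-v_*|^{\gm}\,{\rm d}F_{\tau}(v_*)\,{\rm d}\tau\ge a\la v\ra^{\gm}(t-t_1)\ge a(t-t_1)$ by the first part; in particular $0\le E_{t_1}^{t}\le e^{-a(t-t_1)}$, so $\|fE_{t_1}^{t}\|_{L^2}\le\|f\|_{L^2}e^{-a(t-t_1)}\le\big(\|f\|_{L^{\infty}}\|f\|_{L^1_2}\big)^{1/2}e^{-a(t-t_1)}$. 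For the gradient, $E_{t_1}^{t}\in L^{\infty}$ has weak gradient $-E_{t_1}^{t}\nabla\Phi$ and $f\in H^1\cap L^{\infty}$, so the Leibniz rule gives $\nabla(fE_{t_1}^{t})=E_{t_1}^{t}\nabla f-fE_{t_1}^{t}\nabla\Phi$, whence, using $E_{t_1}^{t}\le 1$,
\[
\|\nabla(fE_{t_1}^{t})\|_{L^2}\le\|\nabla f\|_{L^2}\,e^{-a(t-t_1)}+\Big(\int_{{\bRN}}|f(v)|^2\,\big(E_{t_1}^{t}(v)\big)^2\,|\nabla\Phi(v)|^2\,{\rm d}v\Big)^{1/2}.
\]
Together with \eqref{(3.13-1)} (for $s=1$, and $\|g\|_{\dot H^1}=(2\pi)^{N/2}\|\nabla g\|_{L^2}$ by Plancherel), \eqref{(3-H-regularity)} follows once the last integral is bounded by $C\,(t-t_1)^2e^{-2a(t-t_1)}\big(\|f\|_{L^{\infty}}+\|f\|_{L^1_2}\big)^2$.

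\textit{The key estimate, which is also the main obstacle.} The delicate point is that $\nabla_v|v-v_*|^{\gm}=\gm|v-v_*|^{\gm-2}(v-v_*)$ is singular on $\mathrm{supp}\,F_{\tau}$ when $\gm<1$, so $\nabla\Phi$ need not be bounded; one must exploit $f\in L^{\infty}$ and the fact that the singularity $|v-v_*|^{2(\gm-1)}$ is locally integrable because $2(\gm-1)>-2\ge-N$. Concretely, $|\nabla\Phi(v)|\le\gm\int_{t_1}^{t}\int_{{\bRN}}|v-v_*|^{\gm-1}\,{\rm d}F_{\tau}(v_*)\,{\rm d}\tau$, and Cauchy--Schwarz on $[t_1,t]\times{\bRN}$ with the measure ${\rm d}\tau\,{\rm d}F_{\tau}$ (of total mass $t-t_1$) gives $|\nabla\Phi(v)|^2\le\gm^2(t-t_1)\int_{t_1}^{t}\int_{{\bRN}}|v-v_*|^{2(\gm-1)}\,{\rm d}F_{\tau}(v_*)\,{\rm d}\tau$. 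Inserting this, using $E_{t_1}^{t}\le e^{-a(t-t_1)}$ and Fubini, reduces everything to the elementary inequality
\[
\int_{{\bRN}}|f(v)|^2\,|v-v_*|^{2(\gm-1)}\,{\rm d}v\le C\big(\|f\|_{L^{\infty}}+\|f\|_{L^1_2}\big)^2\la v_*\ra^2 ,
\]
obtained by splitting at $|v-v_*|=1$: on $\{|v-v_*|\le1\}$ bound $|f|^2\le\|f\|_{L^{\infty}}^2$ and integrate the (integrable) singularity, on $\{|v-v_*|>1\}$ use $|v-v_*|^{2(\gm-1)}\le\la v\ra^2\la v_*\ra^2$ together with $|f|^2\le\|f\|_{L^{\infty}}|f|$. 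Since $\int_{t_1}^{t}\int_{{\bRN}}\la v_*\ra^2\,{\rm d}F_{\tau}(v_*)\,{\rm d}\tau=(t-t_1)\|F_{\tau}\|_2=(1+N)(t-t_1)$, one recovers exactly the required bound on $\int|f|^2(E_{t_1}^{t})^2|\nabla\Phi|^2$. The same local-integrability remark ($2(\gm-1)>-N$) is also what legitimizes, by mollification, that $E_{t_1}^{t}\in H^1_{\rm loc}({\bRN})$ and that the Leibniz rule used above is valid, so that $fE_{t_1}^{t}\in H^1({\bRN})$ indeed.
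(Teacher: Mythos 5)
Your proposal is correct and follows essentially the same route as the paper: the lower bound comes from the same Hölder/log-convexity interpolation of the second moment between the $\gm$-th and fourth moments combined with the moment production estimate, and the $H^1$ bound uses the same Cauchy--Schwarz estimate on $\nabla\Phi$ together with the local integrability of $|v-v_*|^{2(\gm-1)}$ (your single splitting at $|v-v_*|=1$ merely unifies the paper's two cases $\gm<1$ and $\gm\ge 1$), with the same closing remark that a mollification argument justifies the weak Leibniz rule.
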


\begin{proof} Let
\[
L_s(F_t)(v):=\int_{{\bRN}}|v-v_*|^{s}\, {\rm d}F_t(v_*).
\]
By conservation of mass, momentum and energy, we have
$L_2(F_t)(v)=N+|v|^2>\la v\ra ^2$ and so
the inequality
\eqref{(3.10)} is obvious when $\gm=2$. Suppose $0<\gm<2$. In this
case we use the inequality $|v-v_*|\le \la v\ra\la v_*\ra$ and the moment production estimate \eqref{(1.12)} with $s=4$
to get
\[
L_4(F_t)(v)\le \la v\ra^4\left\|F_t\right\|_{4} \le \la v\ra^4{\mathcal
  K}_4\left(1+1/t_0\right)^{\frac{2}{\gm}}.
\]
Then from the decomposition
$2=\gm\cdot\fr{2}{4-\gm}+4\cdot\fr{2-\gm}{4-\gm}$
and using H\"{o}lder inequality
we have
\begin{eqnarray*}
\la v\ra^2&<& L_2(F_t)(v) \le
[L_{\gm}(F_t)(v)]^{\frac{2}{4-\gm}} [L_4(F_t)(v)]^{\frac{2-\gm}{4-\gm}}\\
&\le &
[L_{\gm}(F_t)(v)]^{\frac{2}{4-\gm}}\la v\ra^{\frac{4(2-\gm)}{4-\gm}}\left[{\mathcal
K}_4(1+1/t_0)^{\frac{2}{\gm}}\right]^{\frac{2-\gm}{4-\gm}}.
\end{eqnarray*}
This gives
\[
\forall\,t\ge t_0, \quad \la v\ra^{\gm}\le L_{\gm}(F_t)(v)\left[{\mathcal
  K}_4(1+1/t_0)^{\frac{2}{\gm}}\right]^{\frac{2-\gm}{2}}\le \frac{1}{a}
L_{\gm}(F_t)(v)
\]
and \eqref{(3.10)} follows.

The proof of \eqref{(3-H-regularity)} is based on the following a
priori estimates.  First of all we have
$$|\p_{ v_j}E_{t_1}^{t}(v)|^2\le \gm^2 e^{-2a(t-t_1)}(t-t_1)\int_{t_1}^{t}{\rm d}\tau\int_{{\bRN}}|v-v_*|^{2(\gm -1)}{\rm d}F_{\tau}(v_*)$$
where we used Cauchy-Schwartz inequality, $\|F_{\tau}\|_0=1$, and $E_{t_1}^{t}(v)\le e^{-a(t-t_1)}$.
\smallskip

\noindent {\it Case 1: $0<\gm<1$.} In this case we have
$-N<2(\gm-1)<0$ so that
$$
\int_{{\bRN}}|f(v)|^2|v-v_*|^{2(\gm-1)}\, {\rm d}v
\le C\|f\|_{L^{\infty}}^2
+\|f\|_{L^2}^2$$
hence
$$
\sum_{j=1}^N\|f \p_{v_j}E_{t_1}^{t}(v) \|_{L^2}^2\le C \left(\|f\|_{L^{\infty}}^2
+\|f\|_{L^2}^2 \right)e^{-2a(t-t_1)}(t-t_1)^2.$$
\smallskip

\noindent
{\it Case 2: $ \gm \ge 1$.} Since $\gm\le 2$, this implies
$|v-v_*|^{2(\gm-1)}\le \la v\ra^2 \la v_*\ra^2$.
Then recalling $\|F_{\tau}\|_2=1+N$ and $f\in L^{\infty}({\bRN})$ we have
$$
\int_{{\bRN}}\left(\int_{{\bRN}}|f(v)|^2
  |v-v_*|^{2(\gm-1)}\, {\rm d}v\right)\, {\rm d}F_{\tau}(v_*)
\le(1+N)\|f\|_{L^{\infty}} \|f\|_{L^1_2}
$$
which shows that
\[
\sum_{j=1}^N\left\|f \p_{v_j}E_{t_1}^{t}\right\|_{L^2}^2
\le C \|f\|_{L^{\infty}} \|f\|_{L^1_2} e^{-2a(t-t_1)}(t-t_1)^2.
\]

Combing the two cases and using
$\|f\|_{L^{\infty}}\|f\|_{L^1_2}\le
\frac{1}{2}\|f\|_{L^{\infty}}^2+\frac{1}{2}\|f\|_{L^1_2}^2$
we obtain
\begin{equation*}
\left\|f E_{t_1}^{t} \right\|_{H^1(\bRN)}^2
\le  C\left(\|f\|_{L^{\infty}(\bRN)}^2+\|f\|_{L^1_2(\bRN)}^2+
\|f\|_{H^1(\bRN)}^2
\right)e^{-2a(t-t_1)}(1+t-t_1)^2.
\end{equation*}

A full justification requires standard smooth approximation arguments,
for instance one may replace $f$ and $|v-v_*|^{\gm}$ with
$f*\chi_{\vep}$ and $(\vep^2+|v-v_*|^2)^{\gm/2}$ respectively, and
then let $\vep\to 0^{+}$, etc., where
$\chi_{\vep}(v)=\vep^{-N}\chi(\vep^{-1}v)$ and $\chi\ge 0$ is a smooth
mollifier. We omit the details here.
\end{proof}

\begin{theorem}\label{(theo3.1)}  Suppose $N\ge 3$ and let $B(z,\sg)$
  be defined by \eqref{(1.4)} with $\gamma\in(0, 2]$ and with the
  conditions \eqref{(Grad-1)}-\eqref{(b-upperbd)}.  Let
  $f_t\in L^1_{1,0,1}({\bRN})$ be a conservative mild solution of
  equation \eqref{(B)}.

  Then for any $t_0>0$, the positive decomposition $f_t=f^n_t+h^n_t$
  given in (3.1)-(3.5) on $[t_0,\infty)$ satisfies the following
  estimates for all $s\ge 0$:
\begin{eqnarray} \label{(3.14)}&&
\sup_{n\ge N_{\gm}+1,\, t\ge t_0} \left\|f_{t}^{n}\right\|_{L^{\infty}_s}\le
C_{t_0, s}\\
&&\label{(3.15)}\sup_{n\ge N_{\gm}+2,\, t\ge t_0} \left\|f_{t}^{n}\right\|_{H^1}\le
C_{t_0}\\
&&\label{(3.16)}
\forall\, t\ge t_0,\ \forall\,n\ge 1, \quad \|h^n_t\|_{L^1_s}\le C_{t_0,s,n}e^{-\frac{a}{2}(t-t_0)} \\
&&\label{(3.15*)}
\forall\, t_1,t_2\ge t_0, \quad \sup_{n\ge
  1}\left\|f^n_{t_1}-f^n_{t_2}\right\|_{L^1_s},\quad
\sup_{n\ge 1}\left\|h^n_{t_1}-h^n_{t_2}\right\|_{L^1_s}\le
C_{t_0,s}|t_1-t_2|.
\end{eqnarray}
Here $N_{\gm}$ is defined by \eqref{(2.47)}, $a=a_{t_0}>0$ is given in
\eqref{(3.11)}, and $C_{t_0}, C_{t_0,s}, C_{t_0,s,n}$ are finite
constants depending only on $N$, $\gm$, the function $b$, $\max\{1,\,
1/t_0\}$, $s$, as well as $n$ in the case of $C_{t_0,s,n}$. In
particular if $t_0\ge 1$, all these constants are independent of $t_0$.
\end{theorem}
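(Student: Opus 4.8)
The plan is to push the decomposition \eqref{(3.2)}--\eqref{(3.5)} through three inputs: the gain-of-integrability estimates of Theorem~\ref{(theo2.1)}, the $H^{(N-1)/2}$-smoothing of Lemma~\ref{(lemBD)}, and the lower bound \eqref{(3.10)} together with the weighted estimate \eqref{(3-H-regularity)} of Lemma~\ref{(lem3.2)}. The one fact used everywhere is that \eqref{(3.10)} gives $E^{t}_{t_1}(v)\le e^{-a\la v\ra^{\gm}(t-t_1)}\le e^{-a(t-t_1)}$ uniformly in $v$, so each exponential factor in \eqref{(3.3)}--\eqref{(3.5)} may be spent either to make a time-integral converge or to produce decay. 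A preliminary remark, used constantly: $f_t=f^n_t+h^n_t$ with both summands $\ge 0$ forces $0\le f^n_t\le f_t$ and $0\le h^n_t\le f_t$ pointwise, so by the moment production estimate \eqref{(1.12)} one has $\sup_{t\ge t_0}\big(\|f^n_t\|_{L^1_s}+\|h^n_t\|_{L^1_s}\big)\le C_{t_0,s}$ for every $s$, uniformly in $n$; hence all quantities below are a priori finite, and the content of \eqref{(3.14)}--\eqref{(3.15*)} is the uniformity in $n$, the decay, and the smoothness.

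First I would prove the decay \eqref{(3.16)} by induction on $n$. For $n=1$ one estimates the two terms of \eqref{(3.4)} directly using $E^t_{t_0}(v)\le e^{-a(t-t_0)}$, $E^t_{t_1}(v)\le e^{-a(t-t_1)}$, $E^{t_1}_{t_0}(v)\le e^{-a(t_1-t_0)}$, the $Q^{+}$-bound \eqref{(1-Qbound)} and the uniform moment bounds; the $t_1$-integral produces a linear factor $(t-t_0)$ which is absorbed into a fraction of the exponent, leaving the rate $e^{-\frac a2(t-t_0)}$. For the inductive step one substitutes $\|h^{n-1}_\tau\|_{L^1_k}\le C_{t_0,k,n-1}e^{-\frac a2(\tau-t_0)}$ into the three correction terms of \eqref{(3.5)}; in each term the explicit factors $E^{t}_{t_1}$ and $E^{t_1}_{t_2}$ carry full-rate decay $e^{-a(t-t_1)}$ and $e^{-a(t_1-t_2)}$, and one splits each as $e^{-\frac a2(\cdot)}e^{-\frac a2(\cdot)}$: half the budget recombines with the decay of $h^{n-1}$ to reconstitute $e^{-\frac a2(t-t_0)}$, the other half makes the double time-integral converge, yielding $\|h^n_t\|_{L^1_s}\le C_{t_0,s,n}e^{-\frac a2(t-t_0)}$ with a constant growing geometrically in $n$ (which the statement permits).

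The core is \eqref{(3.14)}--\eqref{(3.15)}. From \eqref{(3.3)}, $f^n_t$ is one application of $Q^{+}(\cdot,Q^{+}(\cdot,\cdot))$ to $f^{n-1}$, multiplied pointwise by $E^t_{t_1}(v)\le e^{-a(t-t_1)}$, which makes the outer integral converge uniformly in $t\ge t_0$. Hence part~(a) of Theorem~\ref{(theo2.1)}, applied with the inputs $f^0_\tau=f_\tau$ (in every $L^1_k$ uniformly for $\tau\ge t_0$), gives $\sup_{t\ge t_0}\|f^1_t\|_{L^{p_1}_s}\le C_{t_0,s}$; feeding this into part~(b) gives $\sup_{t\ge t_0}\|f^2_t\|_{L^{p_2}_s}\le C_{t_0,s}$; and inductively, carrying the statement ``for all $s$'' through (part~(b) costs one weight on the first slot), $\sup_{t\ge t_0}\|f^n_t\|_{L^{p_n}_s}\le C_{t_0,s,n}$ for $1\le n\le N_{\gm}$, whence part~(c) yields $\sup_{t\ge t_0}\|f^{N_{\gm}+1}_t\|_{L^{\infty}_s}\le C_{t_0,s}$ (if $\gm\ge N-2$ then $N_{\gm}=1$ and one uses \eqref{(2.53)}--\eqref{(2.54)} with a fixed $p\in(N/(N-1),N)$). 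To make this uniform in $n\ge N_{\gm}+1$ I would iterate once more: for $n\ge N_{\gm}+2$ apply Theorem~\ref{(theo2.1)}(c) to \eqref{(3.3)} with $f=g=h=f^{n-1}$, controlling $\|f^{n-1}\|_{L^{p_{N_{\gm}}}_s}\le\|f^{n-1}\|_{L^{\infty}_s}^{1-1/p_{N_{\gm}}}\|f^{n-1}\|_{L^1_s}^{1/p_{N_{\gm}}}$ and the remaining $L^1$-norms by moment production; one lands on $\|f^n_t\|_{L^{\infty}_s}\le A_{t_0,s}+B_{t_0,s}\|f^{n-1}_t\|_{L^{\infty}_s}^{\kappa}$ with $\kappa<1$ (using \eqref{(2.52)}, $N_{\gm}\gm>N-1-\gm$, and $N\ge3$ to check $\alpha_1+\alpha_2<1$), so $\|f^n_t\|_{L^{\infty}_s}$ stays below the larger of $\|f^{N_{\gm}+1}_t\|_{L^{\infty}_s}$ and the fixed point of $M\mapsto A_{t_0,s}+B_{t_0,s}M^{\kappa}$, giving \eqref{(3.14)}. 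For \eqref{(3.15)}, fix $n\ge N_{\gm}+2$: then $f^{n-1}\in L^{\infty}_s\cap L^1_s$ for all $s$, hence $f^{n-1}_\tau\in L^2_{N+\gm}$ with uniformly bounded norm, so by Lemma~\ref{(lemBD)} (note $\|b\|_{L^2}^2\le\|b\|_{L^{\infty}}A_0<\infty$ and $N\ge3\Rightarrow H^{(N-1)/2}\hookrightarrow H^1$) the inner function $Q^{+}\big(f^{n-1}_{t_1},Q^{+}(f^{n-1}_{t_2},f^{n-1}_{t_2})E^{t_1}_{t_2}\big)$ lies in $H^1\cap L^{\infty}\cap L^1_2$ with uniformly bounded norms; applying \eqref{(3-H-regularity)} to it and integrating $e^{-a(t-t_1)}(1+t-t_1)$ over $t_1,t_2\in[t_0,t]$ gives $\sup_{n\ge N_{\gm}+2,\,t\ge t_0}\|f^n_t\|_{H^1}\le C_{t_0}$. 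I expect this core step, and within it the passage to a bound uniform in $n$, to be the main obstacle: the bootstrap is mechanical, but the weighted bookkeeping and the verification that the self-improving exponent $\kappa$ is strictly below $1$ (so that the fixed-point argument closes) are the delicate points.

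For the time-continuity \eqref{(3.15*)} I would use that $t\mapsto f_t$ is Lipschitz in $L^1_s$ on $[t_0,\infty)$ (from $f_t-f_{t'}=\int_{t'}^tQ(f_\tau,f_\tau)\,{\rm d}\tau$, \eqref{(1-Qbound)} and \eqref{(1.12)}) and $f^n_t=f_t-h^n_t$, so it suffices to treat $f^n_t$. Writing $f^n_t=\int_{t_0}^tE^t_{t_1}(v)\,G^{n-1}_{t_1}(v)\,{\rm d}t_1$ with $G^{n-1}_{t_1}:=\int_{t_0}^{t_1}Q^{+}\big(f^{n-1}_{t_1},Q^{+}(f^{n-1}_{t_2},f^{n-1}_{t_2})E^{t_1}_{t_2}\big)\,{\rm d}t_2$, the factor $E^{t_1}_{t_2}(v)\le e^{-a(t_1-t_2)}$ with $0\le f^{n-1}\le f$ and \eqref{(1-Qbound)} makes the $t_2$-integral converge, so $\sup_{n\ge1,\,t_1\ge t_0}\|G^{n-1}_{t_1}\|_{L^1_s}\le C_{t_0,s}$; then for $t>t'\ge t_0$ one splits $f^n_t-f^n_{t'}$ into $\int_{t'}^tE^t_{t_1}G^{n-1}_{t_1}\,{\rm d}t_1$, bounded by $(t-t')\sup\|G^{n-1}\|_{L^1_s}$, and $\int_{t_0}^{t'}(E^t_{t_1}-E^{t'}_{t_1})G^{n-1}_{t_1}\,{\rm d}t_1$, bounded via $|E^t_{t_1}(v)-E^{t'}_{t_1}(v)|\le E^{t'}_{t_1}(v)\la v\ra^{\gm}(t-t')\sup_\tau\|f_\tau\|_{L^1_\gm}\le e^{-a(t'-t_1)}\la v\ra^{\gm}C_{t_0}(t-t')$ by $C_{t_0,s}(t-t')$; both bounds are uniform in $n$, which gives \eqref{(3.15*)} and completes the proof.
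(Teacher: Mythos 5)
Your proposal follows the paper's own proof essentially step for step: the same induction for the exponential decay of $h^n_t$ (you absorb the polynomial factors at each stage where the paper tracks $(1+t-t_0)^{2n-1}$ and absorbs once at the end), the same bootstrap through parts (a)--(c) of Theorem~\ref{(theo2.1)} up to $L^{\infty}_s$ followed by a self-improving inequality in $n$ to get uniformity (your $Y_n\le A+BY_{n-1}^{\kappa}$ with $\kappa<1$ versus the paper's multiplicative $Y_{k+1}\le AY_k^{\delta}$ -- both close the fixed-point argument), the same combination of Lemma~\ref{(lemBD)} and \eqref{(3-H-regularity)} for the $H^1$ bound, and an equivalent treatment of \eqref{(3.15*)} (direct difference estimates rather than the paper's bound on $\partial_t f^n_t$). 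One point to state more carefully in the $H^1$ step: integrating only $e^{-a(t-t_1)}(1+t-t_1)$ over $t_1,t_2\in[t_0,t]$ leaves a factor $(t_1-t_0)$ from the inner integral and hence a bound growing linearly in $t$; the uniform-in-$t$ bound requires retaining the factor $e^{-a(t_1-t_2)}$ coming from $E^{t_1}_{t_2}$ inside the $H^1$, $L^{\infty}$ and $L^1_2$ estimates of $Q^{+}\bigl(f^{n-1}_{t_1},Q^{+}(f^{n-1}_{t_2},f^{n-1}_{t_2})E^{t_1}_{t_2}\bigr)$, exactly as in the paper's estimate \eqref{(3-Q-E)} -- this is consistent with your stated bookkeeping principle, so it is a slip of wording rather than a gap.
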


\begin{proof} Let
\[
\!|\!|\!|f\!|\!|\!|_{s}=\sup_{t\ge
  t_0}\|f_{t}\|_{L^1_s},\quad s\ge 0.
\]
By using Theorem~\ref{(theo1.0)}, \eqref{(norm)} and the fact that
\[
\|f_0\|_{L^1_0}=1, \quad \|f_0\|_{L^1_2}=1+N
\]
we have with ${\mathcal K}_s={\mathcal K}_s(1,1+N)$ that
\[
\forall\, s\ge 0, \quad \!|\!|\!|f\!|\!|\!|_{s}\le {\mathcal K}_s
\Big(1+\max\{1,\,1/t_0\}\Big)^{(s-2)^{+}/\gm}.
\]
We first prove \eqref{(3.14)} and \eqref{(3.16)}. To do this it
suffices to prove the following estimates
\eqref{(3.17)}-\eqref{(3.21)}:

$\bullet $ For all $s\ge 0$ and all $t\ge t_0$
\begin{equation} \label{(3.17)}
\forall \, n \ge 1, \quad \left\|h^n_{t}\right\|_{L^1_s} \le
\!|\!|\!|f\!|\!|\!|_{s+(2n-1)\gm} ^{2n}(1+t-t_0)^{2n-1}e^{-a(t-t_0)}.
\end{equation}

$\bullet $ If $0<\gm<N-2$, we then define
\[
p_n:=\frac{N-1}{N-1-n\gm}, \quad n=1,2,3,\dots,N_{\gm},
\]
and then for all $s\ge 0$
\begin{eqnarray} \label{(3.18)}&&
\max_{1\le n\le N_{\gm}}\sup_{t\ge t_0}\|f_{t}^{n}\|_{L^{p_{n}}_s}
\le C_{a} \, \!|\!|\!|f\!|\!|\!|_{s+s_1}^{\beta_*},\\
&&\label{(3.19)}
\sup_{n\ge N_{\gm}+1}\sup_{t\ge t_0}\|f_{t}^{n}\|_{L^{\infty}_s}\le
C_{a} \, \!|\!|\!|f\!|\!|\!|_{s+s_1}^{\beta^*},
\end{eqnarray}
where $s_1=N_{\gm}+\gm$ and $0<\beta_*, \beta^*<\infty$ depend only on
$N$ and $\gm$.

$\bullet $ If $\gm\ge N-2$ and $1<p<N$, then for all $s\ge 0$,
\begin{eqnarray}\label{(3.20)} &&
\sup_{t\ge t_0}\|f_{t}^{1}\|_{L^p_s}\le C_{a,p} \,
\!|\!|\!|f\!|\!|\!|_{s+2\gm-N/q} ^3,\\
&&\label{(3.21)}
\sup_{n\ge 2}\sup_{t\ge t_0}\|f_{t}^{n}\|_{L^{\infty}_s}\le
C_{a} \, \!|\!|\!|f\!|\!|\!|_{s+s_1}^{3+4/N},
\end{eqnarray}
where $s_2=3\gm+2-3N/2$.

In the following we denote by $C, C_{*}, C_{*,*}$ the finite positive
constants (larger than $1$) that only depend on
$N,\gm, A_2,\|b\|_{L^{\infty}},$ and on the arguments ``$*,*$''; they
may have different values in different places. 

By definition of
$E_{s}^{t}$ (see \eqref{(3.1)}) and Lemma \ref{(lem3.2)} we have
\begin{equation} \label{(3.22)}
\forall \, t_0\le t_2\le t_1\le t, \quad
\begin{cases} \displaystyle
  E_{t_1}^{t} \le  e^{-a(t-t_1)}, \vspace{0.2cm} \\ \displaystyle
  E_{t_2}^{t_1} E_{t_1}^{t} \le e^{-a(t-t_2)}.
\end{cases}
\end{equation}
We then deduce from \eqref{(3.22)} and $0\le f^{n}_t\le f_t $ that
\begin{eqnarray}\label{(3.23)}&&  h_t^{n}(v)\le
h_t^{1}(v)+ 2\int_{t_0}^{t}\int_{t_0}^{t_1}
e^{-a(t-t_2)}Q^{+}\left(f_{t_1},\,Q^{+}( f_{t_2},\,h^{n-1}_{t_2})
\right)(v)\, {\rm d}t_2 \, {\rm d}t_1\\
&&\nonumber
\qquad \,\,\,\, +\int_{t_0}^{t}\int_{t_0}^{t_1}
e^{-a(t-t_2)}Q^{+}\left(h^{n-1}_{t_1},\,Q^{+}(f_{t_2}, f_{t_2})\right)(v)\, {\rm
  d}t_2 \, {\rm d}t_1,\\
\label{(3.24)} && f^{n}_t(v) \le \int_{t_0}^{t}\int_{t_0}^{t_1}
e^{-a(t-t_2)} Q^{+}\left(f^{n-1}_{t_1},\,Q^{+}( f^{n-1}_{t_2},
f^{n-1}_{t_2})\right)(v)\, {\rm d}t_2 \, {\rm d}t_1.
\end{eqnarray}

Next by definition of $h_t^{1}$ in \eqref{(3.4)}
and using \eqref{(3.22)} and $\|f_{t}\|_{L^1_s}\ge \|f_t\|_{L^1}=1$  we have
$$
\|h_t^{1}\|_{L^1_s} \le
\!|\!|\!|f\!|\!|\!|_{s+\gm}^2(1+t-t_0)e^{-a(t-t_0)}.
$$
Suppose \eqref{(3.17)} holds for some $n\ge 1$. Using \eqref{(3.23)}
for $h^{n+1}_t$ we have
\begin{multline*}
\|h^{n+1}_t\|_{L^1_s} \le
  e^{-a(t-t_0)} \, \!|\!|\!|f\!|\!|\!|_{s+\gm}^2(1+t-t_0)+2 \,
  \!|\!|\!|f\!|\!|\!|_{s+2\gm}^2
  \int_{t_0}^{t}\int_{t_0}^{t_1}\|h^{n}_{t_2}\|_{L^1_{s+2\gm}}
  e^{-a(t-t_2)} \, {\rm d}t_2 \, {\rm d}t_1
  \\
  + \!|\!|\!|f\!|\!|\!|_{s+2\gm}^2
  \int_{t_0}^{t}\|h^{n}_{t_1}\|_{L^1_{s+\gm}}\int_{t_0}^{t_1}
  e^{-a(t-t_2)} \, {\rm d}t_2 \, {\rm d}t_1
  \\
  = \!|\!|\!|f\!|\!|\!|_{s+2\gm}^2e^{-a(t-t_0)}\left(1+t-t_0+
    \int_{t_0}^{t} \int_{t_0}^{t_1}e^{a(t_2-t_0)}\left(2\|h^{n}_{t_2}\|_{L^1_{s+2\gm}}+\|h^{n}_{t_1}\|_{L^1_{s+\gm}}
    \right)\, {\rm d}t_2 \, {\rm
      d}t_1 \right)
\end{multline*}
and by inductive hypothesis on $h^{n}_t$ we have for all $t_0\le
t_2\le t_1$,
\begin{multline*}
2\left\|h^{n}_{t_2}\right\|_{L^1_{s+2\gm}}+\left\|h^{n}_{t_1}\right\|_{L^1_{s+\gm}}
 \le 2\left(\|f\|_{s+2\gm+(2n-1)\gm}\right)^{2n}(1+t_2-t_0)^{2n-1}e^{-a(t_2-t_0)}
 \\
+ \left(\|f\|_{s+\gm+(2n-1)\gm}\right)^{2n}(1+t_1-t_0)^{2n-1}e^{-a(t_1-t_0)}\\
\le
3 \, \!|\!|\!|f\!|\!|\!|_{s+(2n+1)\gm}
^{2n}(1+t_1-t_0)^{2n-1}e^{-a(t_2-t_0)}.
\end{multline*}
So
$$
\|h^{n+1}_t\|_{L^1_s} \le
\!|\!|\!|f\!|\!|\!|_{s+(2n+1)\gm}^{2(n+1)}e^{-a(t-t_0)}\left(
  1+t-t_0+3 \int_{t_0}^{t}(1+t_1-t_0)^{2n-1}(t_1-t_0)\, {\rm d}t_1
\right).
$$
It is easily checked that
\[
\forall \, t \ge t_0, \quad 1+t-t_0+3\int_{t_0}^{t}(1+t_1-t_0)^{2n-1}(t_1-t_0) \, {\rm d}t_1\le
(1+t-t_0)^{2n+1}.
\]
Thus
$$
\|h^{n+1}_t\|_{L^1_s} \le
\!|\!|\!|f\!|\!|\!|_{s+(2n+1)\gm}^{2(n+1)}(1+t-t_0)^{2n+1}e^{-a(t-t_0)}.
$$
This proves \eqref{(3.17)}.

Now we are going to prove \eqref{(3.18)}-\eqref{(3.21)}. First of all
by \eqref{(3.24)} and the inequality
$$\int_{t_0}^{t}\int_{t_0}^{t_1}e^{-a(t-t_2)}\, {\rm d}t_2 \, {\rm d}t_1 \le \frac{1}{a^2}$$ we have
\begin{equation}\label{(3.25)}
  \sup_{t\ge t_0}\|f^n_t\|_{L^p_s}\le \frac{1}{a^2} \left( \sup_{t_1\ge t_2\ge t_0}
    \left\|Q^{+}(f^{n-1}_{t_1},\,Q^{+}( f^{n-1}_{t_2},
      f^{n-1}_{t_2}) )\right\|_{L^p_s} \right)
\end{equation}
for all $s\ge 0, 1\le p\le \infty$, provided that the right hand side makes sense.
\smallskip

\noindent
{\it Case 1: $0<\gm<N-2$.} We first prove that
\begin{equation}\label{(3.26)}
\forall\,s\ge 0,\quad \sup_{t\ge t_0}\|f^n_t\|_{L^{p_{n}}_s}
\le C_{a,n}\left( \!|\!|\!|f\!|\!|\!|_{s+n-1+\gm_1}\right)^{\beta_n},\qquad
n=1,2,\dots, N_{\gm}
\end{equation}
where $\gm_1=\max\{\gm, 1\}$ and
$$
\beta_n:=2(N+1)\left(1+\frac{1}{N}\right)^{n-1} +1-2N.
$$

By part (i) of Theorem \ref{(theo2.1)} we have
$$
\forall \, t_1\ge t_2\ge t_0, \quad
\left\|Q^{+}\left(f_{t_1},\,Q^{+}(f_{t_2}, f_{t_2})\right)\right\|_{L^{p_1}_s} \le
C_{1} \|f_{t_1}\|_{L^1_{s}}\|f_{t_2}\|_{L^1_{s+\gm}}^2 \le
C_{1} \, \!|\!|\!|f\!|\!|\!|_{s+\gm_1}^3.
$$
Using \eqref{(3.25)} with $p=p_1$ and $n=1$ (recalling
$f^{(0)}_t(v)=f_t(v)$) gives
$$
\forall \, s\ge 0, \quad \sup_{t\ge t_0}\left\|f^1_t\right\|_{L^{p_{1}}_s} \le
C_{a,1} \, \!|\!|\!|f\!|\!|\!|_{s+\gm_1}^3.
$$
Since $\beta_1=3$, this proves that the inequality in \eqref{(3.26)} holds for $n=1$.

Suppose the inequality in \eqref{(3.26)} holds for some $1\le n\le
N_{\gm}-1$. Then we compute using $0\le f^n_t\le f_t$ and part (I) of
Theorem \ref{(theo2.1)} that, for all $s\ge 0$,
\begin{multline}\label{(3.27)}
\left\|Q^{+}\left(f^{n}_{t_1},\,Q^{+}( f^n_{t_2},
f^n_{t_2})\right)\right\|_{L^{p_{_{n+1}}}_s}
\le  C_{n}\|f^n_{t_1}\|_{L^{p_n}_{s+1}}
\|f^n_{t_2}\|_{L^1_{s+\gm_1}}
\|f^n_{t_2}\|_{L^1_{s+\gm_1}}^{1-\theta_n} \|f^n_{t_2} \|_{L^{p_n}_{s+1}}^{\theta_n} \\
\le  C_{n} \, \!|\!|\!|f\!|\!|\!|_{s+\gm_1}^{2-\theta_n} \left(\sup_{t\ge
t_0}\|f_{t}^{n}\|_{L^{p_n}_{s+1}}\right)^{1+\theta_n}.
\end{multline}
By inductive hypothesis on $f^n_t$ we compute
\begin{equation}\label{(3.28)}
\!|\!|\!|f\!|\!|\!|_{s+\gm_1}^{2-\theta_n} \left(\sup_{t\ge
t_0}\|f_{t}^{n}\|_{L^{p_n}_{s+1}}\right)^{1+\theta_n} \le C_{a,n} \,
\!|\!|\!|f\!|\!|\!|_{s+n+\gm_1}^{2-\theta_n+\beta_n(1+\theta_n)}.
\end{equation}
Also by definition of $\theta_n$ and $\beta_n$ it is easily checked
that $2-\theta_n+\beta_n(1+\theta_n)<\beta_{n+1}$. It then follows
from \eqref{(3.25)}, \eqref{(3.27)} and \eqref{(3.28)} that
\[
\forall\, s\ge 0, \quad \sup_{t\ge t_0}\|f^{n+1}_t\|_{L^{p_{_{n+1}}}_s}\le C_{a,n+1} \,
\!|\!|\!|f\!|\!|\!|_{s+n+\gm_1}^{\beta_{n+1}}.
\]
This proves that the inequality in \eqref{(3.26)} holds for all
$n=1,2,\dots,N_{\gm}$.  From \eqref{(3.26)} and $N_{\gm}-1+\gm_1<
N_{\gm}+\gm=s_1$, we obtain \eqref{(3.18)} with
$\beta_*=\beta_{_{N_{\gm}}}$.

Next let us prove \eqref{(3.19)}. By Theorem \ref{(theo2.1)} (see
\eqref{(2.51)},\eqref{(2.52)}) and using \eqref{(3.18)} with
$n=N_{\gm}$ we have
\begin{multline*} \left\|Q^{+}(f^{N_{\gm}}_{t_1},\,Q^{+}(
    f^{N_{\gm}}_{t_2}, f^{N_{\gm}}_{t_2})
    )\right\|_{L^{\infty}_s}
  \le C\|f^{N_{\gm}}_{t_1}\|_{L^1_s}^{1-\alpha_1}
  \|f^{N_{\gm}}_{t_1}\|_{L^{p_{_{N_{\gm}}}}_s}^{\alpha_1}
  \|f^{N_{\gm}}_{t_2}\|_{L^1_{s+\gm}}
  \|f^{N_{\gm}}_{t_2}\|_{L^1_{s+\gm}}^{1-\alpha_2}\|f^{N_{\gm}}_{t_2}
  \|_{L^{p_{_{N_{\gm}}}}_s}^{\alpha_2}
  \\
  \le
  C\left(\!|\!|\!|f\!|\!|\!|_{s+s_1}\right)^{3+(\beta_{_{N_{\gm}}}-1)(\alpha_1+\alpha_2)}.\end{multline*}
This together with \eqref{(3.25)} gives
\begin{equation}\label{(3.29)}
\sup_{t\ge t_0}\left\|f^{{N_{\gm}}+1}_t\right\|_{L^{\infty}_s} \le
C_{a}\, \!|\!|\!|f\!|\!|\!|_{s+s_1} ^{\eta},\quad
\eta:=3+(\beta_{_{N_{\gm}}}-1)(\alpha_1+\alpha_2).
\end{equation}
Using \eqref{(3.25)} with $p=\infty$, Theorem \ref{(theo2.1)}, and
$$\|f^{N_{\gm}+k}_t\|_{L^{p_{_{N_{\gm}}}}_s}
\le  \!|\!|\!|f\!|\!|\!|_{s}^{1/p_{N_{\gm}}}
\left\|f^{N_{\gm}+k}_t\right\|_{L^{\infty}_s}^{1/q_{N_{\gm}}}$$
together with the $L^{\infty}_s$-boundedness \eqref{(3.29)} for $k=1$,
we deduce by induction on $k$ that, for all $s\ge 0$,
\begin{multline} \label{(3.30)}
\sup_{t\ge t_0}\left\|f^{N_{\gm}+k+1}_t\right\|_{L^{\infty}_s} \le
\frac{1}{a^2}\sup_{t_1\ge t_2\ge t_0}\left\|Q^{+}\left(f^{N_{\gm}+k}_{t_1},\,Q^{+}\left( f^{N_{\gm}+k}_{t_2},
f^{N_{\gm}+k}_{t_2}\right)\right)\right\|_{L^{\infty}_s}
\\
\le C_a\sup_{t_1\ge t_2\ge t_0}\Bigg\{\left\|f^{N_{\gm}+k}_{t_1}\right\|_{L^1_s}^{1-\alpha_1}
\left\|f^{N_{\gm}+k}_{t_1}\right\|_{L^{p_{_{N_{\gm}}}}_s}^{\alpha_1}\\
\times
\left\|f^{N_{\gm}+k}_{t_2}\right\|_{L^1_{s+\gm}}
\left\|f^{N_{\gm}+k}_{t_2}\right\|_{L^1_{s+\gm}}^{1-\alpha_2}
\left\|f^{N_{\gm}+k}_{t_2}\right\|_{L^{p_{N_{\gm}}}_s}^{\alpha_2}\Bigg\}
\\
\le C_a \, \!|\!|\!|f\!|\!|\!|_{s+\gm}^{3-(\alpha_1+\alpha_2)/{q_{N_{\gm}}}} \left(\sup_{t\ge
t_0}\|f^{N_{\gm}+k}_t\|_{L^{\infty}_s}\right)^{(\alpha_1+\alpha_2)/{q_{N_{\gm}}}}
\\
 =C_a \, \!|\!|\!|f\!|\!|\!|_{s+\gm} ^{3-\dt} \left(\sup_{t\ge
t_0}\|f^{N_{\gm}+k}_t\|_{L^{\infty}_s}\right)^{\dt}<\infty,\quad
k=1,2,3,\dots
\end{multline}
where
$$
\dt:=\frac{\alpha_1+\alpha_2}{q_{_{N_{\gm}}}}=\frac{N-1-\gm+(1-\gm)^{+}}{N}
\,\,\,\,(<1\,).
$$
Now fix any $s\ge 0$ and let us define
\[
A=C_a \, \!|\!|\!|f\!|\!|\!|_{s+\gm} ^{3-\dt} \quad \mbox{ and } \quad  Y_k=\sup_{t\ge
t_0}\left\|f^{N_{\gm}+k}_t\right\|_{L^{\infty}_s}.
\]
Then, from \eqref{(3.30)},
\[
Y_{k+1}\le AY_k^{\dt},\quad k=1,2,\dots
\]
which gives
\[
Y_{k+1}\le A^{1+\dt+\cdots+\dt^{k-1}}Y_{1}^{\dt^{k}}=
A^{\frac{1-\dt^{k}}{1-\dt}}Y_{1}^{\dt^{k}}\le A^{\frac{1}{1-\dt}}Y_{1},\quad
k=1,2,\dots
\]
It follows from \eqref{(3.29)} and $\gm<s_1$ that
$$ \sup_{t\ge
  t_0}\left\|f^{N_{\gm}+k+1}_t\right\|_{L^{\infty}_s}=Y_{k+1} \le
\left(C_{a} \, \!|\!|\!|f\!|\!|\!|_{s+\gm}^{3-\dt}\right)^{\frac{1}{1-\dt}}
C_{a} \, \!|\!|\!|f\!|\!|\!|_{s+s_1}^{\eta} \le
C_{a} ^{1+1/(1-\delta)} \, \!|\!|\!|f\!|\!|\!|_{s+s_1}^{\frac{3-\dt}{1-\dt}+\eta}$$ for
all $k=1,2,3,\dots$ This gives \eqref{(3.19)} with
$\beta^*= (3-\dt)/(1-\dt)+\eta$.
\smallskip

\noindent {\it Case 2: $\gm\ge N-2$.} By Theorem \ref{(theo2.1)} we
have for any $1<p<N$ and $s\ge 0$
\begin{equation*}
\left\|Q^{+}\left(f_{t_1},\,Q^{+}( f_{t_2}, f_{t_2})\right)\right\|_{L^p_s}\le C_{p}
\|f_{t_1}\|_{L^1_{s+\gm-N/q}} \|f_{t_2}\|_{L^1_{s+2\gm-N/q}}^2 \le C_{p}
\, \!|\!|\!|f\!|\!|\!|_{s+2\gm-N/q}^3.
\end{equation*}
This together with \eqref{(3.25)} with $n=1$ implies that
\[
\forall\,s\ge 0, \quad \sup\limits_{t\ge t_0}\|f^1_t\|_{L^p_s}\le C_{a,p}
\, \!|\!|\!|f\!|\!|\!|_{s+2\gm-N/q}^3.
\]
This proves \eqref{(3.20)}.  In particular for $p=2$ we have
\begin{equation}\label{(3.31)}
\forall\, s\ge 0, \quad \sup_{t\ge t_0}\|f^1_t\|_{L^2_s}\le C_{a}
\, \!|\!|\!|f\!|\!|\!|_{s+2\gm-N/2}^3.
\end{equation}
Then using \eqref{(3.25)} with $p=\infty$, Theorem \ref{(theo2.1)}
with
\[
p=q=2\in \left(N/(N-1),\, N\right),
\]
and induction on $n$ starting from $n=1$ with the $L^2_s$-boundedness
\eqref{(3.31)} we have, for all $s\ge 0$,
\begin{eqnarray}&&\label{(3.32)} \sup_{t\ge
    t_0}\left\|f^{n+1}_t\right\|_{L^{\infty}_s}\le \frac{1}{a^2} \sup_{t_1\ge
    t_2\ge t_0} \left\|Q^{+}\left(f^{n}_{t_1},\,Q^{+}\left(f^{n}_{t_2},f^{n}_{t_2}\right)\right)
  \right\|_{L^{\infty}_s} \\
  &&\le C_a \sup_{t\ge t_1\ge
    t_0}\left(\left\|f^{n}_{t_1}\right\|_{L^1_{s+\gm+2-N}}^{1-2/N}
  \left\|f^{n}_{t_1}\right\|_{L^2_{s+\gm+2-N}}^{2/N}\left\|f^{n}_{t_2}\right\|_{L^1_{s+2\gm+1-N}}^2\right)
  \nonumber \\
  &&\nonumber\le C_a \, \!|\!|\!|f\!|\!|\!|_{s+2\gm+1-N})^{3-2/N}
  \left(\sup_{t\ge t_0}\left\|f^{n}_{t}\right\|_{L^2_{s+\gm+2-N}}\right)^{2/N}<\infty,\quad
  n=1,2,3,\dots
\end{eqnarray}
Taking $n=1$ and using \eqref{(3.31)} and $2\gm+1-N< 3\gm+2-3N/2=:s_2$
we obtain
\begin{equation} \label{(3.33)}
\sup_{t\ge t_0}\left\|f^2_t\right\|_{L^{\infty}_s}\le C_{a} \,
  \!|\!|\!|f\!|\!|\!|_{s+s_2}^{3+4/N}.
\end{equation}
Further, using
\[
\forall \, n\ge 2, \quad \left\|f^{n}_{t}\right\|_{L^2_{s+\gm+2-N}} \le
\!|\!|\!|f\!|\!|\!|_{s+2\gm+4-2N}^{1/2}
\left\|f^{n}_{t}\right\|_{L^{\infty}_s} ^{1/2}
\]
and $2\gm+4-2N\le 2\gm+1-N\le s_2$ (because $\gm\ge N-2\ge 1$) we get
from \eqref{(3.32)} that
$$
\sup_{t\ge t_0}\left\|f^{n+1}_t\right\|_{L^{\infty}_s}\le C_{a}
\, \!|\!|\!|f\!|\!|\!|_{s+s_2}^{3-1/N} \left(\sup_{t\ge
t_0}\|f^{n}_{t}\|_{L^{\infty}_{s}}\right)^{1/N},\quad
n=2,3,\dots
$$ By
iteration we deduce, as shown above with $\dt=1/N$, and using
\eqref{(3.33)} that \begin{eqnarray*}&&
\sup_{t\ge t_0}\|f^{n+1}_t\|_{L^{\infty}_s}\le \left(C_{a}
  \, \!|\!|\!|f\!|\!|\!|_{s+s_2}^{3-1/N}\right)^{\frac{1-\dt^{n-1}}{1-\dt}}
\left(\sup_{t\ge t_0}\|f^2_t\|_{L^{\infty}_s}\right)^{\dt^{n-1}}\\
&&\le C_a \left(\!|\!|\!|f\!|\!|\!|_{s+s_2}\right)^{
(3-\frac{1}{N})\frac{1-\dt^{n-1}}{1-\dt}+(3+4/N)\dt^{n-1}}\le
C_{a}\left(\!|\!|\!|f\!|\!|\!|_{s+s_2}\right)^{3+4/N},\quad
n=2,3,\dots
\end{eqnarray*}
This proves \eqref{(3.21)}.

Now let us prove the $H^1$-regularity \eqref{(3.15)} of $f^{n}_t$ for
$n\ge N_{\gm}+2$.  For notation convenience we denote
$$
Q_{t_1, t_2}^{n-1}(v):=Q^{+}\left(f^{n-1}_{t_1},\,Q^{+}\left(f^{n-1}_{t_2},
f^{n-1}_{t_2}\right)E_{t_2}^{t_1}\right)(v).
$$
The iteration formula \eqref{(3.3)} is then written
\begin{equation} \label{(3-iteration)} \forall \, t\ge t_0, \quad
  f_{t}^{n}(v) =\int_{t_0}^{t}E_{t_1}^{t}(v)\int_{t_0}^{t_1}Q_{t_1,
    t_2}^{n-1}(v)\, {\rm d}t_2 \, {\rm d}t_1.
\end{equation}
Applying Theorem \ref{(theo2.1)} and the $L^{\infty}_s$ estimate in
\eqref{(3.14)} we have
$$
\left\|Q_{t_1, t_2}^{n-1} \right\|_{L^{\infty}} \le
e^{-a(t_1-t_2)}\left\|Q^{+}\left(f^{n-1}_{t_1},\,Q^{+}(f^{n-1}_{t_2},
    f^{n-1}_{t_2})\right)\right\|_{L^{\infty}} \le
C_{t_0}e^{-a(t_1-t_2)}.
$$
Also by $f^{n-1}_t\le f_t$ we have
$$
\left\|Q_{t_1, t_2}^{n-1}\right\|_{L^1_2} \le
e^{-a(t_1-t_2)}\left\|Q^{+}\left(f^{n-1}_{t_1},\,Q^{+}\left(f^{n-1}_{t_2},
  f^{n-1}_{t_2}\right)\right)\right\|_{L^1_2} \le C_{t_0}e^{-a(t_1-t_2)}.
$$
And using Lemma \ref{(lemBD)}, \eqref{(3.13-2)} and the $L^{\infty}_s$
estimate in \eqref{(3.14)} we have
 \begin{multline*}
\left\|Q_{t_1,t_2}^{n-1}\right\|_{H^1}\le
\left\|Q_{t_1,t_2}^{n-1}\right\|_{H^{\frac{N-1}{2}}} \le C
\left\|f^{n-1}_{t_1}\right\|_{L^2_{N+\gm}} \left\|Q^{+}(f^{n-1}_{t_2},
f^{n-1}_{t_2})E_{t_2}^{t_1}\right\|_{L^2_{N+\gm}}\\
 \le C\left\|f^{n-1}_{t_1}\right\|_{L^2_{N+\gm}}\left\|f^{n-1}_{t_2}\right\|_{L^2_{2(N+\gm)}}^2e^{-a(t_1-t_2)}
\le C_{t_0}e^{-a(t_1-t_2)}.
\end{multline*}
Thus we conclude from Lemma \ref{(lem3.2)} that
$Q_{t_1,t_2}^{n-1}E_{t_1}^t\in H^1({\bRN})$ and
\begin{equation}\label{(3-Q-E)}
  \left\|Q_{t_1,t_2}^{n-1}E_{t_1}^t\right\|_{H^1}\le C_{t_0}e^{-a(t_1-t_2)} e^{-a(t-t_1)}(1+t-t_1)
  = C_{t_0}e^{-a(t-t_2)}(1+t-t_1).
\end{equation}
Using Minkowski inequality to \eqref{(3-iteration)} we then conclude
from \eqref{(3-Q-E)} and the above estimates that $f^{n}_t\in
H^{1}({\bRN})$ and
$$\left\|f^{n}_t\right\|_{H^1}\le\int_{t_0}^{t}\int_{t_0}^{t_1}
\left\|Q_{t_1,t_2}^{n-1} E_{t_1}^{t}\right\|_{H^1}\, {\rm d}t_2 \, {\rm d}t_1\le
C_{t_0}\int_{t_0}^{t}\int_{t_0}^{t_1}e^{-a(t-t_2)}(1+t-t_1)\, {\rm
  d}t_2 \, {\rm d}t_1\le C_{t_0}.
$$
This proves \eqref{(3.15)}.

Finally let us prove \eqref{(3.15*)}. To do this we rewrite $f^n_t$ as
follows (recall definition of $f^n_t$ in \eqref{(3.3)})
$$f_{t}^{n}(v)
=E_{t_0}^t(v)\int_{t_0}^{t}E_{t_1}^{t_0}(v) \int_{t_0}^{t_1}
Q^{+}\left( f^{n-1}_{t_1},\,Q^{+}\left(f^{n-1}_{t_2}, f^{n-1}_{t_2}\right)E_{t_2}^{t_1}
\right)(v)\, {\rm d}t_2 \, {\rm d}t_1
$$
and recall that
$$
E_{s}^t(v) =\exp\left(-\int_{s}^{t}L_{\gm}(f_{\tau})(v)\, {\rm d}\tau
\right),\quad
L_{\gm}(f_{\tau})(v):=\int_{{\bRN}}|v-v_*|^{\gm}f_\tau(v_*) \, {\rm
  d}v_*.
$$
Then it is easily seen that the function $t\mapsto f^n_t(v)$ is
absolutely continuous on every bounded subinterval of $[t_0,\infty)$
and
$$\frac{\p}{\p t}f_{t}^{n}(v)
=\int_{t_0}^{t}  Q^{+}\left(f^{n-1}_{t},\,Q^{+}\left(
f^{n-1}_{t_2}, f^{n-1}_{t_2}\right)E_{t_2}^{t}\right)(v)\, {\rm d}t_2
-L_{\gm}(f_t)(v)f^n_t(v),\quad {\rm a.e.}\,\,\, t\ge t_0.
$$
Since
\begin{multline*}
  \left\| Q^{+}\left( f^{n-1}_{t},\,Q^{+}\left(
    f^{n-1}_{t_2}, f^{n-1}_{t_2}\right)E_{t_2}^{t}\right)\right\|_{L^1_s} \le
  \left\| Q^{+}(f_{t},\,Q^{+}( f_{t_2},
    f_{t_2}))\right\|_{L^1_s}e^{-a(t-t_2)}  \\
  \le
  \left\|f_{t}\right\|_{L^1_{s+\gm}}
  \left\|f_{t_2}\right\|_{L^1_{s+2\gm}}^2 e^{-a(t-t_2)}
  \le C_{t_0,s}e^{-a(t-t_2)}
\end{multline*}
and
\begin{equation*}
\left\|L_{\gm}(f_t)f^n_t\right\|_{L^1_s}\le
    \left\|f_t\right\|_{\gm}\left\|f_t\right\|_{L^1_{s+\gm}} \le
        C_{t_0,s}
\end{equation*}
it follows that
\[
\left\|\frac{\p}{\p t}f_{t}^{n}\right\|_{L^1_s}\le
  C_{t_0,s} \quad \mbox{ a.e. } t\ge t_0.
\]
Thus, by the absolute continuity of $t\mapsto f^n_t(v)$, we deduce
that
\[
\forall \, t_1, t_2 \ge t_0, \quad \left\|f^n_{t_1}-f^n_{t_2}\right\|_{L^1_s}\le
C_{t_0,s}|t_1-t_2|.
\]

On the other hand, from
$$
\forall\,t\ge
t_0, \quad f_t(v)=f_{t_0}(v)+\int_{t_0}^{t}\left[Q^{+}(f_{\tau},f_{\tau})(v)
-L_{\gm}(f_{\tau})(v)f_{\tau}(v)\right]\, {\rm d}\tau
$$
we also have $\left\|f_{t_1}-f_{t_2}\right\|_{L^1_s}\le
C_{t_0,s}|t_1-t_2|$ for all $t_1,t_2\ge t_0$. Thus the function
$t\mapsto h^n_t=f_t-f^n_t$ satisfies the same estimate.  This proves
\eqref{(3.15*)} and completes the proof of the theorem.
\end{proof}

\begin{corollary}\label{(corol3.2)}  Suppose $N\ge 3$ and
 let  $B(z,\sg)$ be defined in \eqref{(1.4)} with $\gamma \in
  (0,2]$ and with the conditions  \eqref{(Grad-1)},\eqref{(b-upperbd)}.    Let $F_t\in B^{+}_{1,0,1}({\bRN})$ be a conservative measure
  strong solution of equation~\eqref{(B)}. Then for any $t_0>0$, $F_t$
  can be decomposed as
\begin{equation} \label{(3.45)}
\forall \, t\ge t_0, \quad \, {\rm d}F_t(v)=f_t(v)\, {\rm d}v+\,
  {\rm d}\mu_t(v),
\end{equation}
with
\[
0\le f_t\in \bigcap_{s\ge 0}L^{\infty}_s\cap H^1({\bRN}), \quad
\mu_t\in \bigcap_{s\ge 0}{\mathcal B}^{+}_s({\bRN}),
\]
satisfying for all $s\ge 0$
\begin{eqnarray}&&\label{(3.47)}
  \sup_{t\ge t_0}\left\|f_t\right\|_{L^{\infty}_s}\le C_{t_0, s}, \qquad  \sup_{t\ge t_0}\left\|f_t\right\|_{H^{1}}\le C_{t_0}\\
  &&\label{(3.48)} \forall\, t\ge t_0, \quad \left\|\mu_t\right\|_s\le
  C_{t_0, s}e^{-\frac{a}{2}(t-t_0)},
  \\
  &&\label{(3.46)} \forall\, t_1,t_2\in[t_0,\infty), \quad
  \left\|f_{t_1}-f_{t_2}\right\|_{L^1_s},\,\,\left\|\mu_{t_1}-\mu_{t_2}\right\|_s\le
  C_{t_0,s}|t_1-t_2|,
\end{eqnarray}
where $a=a_{t_0}>0$ is given in \eqref{(3.11)} and $C_{t_0},
C_{t_0,s}$ are finite constant depending only on $N$, $\gm$, the function
$b$, $\max\{1,\, 1/t_0\}$ and $s$.
\end{corollary}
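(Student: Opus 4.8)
The plan is to derive Corollary~\ref{(corol3.2)} from Theorem~\ref{(theo3.1)} by approximating the measure solution with $L^1$ mild solutions through the Mehler transform. For a conservative mild solution $f_t\in L^1_{1,0,1}({\bRN})$ the statement is essentially immediate: fix once and for all $n_0:=N_{\gm}+2$, apply the positive decomposition $f_t=f^{n_0}_t+h^{n_0}_t$ on $[t_0,\infty)$, and set $f_t:=f^{n_0}_t$ and ${\rm d}\mu_t:=h^{n_0}_t\,{\rm d}v$. Since $n_0\ge N_{\gm}+2$, estimates \eqref{(3.14)} and \eqref{(3.15)} give $f_t\in\bigcap_{s\ge 0}L^\infty_s\cap H^1({\bRN})$ with \eqref{(3.47)}, estimate \eqref{(3.16)} gives \eqref{(3.48)}, and \eqref{(3.15*)} gives \eqref{(3.46)}. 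The key point to record is that all constants produced by Theorem~\ref{(theo3.1)} depend only on $N$, $\gm$, $b$, $\max\{1,1/t_0\}$ and $s$: here $\|f_0\|_{L^1_0}=1$ and $\|f_0\|_{L^1_2}=1+N$ are fixed, so $\sup_{t\ge t_0}\|f_t\|_{L^1_s}\le{\mathcal K}_s(1,1+N)(1+\max\{1,1/t_0\})^{(s-2)^{+}/\gm}$ is a universal bound and the same $n_0$ works for every such solution.

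For a general conservative measure strong solution $F_t\in{\mathcal B}^{+}_{1,0,1}({\bRN})$, note that $T=1>0$ forces $F_0$ not to be a single Dirac mass, so Theorem~\ref{(theo1.0)}(e) supplies conservative mild solutions $f_{k,t}$ with initial data $f_{k,0}=I_{n_k}[F_0]$; the moment conditions \eqref{(1.22)} with $u=0$, $T=1$ yield precisely $f_{k,0}\in L^1_{1,0,1}({\bRN})$, and $\int_{{\bRN}}\vp f_{k,t}\,{\rm d}v\to\int_{{\bRN}}\vp\,{\rm d}F_t$ for every $\vp\in C_b({\bRN})$ and every $t\ge 0$ by \eqref{(1.23)}. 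Applying the first step to each $f_{k,t}$ produces decompositions $f_{k,t}=f^{n_0}_{k,t}+h^{n_0}_{k,t}$ on $[t_0,\infty)$ satisfying \eqref{(3.14)}--\eqref{(3.15*)} with constants \emph{independent of $k$}. Hence $\{f^{n_0}_{k,t}\}_k$ is bounded in $L^\infty_s({\bRN})$ for every $s$ (so uniformly integrable, tight, and bounded in $H^1$), $\{h^{n_0}_{k,t}\,{\rm d}v\}_k$ is tight and bounded in ${\mathcal B}_s$ for every $s$ with $\|h^{n_0}_{k,t}\|_{L^1_s}\le C_{t_0,s}e^{-\frac{a}{2}(t-t_0)}$, and both families are uniformly Lipschitz in $t$ into $L^1_s({\bRN})$.

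The next step is to extract limits. Choosing a countable dense $D\subset[t_0,\infty)$ and, by a diagonal argument, a subsequence (not relabelled) along which $f^{n_0}_{k,t}\rightharpoonup f_t$ weakly in $H^1$ and weakly-$*$ in every $L^\infty_s$, and $h^{n_0}_{k,t}\,{\rm d}v\rightharpoonup\mu_t$ weakly-$*$ in every ${\mathcal B}_s$, for all $t\in D$. For $t\notin D$ one uses the uniform Lipschitz bounds \eqref{(3.15*)}: passing through a nearby $t'\in D$, the quantities $\int_{{\bRN}}\psi f^{n_0}_{k,t}\,{\rm d}v$ ($\psi\in L^\infty$) and $\int_{{\bRN}}\vp h^{n_0}_{k,t}\,{\rm d}v$ ($\vp\in C_b$) are Cauchy in $k$, hence converge; by Dunford--Pettis (uniform integrability and tightness from the weighted $L^\infty$ bounds) the first limit is represented by some $f_t\in L^1({\bRN})$, and by tightness the second defines a finite positive measure $\mu_t$. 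Passing to the limit in $f_{k,t}=f^{n_0}_{k,t}+h^{n_0}_{k,t}$ tested against $\vp\in C_b({\bRN})$ and invoking \eqref{(1.23)} gives ${\rm d}F_t(v)=f_t(v)\,{\rm d}v+{\rm d}\mu_t(v)$, with $f_t\ge 0$, $\mu_t\ge 0$ inherited from $f^{n_0}_{k,t},h^{n_0}_{k,t}\ge 0$. Finally, \eqref{(3.47)}, \eqref{(3.48)} and \eqref{(3.46)} follow from the corresponding uniform-in-$k$ bounds by weak/weak-$*$ lower semicontinuity of the norms $\|\cdot\|_{L^\infty_s}$, $\|\cdot\|_{H^1}$, $\|\cdot\|_s$ and $\|\cdot\|_{L^1_s}$, and the $\bigcap_{s\ge 0}$ memberships hold since these bounds are valid for every $s\ge 0$.

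The main obstacle is this compactness/limit-extraction step: one must juggle several topologies simultaneously (weak $H^1$ and weak-$*$ $L^\infty_s$ for the regular part, weak-$*$ ${\mathcal B}_s$ / narrow convergence for the singular part), propagate convergence from a dense set of times to all times using the equicontinuity supplied by \eqref{(3.15*)}, and keep careful track of tightness so that the decomposition identity can be tested against all of $C_b({\bRN})$ rather than only $C_0$ or $C_c$. Everything else is a direct transcription of Theorem~\ref{(theo3.1)} together with lower semicontinuity of norms under weak convergence.
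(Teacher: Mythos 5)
Your proposal is correct and follows essentially the same route as the paper: approximate $F_t$ by the Mehler-transform mild solutions from Theorem~\ref{(theo1.0)}(e), apply the decomposition of Theorem~\ref{(theo3.1)} with $n=N_{\gm}+2$ uniformly in $k$, extract a common-in-time limit via a diagonal argument over a dense set of times combined with the equicontinuity \eqref{(3.15*)}, and conclude by lower semicontinuity of the norms. The only (inessential) difference is that the paper extracts the regular part by \emph{strong} relative compactness in $L^1$ (using the uniform $H^1$ and weighted $L^\infty$ bounds), whereas you use weak $L^1$/weak-$*$ compactness plus Dunford--Pettis, which works equally well here.
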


\begin{proof} By Theorem \ref{(theo1.0)}, there is a sequence
  $\{f_{k,t}\}_{k=1}^{\infty}\subset L^1_{1,0,1}({\bRN})$ of
  conservative mild solutions of equation~\eqref{(B)} such that
\begin{equation} \label{(3.49)}
\forall\,\vp\in C_c({\bRN}),\quad
\forall\,t\ge 0, \quad \lim_{k\to\infty}\int_{{\bRN}}\vp(v) f_{k,t}(v)\, {\rm d}v
=\int_{{\bRN}}\vp(v)\, {\rm d}F_t(v).
\end{equation}
Let $n_{\gm}=N_{\gm}+2$ with $N_{\gm}$ defined in \eqref{(2.47)} and
consider the positive decompositions of $f_{k,t}$
$$
\forall \, k=1,2,3,\dots \ \forall \, t\ge t_0, \quad
f_{k,t}(v)=f_{k,t}^{n_{\gm}}(v)+h_{k,t}^{n_{\gm}}(v),\quad
$$
given by \eqref{(3.1)}-\eqref{(3.6)}. By Theorem \ref{(theo3.1)} we
have for all $s\ge 0$
\begin{eqnarray}\label{(3.50)}&&\sup_{k\ge 1, t\ge t_0}\left\|f_{k,t}^{n_{\gm}}\right\|_{L^{\infty}_s}\le
  C_{t_0, s},\quad \sup_{k\ge 1,\,t\ge t_0}\left\|f_{k,t}^{n_{\gm}}\right\|_{H^1}\le
  C_{t_0},\\
  &&\label{(3.51)}
  \forall \, t\ge t_0, \quad \sup_{k\ge 1}\left\|h_{k,t}^{n_{\gm}}\right\|_{L^1_s}\le C_{t_0, s}
  e^{-\frac{a}{2}(t-t_0)}, \\
  &&\label{(3.52)}\forall\, t_1, t_2\ge t_0, \quad
  \sup_{k\ge 1}\left\|f^{n_{\gm}}_{k,t_1}-f^{n_{\gm}}_{k,t_2}\right\|_{L^1_s},\,\,
  \sup_{k\ge 1}\left\|h^{n_{\gm}}_{k,t_1}-h^{n_{\gm}}_{k,t_2}\right\|_{L^1_s}\le
  C_{t_0,s}|t_1-t_2|.\end{eqnarray}
From \eqref{(3.50)}, it is easily seen that for every $t\ge t_0$,
$\{f^{n_{\gm}}_{k,t}\}_{k=1}^{\infty}$ is relatively compact in
$L^1({\bRN})$. Moreover by using the density of rational times, a
diagonal process and \eqref{(3.52)}, one can prove that
there is a common subsequence $\{f^{n_{\gm}}_{k_j,t}\}_{j=1}^{\infty}$
(where $\{k_j\}_{j=1}^{\infty}$ is independent of $t$) and a function $0\le f_t\in
L^1({\bRN})$, 
such that
\begin{equation} \label{(3.53)}
\forall\,t\ge t_0, \quad \left\|f^{n_{\gm}}_{k_j,t}-f_t\right\|_{L^1}
\xrightarrow[j\to\infty]{} 0.
\end{equation}
Since $h^{n_{\gm}}_{k_j,t}=f_{k_j,t}-f^{n_{\gm}}_{k_j,t}$, it follows
from \eqref{(3.53)} and the weak convergence \eqref{(3.49)} that for
every $t\ge t_0$, $h^{n_{\gm}}_{k_j,t}$ converges weakly to some
$\mu_t\in{\mathcal B}_0^{+}({\bRN})$ as $j\to\infty$, i.e.
\begin{equation} \label{(3.54)} \forall\,\vp\in C_c({\bRN}),\
  \forall\,t\ge t_0, \quad \lim_{j\to\infty}\int_{{\bRN}}\vp(v)
  h_{k_j,t}^{n_{\gm}}(v)\, {\rm d}v =\int_{{\bRN}}\vp(v)\, {\rm
    d}\mu_t(v).\end{equation} This leads to the decomposition
\eqref{(3.45)}. The inequalities \eqref{(3.47)}, \eqref{(3.48)},
\eqref{(3.46)} follow easily from \eqref{(3.50)}, \eqref{(3.51)},
\eqref{(3.52)}, \eqref{(3.53)}, \eqref{(3.54)} and the equivalent
version \eqref{(dual)} of measure norm $\|\cdot\|_s$.
\end{proof}

\section{Rate of Convergence to Equilibrium }
\label{sec4}

This section is devoted to the proof of Theorem \ref{(theo1.1)}.  We
first recall the results in \cite{Mcmp} on the exponential rate
of convergence to equilibrium for $L^1$ mild solutions.

\begin{theorem}[Cf. Theorem 1.2 of \cite{Mcmp}]\label{(theo4.1)}
  Suppose $N\ge 3$ and let $B(z,\sg)$ be defined in \eqref{(1.4)} with
  $\gm\in (0, \min\{2, N-2\}]$ and with the conditions
  \eqref{(Grad-1)}-\eqref{(b-upperbd)}-\eqref{(b-lowerbd)}. Let
  $\ld=S_{b,\gm}(1,0, 1)>0$ be the spectral gap of the linear operator
  $L_M$ in \eqref{(1-linear operator)} associated with $B(z,\sg)$ and
  the Maxwellian $M(v)=(2\pi)^{-N/2}e^{-|v|^2/2}$ in
  $L^1_{1,0,1}({\bRN})$.

  Let $f_0\in L^1_{1,0,1}({\bRN})\cap L^2({\bRN})$ and let $f_t\in
  L^1_2({\bRN}) $ be the unique conservative solution of equation
  \eqref{(B)} with the initial datum $f_0$. Then there is a constant
  $0<C<\infty$, which depends  only on $N$,
  $\gm$, the function $b$, and on $\left\|f_0\right\|_{L^2}$, such that
$$
\forall\, t\ge 0, \quad \left\|f_t-M\right\|_{L^1}\le Ce^{-\ld t}.
$$
In the important case of hard sphere model (i.e. $N=3$, $\gm=1$, and
$b={\rm const.}$), the assumption ``$ f_0\in L^1\cap L^2$" can be
relaxed to ``$ f_0\in L^1$" and the same result holds with the
constant $C$ depending  only on $N$, $\gm$, and the function $b$.
\end{theorem}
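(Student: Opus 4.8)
This statement is recalled from \cite{Mcmp}; the plan here is to sketch how one establishes such a sharp exponential $L^1$ relaxation, using only the tools recalled above. The strategy combines three ingredients: the spectral gap of the linearized collision operator $L_M$ (classical, from Grad and Carleman), uniform-in-time smoothness and moment bounds for the nonlinear flow, and a Duhamel bootstrap that upgrades a near-optimal rate to the sharp one. First I would discard an initial layer: fix $t_0\in(0,1]$ and work on $[t_0,\infty)$. By Theorem~\ref{(theo1.0)}(a)--(b) the solution enjoys, for $t\ge t_0$, uniform-in-time bounds on all weighted norms $\|f_t\|_{L^1_s}$ together with an exponential moment bound $\int_{\bRN}e^{\alpha\la v\ra^{\gm}}f_t\,{\rm d}v\le 2$; and by Corollary~\ref{(corol3.2)}, applied to $f_t$ viewed as a measure strong solution (cf.\ Theorem~\ref{(theo1.0)}(d)), one gets the splitting $f_t=g_t+r_t$ with $\sup_{t\ge t_0}\big(\|g_t\|_{L^\infty_s}+\|g_t\|_{H^1}\big)<\infty$ for every $s\ge 0$ and $\|r_t\|_{L^1_s}\le C_{t_0,s}\,e^{-\frac{a}{2}(t-t_0)}$. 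Thus $f_t$ is, up to an exponentially small $L^1$ error, bounded in every weighted Sobolev and $L^\infty$ norm.

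The second step is the qualitative convergence $\|f_t-M\|_{L^1}\to 0$, via Boltzmann's $H$-theorem: the relative entropy $t\mapsto H(f_t|M)=\int f_t\log(f_t/M)$ is nonincreasing with $\int_{t_0}^{\infty}D(f_\tau)\,{\rm d}\tau<\infty$, where $D\ge 0$ is the entropy production. Picking $t_n\to\infty$ with $D(f_{t_n})\to 0$, the uniform bounds of Step~1 make $\{f_{t_n}\}$ weakly compact in $L^1$ (and strongly compact after the harmless splitting $g_{t_n}+r_{t_n}$); the characterization of the equality case $D(f)=0$ together with conservation of mass, momentum and energy forces every limit point to equal $M$. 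Monotonicity and lower semicontinuity of $H(\cdot|M)$ then give $H(f_t|M)\to 0$, and the Csisz\'ar--Kullback--Pinsker inequality yields $\|f_t-M\|_{L^1}\to 0$. Interpolating this decay against the uniform high-norm bounds, one obtains a time $t_1\ge t_0$ after which $h_t:=f_t-M$ is small in the strong norms needed below.

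The third step produces the exponential rate. The perturbation solves $\partial_t h_t=L_M h_t+Q(h_t,h_t)$, and conservation of the collisional invariants keeps $h_t$ in the orthogonal complement, in $L^2(\bRN,M^{-1})$, of $\mathrm{span}\{1,v_1,\dots,v_N,|v|^2\}$, on which $L_M$ is self-adjoint nonpositive with $\mathrm{Spec}(L_M)\subset(-\infty,-\ld]$. The possibly non-Gaussian tails (for $\gm<2$ one need not have $f_t\in L^2(M^{-1})$) are handled by truncating to $|v|\le R$, where the uniform $L^\infty_s$ bound makes the bulk of $h_t$ live in $L^2$, and absorbing the tail using the exponential moment bound, then letting $R\to\infty$; equivalently one transfers the spectral gap to the enlarged weighted space. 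Using $\|Q(h,h)\|\lesssim\|h\|_{\mathrm{strong}}\|h\|$ with $\|h_t\|_{\mathrm{strong}}\to 0$, the energy identity gives $\frac{{\rm d}}{{\rm d}t}\|h_t\|^2\le-2(\ld-\varepsilon(t))\|h_t\|^2$ with $\varepsilon(t)\to 0$, hence $\|h_t\|\le C_\delta\,e^{-(\ld-\delta)t}$ for every $\delta>0$. To reach the \emph{sharp} rate one writes the Duhamel formula $h_t=e^{(t-t_1)L_M}h_{t_1}+\int_{t_1}^{t}e^{(t-\tau)L_M}Q(h_\tau,h_\tau)\,{\rm d}\tau$: the semigroup decays at rate $\ld$ on the invariant subspace, while $\|Q(h_\tau,h_\tau)\|\lesssim e^{-2(\ld-\delta)\tau}$, and since $2(\ld-\delta)>\ld$ for $\delta$ small the convolution is $O(e^{-\ld t})$, so $\|f_t-M\|_{L^1}\le Ce^{-\ld t}$ on $[t_1,\infty)$; on $[0,t_1]$ the norm is bounded, and adjusting the constant gives the claim.

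For the hard-sphere case ($N=3$, $\gm=1$, $b$ constant) one removes the $L^2$ hypothesis by exploiting the gain of integrability of the iterated collision operator (Abrahamsson \cite{MR1697495}; here Theorem~\ref{(theo2.1)}): an $L^1_2$ mild solution lies in $L^1\cap L^2$ (indeed $L^1\cap L^\infty$) for every $t>0$, so applying the estimate just proved from an arbitrary $t_0>0$, together with the uniform bounds, yields the same conclusion with $C$ depending only on $N,\gm,b$. The main obstacle is Step~3: transferring the $L^2(M^{-1})$ spectral gap to the functional setting actually available for heavy-tailed data, and closing the bootstrap so as to obtain the \emph{exact} rate $\ld$ rather than $\ld-\delta$ --- the regularity and exponential-moment inputs of Theorem~\ref{(theo1.0)} and Corollary~\ref{(corol3.2)} are precisely what make both of these steps work.
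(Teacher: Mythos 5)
The first thing to note is that the paper does not prove this statement: Theorem~\ref{(theo4.1)} is imported (after normalization) from Theorem~1.2 of \cite{Mcmp}, and the only work done here is in Remark~\ref{remark4.1}, where the authors check that the present hypotheses fall within the scope of that reference --- rescaling from the Maxwellian $e^{-|v|^2}$ to $M_{1,0,1}$, removing the convexity/monotonicity assumptions on $b$ by means of the exponential moment production of Theorem~\ref{(theo1.0)}(b), and extending the admissible range of $\gm$ from $(0,1]$ to $(0,\min\{2,N-2\}]$. So your proposal is attempting to reconstruct the proof of the cited result rather than to reproduce anything in this paper.

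As a reconstruction it captures the right architecture (instantaneous regularization, qualitative convergence, linearization near equilibrium, Duhamel upgrade from rate $\ld-\dt$ to the sharp rate $\ld$), but it has a genuine gap exactly where you flag ``the main obstacle''. For $\gm<2$ the solution only has stretched-exponential moments $e^{\alpha\la v\ra^{\gm}}$, so $f_t-M$ need not lie in $L^2(\bRN,M^{-1})$, and the truncation to $\{|v|\le R\}$ you propose does not close: $L_M$ does not commute with the truncation, the commutator is a nonlocal collision term that is not small in $L^2(M^{-1})$ uniformly in $R$, so the energy identity you invoke is simply not available. What is actually required is that the semigroup of the (no longer self-adjoint) linearized operator on the enlarged space $L^1(m^{-1})$, $m=e^{-\alpha|v|^{\gm}}$, decays at the exact rate $e^{-\ld t}$ on the complement of the collision invariants; this is the content of the modified operator $\mathcal L_m$ and of Lemma~4.6 of \cite{Mcmp}, recalled here as Lemma~\ref{(lem4.1)}, and it is a theorem in its own right, not a limiting argument. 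A second, smaller gap: your Step~2 enters the perturbative regime by soft compactness, which produces a time $t_1$ depending on the whole datum $f_0$ and hence a constant $C$ that does \emph{not} depend only on $\|f_0\|_{L^2}$ as the statement requires; one needs a quantitative convergence estimate uniform over $\{\|f_0\|_{L^2}\le R\}$ at this stage. The hard-sphere paragraph (removing the $L^2$ hypothesis via the gain of integrability of the iterated gain term) is correct in spirit and is indeed how that relaxation is obtained.
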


\begin{lemma}[Cf. Lemma 4.6 of \cite{Mcmp}]\label{(lem4.1)}
  Suppose $N\ge 3$ and let $B(z,\sg)$ be defined in \eqref{(1.4)} with
  $\gm\in (0, \min\{2, N-2\}]$, and with the conditions
  \eqref{(Grad-1)}-\eqref{(b-upperbd)}-\eqref{(b-lowerbd)}. Let
  $\ld=S_{b,\gm}(1,0, 1)>0$ be the spectral gap of the linear operator
  $L_M$ in \eqref{(1-linear operator)} associated with $B(z,\sg)$ and
  the Maxwellian $M(v)=(2\pi)^{-N/2}e^{-|v|^2/2}$ in
  $L^1_{1,0,1}({\bRN})$.  Let $\alpha>0$,
  $m(v)=e^{-\alpha |v|^{\gm}}$.

  Then there are some explicitable finite constants $\vep>0, C>0$
  depending  only on $N$, $\gm$, the function $b$, and
  $\alpha$, such that if $f_t$ with the initial datum $f_0\in
  L^1_{1,0,1}({\bRN})\cap L^1({\bRN}, m^{-2})$ is a conservative
  solution to equation \eqref{(B)} satisfying
$$
\forall\, t\in[0,\infty), \quad \left\|f_t-M\right\|_{L^1(m^{-2})}\le
\vep
$$
then
$$
\forall\, t\in[0,\infty), \quad \left\|f_t-M\right\|_{L^1(m^{-1})}\le
C\left\|f_0-M\right\|_{L^1(m^{-1})}e^{-\ld t}.
$$
\end{lemma}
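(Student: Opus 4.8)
The plan is to run a perturbative argument around the Maxwellian in the exponentially weighted space $L^1(\bRN,m^{-1})$, exploiting both the spectral gap of the linearized operator and the fact that $Q$ is quadratic. Set $h_t:=f_t-M$. Because $f_t$ is conservative and $M\in L^1_{1,0,1}(\bRN)$ shares its mass, momentum and energy, one has $\int_{\bRN}h_t(v)(1,v,|v|^2)\,{\rm d}v=0$ for every $t\ge 0$; and since $Q(M,M)=0$, bilinearity of $Q$ gives $\p_t h_t=L_M h_t+Q(h_t,h_t)$, hence the Duhamel representation
\[
h_t=e^{tL_M}h_0+\int_{0}^{t}e^{(t-s)L_M}Q(h_s,h_s)\,{\rm d}s .
\]
Since $m^{-1}\le m^{-2}$, the hypothesis already yields $\|h_t\|_{L^1(m^{-1})}\le\|h_t\|_{L^1(m^{-2})}\le\vep$ for all $t\ge0$, and in particular $\|h_0\|_{L^1(m^{-1})}\le\vep$.

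First I would invoke the \emph{linear decay estimate}: on the closed subspace of $L^1(\bRN,m^{-1})$ of functions with vanishing mass, momentum and energy, $\|e^{tL_M}g\|_{L^1(m^{-1})}\le C_0\,e^{-\ld t}\|g\|_{L^1(m^{-1})}$, with $C_0$ depending only on $N,\gm,b,\alpha$. This is precisely the spectral content of \cite{Mcmp}: the spectral gap $\ld$ of $L_M$, classical in the Gaussian space $L^2(\bRN,M^{-1})$ and constructive after \cite{Baranger-Mouhot}, is transferred to the larger space $L^1(\bRN,m^{-1})$ through a splitting $L_M=\mathcal A+\mathcal B$, where $\mathcal B$ is a dissipative part with arbitrarily fast decay in the big space (its multiplicative part $-\nu(v)$, with $\nu(v)=(M\ast|\cdot|^{\gm})(v)$ growing like $\la v\ra^{\gm}$ at infinity, dominates) and $\mathcal A$ is bounded and regularizing (it gains integrability and smoothness from the gain part of the linearized operator), together with a Duhamel iteration that propagates the $L^2(M^{-1})$ decay to $L^1(m^{-1})$ without loss of rate. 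I expect this to be the main obstacle; here it may legitimately be \emph{imported} from \cite{Mcmp}, the only things to check being the stated dependence of $C_0$ and that $m^{-1}$ is pointwise dominated by $M^{-1/2}$ (automatic when $\gm<2$; a restriction on $\alpha$ when $\gm=2$).

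The second ingredient is a \emph{weighted bilinear estimate}. From $|v'|^2\le|v|^2+|v_*|^2$ and the concavity of $x\mapsto x^{\gm/2}$ (recall $\gm\le2$) one gets $|v'|^{\gm}\le|v|^{\gm}+|v_*|^{\gm}$, hence $m^{-1}(v')\le m^{-1}(v)\,m^{-1}(v_*)$ and similarly for $v_*'$. Combining this with the pre/post-collisional change of variables, with $|v-v_*|^{\gm}\le\la v\ra^{\gm}\la v_*\ra^{\gm}$, and with the elementary bound for $Q^{-}$, one obtains $\|Q(h,h)\|_{L^1(m^{-1})}\le C_Q\|h\|_{L^1(m^{-1}\la\cdot\ra^{\gm})}^{2}$. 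Since $m^{-1}(v)\la v\ra^{\gm}\le C_\theta\,e^{\alpha(1+\theta)|v|^{\gm}}$ for every $\theta>0$ and $e^{\alpha(1+\theta)|v|^{\gm}}=(m^{-1})^{1-\theta}(m^{-2})^{\theta}$, Hölder gives $\|h\|_{L^1(m^{-1}\la\cdot\ra^{\gm})}\le C_\theta\|h\|_{L^1(m^{-1})}^{1-\theta}\|h\|_{L^1(m^{-2})}^{\theta}$; fixing a small $\theta\in(0,\tfrac12)$ and using $\|h_s\|_{L^1(m^{-2})}\le\vep$ yields
\[
\|Q(h_s,h_s)\|_{L^1(m^{-1})}\le C_Q'\,\vep^{2\theta}\,\|h_s\|_{L^1(m^{-1})}^{\,2-2\theta},\qquad s\ge0 ,
\]
with $C_Q'$ independent of $\vep$.

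Finally I would close in two Grönwall steps. Plugging the bilinear bound into Duhamel and using $\|h_s\|_{L^1(m^{-1})}^{2-2\theta}\le\vep^{1-2\theta}\|h_s\|_{L^1(m^{-1})}$, the linear estimate gives $\|h_t\|_{L^1(m^{-1})}\le C_0 e^{-\ld t}\|h_0\|_{L^1(m^{-1})}+C_0C_Q'\vep\int_0^t e^{-\ld(t-s)}\|h_s\|_{L^1(m^{-1})}\,{\rm d}s$, so a Grönwall applied to $t\mapsto e^{\ld t}\|h_t\|_{L^1(m^{-1})}$ produces $\|h_t\|_{L^1(m^{-1})}\le C_0\|h_0\|_{L^1(m^{-1})}e^{-\mu t}$ with $\mu=\ld-C_0C_Q'\vep$. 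Choosing $\vep$ small enough that $(2-2\theta)\mu>\ld$ (possible because $2-2\theta>1$), I then reinject this decay into Duhamel \emph{keeping} the rate: $\|Q(h_s,h_s)\|_{L^1(m^{-1})}\le C_Q'\vep^{2\theta}(C_0\|h_0\|_{L^1(m^{-1})})^{2-2\theta}e^{-(2-2\theta)\mu s}\le C\,\vep\,\|h_0\|_{L^1(m^{-1})}e^{-(2-2\theta)\mu s}$ (using $\|h_0\|_{L^1(m^{-1})}\le\vep$ to absorb the extra power), and since $(2-2\theta)\mu>\ld$ the corresponding Duhamel integral is $\le C\|h_0\|_{L^1(m^{-1})}e^{-\ld t}$. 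This yields $\|f_t-M\|_{L^1(m^{-1})}=\|h_t\|_{L^1(m^{-1})}\le C\|h_0\|_{L^1(m^{-1})}e^{-\ld t}$ with $\vep$ and $C$ depending only on $N,\gm,b,\alpha$, as claimed.
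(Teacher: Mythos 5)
There is no internal proof to compare against here: the paper states this lemma as a recalled result (``Cf.\ Lemma 4.6 of \cite{Mcmp}'') and its only ``proof'' is Remark 4.5, which explains why the hypotheses of the original version in \cite{Mcmp} (the restriction $\gm\in(0,1]$, the convexity/monotonicity of $b$, and the constraint $0<s<\gm/2$ on the weight exponent) can be relaxed to the present ones using Theorem \ref{(theo1.0)}. Your reconstruction is, in substance, the argument of \cite{Mcmp} itself, and it is sound as a sketch. The nonlinear part is complete and correct: the inequality $|v'|^{\gm}\le|v|^{\gm}+|v_*|^{\gm}$ for $\gm\le 2$ does give $m^{-1}(v')\le m^{-1}(v)m^{-1}(v_*)$ and hence the weighted bilinear bound; the interpolation $\|h\|_{L^1(m^{-1}\la\cdot\ra^{\gm})}\le C_\theta\|h\|_{L^1(m^{-1})}^{1-\theta}\|h\|_{L^1(m^{-2})}^{\theta}$ is correct; and the two-pass Duhamel/Gr\"onwall scheme (first a degraded rate $\mu=\ld-O(\vep)$, then re-injection using $(2-2\theta)\mu>\ld$ to recover the sharp rate) closes properly for $\theta<1/2$ and $\vep$ small, with all constants depending only on $N,\gm,b,\alpha$. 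Two caveats. First, the linear decay $\|e^{tL_M}g\|_{L^1(m^{-1})}\le C_0e^{-\ld t}\|g\|_{L^1(m^{-1})}$ on the subspace of vanishing collision invariants is not a ``first ingredient'' on the same footing as the rest: it is the entire quantitative content of the spectral-gap enlargement machinery of \cite{Mcmp}, and your proof is only as complete as that import (which is fair, since the paper imports the whole lemma). You should also note that the Duhamel integrand $Q(h_s,h_s)$ does lie in that subspace, which is what licenses applying the decay estimate under the integral. Second, when $\gm=2$ (allowed here for $N\ge 4$) and $\alpha\ge 1/4$ the weight $m^{-2}$ is not integrable against $M$, so the hypothesis $\|f_t-M\|_{L^1(m^{-2})}\le\vep$ is vacuous and the statement degenerates; you flag this correctly, and in the application of Section 4 the exponent $\alpha=\alpha(1)/4$ is small, so nothing is lost.
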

\vskip2mm

\begin{remarks}\label{remark4.1}
\begin{enumerate}
\item The original version of Theorem \ref{(theo4.1)} and Lemma
  \ref{(lem4.1)} in \cite{Mcmp} were proved for the class
  $L^1_{\pi^{N/2}, 0,1/2}({\bRN})$, i.e. for the Maxwellian
  $M(v)=M_{\pi^{N/2},0,1/2}(v)=e^{-|v|^2}$. According to Proposition
  \ref{(prop1.1)} (normalization), these are equivalent to the present
  version. In fact let $g_t\in L^1_{\pi^{N/2},0,1/2}({\bRN}) $,
  $f_{t}\in L^1_{1,0,1}({\bRN})$ have the relation
$$f_t(v)=(2\pi)^{-N/2}g_{t/c}(v/\sqrt{2}),\quad{\rm i.e.}\quad
g_t(v)=(2\pi)^{N/2}f_{ct}(\sqrt{2}\,v),\quad t\ge 0$$ where
$c=\pi^{N/2}2^{-\gm/2}$.  Then $f_t$ is a conservative solution of
equation~\eqref{(B)} if and only if $g_t$ is a conservative solution
of equation~\eqref{(B)}.  And we have
\begin{equation*}
  \forall \, t\ge 0, \quad
  \left\|f_{t}-M_{1,0,1}\right\|_{L^1}=\pi^{-N/2}\left\|g_{t/c}-M_{\pi^{N/2},0,1/2}\right\|_{L^1},
\end{equation*}
and
\begin{equation*}
S_{b,\gm}(\pi^{N/2},0,1/2)=S_{b,\gm}(1,0,1)\pi^{N/2}2^{-\gm/2}.
\end{equation*}

\item In order to prove the exponential rate of convergence to
  equilibrium, it was introduced in \cite{Mcmp} the modified
  linearized collision operator
$${\mathcal L}_m(h)=m^{-1}
ML_M(mM^{-1}h),\quad m(v)=e^{-a|v|^s}$$ with $M(v)=e^{-|v|^2}$, $a>0$
and $0<s<2$. It is proved in \cite{Mcmp} that ${\mathcal L}_m$ and
$L_M$ has the same spectrum, but ${\mathcal L}_m$ is available to
connect the exponential moment estimates of solutions.  The proof of
the original version of Theorem \ref{(theo4.1)} in \cite{Mcmp} used
additional technical assumptions: the angular function $b$ is convex
and nondecreasing in $(-1,1)$, and the constant $s$ in
$m(v)=e^{-a|v|^s}$ satisfies $0<s<\gm/2$.  These assumptions were only
used to prove the exponential moment estimate of the form
\eqref{(1.13)} (see Lemma 4.7 of \cite{Mcmp}). According to Theorem
\ref{(theo1.0)} in Section 1, these additional assumptions on the
function $b$ can be removed and the restriction $0<s<\gm/2$ can be
relaxed to $0<s\le \gm$. In particular one can choose $s=\gm$.

\item In \cite{Mcmp} it was actually assumed that $\gm\in(0,1]$;
  however this assumption was only used three times: 
  \begin{itemize}
  \item The first and second times are in \cite[Proof of
    Proposition~2.3, pp.643-645]{Mcmp}:
    \begin{itemize}
    \item first in
      ``$|v-v_*|^{\gm}\sin^{N-2}\theta/2\le
      (|v-v_*|\sin\theta/2)^{\gm}=|v-v'|^{\gm}$
      since $N-2\ge 1\ge \gm$'' [a key step in obtaining basic
      properties of the linearized collision operator $L_M$], but here
      the condition $0<\gm\le 1$ can be relaxed to $0<\gm\le N-2$ for
      any $N \ge 3$;
      \item second in
      ``$\|{\mathcal I}_{\dt}L^{+}\|_{L^2(M)}=O(\dt^{2-\gm})\to 0$ as
      $\dt\to 0^+$'', with
      ${\mathcal I}_{\dt}=\wt{\Theta}_{\dt}* {\bf 1}_{\{|\cdot|\le
        \dt^{-1}\}}$
      as defined in \cite[p.639]{Mcmp}. In this place, if we assume
      $0<\gm\le 2$ and let ${\bf 1}_{\{|\cdot|\le \dt^{-1}\}}$ be
      modified as ${\bf 1}_{\{|\cdot|\le \dt^{-1/2}\}}$, i.e.  we
      redefine
      ${\mathcal I}_{\dt}=\wt{\Theta}_{\dt}* {\bf 1}_{\{|\cdot|\le
        \dt^{-1/2}\}}$,
      then the same proof in \cite{Mcmp} also yields
      $\|{\mathcal I}_{\dt}L^{+}\|_{L^2(M)}=O(\dt^{2-\gm/2})\to 0$ as
      $\dt\to 0^+$.
    \end{itemize}

  \item The third time was in applying a regularity result from
    \cite{MR2081030}: in the latter paper the condition $0<\gm<2$ was
    used to ensure the existence and the uniqueness of the mild
    solution of equation~\eqref{(B)} constructed in
    \cite{MR1697562}. However since by Theorem~\ref{(theo1.0)}, the
    existence and the uniqueness of the mild solution of
    equation~\eqref{(B)} have been proven for all $0<\gm\le 2$, the
    results of \cite{MR2081030} mentioned above holds also for
    $\gm=2$.
  \end{itemize}

  Therefore the present assumption $0<\gm\le \min\{2, N-2\}$ satisfies
  all requirements and so the above Theorem \ref{(theo4.1)} and Lemma
  \ref{(lem4.1)} hold true.  Of course in the physical case, $N=3$,
  there is no improvement on $\gm$.

\item Let $f_t$ be a conservative mild solution of
  equation~\eqref{(B)} on $[\tau,\infty)$ with the initial datum
  $f_{\tau}$. Applying Theorem \ref{(theo4.1)} and Lemma
  \ref{(lem4.1)} to the solution $t\mapsto f_{t+\tau}$ on
  $[0,\infty)$, the exponential terms for the decay estimates of
  $\left\|f_t-M\right\|_{L^1}$ on the time interval $[\tau,\infty)$ is
  given by $e^{-\ld(t-\tau)}$.
\end{enumerate}
\end{remarks}

To prove Theorem \ref{(theo1.1)}, we need further preparation.

\begin{lemma}\label{(lem4.2)} Let  $0\le f\in L^1_{k+l}({\bRN})\cap L^2_1({\bRN})\cap
  H^s({\bRN})$ with $k\ge 0$, $l>0$, $s>0$. Let
\[
{\mathcal N}(f)={\mathcal N}_{\rho,u,T}(f)\in L^1_{1,0,1}({\bRN})
\]
be the normalization of $f$ defined in
\eqref{(1-general)},\eqref{(1-norm-f)}, and suppose that
$|\rho-1|+|u|+|T-1|\le 1/2$. Then
\begin{equation}\label{(4.2)}
  \left\|f-{\mathcal N}(f)\right\|_{L^1_k}\le
  C_{N,k,l,s}(f) \left(
    |\rho-1|+|u|+|T-1|\right)^{\frac{sl}{(1+s)(k+N+2l)}}
\end{equation}
where
\[
 C_{N,k,l,s}(f):=C_{N,k,l}\max\left\{\|f\|_{L^1_{k+l}},\,
\|f\|_{L^2_1},\, \|f\|_{H^s}\right\}
\]
and $C_{N,k,l}<\infty$ only depends on $N,k,l$.
\end{lemma}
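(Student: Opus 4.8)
The plan is to write $f-\mathcal{N}(f) = (f - M_f) - (\mathcal{N}(f) - \mathcal{N}(M_f)) + (M_f - \mathcal{N}(M_f))$ where $M_f$ is the Maxwellian with the same mass, momentum and temperature as $f$... actually a cleaner route avoids introducing a Maxwellian. Instead I would estimate the difference directly by splitting into a low-velocity region $\{|v|\le R\}$ and a tail region $\{|v|>R\}$, and optimizing over $R$. On the tail, both $f$ and $\mathcal{N}(f)$ have $L^1_{k+l}$ norms controlled (for $\mathcal{N}(f)$ this uses \eqref{(1.35)}--\eqref{(1.36)}-type bounds, available since $|\rho-1|+|u|+|T-1|\le 1/2$ keeps the normalization constants bounded by an absolute constant), so $\int_{|v|>R}\langle v\rangle^k|f-\mathcal{N}(f)|\,\mathrm{d}v \le C \|f\|_{L^1_{k+l}} R^{-l}$. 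This handles the tail with a power of $R^{-l}$ at the cost of the $L^1_{k+l}$ norm.

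On the bounded region $\{|v|\le R\}$ the key is to control $\|f-\mathcal{N}(f)\|_{L^1(\{|v|\le R\})}$ (with the weight $\langle v\rangle^k\le \langle R\rangle^k$ pulled out) by the closeness of the normalization parameters to $(1,0,1)$. Here I would interpolate: $\|f-\mathcal{N}(f)\|_{L^1(B_R)} \le C R^{N/2} \|f-\mathcal{N}(f)\|_{L^2}$ by Cauchy--Schwarz on a ball of radius $R$, and then bound $\|f-\mathcal{N}(f)\|_{L^2}$ by interpolating between the $L^2$-level regularity $\|f\|_{H^s}$ (which controls $\|\mathcal{N}(f)\|_{H^s}$ too, up to the bounded normalization constants) and the smallness of $\|f-\mathcal{N}(f)\|_{L^1}$-type quantities. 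Concretely, $\|f-\mathcal{N}(f)\|_{L^2}\lesssim \|f-\mathcal{N}(f)\|_{\dot H^{-s/(1+s)}}^{?}\cdots$; more simply, use $\|g\|_{L^2}\le \|g\|_{L^1}^{s/(1+s)}\|g\|_{\dot H^s}^{1/(1+s)}$ (a standard Gagliardo--Nirenberg/Fourier interpolation, valid because $\widehat g$ sits between $L^\infty$ from $L^1$ and the weighted $L^2$ from $\dot H^s$), applied with $g=f-\mathcal{N}(f)$. Finally the crude $L^1$ closeness of $f$ and its normalization is $\|f-\mathcal{N}(f)\|_{L^1}\le C_{N,l}\|f\|_{L^1_l}(|\rho-1|+|u|+|T-1|)^{c}$ for some explicit exponent — this follows from the explicit form \eqref{(1-norm-f)} of $\mathcal{N}(f)$ by a change of variables and the Lipschitz dependence of $v\mapsto (\sqrt T v+u)/\rho$-type maps on $(\rho,u,T)$, together with a tail cutoff using the $l$ extra moments to absorb the unboundedness of the linear growth factors.

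Combining: on $B_R$ we get a bound of the form $C(f)\,\langle R\rangle^{k+N/2}\,(|\rho-1|+|u|+|T-1|)^{s/((1+s)\cdot\text{something})}$, and on the tail $C(f) R^{-l}$; optimizing the choice of $R$ as a small power of $\epsilon:=|\rho-1|+|u|+|T-1|$ balances the two terms and produces exactly the exponent $\frac{sl}{(1+s)(k+N+2l)}$ claimed (the denominator $k+N+2l$ is what one expects from balancing $R^{k+N/2}\epsilon^{a}$ against $R^{-l}$ after the interpolation exponents are tracked). The constant $C_{N,k,l,s}(f)$ is then the maximum of the three norms appearing, as stated. The main obstacle I anticipate is bookkeeping the interpolation exponents carefully so that the powers of $R$ and $\epsilon$ combine to the precise fraction in \eqref{(4.2)} — in particular one must be careful that the $H^s$ norm only enters to the power $1/(1+s)$ and that the change-of-variables estimate for $\|f-\mathcal N(f)\|_{L^1}$ is genuinely Lipschitz (not merely Hölder) in the parameters, since any loss there would degrade the final exponent. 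The bound on the normalization constants themselves is routine given the constraint $|\rho-1|+|u|+|T-1|\le 1/2$ and formulas \eqref{(1.35c)}--\eqref{(1.36c)}.
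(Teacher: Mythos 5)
Your outer skeleton (split $\{|v|\le R\}$ versus $\{|v|>R\}$, pay the tail with the $L^1_{k+l}$ moment as $R^{-l}$, use an $L^2$-level quantity on the ball, optimize $R$) is exactly the interpolation step the paper uses, namely $\|h\|_{L^1_k}\le C_N\|h\|_{L^1_{k+l}}^{(k+N)/(k+N+2l)}\|\widehat h\|_{L^2}^{2l/(k+N+2l)}$ with $h=f-{\mathcal N}_1(f)$. The genuine gap is in the ingredient you feed into it. You propose to control $\|f-{\mathcal N}(f)\|_{L^2}$ by interpolating between $\dot H^s$ and a ``crude $L^1$ closeness'' $\|f-{\mathcal N}(f)\|_{L^1}\le C\|f\|_{L^1_l}\,\varepsilon^{c}$, $\varepsilon:=|\rho-1|+|u|+|T-1|$, which you claim follows from Lipschitz dependence of the change of variables on $(\rho,u,T)$. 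This is false at the stated level of regularity: for the translation/dilation part, $\|f-f(\cdot+u)\|_{L^1}$ tends to $0$ as $u\to 0$ for any $f\in L^1$, but with \emph{no} uniform modulus of continuity over the unit ball of $L^1_{k+l}\cap L^2_1\cap H^s$ unless the regularity is used quantitatively, and then one gets a H\"older modulus of order $\min(s,1)$, not a Lipschitz one. Worse, a quantitative $L^1$ bound of this type \emph{is} the lemma in the case $k=0$, so invoking it as an input is circular. This is precisely why the paper passes to the Fourier side: $\widehat{{\mathcal N}_1(f)}(\xi)=e^{{\rm i}T^{-1/2}\xi\cdot u}\widehat f(T^{-1/2}\xi)$, the phase factor is bounded by $2\langle T^{-1/2}\xi\rangle^{s}\max\{|u|,|u|^s\}$ and paid with $\|f\|_{H^s}$, while the dilation difference $\widehat f(\xi)-\widehat f(T^{-1/2}\xi)$ is written as an integral of $\nabla\widehat f$ along the dilation path (controlled by $\|f\|_{L^2_1}$ via Plancherel) and H\"older-interpolated against $\|f\|_{\dot H^s}$ to yield $|T-1|^{s/(2(1+s))}$. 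That Fourier-side $L^2$ bound is what enters the $R$-optimization; no a priori $L^1$ rate is needed.

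A secondary but real error: the inequality $\|g\|_{L^2}\le\|g\|_{L^1}^{s/(1+s)}\|g\|_{\dot H^s}^{1/(1+s)}$ you invoke fails the scaling test for $N\neq 2$; the correct Fourier-splitting interpolation is $\|g\|_{L^2}\le C\|g\|_{L^1}^{2s/(N+2s)}\|g\|_{\dot H^s}^{N/(N+2s)}$. With the wrong exponents your final bookkeeping cannot land on $\frac{sl}{(1+s)(k+N+2l)}$; note that in the paper the factor $\frac{s}{1+s}$ comes from the H\"older step on the Fourier side (the exponent $1/(2p)$ with $p=(1+s)/s$), not from an $L^1$--$\dot H^s$ interpolation. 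To repair your plan you would need to (i) replace the ``Lipschitz change of variables'' step by the Fourier-multiplier computation above (or some equivalent quantitative use of $H^s$ and $L^2_1$), and (ii) redo the exponent count with the correct interpolation.
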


\begin{proof} Recall that ${\mathcal N}(f)=\rho^{-1}T^{N/2}f(\sqrt{T}\, v+u)$.
Let ${\mathcal N}_1(f)=T^{N/2}f(\sqrt{T}\, v+u)$. Then
$$
\left\|f-{\mathcal N}(f)\right\|_{L^1_k}\le\left\|f-{\mathcal N}_1(f)\right\|_{L^1_k}
+2|\rho-1| \left\|{\mathcal N}_1(f)\right\|_{L^1_k}
$$
where we used $|1-\rho^{-1}|\le 2|\rho-1|$ because $1/2\le \rho\le
3/2$. We need to prove that
\begin{equation} \label{(lem4.1)}\left\|f-{\mathcal
      N}_1(f)\right\|_{L^1_k}\le C_{N} \left\|f-{\mathcal
        N}_1(f)\right\|_{L^1_{k+l}} ^{\frac{k+N}{k+N+2l}}
    \left\|\widehat{f}-\widehat{{\mathcal
          N}_1(f)}\right\|_{L^2}^{\frac{2l}{k+N+2l}}.
  \end{equation}
Let $h=f-{\mathcal N}_1(f)$, $R\in(0,\infty)$. We have
$$
\|h\|_{L^1_k}= \int_{\la v\ra<R}\la v\ra^k |h(v)|\, {\rm d}v+\int_{\la v\ra\ge
R}\la v\ra^k |h(v)|\, {\rm d}v\le
 C_N \|h\|_{L^2}R^{\frac{k+N}{2}}+\|h\|_{L^1_{k+l}}\frac{1}{R^{l}}.
$$
Minimizing the right hand side with respect to $R\in (0,\infty)$ leads to
$$
\|h\|_{L^1_k}\le 2 C_N ^{\frac{2 l}{k+N+2l}}  \|h\|_{L^1_{k+l}}^{\frac{k+N}{k+N+2l}}
\|h\|_{L^2}^{\frac{2l }{k+N+2l}}
$$
which gives \eqref{(lem4.1)} by Plancherel theorem
$\|h\|_{L^2}=(2\pi)^{-N/2}\|\widehat{h}\|_{L^2}$.

Since $1/2\le T\le 3/2$ and $|u|\le 1/2$ imply
\[
1+ \left|\frac{v-u}{\sqrt{T}}\right|^2\le 4(1+|v|^2),
\]
it follows that
$$
\left\|{\mathcal
    N}_1(f)\right\|_{L^1_{k+l}}=\int_{{\bRN}}\left(1+\left|\frac{v-u}{\sqrt{T}}\right|^2\right)^{(k+l)/2}
f(v)\, {\rm d}v \le 2^{k+l}\|f\|_{L^1_{k+l}}
$$
and thus
\begin{equation}\label{(4.3)}
  \left\|f-{\mathcal N}(f)\right\|_{L^1_k}\\
  \le C_{N,k,l}  \|f\|_{L^1_{k+l}} ^{\frac{k+N
    }{k+N+2l}} \left\|\widehat{f}-\widehat{{\mathcal N}_1(f)}\right\|_{L^2}^{\frac{2l
    }{k+N+2l}}+2^{k+1}|\rho-1|\|f\|_{L^1_k}.
\end{equation}
Next we compute
\begin{eqnarray*}&& \widehat{{\mathcal N}_1(f)}(\xi)
  =e^{{\rm i}T^{-1/2}\xi\cdot u}\widehat{f}(T^{-1/2}\xi),\\ \\
  && \left|1-e^{{\rm i}T^{-1/2}\xi\cdot u}\right|\le
  2\left(1+|T^{-1/2}\xi|^2\right)^{s/2}
 \max\{|u|,\, |u|^s\},\\ \\
  && \left|\widehat{f}(\xi)-\widehat{{\mathcal N}_1(f)}(\xi)\right|\le
  \left|\widehat{f}(\xi)-\widehat{f}(T^{-1/2}\xi)\right|+2\left|\widehat{f}(T^{-1/2}\xi)\right|
  \left(1+|T^{-1/2}\xi|^2\right)^{s/2}\max\{|u|,\,
  |u|^s\},
\end{eqnarray*}
hence
\begin{equation}\label{(4.7)}
  \left\|\widehat{f}-\widehat{{\mathcal N}_1(f)}\right\|_{L^2}  \le
  \left\|\widehat{f}-\widehat{f}\left(T^{-1/2}\cdot\right)\right\|_{L^2}+2^{1+N/4} \|f\|_{H^s}\max\{|u|,\,
  |u|^s\}.
\end{equation}
Write $\xi=(\xi_1,\xi_2, \dots,\xi_N), v=(v_1,v_2,\dots,v_N)$, and
$f_j(v)=v_j f(v), j=1,2,\dots,N$. Then
$$
\widehat{f}(\xi)-\widehat{f}\left(T^{-1/2}\xi\right)
=-{\rm
  i}\int_{\frac{1}{\sqrt{T}}}^{1}\sum_{j=1}^{N}\widehat{f_j}(t\xi)
\xi_j \, {\rm d}t.
$$
By Cauchy-Schwarz inequality and
\[
1/2\le T\le 3/2 \quad \Longrightarrow \quad
\left|\frac{1}{\sqrt{T}}-1\right|\le |T-1|
\]
we have
$$
\left|\widehat{f}(\xi)-\widehat{f}\left(T^{-1/2}\xi\right)\right| \le
|T-1|^{1/2}\left(\int_{1\wedge \frac{1}{\sqrt{T}}}^{1\vee
    \frac{1}{\sqrt{T}}}\sum_{j=1}^{N} |\widehat{f_j}(t\xi)|^2 \, {\rm
    d}t\right)^{1/2}\,|\xi|$$ where $a \wedge b=\min\{a,b\}$ and
$a\vee b=\max\{a,b\}$. Let $1/p+1/q=1$ with
$p=(1+s)/s$ and $q=1+s$. Then
\begin{multline*}
\left|\widehat{f}(\xi)-\widehat{f}\left(T^{-1/2}\xi\right)\right|^2=
\left|\widehat{f}(\xi)-\widehat{f}\left(T^{-1/2}\xi\right)\right|^{2/p}
\left|\widehat{f}(\xi)-\widehat{f}\left(T^{-1/2}\xi\right)\right|^{2/q}\\
\le |T-1|^{1/p}\left(\int_{1\wedge \frac{1}{\sqrt{T}}}^{1\vee \frac{1}{\sqrt{T}}}\sum_{j=1}^{N}
\left|\widehat{f_j}(t\xi)\right|^2 \, {\rm d}t\right)^{1/p}\,|\xi|^{2/p}
\left|\widehat{f}(\xi)-\widehat{f}\left(T^{-1/2}\xi\right)\right|^{2/q}.
\end{multline*}
It follows from H\"{o}lder  inequality and $q/p=s$ that
\begin{multline*}
\left\|\widehat{f}-\widehat{f}\left(T^{-1/2}\cdot\right)\right\|_{L^2}^2\\
\le |T-1|^{1/p}
\left(\int_{1\wedge \frac{1}{\sqrt{T}}}^{1\vee \frac{1}{\sqrt{T}}}\sum_{j=1}^{N}
\left(\int_{{\bRN}}\left|\widehat{f_j}(t\xi)\right|^2 \, {\rm d}\xi\right)\, {\rm d}t
\right)^{1/p}
\left\|\widehat{f}-\widehat{f}\left(T^{-1/2}\cdot\right)\right\|_{\dot{L}^2_s}
^{2/q}
\end{multline*}
where ${\dot{L}^2_s}$ denotes the weighted $L^2$ space with the
\emph{homogeneous} weight $|\xi|^{2s}$. Since, by Plancherel theorem,
\begin{multline*}
\int_{1\wedge \frac{1}{\sqrt{T}}}^{1\vee \frac{1}{\sqrt{T}}}
\left(\sum_{j=1}^{N} \int_{{\bRN}}\left|\widehat{f_j}(t\xi)\right|^2
\, {\rm d}\xi\right)\, {\rm d}t \\ =
(2\pi)^N\left( \int_{1\wedge \frac{1}{\sqrt{T}}}^{1\vee \frac{1}{\sqrt{T}}}t^{-N}{\rm
d}t \right) \left( \int_{{\bRN}}|v|^2 f(v)^2\, {\rm d}v \right) \le C_N\|f\|_{L^2_1}^2
\end{multline*}
and
$$
\left\|\widehat{f}-\widehat{f}\left(T^{-1/2}\cdot\right)\right\|_{\dot{L}^2_s}\le
\left\|\widehat{f}\right\|_{\dot{L}^2_s}+
\left\|\widehat{f}\left(T^{-1/2}\cdot\right)\right\|_{\dot{L}^2_s}\le
\left(1+2^{(N+2s)/4}\right)\|f\|_{H^s},
$$
it follows that
\begin{equation}\label{(4.8)}
\left\|\widehat{f}-\widehat{f}\left(T^{-1/2}\cdot\right)\right\|_{L^2} \le C_N \|f\|_{
L^2_1}^{1/p} \|f\|_{H^s}^{1/q}|T-1|^{1/(2p)}.
\end{equation}
Thus we get from \eqref{(4.7)}, \eqref{(4.8)}, and
$1/p=s/(1+s)$ that
\begin{equation}\label{(4.9)}
\left\|\widehat{f}-\widehat{{\mathcal N}_1(f)}\right\|_{L^2}\le
C_N\max\left\{ \|f\|_{L^2_1},\ \|f\|_{H^s}\right\}(|u|+
|T-1|)^{\frac{s}{2(1+s)}}
\end{equation}
and so \eqref{(4.2)} follows from \eqref{(4.3)} and \eqref{(4.9)}.
\end{proof}

In order to apply existing results on $L^1$ solutions to our measure
solutions, we shall use the \emph{Mehler transform}, which is defined
as follows:
\begin{definition}
  Let $\rho>0$, $u\in {\bRN}$ and $T>0$.  The \emph{Mehler transform}
  $I_n[F]$ of $F\in {\mathcal B}_{\rho,u,T}^{+}({\bRN})$ is given by
$$
I_n[F](v)=e^{Nn}\int_{{\bRN}}M_{1,0,T}\left(e^n\left(v-u-\sqrt{1-e^{-2n}}\,(v_*-u)\right)\right)
\, {\rm d}F(v_*),\quad n>0,
$$
where $M_{1,0,T}(v)=(2\pi T)^{-N/2}\exp(- |v|^2/(2T))$.
\end{definition}

The following lemma gives some basic properties of the Mehler
transform that we shall use in the proof of Theorem~\ref{(theo1.1)}.
\begin{lemma}\label{(lem4.4)} Given any $\rho>0$, $u\in{\bRN}$ and $T>0$. Let  $F\in{\mathcal B}_{\rho,u,T}^{+}({\bRN})$
  and let $M \in{\mathcal B}_{\rho,u,T}^{+}({\bRN})$ be the Maxwellian
  distribution. Then $I_n[F]\in L^1_{\rho,u,T}({\bRN}) $ and for any
  $0\le s\le 2$
\begin{equation}\label{(4.10*)}\lim_{n\to\infty}\left\|I_n[F]-M\right\|_{L^1_s}=\|F-M\|_s.\end{equation}
\end{lemma}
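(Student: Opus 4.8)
The plan is to compute the Mehler transform explicitly as a rescaled convolution and then pass to the limit $n\to\infty$ using dominated convergence together with the moment production bound for $F$. First I would observe that, after the change of variable $v\mapsto u+\sqrt{T}\,v$, one may assume without loss of generality that $\rho=1$, $u=0$, $T=1$, so that $M=M_{1,0,1}$ is the standard Gaussian and
\[
I_n[F](v)=e^{Nn}\int_{{\bRN}}M_{1,0,1}\Big(e^n\big(v-\sqrt{1-e^{-2n}}\,v_*\big)\Big)\, {\rm d}F(v_*).
\]
Writing $\lambda_n=\sqrt{1-e^{-2n}}\to 1$ and $\sigma_n^2=e^{-2n}\to 0$, this says precisely that $I_n[F]$ is the law of $\lambda_n X + \sigma_n G$ where $X\sim F$ (a probability measure once we normalize the mass to $1$, which we may since $\|F\|_0=\rho$ is a fixed constant factored out) and $G$ is an independent standard Gaussian; equivalently $I_n[F] = (F\circ D_{\lambda_n}^{-1}) * M_{\sigma_n}$ where $D_{\lambda_n}$ is dilation by $\lambda_n$ and $M_{\sigma_n}$ is the Gaussian of variance $\sigma_n^2 I$. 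In particular $I_n[F]\rightharpoonup F$ weakly, and since $I_n[F]$ has the same mass, momentum and energy as $F$ (the Gaussian part contributes mean $0$ and the rescaling by $\lambda_n^2 + \sigma_n^2 = 1$ in the energy is exact — this is the point of the precise normalization in the definition), $I_n[F]\in L^1_{\rho,u,T}({\bRN})$.

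Next, for the convergence \eqref{(4.10*)}, the duality formula \eqref{(dual)} gives
\[
\big\|I_n[F]-M\big\|_s=\sup_{\|\vp\|_{L^\infty}\le 1,\ \vp\in C_c}\left|\int_{{\bRN}}\vp(v)\la v\ra^s\, {\rm d}(I_n[F]-M)(v)\right|.
\]
Using the convolution/dilation structure, $\int \psi\, {\rm d}I_n[F] = \iint \psi(\lambda_n v_* + \sigma_n g)\, M_{1,0,1}(g)\,{\rm d}g\, {\rm d}F(v_*)$ for any $\psi$ of at most quadratic growth, and the same with $F$ replaced by $M$ (noting $M$ is itself Gaussian so $I_n[M]=M$ exactly, by the semigroup/stationarity property of the Ornstein–Uhlenbeck–Mehler kernel — this is worth recording as it isolates why the limit is $\|F-M\|_s$ and not merely $\le$). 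Hence
\[
\int \vp(v)\la v\ra^s\, {\rm d}(I_n[F]-M)(v)=\iint \Big(\vp\la\cdot\ra^s\Big)(\lambda_n v_*+\sigma_n g)\, M_{1,0,1}(g)\,{\rm d}g\, {\rm d}(F-M)(v_*).
\]
For fixed $\vp\in C_c$, the inner function $v_*\mapsto \int (\vp\la\cdot\ra^s)(\lambda_n v_*+\sigma_n g)M_{1,0,1}(g)\,{\rm d}g$ converges pointwise to $\vp(v_*)\la v_*\ra^s$ as $n\to\infty$ (continuity of $\vp\la\cdot\ra^s$ and $\lambda_n\to1$, $\sigma_n\to0$), and is dominated by $\la\cdot\ra^s$ times a constant independent of $n$; since $F,M\in{\mathcal B}_s^+({\bRN})$ with $s\le 2$, dominated convergence applies against ${\rm d}|F-M|$. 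This yields, for each fixed $\vp$, convergence of the bracket to $\int \vp(v)\la v\ra^s\,{\rm d}(F-M)(v)$, hence $\liminf_n \|I_n[F]-M\|_s\ge \|F-M\|_s$ by taking sup over $\vp$. For the reverse inequality I would bound, uniformly in $n$ and in $\|\vp\|_\infty\le1$, the bracket by $\|F-M\|_s$ up to an error $\varepsilon_n\to0$: split $g$ into $|g|\le R$ and $|g|>R$, use uniform continuity of $\la\cdot\ra^s$ on the relevant scale plus the smallness of $\sigma_n$ and $|1-\lambda_n|$ on the first region, and the Gaussian tail of $M_{1,0,1}$ together with the finiteness of $\|F\|_{s+1}$ (available for $n$-independent control, or simply $\|F\|_s$ with a cruder estimate) on the second; this gives $\limsup_n\|I_n[F]-M\|_s\le \|F-M\|_s$.

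The main obstacle I anticipate is the uniformity in $\vp$ (equivalently, proving the $\limsup$ bound rather than the easy $\liminf$): one must show the approximation of $\la v\ra^s$ by its Mehler-averaged version is uniform over the whole line in a way that interacts correctly with the weight $\la v\ra^s$ appearing both inside and outside, and here the restriction $s\le 2$ is essential since it allows controlling $\la \lambda_n v_* + \sigma_n g\ra^s$ by $C(\la v_*\ra^s + \la g\ra^s)$ with the $\la g\ra^s$ part integrable against the Gaussian; for $s>2$ the cross terms would require higher moments of $F$ and the statement would need the moment production estimate \eqref{(1.12)} in an essential way. Since the claim is only for $0\le s\le2$, the elementary inequality $\la a+b\ra^s\le 2^{s/2}\la a\ra^s\la b\ra^s \le 2^{s/2}\la a\ra^2\la b\ra^2$ suffices and no moment production beyond $\|F\|_2<\infty$ is needed. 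The rest is routine: assembling $\liminf\ge\|F-M\|_s$ and $\limsup\le\|F-M\|_s$ gives \eqref{(4.10*)}, and undoing the normalization $v\mapsto u+\sqrt T v$ transfers the result to general $\rho,u,T$ by the scaling relations in Proposition~\ref{(prop1.1)}.
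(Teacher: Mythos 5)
Your argument is correct and, at its core, follows the same two-sided strategy as the paper: the inequality $\|F-M\|_s\le\liminf_n\|I_n[F]-M\|_{L^1_s}$ from the duality formula \eqref{(dual)} together with the weak convergence $I_n[F]\rightharpoonup F$, and the reverse inequality from representing $\int\vp\la\cdot\ra^s\,{\rm d}(I_n[F]-M)$ as a double integral against ${\rm d}(F-M)$, bounded uniformly in $\|\vp\|_{L^\infty}\le1$ by $\iint\la\lambda_n v_*+\sigma_n g\ra^s M_{1,0,T}(g)\,{\rm d}g\,{\rm d}|F-M|(v_*)$, which tends to $\|F-M\|_s$ by dominated convergence (this is exactly the paper's estimate of its term $I_{n,1}$). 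Where you genuinely improve on the paper is the observation that $I_n[M]=M$ exactly, by stationarity of the Gaussian under the Mehler/Ornstein--Uhlenbeck kernel (i.e. $\lambda_n^2+\sigma_n^2=1$). The paper does not use this: it introduces the sign function $\psi_n$ of $I_n[F]-M$, splits $\|I_n[F]-M\|_s=I_{n,1}+I_{n,2}$ where $I_{n,2}$ is precisely $\int\psi_n\,{\rm d}(I_n[M]-M)$, and then spends the second half of the proof showing $\limsup_n I_{n,2}\le 0$ by an explicit change of variables and a Gaussian domination argument. Your identity makes $I_{n,2}$ vanish identically and removes that entire computation; moreover your $|g|\le R$ truncation in the $\limsup$ step is not needed, since the uniform-in-$\vp$ bound above plus dominated convergence (using $\la a+b\ra^s\le 2^{s/2}\la a\ra^2\la b\ra^2$ for $s\le 2$, as you note) already closes the argument.

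One small repair: the reduction ``normalize to $(\rho,u,T)=(1,0,1)$ and undo it at the end via Proposition~\ref{(prop1.1)}'' does not literally work for the exact equality \eqref{(4.10*)}, because under the normalization map the weighted norms $\|\cdot\|_s$ with $0<s\le 2$ are only \emph{equivalent} up to the constants $C_{\rho,|u|,T}$ and $C_{1/\rho,|u|/\sqrt{T},1/T}$ (the weight $\la v\ra^s$ is pulled back to $\la (v-u)/\sqrt{T}\ra^s$, which is not a constant multiple of $\la v\ra^s$), and a two-sided norm equivalence cannot transfer a statement asserting that a limit \emph{equals} a prescribed value. This is harmless: your argument never actually uses the normalization and runs verbatim in the original variables with the Gaussian $M_{1,0,T}$ in place of $M_{1,0,1}$, which is how the paper proceeds.
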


\begin{proof} Recall the basic formula of $I_n[F]$:
\begin{equation}\label{(4.12)}
\int_{{\bRN}}\psi(v)I_n[F](v)\, {\rm d}v=
\int_{{\bRN}}\left(\int_{{\bRN}}\psi\left(e^{-n} z+u+\sqrt{1-e^{-2n}}\,(v_*-u)\right)M_{1,0,T}(z)
\, {\rm d}z\right)\, {\rm d}F(v_*)\end{equation}
where $\psi$ is any Borel function on ${\bRN}$
satisfying $\sup\limits_{v\in{\bRN}}|\psi(v)|\la v\ra^{-2}<\infty$. 
This formula \eqref{(4.12)} is easily proved by using Fubini theorem and change of variables. From \eqref{(4.12)} it is easily deduced that $I_n[F]\in L^1_{\rho,u,T}({\bRN})$ for all $n>0$  and
\begin{equation}\label{(4.11)}
\lim_{n\to\infty} \int_{{\bRN}}\vp(v)I_n[F](v) \, {\rm
d}v=\int_{{\bRN}}\vp(v) \, {\rm d}F(v)
\end{equation}
for all $\vp\in C({\bRN})$ satisfying
$\sup\limits_{v\in{\bRN}}|\vp(v)|\la v\ra^{-2}<\infty$.

Let $0\le s\le 2$. Applying the dual version \eqref{(norm)} of the
norm $\|\cdot\|_s$ and the convergence \eqref{(4.11)} we have
\begin{equation}\label{(4.11*)}
\|F-M\|_s\le \liminf_{n\to\infty}\left\|I_n[F]-M\right\|_{L^1_s}.
\end{equation}

On the other hand we shall prove that 
\begin{equation}\label{(4.15)}
\limsup_{n\to\infty}\left\|I_n[F]-M\right\|_{L^1_s}\le \|F-M\|_s
\end{equation}
also holds true, and this together with \eqref{(4.11*)} then proves \eqref{(4.10*)}.

To prove \eqref{(4.15)},  we take
$$
\psi_n(v)=\la v\ra^s{\rm sign}\left(I_n[F](v)-M(v)\right).
$$
Then
\begin{multline*}
\left\|I_n[F]-M\right\|_s=
  \int_{{\bRN}}\psi_n(v)I_n[F](v)\, {\rm d}v-
  \int_{{\bRN}}\psi_n(v)M(v)\, {\rm d}v
  \\
  =\int_{{\bRN}}\left(\int_{{\bRN}}\psi_n\left(e^{-n}
    z+u+\sqrt{1-e^{-2n}}\,(v_*-u)\right)M_{1,0,T}(z) \, {\rm d}z\right)\,
  {\rm d}(F-M)(v_*)
  \\
  +\int_{{\bRN}}\left(\int_{{\bRN}}\psi_n\left(e^{-n}
    z+u+\sqrt{1-e^{-2n}}\,(v_*-u)\right)M_{1,0,T}(z) \, {\rm
      d}z\right)M(v_*)\, {\rm d}v_*-\int_{{\bRN}}\psi_n(v)M(v)\, {\rm
    d}v
  \\
  :=I_{n,1}+I_{n,2}.
\end{multline*}
Let $h$ be the sign function of $F-M$, i.e.,
$h:\mathbb{R}^N\to \mathbb{R}$ is a real Borel function satisfying 
\[
{\rm d}(F-M)(v_*)=h(v_*)\, {\rm d}|F-M|(v_*) \quad \mbox{ and }\quad h(v_*)^2\equiv
1
\]
(see e.g. \cite[Chapter~6]{MR0210528}). Then
$$
I_{n,1}\le \int_{{\bRN}}\int_{{\bRN}}\left\langle e^{-n} z+u+\sqrt{1-e^{-2n}}\,(v_*-u)\right\rangle ^sM_{1,0,T}(z)
\, {\rm d}z\, {\rm d}|F-M|(v_*).
$$
Since
$$
\forall\, (z,v_*)\in{\bRN}\times{\bRN}, \quad
\lim_{n\to\infty}\left\langle e^{-n} z+u+\sqrt{1-e^{-2n}}\,(v_*-u)\right\rangle
  ^s=\left\langle v_*\right\rangle ^s
$$
and
\[
\left\langle e^{-n} z+u+\sqrt{1-e^{-2n}}\,(v_*-u)\right\rangle^s \le 3^s\left\langle
u\right\rangle^s\left\langle z\ra^s\la v_*\right\rangle^s
\]
so that
$$\int_{{\bRN}}\int_{{\bRN}}3^s\la u\ra^s\la z\ra^s\la v_*\ra^s M_{1,0,T}(z)
\, {\rm d}z\, {\rm d}|F-M|(v_*)
=3^s\la u\ra^s\left\|M_{1,0,T}\right\|_s\|F-M\|_s<\infty,
$$
it follows from dominated convergence that
\begin{equation}\label{(4.13)}
\limsup_{n\to\infty}I_{n,1}\le\left\|M_{1,0,T}\right\|_{L^1}
\|F-M\|_s=\|F-M\|_s.
\end{equation}

Next we prove that $\limsup_{n\to\infty}I_{n,2}\le 0$.  We
compute by changing variable that
\begin{multline*}
\int_{{\bRN}}\left(\int_{{\bRN}}\psi_n\left(e^{-n}
  z+u+\sqrt{1-e^{-2n}}\,(v_*-u)\right) \, M_{1,0,T}(z)
\, {\rm d}z\right)M(v_*)\, {\rm d}v_*
\\
=\frac{1}{(1-e^{-2n})^{N/2}}
\int_{{\bRN}}M_{1,0,T}(z)
\, {\rm d}z\int_{{\bRN}}\psi_n(v) \, M\left(\frac{v-e^{-n}
    z-(1-\sqrt{1-e^{-2n}}) u}{\sqrt{1-e^{-2n}}}\right)\, {\rm d}v.
\end{multline*}
So we get
\begin{multline*}
 I_{n,2}\\
=
\frac{1}{(1-e^{-2n})^{N/2}}
\int_{{\bRN}}M_{1,0,T}(z)
\, {\rm d}z\int_{{\bRN}}\psi_n(v)\left[M\left(\frac{v-e^{-n} z-(1-\sqrt{1-e^{-2n}})u}{\sqrt{1-e^{-2n}}}\right)-M(v)\right]\, {\rm d}v\\
+\left(\frac{1}{(1-e^{-2n})^{N/2}}-1\right)\int_{{\bRN}}\psi_n(v)M(v)\, {\rm d}v
\\
\le \frac{1}{(1-e^{-2n})^{N/2}}
\intt_{{\bRRN}}\la v\ra^s\left|M\left(\frac{v-e^{-n}
      z-(1-\sqrt{1-e^{-2n}})u}{\sqrt{1-e^{-2n}}}\right)-M(v)\right|\, M_{1,0,T}(z) \, {\rm d}v\, {\rm d}z\\
+\left(\frac{1}{(1-e^{-2n})^{N/2}}-1\right)\int_{{\bRN}}\la v\ra ^sM(v)\, {\rm d}v
\end{multline*}
and finally (since the last last term above clearly converges to zero)
\begin{multline}\label{(4.14)}
\limsup_{n\to\infty} I_{n,2}\\
\le  \limsup_{n\to\infty}
\intt_{{\bRRN}}\la v\ra^s\left|M\left(\frac{v-e^{-n}
    z-(1-\sqrt{1-e^{-2n}})u}{\sqrt{1-e^{-2n}}}\right)-M(v)\right|M_{1,0,T}(z)
\, {\rm d}v\, {\rm d}z.
\end{multline}
It is obvious that the integrand in the right hand side of \eqref{(4.14)} tends to
zero as $n\to\infty$. To find a dominated function for the integrand,
we recall that
\begin{multline*}
M\left(\frac{v-e^{-n}
    z-(1-\sqrt{1-e^{-2n}})u}{\sqrt{1-e^{-2n}}}\right)\\
=\frac{\rho}{(2\pi
  T)^{N/2}}\exp\left(-\frac{1}{2T}\left|\frac{v-e^{-n}
      z-(1-\sqrt{1-e^{-2n}})u}{\sqrt{1-e^{-2n}}}-u\right|^2\right).
\end{multline*}
Elementary calculation shows that
$$
\frac{1}{2T} \left|\frac{v-e^{-n}
    z-(1-\sqrt{1-e^{-2n}})u}{\sqrt{1-e^{-2n}}}-u\right|^2 \ge
\frac{|v-u|^2}{4T}-\frac{|z|^2}{4T}.
$$
This gives
$$
M\left(\frac{v-e^{-n} z-(1-\sqrt{1-e^{-2n}})u}{\sqrt{1-e^{-2n}}}\right)M_{1,0,T}(z)
\le\frac{\sqrt{\rho}}{(2\pi T)^{N/2}}\sqrt{M(v)}\sqrt{M_{1,0,T}(z)}$$
and thus
\begin{multline*}
\la v\ra^s \left|M\left(\frac{v-e^{-n}
        z-(1-\sqrt{1-e^{-2n}})u}{\sqrt{1-e^{-2n}}}\right)-M(v)\right|M_{1,0,T}(z)
  \\
  \le \frac{\sqrt{\rho}}{(2\pi T)^{N/2}}\la
  v\ra^s\sqrt{M(v)}\sqrt{M_{1,0,T}(z)} +\la
  v\ra^sM(v)M_{1,0,T}(z).
\end{multline*}
By dominated convergence we then conclude that the limit in the right hand side of
\eqref{(4.14)} is zero. Therefore $\limsup_{n\to\infty}I_{n,2}\le 0$
and
$$
\lim\sup\limits_{n\to\infty}\left\|I_n[F]-M\right\|_s\le
\lim\sup\limits_{n\to\infty}I_{n,1}+
\lim\sup\limits_{n\to\infty}I_{n,2} \le \|F-M\|_s.
$$
This proves \eqref{(4.15)} and completes the proof of the lemma.
\end{proof}
\vskip2mm

Now we are ready to prove Theorem~\ref{(theo1.1)}.

\begin{proof}[Proof of Theorem \ref{(theo1.1)}]

  We first prove that the theorem holds true for all $L^1$ mild
  solutions in $L^1_{1,0,1}({\bRN})$. We shall then use approximation
  and normalization to extend it to general measure solutions.
  \smallskip

\noindent {\em Step 1}. Let $\ld=S_{b,\gm}(1,0,1)>0$ be the spectral
gap of the linearized operator $L_M$ associated with the kernel
$B(z,\sg)$ and the Maxwellian $M(v)=(2\pi)^{-N/2} e^{-|v|^2/2}$ in
$L^1_{1,0,1}({\bRN})$. Let $f_0\in L^1_{1,0,1}({\bRN})$ and let $f_t$
be the unique conservative mild $L^1$ solution of equation~\eqref{(B)}
with the initial datum $f_0$. We shall prove that
\begin{equation}\label{(4.17)}
\forall\, t\ge
    0, \quad \left\|f_t-M\right\|_{L^1_2}\le
  C_0\left\|f_0-M\right\|_{L^1_2}^{1/2}e^{-\ld t}
\end{equation}
where the constant $0<C_0<\infty$ depends only on $N$, $\gamma$, and the function
$b$.

To do this we use Theorem \ref{(theo3.1)} to consider the positive decomposition:
$$
\forall \, t \ge 1, \quad f_t=g_t+h_t,
$$
where $g_t=f^{n}_t\ge 0$ and $h_t=h^{n}_t\ge 0$ are given in
\eqref{(3.1)}-\eqref{(3.5)} with $n=N_{\gm}+2$ and $t_0=1$.  In the
following we denote by $c_i>0, C_{i}>0\,(i=1,2,\dots)$ some finite
constants that depend  only on $N, \gamma$, and the function $b$. By Theorem \ref{(theo3.1)} with $t_0=1$ we have
\begin{eqnarray}&& \label{(4.20)}
\forall \, t\ge 1, \quad \left\|f_t-g_t\right\|_{L^1_2}=\left\|h_t\right\|_{L^1_2} \le
C_1e^{-\frac{1}{2}at},\\
&&\label{(4.21}
\sup_{t\ge 1}\left\{\left\|g_t\right\|_{L^1_{N+4}},\, \left\|g_t\right\|_{L^2_1},\,
\left\|g_t\right\|_{H^1}\right\} \le C_{2}.\end{eqnarray}
We can assume that $C_1\ge 1$. Let $\tau_0=(2/a)\log(8C_1)\,(>1)$ and let
\begin{equation*}
\begin{cases} \displaystyle
\rho_t=\int_{{\bRN}}g_t(v)\, {\rm d}v,\,\,\,
u_t=\frac{1}{\rho_t}\int_{{\bRN}}v g_t(v)\, {\rm d}v,\,\,\,
T_t=\frac{1}{N\rho_t}\int_{{\bRN}}|v-u_t|^2g_t(v) \, {\rm d}v,
\vspace{0.2cm} \\ \displaystyle
{\mathcal
  N}(g_t)(v)=\frac{T_t^{N/2}}{\rho_t}g_t(\sqrt{T_t}\,v+u_t),\quad t\ge
\tau_0.
\end{cases}
\end{equation*}
Using the relation
\[
T_t=\frac{1}{N\rho_t}\int_{{\bRN}}|v|^2g_t(v)\, {\rm
  d}v-\frac{|u_t|^2}{N}
\]
we compute
\begin{equation} \label{(4.23)}
  \forall \, t\ge \tau_0, \quad \left|\rho_t-1\right|+\left|u_t\right|+\left|T_t-1\right|\le
  4\left\|f_t-g_t\right\|_{L^1_2}\le 4C_1e^{-\frac{1}{2}at}\le
  \frac{1}{2}.
\end{equation}
So by \eqref{(4.21} and applying Lemma \ref{(lem4.2)} (with $k=2,
l=N+2, s=1$) we have
$$
\forall \, t\ge \tau_0, \quad \left\|g_t-{\mathcal N}(g_t)\right\|_{L^1_2}\le C_3 \left(4C_1 e^{-\frac{1}{2}a
t}\right)^{1/6}=C_4 e^{-c_1t}.
$$
This together with \eqref{(4.20)} gives
\begin{equation}\label{(4.24)}
  \forall \, t\ge
  \tau_0, \quad \left\|f_t-{\mathcal N}(g_t)\right\|_{L^1_2}\le\left\|f_t-g_t\right\|_{L^1_2}+
  \left\|g_t-{\mathcal N}(g_t)\right\|_{L^1_2} \le C_5e^{-c_1t}.
\end{equation}
Also by \eqref{(4.23)}, $\sup\limits_{t\ge
  1}\left\|g_t\right\|_{L^2}\le C_2$, $\tau_0>1$ and
${\mathcal N}(g_t)\in L^1_{1,0,1}({\bRN})$ we have
\begin{equation}\label{(4.25)}
  C_6\le \inf_{t\ge \tau_0}\left\|{\mathcal
      N}(g_t)\right\|_{L^2},\quad \sup_{t\ge \tau_0} \left\|{\mathcal
      N}(g_t)\right\|_{L^2}\le C_7.
\end{equation}
The second inequality follows from elementary calculations and the
bounds $1/2\le \rho_t$ and $T_t\le 3/2$.
To prove the first one, we consider some $R>0$ and write
$$
1=\int_{|v|<R}{\mathcal N}(g_t)(v)\, {\rm d}v+\int_{|v|\ge R}{\mathcal
  N}(g_t)(v)\, {\rm d}v\le \left|{\mathbb
  B}^N\right|^{1/2}R^{N/2}\left\|{\mathcal N}(g_t)\right\|_{L^2}
+R^{-2}N,
$$
where $|{\mathbb B}^N|$ is the volume of the unite ball ${\mathbb
  B}^N$. If we now fix $R=\sqrt{2N}$, then
\[
\frac{1}{2}\left|{\mathbb B}^N\right|^{-1/2}(2N)^{-N/4}\le\left\|{\mathcal
    N}(g_t)\right\|_{L^2}
\]
for all $t\ge \tau_0$ so that the first inequality in \eqref {(4.25)}
holds for $C_6=(1/2)|{\mathbb B}^N|^{-1/2}(2N)^{-N/4}$.

To prove \eqref{(4.17)} we use the following technique of ``moving
solutions" as used in \cite{MR1697495} and \cite{Mcmp}. For any
$\tau\ge \tau_0$, let $(t,v)\mapsto f_{t}^{(\tau)}(v)$ be the unique
conservative solution on $[\tau,\infty)\times{\bRN}$ with the initial
datum at time $t = \tau$:
$$
f^{(\tau)}_{t}|_{t=\tau}=f_\tau^{(\tau)}= {\mathcal N}(g_\tau).
$$
On the one hand, by Theorem \ref{(theo4.1)}, we have
$$
\forall\, t\ge \tau, \quad \left\|f_{t}^{(\tau)}-M\right\|_{L^1}\le
C_{f_\tau^{(\tau)}}e^{-\ld (t-\tau)}
$$
where the coefficient $0<C_{f_\tau^{(\tau)}} <\infty$ depends only on
$N$, $\gamma$, the function $b$, and $\|f_\tau^{(\tau)}\|_{L^2}$.
Since \eqref{(4.25)} implies $C_6\le \|f_\tau^{(\tau)}\|_{L^2}\le C_7$
for all $\tau\ge \tau_0$, it follows from Remark
\ref{remark1.UpBd}-(3) that
$\sup\limits_{\tau\ge \tau_0}C_{f_\tau^{(\tau)}}\le C_8$, and thus for
every $\tau\ge \tau_0$ we have
\begin{equation}\label{(4.26)}
  \forall\, t\ge \tau, \quad \left\|f_{t}^{(\tau)}-M\right\|_{L^1}\le
  C_8e^{-\ld (t-\tau)}.
\end{equation}
On the other hand using the stability estimate \eqref{(1.21*)} we have
\begin{equation}\label{(4.27)}
  \forall \, t\ge\tau, \quad \left\|f_t-f^{(\tau)}_t\right\|_{L^1_2}\le \left\| f_{\tau}-f_\tau^{(\tau)}\right\|_{L^1_2} e^{c_2(t-\tau)}.
\end{equation}
Since \eqref{(4.24)} and $\tau\ge \tau_0$ imply
\[
\left\|f_{\tau}-f_\tau^{(\tau)}\right\|_{L^1_2}=\left\|f_{\tau}-{\mathcal
    N}(g_\tau)\right\|_{L^1_2}\le C_5 e^{-c_1\tau},
\]
it follows from \eqref{(4.26)} and \eqref{(4.27)} that
\begin{equation}\label{(4.28)}
  \forall \, t\ge \tau, \quad \left\|f_t-M\right\|_{L^1}\le \left\|f_t-f_t^{(\tau)}\right\|_{L^1}+
  \left\|f_t^{(\tau)}-M\right\|_{L^1}\le
  C_5e^{-c_1\tau+c_2(t-\tau)}+C_8e^{-\ld (t-\tau)}.
\end{equation}
Now for any
\[
t\ge t_1:=\frac{c_1+c_2+\ld}{c_2+\ld}\tau_0, \quad
\mbox{we choose} \quad \tau =\tau(t)=\frac{c_2+\ld}{c_1+c_2+\ld} t.
\]
Then $t>\tau(t)\ge \tau(t_1)=\tau_0$ and
$$
-c_1\tau(t)+c_2(t-\tau(t))=-\frac{c_1\ld }{c_1+c_2+\ld}t:=-c_3
t.
$$
Thus applying \eqref{(4.28)} with  $t>\tau=\tau(t)$ (for all $t\ge t_1$) we obtain
\begin{equation}\label{(4.29)}
  \forall\,t\ge t_1, \quad \left\|f_t-M\right\|_{L^1}\le (C_5+C_8)e^{-c_3 t}.
\end{equation}
Now let
\[
m(v):=\exp\left(-\frac{\alpha(1)}{4}|v|^{\gm}\right)
\]
where $\alpha(t)>0$ is given in Theorem \ref{(theo1.0)} for the
initial datum $F_0\in {\mathcal B}_{1,0,1}^{+}({\bRN})$ defined by
${\rm d}F_0(v)=f_0(v)\, {\rm d}v$. Then
\begin{equation}\label{(4-moment)}
\sup_{t\ge 1}\left\|f_t\right\|_{L^1(m^{-4})} \le 2,\quad
\left\|M\right\|_{L^1(m^{-4})}\le C_9.
\end{equation}
Therefore, using Cauchy-Schwarz inequality and \eqref{(4.29)},
we get
\begin{equation}\label{(4.30)}
  \forall\, t\ge t_1, \quad
  \left\|f_t-M\right\|_{L^1(m^{-2})}\le\left\|f_t-M\right\|_{L^1(m^{-4})}^{1/2}\left\|f_t-M\right\|_{L^1}^{1/2}
  \le C_{10}e^{-c_4 t}.
\end{equation}
Let $\vep>0$ be the constant in Theorem 4.1 corresponding to $m(v)$,
and let us choose
$$
t_2=\max\left\{t_1,\,\frac{1}{c_4}\log\left(\frac{C_{10}}{\vep}\right)\right\}.
$$
Then we deduce from \eqref{(4.30)} that
$$
\forall\, t\ge t_2, \quad \left\|f_t-M\right\|_{L^1(m^{-2})}\le C_{10}e^{-c_4 t}\le \vep.
$$
It follows from Lemma \ref{(lem4.1)} (see Remark \ref{remark4.1}-(3)) that
\begin{equation}\label{(4.31)}
  \forall\, t\ge t_2, \quad \left\|f_t-M\right\|_{L^1(m^{-1})}\le C_{11}\left\|f_{t_2}-M\right\|_{L^1(m^{-1})}e^{-\ld
    (t-t_2)}.
\end{equation}
Next, applying the elementary inequality
$$
1+|v|^2\le 
C e^{\dt |v|^{2\eta}},\qquad
\eta:=\frac{\gm}{2},\quad \dt:=\frac{\alpha(1)}{4}
$$
for some constant $C=C_{\eta,\dt}>0$, we have
\begin{equation} \label{(4.31*)}
\left\|f_t-M\right\|_{L^1_2}\le
C_{12}\left\|f_t-M\right\|_{L^1(m^{-1})}.
\end{equation}
On the other hand, using the bound in \eqref{(4-moment)}, we have
\begin{equation}\label{(4.33)}
\left\|f_{t_2}-M\right\|_{L^1(m^{-1})}\le
\left\|f_{t_2}-M\right\|_{L^1(m^{-2})}^{1/2}
\left\|f_{t_2}-M\right\|_{L^1_2}^{1/2}\le
C_{13}\left\|f_{t_2}-M\right\|_{L^1_2}^{1/2}.
\end{equation}
It follows from \eqref{(4.31*)}, \eqref{(4.31)} and \eqref{(4.33)} that
\begin{equation}\label{(4.34)}
   \forall \, t\ge t_2, \quad \left\|f_t-M\right\|_{L^1_2}\le
  C_{14}\left\|f_{t_2}-M\right\|_{L^1_2}^{1/2} e^{-\ld t}.
\end{equation}

It remains to estimate $\|f_t-M\|_{L^1_2}$ in terms of
$\|f_0-M\|_{L^1_2}$ for $t\in[0,t_2]$. To do this we use the estimate
in \cite[p. 3359 line 4]{partI} for the measure $H_t:=F_t-G_t$ where $F_t,
G_t$ are measure solutions of the equation~\eqref{(B)}. Here we define
more precisely $F_t, G_t$ to be
\[
{\rm d}F_t(v)=M(v)\, {\rm d}v \quad \mbox{ and } \quad {\rm
  d}G_t(v)=f_t(v)\, {\rm d}v.
\]
Then $\|H_t\|_2=\|M-f_t\|_{L^1_2}, \|f_t\|_{L^1_2}=\|M\|_{L^1_2}$, and
thus (recalling $A_0=1$)
\begin{multline*}
\forall \, t \in [r,\infty), \quad \left\|M-f_t\right\|_{L^1_2}\le \\
2\left\|(M-f_r)^{+}\right\|_{L^1_2}+
4\left(\|M\|_{L^1_{2+\gm}}+\|M\|_{L^1_2}\right)
\int_{r}^{t}\left\|M-f_s\right\|_{L^1_{\gm}}\, {\rm d}s.
\end{multline*}
Since $t\mapsto f_t(v)\ge 0$ is continuous on $[0,\infty)$ for
a.e. $v\in {\mathbb R}^N$, it follows from dominated convergence that
$$
2\left\|(M-f_r)^{+}\right\|_{L^1_2}\xrightarrow[r \to 0^+]{}
2\left\|(M-f_0)^{+}\right\|_{L^1_2}=\left\|f_0-M\right\|_{L^1_2},
$$
where the last equality follows from \[
|f_0-M|=f_0-M+2(M-f_0)^{+} \quad \mbox{ and } \quad
\|f_0\|_{L^1_2}=\|M\|_{L^1_2}.
\]
Thus letting $r\to 0^+$ gives
$$
\forall \, t \in [0,\infty), \quad \left\|f_t-M\right\|_{L^1_2}\le
\left\|f_0-M\right\|_{L^1_2}+c_5\int_{0}^{t}\left\|f_s-M\right\|_{L^1_{\gm}}\,
{\rm d}s.
$$
Since $\|f_s-M\|_{L^1_{\gm}}\le \|f_s-M\|_{L^1_2}$, it follows from
Gronwall lemma that
\begin{equation}\label{(4.32)}
  \forall \,  t\ge 0, \quad \|f_t-M\|_{L^1_2}\le \|f_0-M\|_{L^1_2}
  e^{c_5t}.
\end{equation}

Inserting this estimate with $t=t_2$ into the right hand side of
\eqref{(4.34)} gives
\begin{equation}\label{(4.34*)}
\forall \, t\ge t_2, \quad \left\|f_t-M\right\|_{L^1_2}\le
C_{15}\left\|f_0-M\right\|_{L^1_2}^{1/2} e^{-\ld t}.
\end{equation}
Also because of \eqref{(4.32)} and $\left\|f_{0}-M\right\|_{L^1_2}\le
2(1+N)$, we have
\begin{equation}\label{(4.35)}
\forall \, t \in [0,t_2], \quad \left\|f_t-M\right\|_{L^1_2}
\le C_{16}\left\|f_0-M\right\|_{L^1_2}^{1/2} e^{-\ld t}.
\end{equation}
Combining \eqref{(4.34*)}, \eqref{(4.35)} we then obtain \eqref{(4.17)} with
$C_0=\max\{C_{15},\, C_{16}\}$.
\smallskip

\noindent {\em Step 2.} Let us now prove that \eqref{(4.17)} holds also true
for all measure solutions in ${\mathcal B}_{1,0,1}^{+}({\bRN})$. Given any
$F_0\in {\mathcal B}^{+}_{1,0,1}({\bRN})$, let $F_t$ be the unique
conservative measure strong solution of equation~\eqref{(B)} with the
initial datum $F_0$.  By part (e) of Theorem \ref{(theo1.0)} and Lemma
\ref{(lem4.4)}, there is a sequence $f_{k,t}\in L^1_{1,0,1}({\bRN})$
of solutions with the initial data $f_{k,0}:=I_{n_k}[F_0]\in
L^1_{1,0,1}({\bRN})$ such that
\begin{eqnarray}\label{(4.18)}
  && \forall\,\vp\in C_b({\bRN}),\quad
  \forall\,t\ge 0, \quad \lim_{k\to\infty}\int_{{\bRN}}\vp(v)
  f_{k,t}(v)\, {\rm d}v
  =\int_{{\bRN}}\vp(v)\, {\rm d}F_t(v), \\
  &&\label{(4.18-initial)}
  \lim_{k\to\infty}\left\|f_{k,0}-M\right\|_{L^1_2}=
  \left\|F_0-M\right\|_{2}.
\end{eqnarray}
Using the formulation \eqref{(dual)} of the norm
$\left\|\cdot\right\|_2$, we conclude from \eqref{(4.18)} that
\begin{equation} \label{(4.18F)}
\forall \, t\ge 0, \quad \left\|F_t-M\right\|_2\le
\lim\sup_{k\to\infty}\left\|f_{k,t}-M\right\|_{L^1_2}.
\end{equation}
On the other hand, applying \eqref{(4.17)} to $f_{k,t}$, we have
\begin{equation}\label{(4.19)}
\forall \, t\ge
0,\ k=1,2,3,\dots, \quad \left\|f_{k,t}-M\right\|_{L^1_2}
\le C_0\left\|f_{k,0}-M\right\|_{L^1_2}^{1/2}e^{-\ld t}.
\end{equation}
Combining  \eqref{(4.18F)}, \eqref{(4.19)} and \eqref{(4.18-initial)} we obtain
\begin{equation}\label{(4-standard)}
\forall \, t\ge 0, \quad \left\|F_t-M\right\|_2\le
C_0\left\|F_0-M\right\|_2^{1/2}e^{-\ld t}.
\end{equation}
\smallskip

\noindent {\em Step 3.} Finally we show that for any $\rho>0$,
$u\in{\bRN}$ and $T>0$, the theorem holds true for all measure solutions
in ${\mathcal B}^{+}_{\rho,u,T}({\bRN})$. Let $F_0\in {\mathcal
  B}^{+}_{\rho,u,T}({\bRN})$ and let $F_t$ be the unique conservative
measure strong solution with the initial datum $F_0$. Let
$M_{\rho,u,T}\in {\mathcal B}_{\rho,u,T}^{+}({\bRN})$ be the
Maxwellian and let ${\mathcal N}={\mathcal N}_{\rho,u,T}$ be the
normalization operator. By Proposition \ref{(prop1.1)}, the flow
$t\mapsto {\mathcal N}(F_{t/c})$ is the unique conservative measure
strong solution of equation~\eqref{(B)} with the initial datum
${\mathcal N}(F_0)\in{\mathcal B}_{1,0,1}^{+}({\bRN})$. Here $c=\rho
T^{\gm/2}$.  Since ${\mathcal N}(M_{\rho,u,T})\in {\mathcal
  B}_{1,0,1}^{+}({\bRN})$ is the standard Maxwellian, it follows from
the above result \eqref{(4-standard)} that (writing ${\mathcal N}(F_t)={\mathcal
  N}(F_{ct/c})$)
$$
\forall\, t\ge 0, \quad \left\|{\mathcal N}(F_{t})-{\mathcal
    N}(M_{\rho,u,T})\right\|_2 \le C_0\left\|{\mathcal
    N}(F_{0})-{\mathcal N}(M_{\rho,u,T})\right\|_2^{1/2}e^{-\ld ct}.
$$
Then, applying Proposition \ref{(prop1.1)}, we have
\begin{multline*}
\forall\, t\ge 0, \quad \left\|F_t-M_{\rho,u,T}\right\|_2\le
C_{1/\rho,|u|/\sqrt{T},1/T}\left\|{\mathcal N}(F_{t})-{\mathcal N}(M_{\rho,u,T})\right\|_2
\\
\qquad\qquad \qquad\quad  \le C_0C_{1/\rho,|u|/\sqrt{T},1/T}\left\|{\mathcal N}(F_{0})-{\mathcal N}(M_{\rho,u,T})\right\|_2^{1/2}e^{-\ld ct}\\
\le C_0C_{1/\rho,|u|/\sqrt{T},1/T}[C_{\rho,|u|,T}]^{1/2}\left\|F_{0}-M_{\rho,u,T}\right\|_2^{1/2}e^{-\ld ct}.\qquad\,\,
\end{multline*}
Since $\ld c=S_{b,\gm}(1,0,1)\rho T^{\gm/2}=S_{b,\gm}(\rho,u,T)$ is
the spectral gap of the linearized operator $L_{M_{\rho,u,T}}$,
this completes the proof of Theorem \ref{(theo1.1)}.
\end{proof}

\section{Lower Bound of Convergence Rate }
\label{sec5}

In this section we prove Theorem \ref{(theo1.2)}. Recall that we
assume here that $\gm\in(0, 2]$ and that the function $b$ satisfies
only \eqref{(Grad-1)}.

\begin{proof}[Proof of Theorem \ref{(theo1.2)}] We first prove the
  theorem for the standard case, i.e. assuming
  $F_0, M\in{\mathcal B}^{+}_{1,0,1}({\bRN})$.  By
$$
\forall\, t_1,t_2\in[0,\infty), \quad
F_{t_2}=F_{t_1}+\int_{t_1}^{t_2}Q(F_{\tau},F_{\tau})\, {\rm d}\tau
$$
and $Q(M,M)=0$, we have
\begin{equation}\label{(5.1)}
  \forall \, t_1, t_2\in[0,\infty), \quad \Big|\left\|F_{t_2}-M\right\|_0 -\left\|F_{t_1}-M\right\|_0\Big|\le
  \left| \int_{t_1}^{t_2}\left\|Q(F_{\tau},F_{\tau})-Q(M,M)\right\|_0
    \, {\rm d}\tau\right|.
\end{equation}
Using the inequalities in \eqref{(1-Q-differ-bound)}, $0<\gm\le 2$, and the conservation of mass and energy (which implies
$\left\|F_t\right\|_{\gm}\le \left\|F_t\right\|_{2}=1+N$, etc.)
we have
\begin{equation}\label{(5.2)}
\left\|Q(F_{t},F_{t})-Q(M,M)\right\|_0\le
2^{4}(1+N)\left\|F_{t}-M\right\|_{\gm}.
\end{equation}
Since $t\mapsto \left\|F_{t}-M\right\|_{\gm}$ is bounded and,
by H\"{o}lder inequality,
\begin{equation}\label{(5.3)}
  \left\|F_{t}-M\right\|_{\gm}\le
  \left\|F_t-M\right\|_0^{1-\gm/2}\left\|F_t-M\right\|_2^{\gm/2},
\end{equation}
it follows from \eqref{(5.1)}-\eqref{(5.3)} that $t\mapsto
\|F_t-M\|_0$ is Lipschitz continuous and
\begin{equation}\label{(5.4)}
  \left|\frac{\, {\rm d}}{\, {\rm d}t}\left\|F_{t}-M\right\|_0\right|\le
  2^{4}(N+1)\left\|F_t-M\right\|_0^{1-\gm/2}\left\|F_t-M\right\|_2^{\gm/2}
  \quad {\rm a.e.}\quad
  t\in(0,\infty).\end{equation}
Next, thanks to the exponential decay of the Maxwellian,
we show that $\|F_t-M\|_2$ can be controlled by $\|F_t-M\|_0$ (see e.g. \eqref{(5.8)} below). In fact we show that this property holds for all measure $F\in{\mathcal B}^{+}_{1,0,1}({\bRN})$.
To do this,  let $(M-F)^{+}$ be the positive part of $M-F$, i.e., $(M-F)^{+}=\fr{1}{2}(|M-F|+ M-F)$. Then $|M-F|=F-M+2(M-F)^{+}$. Let
$h$ be the sign function of $M-F$, i.e., $h(v)^2\equiv 1$ such that ${\rm d}(M-F)=h{\rm d}|M-F|$.
Then
${\rm d}(M-F)^{+}=\fr{1}{2}(1+h){\rm d}(M-F)$. From these we have
\begin{equation}
{\rm d}|M-F|={\rm d}(F-M)+2{\rm d}(M-F)^{+}\quad {\rm and}\quad {\rm d}(M-F)^{+}\le {\rm d}M
\label{(FM)}
\end{equation}
where the inequality part is due to $F\ge 0$. Now since
$F,M$ have the same mass and energy, it follows from
\eqref{(FM)} and $|F-M|=|M-F|$ that
\begin{equation}\label{(5.5)}
\left\|F-M\right\|_0=2\left\|(M-F)^{+}\right\|_0,\quad
\left\|F-M\right\|_2=2\left\|(M-F)^{+}\right\|_2.
\end{equation}
Let $0<\dt<1$. Applying Jensen inequality to
the convex function $x\mapsto \exp(\dt x/2)$
and the measure $(M-F)^{+}$  and assuming
$\|(M-F)^{+}\|_0>0$ we have
\begin{equation}\label{(5.6)}
\frac{1}{\left\|(M-F)^{+}\right\|_0}\int_{\mathbb{R}^N}\exp(\dt\la v\ra^2/2)\, {\rm d}(M-F)^{+}\ge
\exp\left(\frac{\dt}{2}\cdot\frac{\left\|(M-F)^{+}\right\|_2}{\left\|(M-F)^{+}\right\|_0}\right).\end{equation}
On the other hand we have
\[
{\rm d}(M-F)^{+}(v)\le \, {\rm d}M(v)=\frac{1}{(2\pi)^{N/2}}
\exp(-|v|^2/2)\, {\rm d}v
\]
and
\begin{equation}\label{(5.7)}
  \int_{{\bf R}^N}\exp(\dt\la v\ra^2/2)\, {\rm d}(M-F)^{+}(v)
  \le \int_{{\bf R}^N}\exp(\dt\la v\ra^2/2)\, {\rm d}M(v)=e^{\dt/2}\left(\frac{1}{1-\dt}\right)^{N/2}.\end{equation}
Let us now choose $\dt=\frac{1}{N+1}$. Then
\[
e^{\dt/2}\left(\frac{1}{1-\dt}\right)^{N/2}=e^{\frac{1}{2(N+1)}}
\left(1+1/N\right)^{N/2}<2
\]
and thus, from \eqref{(5.5)}-\eqref{(5.7)}, we deduce
$$
\exp\left(\frac{1}{2(N+1)}\cdot\frac{\left\|F-M\right\|_2}{\left\|F-M\right\|_0}\right)\le
\frac{4}{\left\|F-M\right\|_0},
$$
i.e.
\begin{equation}\label{(5.8)}
\left\|F-M\right\|_2\le 2(N+1)\left\|F-M\right\|_0\log
\left(\frac{4}{\left\|F-M \right\|_0}\right).
\end{equation}
If we adopt the convention $x\log (1/x)=0$ for $x=0$, the inequality
\eqref{(5.8)} also holds for $\|F-M\|_0=0$.

Now let us go back to the solution $F_t$. To avoid discussing the
case $\|F_t-M\|_0=0$ for some $t$, we consider
$$
U_{\vep}(t):=\frac{\|F_t-M\|_0+\vep}{4},\quad 0<\vep<1.
$$
Then
$$
\frac{\|F_t-M\|_0}{4}<U_{\vep}(t)\le \frac{2+\vep}{4}<\frac{2}{3}, \quad
0<\vep <\frac{2}{3}.
$$
Using the inequality
\[
\forall \, 0\le x\le y\le \frac{2}{3}, \quad x\log \frac{1}{x} \le 2
y\log \frac{1}{y}
\]
and \eqref{(5.8)} (with $F=F_t$), we then obtain
$$\frac{\left\|F_t-M\right\|_2}{4}
\le 4(N+1)U_{\vep}(t)\log \left(\frac{1}{U_{\vep}(t)}
\right).$$
Thus by \eqref{(5.4)} we deduce
\begin{equation}\label{(5.9)}
\left|\frac{\, {\rm d}}{\, {\rm d}t}U_{\vep}(t)\right|\le
A U_{\vep}(t)\left [\log \left(\frac{1}{U_{\vep}(t)}\right)\right ]^{\gm/2}
\quad {\rm a.e.}\quad
t\in(0,\infty)\end{equation}
where $A=2^{6}(N+1)^2$.
\smallskip

\noindent {\em Case 1: $0<\gm<2$.} In this case we have, by
\eqref{(5.9)},
\begin{equation*}
\frac{\, {\rm d}}{\, {\rm d}t}
\left[\log\left(\frac{1}{U_{\vep}(t)}\right)\right]^{1-\gm/2}
=-(1-\gm/2)\left[\log\left(\frac{1}{U_{\vep}(t)}\right)\right]^{-\gm/2}\frac{1}{U_{\vep}(t)}\cdot
\frac{\, {\rm d}}{\, {\rm d}t}U_{\vep}(t) \le (1-\gm/2)A
\end{equation*}
for almost every $t\in(0,\infty)$. Observe that the function
$$t\mapsto \left[\log\left(\frac{1}{U_{\vep}(t)}\right)\right]^{1-\gm/2}$$
is absolutely continuous on every bounded interval of $[0,\infty)$.
It follows that
\begin{equation}\label{(5.10)}
  \forall\, t\ge 0, \quad
  \left[\log\left(\frac{1}{U_{\vep}(t)}\right)\right]^{1-\gm/2}
  \le
  \left[\log\left(\frac{1}{U_{\vep}(0)}\right)\right]^{1-\gm/2}+(1-\gm/2)A
  t.
\end{equation}
Next, using the convexity inequality
$$
\forall \, x,y\ge 0, \quad \left(x+\left(1-\frac{\gamma}{2}
  \right)y\right)^{\frac{1}{1-\gm/2}} \le \frac{\gamma}{2} \left(
  \frac{2 x}{\gm}
\right)^{\frac{1}{1-\gm/2}}+\left(1-\frac{\gm}{2}\right)
y^{\frac{1}{1-\gm/2}},$$
we have
$$
\left\{\left[\log\left(\frac{1}{U_{\vep}(0)}\right)\right]^{1-\gm/2}+\left(1-\frac{\gm}{2}\right)A
  t \right\}^{\frac{1}{1-\gm/2}}\le \alpha
\log\left(\frac{1}{U_{\vep}(0)}\right) +\beta_1 t^{\frac{2}{2-\gm}}
$$
where
$$
\alpha=\left(\frac{2}{\gm}\right)^{\frac{\gm}{2-\gm}},\quad
\beta_1=\left(1-\frac{\gm}{2}\right)
A^{\frac{2}{2-\gm}}=\left(1-\frac{\gm}{2}\right)
\left(2^{6}(N+1)^2\right)^{\frac{2}{2-\gm}}.
$$
Thus, from \eqref{(5.10)}, we obtain
$$
\forall\, t\ge 0, \quad U_{\vep}(t)\ge
U_{\vep}(0)^{\alpha} \exp\left(-\beta_1 t^{\frac{2}{2-\gm}}\right).
$$
Using the definition of $U_{\vep}(t)$ and letting $\vep\to 0+$, we get
finally
$$
\forall\, t\ge 0, \quad \frac{\|F_t-M\|_0}{4}\ge \left
  (\frac{\|F_0-M\|_0}{4}\right)^{\alpha} \exp\left(-\beta_1
t^{\frac{2}{2-\gm}}\right).
$$
This concludes the proof of the standard case for $0<\gm<2$.
\smallskip

\noindent {\em Case 2: $\gm=2$.} In this case we have by \eqref{(5.9)}
with $\gm=2$ that
$$\frac{\, {\rm d}}{\, {\rm d}t}
\log\left(\log\left(\frac{1}{U_{\vep}(t)}\right)\right)=-
\left[\log\left(\frac{1}{U_{\vep}(t)}\right)\right]^{-1}\frac{1}{U_{\vep}(t)}\cdot
\frac{\, {\rm d}}{\, {\rm d}t}U_{\vep}(t)\le A,\quad {\rm a.e.}\quad
t\in(0,\infty).$$ Since the function
$$
t \mapsto \log\left(\log\left(\frac{1}{U_{\vep}(t)}\right)\right)
$$
is absolutely continuous on every bounded interval of $[0,\infty)$, it follows that for all $t>0$
$$
\log\left(\log\left(\frac{1}{U_{\vep}(t)}\right)\right)
\le \log\left(\log\left(\frac{1}{U_{\vep}(0)}\right)\right)+A t,
\qquad {\rm i.e.}\quad
 U_{\vep}(t)
\ge (U_{\vep}(0))^{e^{At}}.
$$
Letting $\vep\to 0+$ leads to
$$
\forall\, t\ge 0, \quad \frac{\|F_t-M\|_0}{4}\ge \left
  (\frac{\|F_0-M\|_0}{4}\right)^{e^{At}}.
$$
This prove the standard case for $\gm=2$.
\smallskip

\noindent {\em General non-normalized setting.} The general case can
be reduced to the standard case by using normalization. Let $F_0\in
{\mathcal B}_{\rho,u,T}^+({\bRN})$, $c=\rho T^{\gm/2}$ and let $M\in
{\mathcal B}_{\rho,u,T}^+({\bRN})$ be the Maxwellian.  Then, according to
Proposition \ref{(prop1.1)}, the normalization $t\mapsto {\mathcal
  N}(F_{t/c})\in {\mathcal B}^{+}_{1,0,1}({\bRN})$ is a conservative
measure strong solution of equation~\eqref{(B)} with the initial datum
${\mathcal N}(F_0)$.  Applying the above estimates and
$\|F_{t}-M\|_0=\rho\|{\mathcal N}(F_{t})-{\mathcal N}(M)\|_0$ we obtain
that if $0<\gm<2$ then
\begin{multline*}
 \forall\, t\ge 0, \quad  \|F_t-M\|_0 =\rho\|{\mathcal N}(F_{c^{-1}ct})-{\mathcal N}(M)\|_0
\\
\ge  4\rho\left(\frac{\|{\mathcal N}(F_0)-{\mathcal N}(M)\|_0}{4}\right)^{\alpha}
\exp\left(-\beta_1(ct)^{\frac{2}{2-\gm}}\right)\\
=4\rho\left(\frac{\|F_0-M\|_0}{4\rho}\right)^{\alpha}\exp\left(-\beta\,t^{\frac{2}{2-\gm}}
\right)\qquad \qquad \qquad \qquad \quad \qquad \qquad\end{multline*}
with
\[
\beta=\beta_1c^{\frac{2}{2-\gm}}=
(1-\frac{\gm}{2}) \left(2^{6}(1+N)^2\rho
  T^{\gm/2}\right)^{\frac{2}{2-\gm}}.
\]

Similarly  if $\gm=2$, then
$$\|F_t-M\|_0\ge
4\rho\left (\frac{\|{\mathcal N}(F_{0})-{\mathcal N}(M)\|_0}{4}\right)^{e^{Act}}
= 4\rho\left (\frac{\|F_{0}-M\|_0}{4\rho}\right)^{e^{\kappa\,t}}
$$
with $\kappa=Ac=2^{6}(N+1)^2\rho T$.  This completes the
proof. \end{proof} \vskip2mm

\section{Global in Time Stability Estimate}
\label{sec6}

In the last section we prove the the global in time strong stability
of the measure strong solutions of equation~\eqref{(B)}.

\begin{proof}[Proof of Theorem \ref{(theo1.3)}]
  Let $F_t$ be a conservative measure strong solution of
  equation~\eqref{(B)} with the initial datum $F_0\in {\mathcal
    B}_{\rho_0,u_0,T_0}^{+}({\bRN})$, and let $G_t$ be any
  conservative measure strong solution of equation~\eqref{(B)} with
  the initial datum $G_0$.  Let
\begin{equation}\label{(6.1)}
  D_0:=\min\left\{\frac{\rho_0}{2},\,
    \left(
      \frac{4\left\|F_0\right\|_2}{N\rho_0^2}
      +
      \frac{6}{N}\left(\frac{\left\|F_0\right\|_2}{\rho_0^2}\right)^2\right)^{-1}\frac{T_0}{2}
  \right\}.
\end{equation}
If $\left\|F_0-G_0\right\|_2\ge D_0$,
then by conservation of mass and energy we have  for all $t\ge 0$,
\begin{equation}\label{(6.2)}
  \left\|F_t-G_t\right\|_2\le
  \left\|F_0\right\|_2+\left\|G_0\right\|_2
  \le 2\left\|F_0\right\|_2+\left\|G_0-F_0\right\|_2\le
  \left(\frac{2\left\|F_0\right\|_2}{D_0}+1\right)\left\|G_0-F_0\right\|_2.
\end{equation}

In the following we assume that $\left\|F_0-G_0\right\|_2< D_0$. By
the uniqueness theorem, we can also assume that
$\left\|F_0-G_0\right\|_2>0$. Due to our choice of $D_0$, we see that
$G_0$ is non-zero and is not a Dirac distribution. Therefore let
$\rho>0$, $u\in{\bRN}$, $T>0$ be the mass, mean velocity and
temperature corresponding to $G_0$, i.e., $G_0\in {\mathcal
  B}_{\rho,u,T}^{+}({\bRN})$. Using the condition
$\left\|F_0-G_0\right\|_2< D_0$ and elementary estimates we have
\begin{equation*}
\begin{cases} \displaystyle
  |\rho-\rho_0|\le \left\|G_0-F_0\right\|_2, \quad
  0<\frac{\rho_0}{2}\le \rho\le \frac{3\rho_0}{2}, \vspace{0.2cm} \\ \displaystyle
  |u-u_0|\le
  \frac{2\left\|F_0\right\|_2}{\rho_0^2}\left\|G_0-F_0\right\|_2,
  \vspace{0.2cm} \\ \displaystyle
  |T-T_0|\le\left( \frac{4\left\|F_0\right\|_2}{N\rho_0^2} +
    \frac{6}{N}\left(\frac{\left\|F_0\right\|_2}{\rho_0^2}\right)^2\right)\left\|G_0-F_0\right\|_2,
  \vspace{0.2cm} \\ \displaystyle
  0<\frac{T_0}{2}\le T\le \frac{3T_0}{2}.
\end{cases}
\end{equation*}

Let $M_{F_0}, M_{G_0}$ be the Maxwellians associated with $F_0, G_0$
respectively, i.e. $M_{F_0}\in{\mathcal
  B}_{\rho_0,u_0,T_0}^{+}({\bRN})$, $M_{G_0}\in {\mathcal
  B}_{\rho,u,T}^{+}({\bRN})$.  In the following calculations the
constants $0<C_i<\infty$ $(i=1,2,\dots,9)$ only depend on $N$, the
function $b$, $\gm$, $\rho_0$, $u_0$ and $T_0$, and we recall that
\[
\left\|F_0\right\|_2=\rho_0(1+NT_0+|u_0|^2).
\]

We need to estimate
$\left\|M_{G_0}-M_{F_0}\right\|_2$. Let us define
$$
{\mathcal M}(\rho,u,T; v)=(2\pi)^{-N/2}\rho T^{-N/2}\exp\left(-\frac{|v-u|^2}{2T}\right)
$$
and let us compute
\begin{equation*}
\begin{cases}\displaystyle
\frac{\p }{\p \rho}{\mathcal M}(\rho,u,T; v)
={\mathcal M}(1,u,T; v), \vspace{0.2cm} \vspace{0.2cm} \\
\displaystyle
\nabla_{u}{\mathcal M}(\rho,u,T; v)
={\mathcal M}(\rho,u,T; v) \, \frac{v-u}{T},
\vspace{0.2cm} \\ \displaystyle
\frac{\p }{\p T}{\mathcal M}(\rho,u,T; v)
=\left(-\frac{(N/2+1)}{T}+\frac{|v-u|^2}{2T^2}\right)
{\mathcal M}(\rho,u,T; v).
\end{cases}
\end{equation*}
If we set
$$
\forall \, \theta \in [0,1], \quad \rho(\theta)=\theta\rho+(1-\theta)\rho_0,
\,\,\,u(\theta)=\theta u+(1-\theta)u_0,\,\,\,
T(\theta)=\theta T+(1-\theta)T_0,
$$
then
\begin{multline*}
\left|{\mathcal M}(\rho,u,T; v)-{\mathcal M}(\rho_0,v_0,T_0; v)
\right|
\le  |\rho-\rho_0|
\int_{0}^{1}{\mathcal M}(1,u(\theta),T(\theta); v)\, {\rm d}\theta\\
+|u-u_0|\int_{0}^{1}{\mathcal M}(\rho(\theta),u(\theta),T(\theta); v)
\frac{|v-u(\theta)|}
{T(\theta)}
\, {\rm d}\theta \\
+|T-T_0|\int_{0}^{1}{\mathcal M}(\rho(\theta),u(\theta),T(\theta); v)
\left(\frac{(N/2+1)}{T(\theta)}+\frac{|v-u(\theta)|^2}{2T(\theta)^2}\right)
\, {\rm d}\theta.
\end{multline*}
We then deduce
\begin{multline*}
\left\|M_{G_0}-M_{F_0}\right\|_2=\int_{{\bRN}}\la v\ra^2
\left|{\mathcal M}(\rho,u,T; v)-{\mathcal M}(\rho_0,u_0,T_0; v)
\right|\, {\rm d}v\\
\le C_1
\left(|\rho-\rho_0|+|u-u_0|+|T-T_0|
\right)
\end{multline*}
and thus using the above estimates for $\rho-\rho_0$, $u-u_0$ and
$T-T_0$, we obtain
\begin{equation}\label{(6.3)}
\left\|M_{G_0}-M_{F_0}\right\|_2\le C_2\left\|G_0-F_0\right\|_2.
\end{equation}

Next from the above estimates we have
$$
\ld_0=S_{b,\gm}(1,0,1) \rho_0 T_0^{\gm/2},\quad \ld=S_{b,\gm}(1,0,1)
\rho T^{\gm/2} \ge 2^{-1-\gm/2}\ld_0.
$$
Then using the convergence estimate
\eqref{(4.36)} and recalling that
\[
C_{1/\rho,|u|/\sqrt{T},1/T}=\rho\max\left\{ 1+|u|^2+\sqrt{T}|u|,\,
  T+\sqrt{T}|u|\right\},
\]
we have
\begin{equation*}
\begin{cases} \displaystyle
\left\|F_t-M_{F_0}\right\|_2\le  C_3e^{-\ld_0t}, \vspace{0.2cm} \\ \displaystyle
\left\|G_t-M_{G_0}\right\|_2\le C_0C_{1/\rho,|u|/\sqrt{T},1/T}e^{-\ld t}
 \le C_{4} \exp\left(- 2^{-1-\gm/2}\ld_0 t\right).
\end{cases}
\end{equation*}
Thus
$$
\forall\, t\ge 0, \quad
\left\|F_t-M_{F_0}\right\|_2+\left\|G_t-M_{G_0}\right\|_2 \le
C_5e^{-C_6 t},
$$
and it follows from \eqref{(6.3)} that
\begin{equation}\label{(6.4)}
  \forall\, t\ge 0, \quad \left\|F_t-G_t\right\|_2\le C_5e^{-C_6 t}+C_2\left\|F_0-G_0\right\|_2.
\end{equation}
On the other hand by the stability estimate \eqref{(1.21)} we have
\begin{equation}\label{(6.5)}
  \forall\,t\ge 0, \quad \left\|F_t-G_t\right\|_2\le
  \Psi_{F_0}\left(\left\|F_0-G_0\right\|_2\right)e^{C_{7}(1+t)}.
\end{equation}

The remaining of the proof is concerning with balancing properly
\eqref{(6.4)} and \eqref{(6.5)}.  \smallskip

\noindent {\em Case 1: $\Psi_{F_0}(\left\|F_0-G_0\right\|_2)<1$.} Note
that $\left\|F_0-G_0\right\|_2>0$ implies
$\Psi_{F_0}(\left\|F_0-G_0\right\|_2>0$. Let
$$
t_0=
\log\left\{\left(\frac{1}{\Psi_{F_0}(\left\|F_0-G_0\right\|_2)}\right)^{\frac{1}{C_6+C_7}}\right\}.
$$
For every $t\ge 0$, if $t\le t_0$, then, using \eqref{(6.5)},
\begin{equation*}
\left\|F_t-G_t\right\|_2\le \Psi_{F_0}\left(\left\|F_0-G_0\right\|_2\right)e^{C_{7}(1+t_0)}
=e^{C_{7}}\left[\Psi_{F_0}\left(\left\|F_0-G_0\right\|_2\right)\right]^{\frac{C_{6}}{C_6+C_{7}}}.
\end{equation*}
If  $t\ge t_0$, then, using \eqref{(6.4)},
\begin{equation*}
\left\|F_t-G_t\right\|_2\le C_5\Big[\Psi_{F_0}(\left\|F_0-G_0\right\|_2)\Big]^{\frac{C_6}{C_6+C_7}}
+C_2\left\|F_0-G_0\right\|_2.
\end{equation*}
Thus
\begin{equation}\label{(6.6)}
\forall\, t\ge 0, \quad \left\|F_t-G_t\right\|_2\le C_{8}
\Big[\Psi_{F_0}(\left\|F_0-G_0\right\|_2)\Big]^{\frac{C_6}{C_6+C_{7}}}
+C_2\left\|F_0-G_0\right\|_2.
\end{equation}
\smallskip

\noindent
{\em Case 2: $\Psi_{F_0}(\left\|F_0-G_0\right\|_2)\ge 1$.}
In this case we have, by conservation of mass and energy and
$\left\|F_0-G_0\right\|_2\le \rho_0/2\le \left\|F_0\right\|_2/2$ that
\begin{equation}\label{(6.7)}
  \forall\, t\ge 0, \quad \left\|F_t-G_t\right\|_2\le
  \frac{5}{2}\left\|F_0\right\|_2\le
  \frac{5}{2}\left\|F_0\right\|_2\Big[\Psi_{F_0}(\left\|F_0-G_0\right\|_2)\Big]^{\frac{C_6}{C_6+C_{7}}}.
\end{equation}

Combining \eqref{(6.6)}, \eqref{(6.7)}, and \eqref{(6.2)}, we obtain
$$
\forall\, t\ge 0, \quad \left\|F_t-G_t\right\|_2\le C_{9}\left(
  \left[\Psi_{F_0}\left(\left\|F_0-G_0\right\|_2\right)\right]^{\frac{C_6}{C_6+C_{7}}}
+\left\|F_0-G_0\right\|_2 \right).
$$
This proves Theorem
\ref{(theo1.3)}
\end{proof}

\vskip3mm \medskip\par\noindent\emph{Acknowledgements.} This work was
started while the first author was visiting the University
Paris-Dauphine as an invited professor during the autumn 2006, and the
support of this university is acknowledged. The first author also
acknowledges support of National Natural Science Foundation of China,
Grant No.10571101, No.11171173. The second author's research was supported by the
ERC Starting Grant MATKIT.

\par\noindent{\scriptsize\copyright\ 2015 by the authors. This paper
  may be reproduced, in its entirety, for non-commercial purposes.}

\bibliographystyle{acm}
\bibliography{Biblio-LM}

\begin{thebibliography}{10}

\bibitem{MR1697495}
{\sc Abrahamsson, F.}
\newblock Strong {$L^1$} convergence to equilibrium without entropy conditions
  for the {B}oltzmann equation.
\newblock {\em Comm. Partial Differential Equations 24}, 7-8 (1999),
  1501--1535.

\bibitem{Baranger-Mouhot}
{\sc Baranger, C., and Mouhot, C.}
\newblock Explicit spectral gap estimates for the linearized {B}oltzmann and
  {L}andau operators with hard potentials.
\newblock {\em Rev. Mat. Iberoamericana 21}, 3 (2005), 819--841.

\bibitem{Bobylev1975}
{\sc Bobyl{\"e}v, A.~V.}
\newblock The method of the {F}ourier transform in the theory of the
  {B}oltzmann equation for {M}axwell molecules.
\newblock {\em Dokl. Akad. Nauk SSSR 225}, 6 (1975), 1041--1044.

\bibitem{Boby:maxw:88}
{\sc Bobyl{\"e}v, A.~V.}
\newblock The theory of the nonlinear spatially uniform {B}oltzmann equation
  for {M}axwell molecules.
\newblock In {\em Mathematical physics reviews, Vol.\ 7}, vol.~7 of {\em Soviet
  Sci. Rev. Sect. C Math. Phys. Rev.} Harwood Academic Publ., Chur, 1988,
  pp.~111--233.

\bibitem{MR1639275}
{\sc Bouchut, F., and Desvillettes, L.}
\newblock A proof of the smoothing properties of the positive part of
  {B}oltzmann's kernel.
\newblock {\em Rev. Mat. Iberoamericana 14}, 1 (1998), 47--61.

\bibitem{Carleman}
{\sc Carleman, T.}
\newblock {\em Probl\`emes math\'ematiques dans la th\'eorie cin\'etique des
  gaz}.
\newblock Publ. Sci. Inst. Mittag-Leffler. 2. Almqvist \& Wiksells Boktryckeri
  Ab, Uppsala, 1957.

\bibitem{CC92}
{\sc Carlen, E.~A., and Carvalho, M.~C.}
\newblock Strict entropy production bounds and stability of the rate of
  convergence to equilibrium for the {B}oltzmann equation.
\newblock {\em J. Statist. Phys. 67}, 3-4 (1992), 575--608.

\bibitem{MR1263387}
{\sc Carlen, E.~A., and Carvalho, M.~C.}
\newblock Entropy production estimates for {B}oltzmann equations with
  physically realistic collision kernels.
\newblock {\em J. Statist. Phys. 74}, 3-4 (1994), 743--782.

\bibitem{CC94}
{\sc Carlen, E.~A., and Carvalho, M.~C.}
\newblock Entropy production estimates for {B}oltzmann equations with
  physically realistic collision kernels.
\newblock {\em J. Statist. Phys. 74}, 3-4 (1994), 743--782.

\bibitem{MR1725612}
{\sc Carlen, E.~A., Carvalho, M.~C., and Gabetta, E.}
\newblock Central limit theorem for {M}axwellian molecules and truncation of
  the {W}ild expansion.
\newblock {\em Comm. Pure Appl. Math. 53}, 3 (2000), 370--397.

\bibitem{MR2546739}
{\sc Carlen, E.~A., Carvalho, M.~C., and Lu, X.}
\newblock On strong convergence to equilibrium for the {B}oltzmann equation
  with soft potentials.
\newblock {\em J. Stat. Phys. 135}, 4 (2009), 681--736.

\bibitem{MR1991033}
{\sc Carlen, E.~A., and Lu, X.}
\newblock Fast and slow convergence to equilibrium for {M}axwellian molecules
  via {W}ild sums.
\newblock {\em J. Statist. Phys. 112}, 1-2 (2003), 59--134.

\bibitem{MR2765747}
{\sc Desvillettes, L., Mouhot, C., and Villani, C.}
\newblock Celebrating {C}ercignani's conjecture for the {B}oltzmann equation.
\newblock {\em Kinet. Relat. Models 4}, 1 (2011), 277--294.

\bibitem{Grad1}
{\sc Grad, H.}
\newblock Principles of the kinetic theory of gases.
\newblock In {\em Handbuch der {P}hysik (herausgegeben von {S}. {F}l\"ugge),
  {B}d. 12, {T}hermodynamik der {G}ase}. Springer-Verlag, Berlin, 1958,
  pp.~205--294.

\bibitem{Grad2}
{\sc Grad, H.}
\newblock Asymptotic theory of the {B}oltzmann equation. {II}.
\newblock In {\em Rarefied {G}as {D}ynamics ({P}roc. 3rd {I}nternat. {S}ympos.,
  {P}alais de l'{UNESCO}, {P}aris, 1962), {V}ol. {I}}. Academic Press, New
  York, 1963, pp.~26--59.

\bibitem{GMM}
{\sc Gualdani, M., Mischler, S., and Mouhot, C.}
\newblock Factorization of non-symmetric operators and applications.
\newblock Tech. rep., preprint, 2013.

\bibitem{MR1511713}
{\sc Hilbert, D.}
\newblock Begr\"undung der kinetischen {G}astheorie.
\newblock {\em Math. Ann. 72}, 4 (1912), 562--577.

\bibitem{MR0056184}
{\sc Hilbert, D.}
\newblock {\em Grundz\"uge einer allgemeinen {T}heorie der linearen
  {I}ntegralgleichungen}.
\newblock Chelsea Publishing Company, New York, N.Y., 1912 (1953).

\bibitem{MR1284432}
{\sc Lions, P.-L.}
\newblock Compactness in {B}oltzmann's equation via {F}ourier integral
  operators and applications. {I}, {II}.
\newblock {\em J. Math. Kyoto Univ. 34}, 2 (1994), 391--427, 429--461.

\bibitem{MR1663589}
{\sc Lu, X.}
\newblock A direct method for the regularity of the gain term in the
  {B}oltzmann equation.
\newblock {\em J. Math. Anal. Appl. 228}, 2 (1998), 409--435.

\bibitem{MR2946633}
{\sc Lu, X.}
\newblock On backward solutions of the spatially homogeneous {B}oltzmann
  equation for {M}axwelian molecules.
\newblock {\em J. Stat. Phys. 147}, 5 (2012), 991--1006.

\bibitem{partI}
{\sc Lu, X., and Mouhot, C.}
\newblock On measure solutions of the {B}oltzmann equation, part {I}: moment
  production and stability estimates.
\newblock {\em J. Differential Equations 252}, 4 (2012), 3305--3363.

\bibitem{MR1697562}
{\sc Mischler, S., and Wennberg, B.}
\newblock On the spatially homogeneous {B}oltzmann equation.
\newblock {\em Ann. Inst. H. Poincar\'e Anal. Non Lin\'eaire 16}, 4 (1999),
  467--501.

\bibitem{MR2254617}
{\sc Mouhot, C.}
\newblock Explicit coercivity estimates for the linearized {B}oltzmann and
  {L}andau operators.
\newblock {\em Comm. Partial Differential Equations 31}, 7-9 (2006),
  1321--1348.

\bibitem{Mcmp}
{\sc Mouhot, C.}
\newblock Rate of convergence to equilibrium for the spatially homogeneous
  {B}oltzmann equation with hard potentials.
\newblock {\em Comm. Math. Phys. 261}, 3 (2006), 629--672.

\bibitem{MR2301289}
{\sc Mouhot, C.}
\newblock Quantitative linearized study of the {B}oltzmann collision operator
  and applications.
\newblock {\em Commun. Math. Sci.}, suppl. 1 (2007), 73--86.

\bibitem{MR2081030}
{\sc Mouhot, C., and Villani, C.}
\newblock Regularity theory for the spatially homogeneous {B}oltzmann equation
  with cut-off.
\newblock {\em Arch. Ration. Mech. Anal. 173}, 2 (2004), 169--212.

\bibitem{MR0210528}
{\sc Rudin, W.}
\newblock {\em Real and complex analysis}.
\newblock McGraw-Hill Book Co., New York, 1966.

\bibitem{MR0304972}
{\sc Stein, E.~M., and Weiss, G.}
\newblock {\em Introduction to {F}ourier analysis on {E}uclidean spaces}.
\newblock Princeton University Press, Princeton, N.J., 1971.
\newblock Princeton Mathematical Series, No. 32.

\bibitem{Villani-handbook}
{\sc Villani, C.}
\newblock A review of mathematical topics in collisional kinetic theory.
\newblock In {\em Handbook of mathematical fluid dynamics, {V}ol. {I}}.
  North-Holland, Amsterdam, 2002, pp.~71--305.

\bibitem{Vi03a}
{\sc Villani, C.}
\newblock Cercignani's conjecture is sometimes true and always almost true.
\newblock {\em Comm. Math. Phys. 234}, 3 (2003), 455--490.

\bibitem{WCUh:LBE:70}
{\sc Wang~Chang, C.~S., Uhlenbeck, G.~E., and de~Boer, J.}
\newblock In {\em Studies in Statistical Mechanics, Vol. V}. North-Holland,
  Amsterdam, 1970.

\end{thebibliography}

\signxl \signcm

\end{document}